\PassOptionsToPackage{dvipsnames,cmyk}{xcolor}
\documentclass[final,hidelinks]{siamart251216}

\newcommand{\fullTitle}{Visibility in Polygonal Environments with Holes: \\ Finding Best Spots for Hiding and Surveillance}
\newcommand{\shortTitle}{Visibility in Polygonal Environments with Holes}

\newcommand{\shortAuthors}{N. Banzal, J. Cort\'es, S. Mart\'inez}

\usepackage[T1]{fontenc}                    
\usepackage[utf8]{inputenc}                 
\usepackage[english]{babel}                 
\usepackage{xifthen}                        

\usepackage{amssymb}                        
\usepackage{amsfonts}                       
\usepackage{mathrsfs}
\usepackage{thmtools}                       
\usepackage{bbm}                            

\usepackage{graphicx,subfigure}             
\usepackage{float}                          
\usepackage{tikz}                           
\usepackage{stmaryrd}                       

\usepackage{enumitem}                       

\usepackage{xfrac}                          

\usepackage[font=small]{caption}

\usepackage{algpseudocode}

\usepackage{setspace}
\usepackage{etoolbox}

\usepackage{mparhack}

\usetikzlibrary{calc}
\usetikzlibrary{arrows, arrows.meta}

\makeatletter
\renewcommand*{\thesection}{\arabic{section}}
\renewcommand*{\thesubsection}{\thesection.\Alph{subsection}}
\renewcommand*{\p@subsection}{}
\renewcommand*{\thesubsubsection}{\thesection.\thesubsection.\arabic{subsubsection}}
\renewcommand*{\p@subsubsection}{}
\makeatother

\def\tallqed{\smash{\scalebox{.5}[.75]{$\blacksquare$}}}
\renewcommand{\proofbox}{\ensuremath{\tallqed}}

\newsiamthm{prop}{Proposition}
\crefname{prop}{proposition}{propositions}
\Crefname{prop}{Proposition}{Propositions}

\newsiamremark{remark}{Remark}
\crefname{remark}{remark}{remarks}
\Crefname{remark}{Remark}{Remarks}

\newsiamremark{note}{Note}
\crefname{note}{note}{notes}
\Crefname{note}{Note}{Notes}

\makeatletter
  \newcommand*\rel@kern[1]{\kern#1\dimexpr\macc@kerna}
  \newcommand*\widebar[1]{%
    \begingroup
    \def\mathaccent##1##2{%
      \rel@kern{0.8}%
      \overline{\rel@kern{-0.8}\macc@nucleus\rel@kern{0.2}}%
      \rel@kern{-0.2}%
    }%
    \macc@depth\@ne
    \let\math@bgroup\@empty \let\math@egroup\macc@set@skewchar
    \mathsurround\z@ \frozen@everymath{\mathgroup\macc@group\relax}%
    \macc@set@skewchar\relax
    \let\mathaccentV\macc@nested@a
    \macc@nested@a\relax111{#1}%
    \endgroup
  }
\makeatother

\hypersetup{pdftitle={\shortTitle},
            pdfauthor={\shortAuthors}
            pdfsubject = {Control Systems},
            pdfkeywords = {visibility, analysis, surveillance, hiding, optimization},
            plainpages = false,
            colorlinks = true,
            linkcolor = NavyBlue,
            citecolor = WildStrawberry,
            } 

\definecolor{VibrantPurple}{RGB}{128, 64, 192}

\newcommand{\state}{z}
\newcommand{\heading}{\theta}

\newcommand{\fov}{\varphi}
\newcommand{\fovCone}[3]{\mathfrak{C}^{#1}_{#2}(#3)}
\newcommand{\fovConeInfinite}[2]{\mathfrak{C}^\infty_{#1}(#2)}
\newcommand{\radius}{R}

\protected\def\verythinspace{%
  \ifmmode
    \mskip0.5\thinmuskip
  \else
    \ifhmode
      \kern0.08334em
    \fi
  \fi
}

\newcommand{\symDiff}{\verythinspace \Delta \verythinspace}

\newcommand{\radiusRay}[2]{r(#1, #2)}
\newcommand{\radiusRotatedRay}[3]{r_{#3}(#1, #2)}

\newcommand{\setRay}[2]{\gamma(#1, #2)}
\newcommand{\setRotatedRay}[3]{\gamma_{#3}(#1, #2)}
\newcommand{\setRotatedRaySet}[4]{\gamma_{[#3, #4]}(#1, #2)}

\newcommand{\setCriticalPoints}{\mathcal{C}}
\newcommand{\setStationaryPoints}{\mathcal{S}}
\newcommand{\setNonsmoothPoints}{\mathcal{S}'}
\newcommand{\setInflectionPoints}{\mathcal{I}}

\newcommand{\myDomain}{D_1}
\newcommand{\advDomain}{D_2}

\newcommand{\conicalHull}[1]{\mathrm{cone}(#1)}
\newcommand{\convexHull}[1]{\mathrm{co}(#1)}

\newcommand{\lineSegment}[2]{\overline{#1 #2}}

\newcommand{\tangentCone}[2]{\mathcal{T}_{#1}(#2)}
\newcommand{\edgeUVA}[1]{\hat{a}_{#1}}
\newcommand{\edgeUVB}[1]{\hat{b}_{#1}}

\newcommand{\interior}[1]{\mathrm{int}(#1)}
\newcommand{\exterior}[1]{\mathrm{ext}(#1)}
\newcommand{\boundary}[1]{\mathrm{bdy}(#1)}
\newcommand{\closure}[1]{\mathrm{cl}(#1)}
\newcommand{\polygon}{P}

\newcommand{\vertices}[1]{\mathrm{Ve}(#1)}
\newcommand{\edges}[1]{\mathrm{Ed}(#1)}
\newcommand{\anchors}[1]{\mathrm{Ve}_a(#1)}
\newcommand{\reflexVertices}[1]{\mathrm{Ve}_r(#1)}
\newcommand{\vertex}{v}
\newcommand{\anchor}{\vertex_a}
\newcommand{\reflexVertex}{\vertex_r}

\newcommand{\visibilityPolygon}[1]{S(#1)}
\newcommand{\visibilityPolygonSymbol}{S}
\newcommand{\visibilityRegion}[1]{S(#1)}

\newcommand{\visibilityMetric}{V}
\newcommand{\visibilityArea}{\visibilityMetric_{\textrm{area}}}
\newcommand{\augmentedVisibilityMetric}[1]{\mathcal{V}_{#1}}
\newcommand{\augmentedVisibilityMetricMax}[1]{\mathcal{W}_{#1}}
\newcommand{\obstacles}{\mathcal{O}}
\newcommand{\obstacle}{O}
\newcommand{\freeSpace}{\mathcal{F}}
\newcommand{\reducedFreeSpace}{\freeSpace_r}
\newcommand{\inflectionSegment}{I}
\newcommand{\partition}{\mathcal{P}}
\newcommand{\setInflectionSegments}{\mathfrak{I}}

\newcommand{\componentTerrain}{\mathcal{U}}

\newcommand{\genericPoint}{x}
\newcommand{\genericPointQ}{y}
\newcommand{\genericPointR}{z}

\newcommand{\component}{\widebar{\componentTerrain}}

\newcommand{\genericPointSet}{\MakeUppercase{\genericPoint}}
\newcommand{\anySet}{\Omega}
\newcommand{\region}{M}

\newcommand{\distanceCenters}{d_\genericPointQ}

\newcommand{\proj}[2]{\pi_{#1}(#2)}

\newcommand{\environment}{Q}

\newcommand{\rightDirectionalDerivative}[1]{D #1}
\newcommand{\rightMuDirectionalDerivative}[1]{D_\mu #1}
\newcommand{\direction}{\nu}

\newcommand*\xbar[1]{%
    \hbox{%
        \vbox{%
            \hrule height 0.5pt 
            \kern0.5ex
            \hbox{%
                \kern-0.1em
                \ensuremath{#1}%
                \kern-0.1em
            }%
        }%
    }%
} 

\newcommand{\foralln}{\forall \verythinspace}
\newcommand{\existsn}{\exists \verythinspace}

\newcommand{\Norcent}{%
  \hyperlink{alg:Norcent}{Norcent algorithm}%
}
\newcommand{\NorcentTO}{%
  Norcent algorithm%
}

\makeatletter
\newcommand*{\etc}{%
    \@ifnextchar{.}%
        {etc}%
        {etc.\@\xspace}%
}
\makeatother

\newcommand\oprocendsymbol{\hbox{$\bullet$}}
\newcommand\oprocend{\relax\ifmmode\else\unskip\hfill\fi\oprocendsymbol}

\newcommand{\normalization}{\hat{\normalfont\textsc{n}}}

\DeclareMathOperator{\arctanTwo}{arctan2}

\newcommand{\genericSet}{A}

\newcommand{\integernonnegative}{\ensuremath{\mathbb{Z}}_{\geqslant 0}}
\newcommand{\real}{\ensuremath{\mathbb{R}}}
\newcommand{\realpositive}{\ensuremath{\mathbb{R}}_{> 0}}
\newcommand{\realnonnegative}{\ensuremath{\mathbb{R}}_{\geqslant 0}}
\newcommand{\extendedreal}{\ensuremath{\widebar{\mathbb{R}}}}
\newcommand{\extendedrealpositive}{\ensuremath{\widebar{\mathbb{R}}}_{> 0}}
\newcommand{\extendedrealnonnegative}{\ensuremath{\widebar{\mathbb{R}}}_{\geqslant 0}}

\newcommand{\union}{\ensuremath{\operatorname{\cup}\,}}
\newcommand{\intersection}{\ensuremath{\operatorname{\cap}}}
\newcommand{\bigunion}{\bigcup}

\newcommand{\map}[3]{#1: #2 \rightarrow #3}

\newcommand{\oBall}[2]{\mathbb{B}_{#1}(#2)}

\newcommand{\argmin}{\ensuremath{\operatorname{argmin}}}

\newcommand{\gen}{\ensuremath{\operatorname{gen}}}

\newcommand{\solutions}{\ensuremath{\Gamma}}
\newcommand{\stepSize}{a}
\newcommand{\stepSizeB}{b}

\newcommand{\longthmtitle}[1]{\mbox{}\textup{{(#1).}}}

\allowdisplaybreaks

\headers{\shortTitle}{\shortAuthors}

\title{\fullTitle\thanks{Submitted to the editors on \today.
    \funding{This work was supported by Award ARL-W911NF-25-2-0042.
    }}}

\author{Neilabh Banzal\thanks{Department of Mechanical and Aerospace
Engineering and the Contextual Robotics Institute, University of
California San Diego, CA (\{\href{mailto:neilabh@ucsd.edu}{neilabh}, \href{mailto:cortes@ucsd.edu}{cortes}, \href{mailto:soniamd@ucsd.edu}{soniamd}\}@ucsd.edu)} \and Jorge Cort\'es\footnotemark[2] \and Sonia Mart\'inez\footnotemark[2]}

\ifpdf
\hypersetup{
  pdftitle={\shortTitle},
  pdfauthor={\shortAuthors}
}
\fi

\allowdisplaybreaks

\begin{document}

\maketitle


\begin{abstract}
  Visibility plays an important role for decision making in cluttered,
  uncertain environments.  This paper considers the problem of
  identifying optimal hiding spots for an agent against line-of-sight
  detection by an adversary whose location is unknown.  We consider
  environments modeled as polygons with holes. We develop a set of
  mathematical tools for reasoning about visibility as a function of
  position and rely on non-smooth analysis to formally characterize
  the regularity properties of various visibility-based metrics. These
  metrics are non-smooth and non-convex, so off-the-shelf optimization
  algorithms can only guarantee convergence to Clarke critical points.
  To address this, the proposed Normalized Descent algorithm leverages
  the structure of non-smooth points in visibility problems and
  introduces randomness to escape saddle points. Our technical
  analysis allows for the non-monotonic decrease in the visibility
  metric and strengthens the algorithm guarantees, ensuring
  convergence to local minima with high probability. Simulations on
  two hide-and-seek scenarios showcase the effectiveness of the
  proposed approach.
\end{abstract}



\section{Introduction}
\label{sec:introduction}


This paper is motivated by the deployment of autonomous agents in
cluttered, uncertain environments. In such scenarios, line-of-sight
visibility is critical to accomplish their goals.  Under partial and
incomplete knowledge, an agent cannot fully rely on prior knowledge of
the environment and must instead acquire fresh information from
sensors. In turn, the information available to sensors can be
obstructed by an obstacle, and therefore, autonomous agents have to
leverage the readings within their line-of-sight for navigation and
decision making.
In adversarial settings, agents could seek to avoid detection while
trying to achieve their objective or, alternatively, a surveillance
team might seek to effectively monitor an area, both of which rely on
line-of-sight information in the presence of obstacles. In multi-agent
settings, formations that ensure line-of-sight visibility between
neighbors ensure better communication between the agents.
Visibility also plays an important role in the information mismatch
between the opposing teams in multi-player games such as Among Us.
Motivated by scenarios in which adversaries are present but their
locations are unknown, this paper seeks to develop mathematical tools
for reasoning about visibility in obstacle-rich environments as a
function of the agent's position and apply them to identify optimal
locations for hiding and seeking in environments modeled as polygons
with holes.


\subsubsection*{Literature Review} 

Our work leverages notions and methods from the extensive body of work
on visibility~\cite{JOR:17}, a cornerstone of computational
geometry~\cite{DL-FP:84-tc,JOR:00}.  The \emph{art gallery problem} is
a classical problem, where the objective is to find the minimum number
of guards to cover a non-convex polygon~\cite{VC:75-jct,SF:78-jct}.
The key tool used for the analysis of the art gallery problem and its
variants is coloring from graph
theory~\cite{JOR:87,TS:92-ieee,JU:00-hcg}.
The focus of this paper is on the variant with a single guard proposed
in~\cite{SN-MT:94-is}. Even though sampling-based algorithms have been
developed~\cite{OC-AE-SH:04-dcg} to find the best vertex-, perimeter-
and point-guards numerically, the theoretical understanding of how
visibility changes as the single guard moves within the environment is
limited. The solution methodologies for other variants like the
floodlight problem~\cite{VE-JO-JU-DX:95-ipl}, a single guard with
limited field of view~\cite{CT:02-cg,CT:00-cg}, and optimal sensor
placement~\cite{YO-MS-RY:08-WiMob}, also rely on discretization of the
space to generate computational graphs, followed by application of
graph theory to solve the problems. This results in good numerical
solutions, though lacking interpretability.  This problem lays the
foundation for solving visibility-based persistent monitoring
problems, for which there has been renewed interest in recent
years~\cite{AG-JC-FB:06-tr,NT-JB:10-icra,ES-NM-VK-VI:11-icra,AM-JC:20-jirs,JC-AB-ZZ-PT:21-iros}.

We seek to find the best spots for hiding and surveillance in
polygonal environments. The problem is non-trivial because the
visibility polygon may change drastically as the observer moves, and
as such, metrics based on visibility are non-smooth and non-convex.
This makes the optimization problem to find best spots challenging.
The works~\cite{AG-JC-FB:06-sicon,HS-MZ:15-ajor} tackle these issues
using a combination of non-smooth analysis, computational geometry,
and regularity properties of continuous flows. 
However, \cite{HS-MZ:15-ajor} takes a qualitative approach, and does
not provide guarantees for convergence to a local minima.  On the
other hand, \cite{AG-JC-FB:06-sicon} analyzes only the area of the
visibility polygon, in simple polygons without holes, and assuming
omnidirectional, infinite range visibility. The maximization algorithm
is designed in continuous time, and results in jittering when
discretizations are implemented.

We employ gradient descent algorithms for optimizing visibility-based
metrics. Standard gradient methods slow down progress towards critical
points in regions where the magnitude of the gradient becomes small.
Instead, for optimizing visibility-based metrics, we turn to
normalized gradient algorithms, that use only the gradient direction
information at each step.  For non-smooth objectives, as is the case
for visibility-based metrics, the convex case has been
well-developed~\cite{RC-CL:93-mp,BP:77,AR:06}.
For non-convex scenarios, in continuous time,~\cite{JC:06-auto}
establishes global convergence to Clarke stationary points under
normalized generalized gradient flows, while~\cite{RM-BS-SK:19-tac}
demonstrates the non-convergence of the said flows to saddle points
under a strong thrice differentiable assumption in a neighborhood of
the saddle points. In discrete
time,~\cite{LB:72-kibernetika,NS-KK-AR:85} show local convergence to
isolated local minima, the gradient sampling
method~\cite{JB-AL-MO:05-siamjo,KK:07-siamjo} achieves almost sure
global convergence to Clarke stationary points,
while~\cite{VM-AG-VN:87} globally converges to said stationary points
under mild conditions. We notice that the above algorithms only
converge to stationary points, but not to local minimum points.
Moreover, for the scenarios considered in this paper, due to the
geometry of the problem, one could have a continuum of local minima,
and the metrics could be non-differentiable at the saddle points,
making the construction of a gradient-based algorithm in this setting
non-trivial.


\subsubsection*{Statement of Contributions}

We consider an agent deployed over an obstacle-rich environment,
modeled as a polygon with holes. The agent's objective is to minimize
the risk of line-of-sight detection from an adversary whose location
is unknown. Our first set of contributions pertain to the
characterization of mathematical notions for visibility. Specifically,
we study the \emph{critical points} of the environment, which
correspond to the set of points for the agent where visibility
properties change discontinuously, and are naturally associated to
anchors, which obstruct a part of the environment from the observer.
We establish their connection with the notion of \emph{inflection
segment}, leading to a partition of the free space with each of its
connected components having a constant set of anchors.  Building on
these results, our second set of contributions concern the study of
various non-smooth visibility-based metrics, including under limited
range/field of view, that encode the chances of the agent being
detected in the environment.  To do so, we extend the classical
notions of local Lipschitzness and directional derivatives to their
set-valued counterparts, which we term $\mu$-local Lipschitzness and
$\mu$-directional derivatives.  These notions provide us a way to
formally characterize of the regularity properties of the visibility
metrics, including their directional derivatives and generalized
gradients.  Finally, we introduce the \textbf{Nor}malized
Des\textbf{cent} (Norcent) algorithm to optimize the visibility
metrics.  Our design strategy first transforms the constrained
optimization problem into an unconstrained one and then synthesizes a
normalized generalized gradient-based descent.  Given the non-smooth
and non-convex nature of the objective functions, this algorithm also
incorporates randomization, allowing for a non-monotonic decrease of
the cost by exploiting the problem structure.  We establish the almost
surely convergence to local minimum points of the metrics while
avoiding undesired stationary points, including non-differentiable
and/or non-isolated ones.  Simulations on two scenarios (one where
expert mountaineers try to hide from a drone with a camera and another
where a robot looks for other robots in a multi-room setting)
illustrate our results.


\section{Preliminaries}\label{sec:preliminaries}

In this section we introduce our notational conventions and review key
concepts from non-smooth analysis, set-valued maps, and computational
geometry.


\subsection{Notation}\label{sub:notation}

We let $\real^n$ (resp. $\extendedreal^n$) denote the $n$-dimensional
real (resp. extended real) vector space. The positive and non-negative
orthants of these spaces are denoted as \smash{$\realpositive^n$
($\extendedrealpositive^n$) and $\realnonnegative^n$
($\extendedrealnonnegative^n$)}, respectively. 
We represent the Euclidean norm of a column vector $v \in \real^n$ as
$\|v\|$, its transpose as $v^\top$\!, its $k$\textsuperscript{th}
component as $v_k$, and its normalized version as $\hat{v} := v /
\|v\|$.  We also employ the alternate form $\normalization(v)$ when
the use of the hat notation becomes cumbersome. 
The open $r$-ball in $\real^n$ with center at $\genericPoint$ is
denoted as $\oBall{r}{\genericPoint} = \left\{ \genericPointQ \in
\real^n \mid \|\genericPointQ - \genericPoint\| < r \right\}$.
The cardinality of a set $\genericSet$ is represented by
$|\genericSet|$. For sets $\genericSet_1$ and $\genericSet_2$, their
intersection and union are expressed as $\genericSet_1 \cap
\genericSet_2$, and $\genericSet_1 \cup \genericSet_2$, respectively,
and the difference of $\genericSet_2$ from $\genericSet_1$ as
$\genericSet_1 \setminus \genericSet_2$. The symmetric difference of
$\genericSet_1$ and $\genericSet_2$ is denoted as $\genericSet_1
\symDiff \genericSet_2 := (\genericSet_1 \setminus \genericSet_2) \cup
(\genericSet_2 \setminus \genericSet_1)$.
Given a set $ \genericSet \subset \real^n$, let
$\interior{\genericSet}$, $\boundary{\genericSet}$,
$\conicalHull{\genericSet}$, and $\convexHull{\genericSet}$ denote its
interior, boundary, conical hull, and convex hull, respectively.  If
$\real^n \supseteq \Omega \supset \genericSet$, the exterior of
$\genericSet$ with respect to $\Omega$ is defined as
$\exterior{\genericSet} := \Omega \setminus \genericSet$.
Given a measure $\mu$ on $\Omega$ and a measurable set $\genericSet
\subset \Omega$, we define $\| \genericSet \|_\mu := \mu(\genericSet)$
(with a slight abuse of notation, as this is only a semi-norm, not a
norm). A set-valued map $\mathcal{F} : \Omega \rightrightarrows
\real^m$ assigns a subset of $\real^m$ to each point in~$\Omega$.
Given a sequence $\{\genericPoint_k\}$, we use $\genericPoint_k \to
\bar{\genericPoint}$ and $\genericPoint_k \downarrow
\bar{\genericPoint}$ to denote convergence and monotonic convergence
to $\bar{\genericPoint}$.


\subsection{Non-Smooth Analysis}\label{sub:non_smooth}

We consider functions that may not be differentiable everywhere
following~\cite{FC:13-gtm,FHC:83,RTR-RJBW:98}. For a set
$\genericPointSet \subset \real^n$, a function $f : \genericPointSet
\rightarrow \real^m$ is \emph{locally Lipschitz} at $\genericPoint \in
\genericPointSet$ if there exist $L_\genericPoint, \epsilon > 0$ such
that
\begin{align*}
  \left\| f(\genericPointQ) - f(\genericPointQ') \right\| \leqslant
  L_\genericPoint \left\| \genericPointQ - \genericPointQ' \right\|, \quad
  \foralln \genericPointQ, \genericPointQ' \in
  \oBall{\epsilon}{\genericPoint} \intersection \genericPointSet. 
\end{align*}
We extend this notion to set-valued maps. Let $\mu$ be the Lebesgue
measure on $\real^m$. For a measure space $(\genericPointSet,
\mathcal{A}, \mu)$, a set-valued map $F: \genericPointSet
\rightrightarrows \real^m$ is \emph{$\mu$-locally Lipschitz} at
$\genericPoint \in \genericPointSet$ if there exist $L_\genericPoint,
\epsilon > 0$ such that
\begin{align*}
  \| F(\genericPointQ) \symDiff
  F(\genericPointQ') \|_\mu \leqslant
  L_\genericPoint \left\| \genericPointQ - \genericPointQ' \right\|, \quad
  \foralln \genericPointQ, \genericPointQ' \in
  \oBall{\epsilon}{\genericPoint} \intersection \genericPointSet,
\end{align*}
The notion of distance on the left-hand side, $d(A_1, A_2) := \| A_1
\symDiff A_2 \|_\mu, A_1, A_2 \in \mathcal{A}$, is known as the
\emph{symmetric difference metric}.
Let $\sim$ be a relation on $\mathcal{A}$ such that $A_1 \sim A_2$ if
$d(A_1, A_2) = 0$. This metric induces a \textit{bona-fide} metric
over the quotient space $\mathcal{A} / \mu$, where an equivalence
class is defined by sets of the same measure, cf.~\cite{VB:07,
MD-ED:16}.
$\mu$-local Lipschitzness is useful for establishing regularity of
functions expressed as compositions of a set-valued map with a
measure.

\begin{lemma}\longthmtitle{Relation between
  \texorpdfstring{$\mu$-}{μ-}local Lipschitzness and local
  Lipschitzness}
  \label{lem:mu_LL}
  Given a measure space $(\anySet, \mathcal{A}, \mu)$ and a set-valued
  map $F : \genericPointSet \rightrightarrows \anySet$, if $F$ is
  $\mu$-locally Lipschitz at $\genericPoint \in \genericPointSet$,
  then $\mu \circ F$ is locally Lipschitz at $\genericPoint$.
\end{lemma}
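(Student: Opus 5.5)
The argument reduces to one inequality: the difference of the measures of two sets is bounded by the measure of their symmetric difference. Once that is in place, the Lipschitz constant of $F$ transfers directly to $\mu \circ F$.

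Fix $\genericPoint \in \genericPointSet$ and take the constants $L_\genericPoint, \epsilon > 0$ provided by $\mu$-local Lipschitzness of $F$ at $\genericPoint$. Pick arbitrary $\genericPointQ, \genericPointQ' \in \oBall{\epsilon}{\genericPoint} \intersection \genericPointSet$, and write $A = F(\genericPointQ)$ and $B = F(\genericPointQ')$. Implicit in the definition is that $F$ takes values in $\mathcal{A}$, so $A$ and $B$ are measurable. Since the inequality in the definition is meaningful only if $\|A \symDiff B\|_\mu$ is finite, I will also assume that $\mu(A)$ and $\mu(B)$ are finite, so that $\mu\circ F$ is real-valued.

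The key step is to decompose disjointly: $A = (A \intersection B) \union (A \setminus B)$ and $B = (A\intersection B) \union (B\setminus A)$. By additivity,
\begin{align*}
  \mu(A) - \mu(B) = \mu(A\setminus B) - \mu(B \setminus A).
\end{align*}
Subtracting $\mu(A\intersection B)$ is legitimate because that quantity is finite by the finiteness assumption above. It follows that $|\mu(A) - \mu(B)| \leqslant \max\{\mu(A\setminus B), \mu(B\setminus A)\} \leqslant \mu(A\setminus B) + \mu(B\setminus A)$. The right-hand side equals $\mu(A \symDiff B) = \|A\symDiff B\|_\mu$, because $A\setminus B$ and $B\setminus A$ are disjoint.

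Combining this with the $\mu$-Lipschitz bound gives
\begin{align*}
  |\mu(F(\genericPointQ)) - \mu(F(\genericPointQ'))| \leqslant \|F(\genericPointQ)\symDiff F(\genericPointQ')\|_\mu \leqslant L_\genericPoint\|\genericPointQ - \genericPointQ'\|.
\end{align*}
This is exactly local Lipschitzness of $\mu\circ F$ at $\genericPoint$, with the same constants $L_\genericPoint$ and $\epsilon$.

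The only delicate point is avoiding $\infty - \infty$ in the subtraction step, which is why finiteness of $\mu(F(\cdot))$ is needed. I would justify it near $\genericPoint$ as follows. The relevant values are subsets of a bounded polygonal region (or are assumed to have finite measure), and the symmetric difference bound shows that if one value $F(\genericPointQ)$ has finite measure, then every other value on the ball does too.
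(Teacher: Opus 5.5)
Your proof is correct and follows the paper's argument: both rewrite $\mu(F(y))-\mu(F(y'))$ as $\mu(F(y)\setminus F(y'))-\mu(F(y')\setminus F(y))$, bound its absolute value by the measure of the symmetric difference, and then apply the $\mu$-Lipschitz bound with the same constants $L_x,\epsilon$. Your explicit finiteness caveat, which rules out $\infty-\infty$, is a point the paper leaves implicit; it is harmless there because the sets lie in a bounded free space.
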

\begin{proof}
  For a given $\genericPoint \in \genericPointSet$, for all
  $\genericPointQ, \genericPointQ' \in \oBall{\epsilon}{\genericPoint}
  \intersection \genericPointSet$, we have 
  \begin{multline*}
    | \|F(\genericPointQ)\|_\mu - \|F(\genericPointQ')\|_\mu | =
    \left| \| F(\genericPointQ) \setminus 
    F(\genericPointQ') \|_\mu - \| F(\genericPointQ') \setminus
    F(\genericPointQ) \|_\mu \right| 
    \\
    \leqslant \| F(\genericPointQ) \setminus F(\genericPointQ') \|_\mu
    + \| F(\genericPointQ') \setminus F(\genericPointQ) \|_\mu 
    = \| S(\genericPointQ) \symDiff S(\genericPointQ') \|_\mu
    \leqslant L_{\genericPoint} \left\| \genericPointQ -
      \genericPointQ' \right\|, 
  \end{multline*}
  where $L_{\genericPoint}, \epsilon > 0$ are constants from
  $\mu$-locally Lipschitzness of $F$ at $x \in \genericPointSet$.
\end{proof}

The intersection of two set-valued maps that are $\mu$-locally
Lipschitz is also $\mu$-locally Lipschitz.

\begin{lemma}[Intersection of \texorpdfstring{$\mu$}{μ}-locally
  Lipschitz set-valued maps]
  \label{lem:intersection}
  Given a measure space $(\anySet, \mathcal{A}, \mu)$, let $S_1, S_2 :
  \genericPointSet \rightrightarrows \anySet$ be $\mu$-locally
  Lipschitz at $\genericPoint \in \genericPointSet$, with Lipschitz
  constants $L_1$ and $L_2$, resp. Then, $S_1 \intersection S_2$ is
  $\mu$-locally Lipschitz at $\genericPoint \in \genericPointSet$,
  with Lipschitz constant $L_1 + L_2$.
\end{lemma}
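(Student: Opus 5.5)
The argument is a pointwise set inclusion followed by monotonicity and subadditivity of $\mu$. Take the common neighborhood first. Let $\epsilon_1, \epsilon_2 > 0$ be the radii from the $\mu$-local Lipschitzness of $S_1$ and $S_2$ at $\genericPoint$, and set $\epsilon := \min\{\epsilon_1, \epsilon_2\}$. Then both Lipschitz estimates hold simultaneously for all $\genericPointQ, \genericPointQ' \in \oBall{\epsilon}{\genericPoint} \intersection \genericPointSet$.

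The key step is the elementary inclusion
\begin{align*}
  \bigl(S_1(\genericPointQ) \intersection S_2(\genericPointQ)\bigr) \symDiff \bigl(S_1(\genericPointQ') \intersection S_2(\genericPointQ')\bigr) \subseteq \bigl(S_1(\genericPointQ) \symDiff S_1(\genericPointQ')\bigr) \union \bigl(S_2(\genericPointQ) \symDiff S_2(\genericPointQ')\bigr).
\end{align*}
To verify it, take a point $z$ in the left-hand side. Without loss of generality, $z$ lies in $S_1(\genericPointQ) \intersection S_2(\genericPointQ)$ but not in $S_1(\genericPointQ') \intersection S_2(\genericPointQ')$. Then $z \notin S_1(\genericPointQ')$ or $z \notin S_2(\genericPointQ')$. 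In the first case $z \in S_1(\genericPointQ) \setminus S_1(\genericPointQ')$; in the second $z \in S_2(\genericPointQ) \setminus S_2(\genericPointQ')$. The symmetric case, with $\genericPointQ$ and $\genericPointQ'$ swapped, is identical.

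Next apply $\mu$. The intersections and symmetric differences of measurable sets are measurable, so all quantities are well-defined. By monotonicity and subadditivity of $\mu$,
\begin{align*}
  \| (S_1 \intersection S_2)(\genericPointQ) \symDiff (S_1 \intersection S_2)(\genericPointQ') \|_\mu \leqslant \| S_1(\genericPointQ) \symDiff S_1(\genericPointQ') \|_\mu + \| S_2(\genericPointQ) \symDiff S_2(\genericPointQ') \|_\mu.
\end{align*}
The two Lipschitz bounds then give a right-hand side of at most $(L_1 + L_2)\|\genericPointQ - \genericPointQ'\|$, which is the claim.

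No step here is genuinely hard. The only points needing care are that the set inclusion covers both halves of the symmetric difference, and that the neighborhood is chosen as the smaller of the two radii.
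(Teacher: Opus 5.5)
Your proof is correct and follows the same route as the paper, which rests entirely on the inclusion $(A_1 \cap C_1) \symDiff (A_2 \cap C_2) \subseteq (A_1 \symDiff A_2) \cup (C_1 \symDiff C_2)$ followed by monotonicity and subadditivity of $\mu$. You also spell out two details the paper leaves implicit: the element-wise check of that inclusion, and taking the common radius $\min\{\epsilon_1, \epsilon_2\}$.
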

\begin{proof}
  The result follows directly from the fact that, for arbitrary sets
  $A_1, A_2, C_1, \allowbreak C_2$, one has $(A_1 \intersection C_1)
  \symDiff (A_2 \intersection C_2) \subseteq (A_1 \symDiff A_2) \union
  (C_1 \symDiff C_2)$.
\end{proof}

The \emph{directional derivative} of a function $f : \genericPointSet
\rightarrow \real^m$ at $\genericPoint \in \genericPointSet$ in the
direction $\nu \in \real^n$ is defined as 
$\rightDirectionalDerivative{f}(\genericPoint ; \nu) = \lim_{h
\rightarrow 0^+} (f(\genericPoint + h \nu) - f(\genericPoint))/{h}$
whenever the limit exists. The \emph{generalized gradient} is given by
\begin{align*}
  \partial f(\genericPoint) = \mathrm{co} \left\{ \lim_{i \rightarrow
  \infty} \nabla f(\genericPoint_i) \mid \genericPoint_i \rightarrow
  \genericPoint, \genericPoint_i \not \in E \cup E_f \right\}. 
\end{align*}
Here, $E$ is any zero measure set in the Lebesgue sense, $E_f$ is the
set of points where $f$ fails to be differentiable, and $\nabla f$ is
the gradient of~$f$. For $f$ locally Lipschitz, Rademacher's Theorem
implies that $E_f$ has Lebesgue measure zero. Furthermore, $\partial
f(x)$ is a non-empty, convex, and compact subset of $\real^{n \times
n}$.

We extend the notion of directional derivatives to compositions of
measures with set-valued maps. Let $(\anySet, \mathcal{A}, \mu)$ be a
measure space.  
The \emph{$\mu$-directional derivative} of a set-valued map $F :
\genericPointSet \rightrightarrows \anySet$ at $\genericPoint \in
\genericPointSet$ in the direction $\direction$ is given by
\begin{align}
  \label{eqn:mu_directional_derivative}
  \rightMuDirectionalDerivative{F}(\genericPoint; \direction) =
  \lim_{h \to 0^+} \frac{\|F(\genericPoint
  + h \direction) \Delta
  F(\genericPoint)\|_\mu}{h}, 
\end{align}
whenever the limit exists. Note that $|
\rightDirectionalDerivative{(\mu \circ F)}(\genericPoint; \direction)
| \leqslant \rightMuDirectionalDerivative{F}(\genericPoint;
\direction)$.

Finally, we state a useful result  characterizing local minima (resp.
maxima) via the directional derivative.

\begin{lemma}[Local extrema]
  \label{lem:local_extrema}
  Given $f: \real^n \rightarrow \real$ locally Lipschitz,
  $\genericPoint \in \real^n$ is a local minimum (resp. maximum) over
  $\real^n$ iff $\rightDirectionalDerivative{f}(\genericPoint;
  \hat{\nu}) \geqslant 0$ (resp.
  $\rightDirectionalDerivative{f}(\genericPoint; \hat{\nu}) \leqslant
  0$), for all $ \nu \in \real^n$ wherever defined, and is an isolated
  local minimum (resp. maximum) iff
  $\rightDirectionalDerivative{f}(\genericPoint; \hat{\nu}) > 0$
  (resp. $\rightDirectionalDerivative{f}(\genericPoint; \hat{\nu}) <
  0$), for all $ \nu \in \real^n$ wherever defined. \oprocend
\end{lemma}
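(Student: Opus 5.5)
The plan is to prove the four implications separately, using only the definition of the one-sided directional derivative and the local Lipschitz constant $L_\genericPoint$ of $f$ on $\oBall{\epsilon}{\genericPoint}$. The key preliminary fact is a uniform first-order expansion. Suppose $\rightDirectionalDerivative{f}(\genericPoint;\hat\nu)$ exists for every unit $\hat\nu$. Then $\hat\nu \mapsto \rightDirectionalDerivative{f}(\genericPoint;\hat\nu)$ is $L_\genericPoint$-Lipschitz on the unit sphere, since $|f(\genericPoint+h\hat\nu)-f(\genericPoint+h\hat\nu')|\leqslant L_\genericPoint h\|\hat\nu-\hat\nu'\|$. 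A finite $\delta$-net of the sphere then gives, for every $\eta>0$, some $h_0>0$ such that
\begin{align*}
  \bigl| f(\genericPoint+h\hat\nu)-f(\genericPoint)-h\,\rightDirectionalDerivative{f}(\genericPoint;\hat\nu)\bigr| \leqslant \eta h, \quad \foralln h\in(0,h_0),\ \foralln \|\hat\nu\|=1 .
\end{align*}
To see this, fix $\delta=\eta/(3L_\genericPoint)$, choose $h_0$ so the expansion holds with error $\eta h/3$ at the finitely many net points, and pass to an arbitrary $\hat\nu$ with two Lipschitz estimates. This is the Hadamard-type uniformity that local Lipschitzness provides.

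\emph{Necessity.} If $\genericPoint$ is a local minimum, then $f(\genericPoint+h\hat\nu)-f(\genericPoint)\geqslant 0$ for small $h>0$. Dividing by $h$ and letting $h\to0^+$ gives $\rightDirectionalDerivative{f}(\genericPoint;\hat\nu)\geqslant0$ wherever the limit exists. For the isolated case I will argue by contradiction. If $\rightDirectionalDerivative{f}(\genericPoint;\hat\nu)=0$ for some $\hat\nu$ while $\genericPoint$ is an isolated strict minimizer, I will try to use the expansion along that ray to show the minimum is not isolated. I expect this to need more than the bare definition, so I flag it below.

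\emph{Sufficiency, strict case.} If $\rightDirectionalDerivative{f}(\genericPoint;\hat\nu)>0$ for all unit $\hat\nu$, continuity on the compact sphere yields $c:=\min_{\hat\nu}\rightDirectionalDerivative{f}(\genericPoint;\hat\nu)>0$. Taking $\eta=c/2$ in the uniform expansion gives $f(\genericPoint+h\hat\nu)\geqslant f(\genericPoint)+ch/2>f(\genericPoint)$ for all $0<h<h_0$. Hence $\genericPoint$ is a strict, and so isolated, local minimum on $\oBall{h_0}{\genericPoint}$. The maximum statements follow by applying everything to $-f$.

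\emph{Main obstacle.} The remaining directions are the non-strict sufficiency ($\rightDirectionalDerivative{f}\geqslant0$ for all directions $\Rightarrow$ local minimum) and the converse for isolated minima. In both, first-order information is degenerate on the set $Z=\{\hat\nu : \rightDirectionalDerivative{f}(\genericPoint;\hat\nu)=0\}$. A generic locally Lipschitz function such as $f(x)=x^3$ at $0$ shows that first-order data alone cannot settle these cases. My approach is to split the sphere into a neighborhood $N$ of $Z$ and its complement. On the complement the strict argument above applies verbatim. On $N$, I plan to show that $f(\genericPoint+h\hat\nu)-f(\genericPoint)$ has the sign of its first-order term for all small $h$, and I will try to obtain this from the piecewise structure of $f$ near $\genericPoint$: finitely many pieces, on each of which $h\mapsto f(\genericPoint+h\hat\nu)$ is monotone for small $h$. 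This ray-wise sign control is exactly what the lemma's ``wherever defined'' proviso must supply, and it is the step I expect to be hardest. If it cannot be derived in this generality, it has to be verified directly for the visibility metrics where the lemma is applied.
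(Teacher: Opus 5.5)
The paper states this lemma without proof and never invokes it later, so your proposal has to stand on its own. It succeeds for only two of the four implications. Your necessity argument (local minimizer $\Rightarrow$ $Df(x;\hat\nu)\geqslant 0$ wherever the limit exists) is correct. Your sharp-minimum argument via Hadamard uniformity is also correct, \emph{provided} $Df(x;\hat\nu)$ exists for every unit $\hat\nu$. You add that assumption silently, and the proviso ``wherever defined'' does not supply it; without it the implication is false. Take $f(y)=\|y\|\sin(\log\|y\|)$ with $f(0)=0$. It is Lipschitz with constant $\sqrt{2}$, and $f(h\hat\nu)/h=\sin(\log h)$ has no limit, so no directional derivative exists at $0$. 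Both the ``$>0$'' and ``$<0$'' hypotheses then hold vacuously, yet $f$ takes both signs arbitrarily close to $0$. Likewise, ``strict, and so isolated'' is valid only if \emph{isolated} is read as \emph{strict}. Let $f(t)=|t|+\phi(|t|)$, where $\phi\geqslant 0$ is the sum of tents of slopes $\pm 2$ and half-width $4^{-k}$ centered at $2^{-k}$ ($k\geqslant 2$). This $f$ is Lipschitz with $Df(0;\pm1)=1$, but it has local minimizers $\pm(2^{-k}+4^{-k})$ accumulating at $0$.

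The two remaining implications are where the real gap lies, and it cannot be closed because they are false. The ``main obstacle'' you describe is a counterexample, not a technical step.
\begin{itemize}
\item $f(y)=y_1^3$ has $Df(0;\hat\nu)=0$ for every $\hat\nu$, so it satisfies both the ``$\geqslant 0$'' and ``$\leqslant 0$'' conditions. Yet $0$ is neither a local minimizer nor a local maximizer.
\item $f(y)=\|y\|^2$ has an isolated strict minimizer at $0$ with $Df(0;\hat\nu)=0$ for all $\hat\nu$. Your planned contradiction along a ray with zero directional derivative therefore has nothing to contradict.
\end{itemize}
Your proposed rescue does not work either. ``Wherever defined'' only restricts which directions are tested; it carries no sign information. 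The ray-wise structure you want (finitely many pieces, monotone along each ray) is already enjoyed by $y_1^3$ and $\|y\|^2$, each a single smooth piece monotone on every ray. Once the first-order term vanishes, the sign of $f(x+h\hat\nu)-f(x)$ is decided by higher-order behavior that no first-order hypothesis can see.

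What does make all four implications true is local conicity: $f(x+d)=f(x)+Df(x;d)$ for all sufficiently small $d$ (e.g.\ $f$ piecewise affine near $x$). Under it each implication is immediate. The visibility metrics do not provide this: they are $C^1$ but in general not affine on the components of $\partition$ (\Cref{cor:directional_derivative_component_area-metric}). The correct outcome of your analysis is to prove the true part, namely necessity and sharp sufficiency under directional differentiability at $x$. You should then report the other two implications as false, with these counterexamples.
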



\subsection{Visibility in Polygonal Environments}
\label{sub:computational_geometry}

Here, we formalize the notion of visibility in polygonal environments
with obstacles and introduce the concept of projected rays.

\subsubsection*{Geometry of an environment with obstacles}

Following~\cite{HC-JE-CX:15-siam}, we define a \emph{rigidly weakly
simple polygon} as a polygon $\polygon$ which can be made simple by an
arbitrarily small perturbation of its vertices.
Given such polygon, let $\vertices{\polygon} := \{ p^1, \dots
p^{n_\polygon} \}$ denote its $n_\polygon$ vertices. We consider
\emph{positively-oriented} and \emph{negatively-oriented} polygons,
depending on whether the vertices are listed in counter-clockwise or
clockwise order.  Its $n_\polygon$ edges are $\edges{\polygon} := \{
\lineSegment{p^1}{p^2}, \lineSegment{p^2}{p^3}, \dots,
\lineSegment{p^{n_\polygon}}{p^1} \}$ and we let
$\interior{\polygon}$, $\boundary{\polygon}$, and
$\exterior{\polygon}$ denote the interior, boundary, and exterior of
$\polygon$, respectively, where the exterior is taken wrt~$\real^2$.

A polygonal \emph{environment} for an \emph{agent} is a
negatively-oriented rigidly weakly simple polygon $\environment
\subset \real^2$ such that the agent is not allowed in
$\exterior{\environment}$.
Similarly, a polygonal \emph{obstacle} is a positively-oriented
rigidly weakly simple polygon $\obstacle \subset \environment$ such
that the agent is not allowed in $\interior{\obstacle}$. Let
$\obstacles = \{ \obstacle^1, \dots \obstacle^{n_o} \}$ denote the set
of $n_o$ \emph{obstacles} in the environment such that
$\interior{\obstacle^i} \cap \interior{\obstacle^j} = \varnothing$,
for all $\obstacle^i, \obstacle^j \in \obstacles, i \neq j$. We define
the \emph{free space} $\freeSpace := \environment \setminus
\interior{\obstacles}$, where $\interior{\obstacles} :=
\cup_{i=1}^{n_o} \interior{\obstacle_i}$. We denote the vertices and
edges of the free space by $\vertices{\freeSpace} =
\vertices{\environment} \union \bigunion_i \vertices{\obstacle^i}$ and
$\edges{\freeSpace} = \edges{\environment} \union \bigunion_i
\edges{\obstacle^i}$, respectively. With this formulation, the
\emph{free space} characterizes the set of all the positions the agent
is allowed in.

A point $\genericPointQ \in \freeSpace$ is \emph{visible} from a point
$\genericPoint \in \freeSpace$ if the line segment
$\lineSegment{\genericPoint}{\genericPointQ}$ is contained in
$\freeSpace$. The \emph{visibility region} is the set-valued mapping
$S: \freeSpace \rightrightarrows \freeSpace$ which maps a point
$\genericPoint \in \freeSpace$ to the set of all points in
$\freeSpace$ \emph{visible} from $\genericPoint$, that is,
\begin{align*}
  \visibilityRegion{\genericPoint} = \{ \genericPointQ \in \freeSpace
  \mid \lineSegment{\genericPoint}{\genericPointQ} \subset \freeSpace
  \}. 
\end{align*}
When the environment and obstacles are polygonal, the visibility
region $\visibilityRegion{\genericPoint}$ is a polygon for every
$\genericPoint \in \freeSpace$, which we term \emph{visibility
polygon} of $\genericPoint$. 

For a vertex $\vertex_i$, the counter-clockwise angle from
$\lineSegment{\vertex_i}{\vertex_{i-1}}$ to
$\lineSegment{\vertex_i}{\vertex_{i+1}}$ is the angle made by
$\vertex_i$ over the free space. This is consistent with the ordering
on the environment and the obstacle polygons. If this angle is greater
than $\pi$, then $\vertex_i \in \vertices{\freeSpace}$ is a
\emph{reflex vertex}. For a polygon $\polygon$,
$\reflexVertices{\polygon}$ denotes its reflex vertices. Then, the set
of all reflex vertices of $\freeSpace$ is denoted by
$\reflexVertices{\freeSpace} := \reflexVertices{\environment} \union
\bigunion_i \reflexVertices{\obstacle^i}$.
\Cref{fig:visibility_notions} illustrates these notions.

\begin{figure}[htb]
  \centering
  \subfigure[]{
    \label{fig:visibility_notions}
    \resizebox{0.46\linewidth}{!}{
      \centering
      \pgfdeclarelayer{background}
      \pgfdeclarelayer{foreground}
      \pgfsetlayers{background,main,foreground}
      \begin{tikzpicture}[scale=0.55]
        \tikzset{
          style 0/.style={inner sep=0pt, minimum size=3pt},
          style A/.style={shape=circle, fill=PineGreen, style 0},
          style B/.style={shape=circle, fill=WildStrawberry, style 0},
          style C/.style={shape=circle, fill=Blue!50!RoyalBlue, style 0},
          font={\fontsize{8pt}{9.6}\selectfont}
        }

        \begin{pgfonlayer}{foreground}
          \node[draw, style A] (q1) at (-1, -1) {};
          \node[draw, style A] (q2) at (11, -1) {};
          \node[draw, style A] (q3) at (11,  1) {};
          \node[draw, style A] (q4) at ( 9,  1) {};
          \node[draw, style A] (q5) at ( 9,  5) {};
          \node[draw, style A] (q6) at ( 1,  5) {};
          \node[draw, style A] (q7) at ( 1,  1) {};
          \node[draw, style A] (q8) at (-1,  1) {};

          \node[text=PineGreen] at ([shift={(225:0.7)}]q1) {$q^1$};
          \node[text=PineGreen] at ([shift={(315:0.5)}]q2) {$q^8$};
          \node[text=PineGreen] at ([shift={( 45:0.7)}]q3) {$q^7$};
          \node[text=PineGreen] at ([shift={( 45:0.7)}]q4) {$q^6$};
          \node[text=PineGreen] at ([shift={( 45:0.7)}]q5) {$q^5$};
          \node[text=PineGreen] at ([shift={(135:0.5)}]q6) {$q^4$};
          \node[text=PineGreen] at ([shift={(135:0.7)}]q7) {$q^3$};
          \node[text=PineGreen] at ([shift={(135:0.5)}]q8) {$q^2$};
        \end{pgfonlayer}

        \begin{pgfonlayer}{main}
          \tikzstyle{every path}=[draw, line width=1.0pt, color=black]
          \draw[fill = gray!20] (q1.center) -- (q2.center) -- (q3.center) -- (q4.center) -- (q5.center) -- (q6.center) -- (q7.center) -- (q8.center) -- cycle;
          \node at (8, 2) {$\freeSpace$};
        \end{pgfonlayer}

        \begin{pgfonlayer}{foreground}
          \node[draw, style B] (o1) at (3, 1) {};
          \node[draw, style B] (o2) at (7, 1) {};
          \node[draw, style B] (o3) at (7, 3) {};
          \node[draw, style B] (o4) at (3, 3) {};

          \node[text=WildStrawberry] at ([shift={( 45:0.6)}]o1) {$o^1$};
          \node[text=WildStrawberry] at ([shift={(135:0.6)}]o2) {$o^2$};
          \node[text=WildStrawberry] at ([shift={(225:0.6)}]o3) {$o^3$};
          \node[text=WildStrawberry] at ([shift={(315:0.6)}]o4) {$o^4$};
        \end{pgfonlayer}

        \begin{pgfonlayer}{main}
          \tikzstyle{every path}=[draw, line width=1.0pt, color=black]
          \draw[fill = white] (o1.center) -- (o2.center) -- (o3.center) -- (o4.center) -- cycle;
          \node at (5, 2) {$\obstacle$};
        \end{pgfonlayer}

        \begin{pgfonlayer}{foreground}
          \node[draw, style C] (p) at (2, 2) {};
          \node[text = Blue!50!RoyalBlue] at ([shift={(135:0.5)}]p) {$\genericPoint$};
        \end{pgfonlayer}

        \begin{pgfonlayer}{main}
          \tikzstyle{every path}=[draw, line width=0.5pt, color=RoyalBlue]
          \draw[fill = RoyalBlue!20] (1, 1) -- (-1,-1) -- (5, -1) -- (3, 1) -- (3, 3) -- (5, 5) -- (1, 5) -- cycle;
          \node[text = Blue!50!RoyalBlue] at (2, 0) {$\visibilityPolygon{\genericPoint}$};
        \end{pgfonlayer}

        \begin{pgfonlayer}{main}
          \draw[<-, Violet, line width=0.75pt] (7, 3)+(-0.5, 0) arc[start angle=180, end angle=-90, radius=0.5];
          \draw[<-, Violet, line width=0.75pt] (7, 1)+(0, 0.5) arc[start angle=90, end angle=-180, radius=0.5];
          \draw[<-, Violet, line width=0.75pt] (9, 1)+(0.5, 0) arc[start angle=360, end angle=90, radius=0.5];
          \draw[<-, Violet, line width=0.75pt] (3, 1)+(0.5, 0) arc[start angle=0, end angle=-270, radius=0.5];
          \draw[<-, Violet, line width=0.75pt] (3, 3)+(0, -0.5) arc[start angle=270, end angle=0, radius=0.5];
          \draw[<-, Violet, line width=0.75pt] (1, 1)+(0, 0.5) arc[start angle=90, end angle=-180, radius=0.5];

          \draw[<-, RedOrange, line width=0.75pt] (9, 5)+(0, -0.5) arc[start angle=270, end angle=180, radius=0.5];
          \draw[<-, RedOrange, line width=0.75pt] (11, 1)+(0, -0.5) arc[start angle=270, end angle=180, radius=0.5];
          \draw[<-, RedOrange, line width=0.75pt] (11, -1)+(-0.5, 0) arc[start angle=180, end angle=90, radius=0.5];
          \draw[<-, RedOrange, line width=0.75pt] (1, 5)+(0.5, 0) arc[start angle=0, end angle=-90, radius=0.5];
          \draw[<-, RedOrange, line width=0.75pt] (-1, 1)+(0.5, 0) arc[start angle=0, end angle=-90, radius=0.5];
          \draw[<-, RedOrange, line width=0.75pt] (-1, -1)+(0, 0.5) arc[start angle=90, end angle=0, radius=0.5];
        \end{pgfonlayer}

        \begin{pgfonlayer}{main}
          \tikzstyle{every path}=[draw, line width=1.0pt, color=black]
          \draw (q1.center) -- (q2.center) -- (q3.center) -- (q4.center) -- (q5.center) -- (q6.center) -- (q7.center) -- (q8.center) -- cycle;
          \draw (o1.center) -- (o2.center) -- (o3.center) -- (o4.center) -- cycle;
        \end{pgfonlayer}

      \end{tikzpicture}
    } 
  }
  %
  \subfigure[]{
    \label{fig:projected_rays}
    \resizebox{0.46\linewidth}{!}{
      \centering
      \pgfdeclarelayer{background}
      \pgfdeclarelayer{foreground}
      \pgfsetlayers{background,main,foreground}
      \begin{tikzpicture}[scale=0.55]
        \tikzset{
          style 0/.style={inner sep=0pt, minimum size=3pt},
          style A/.style={shape=circle, fill=PineGreen, style 0},
          style B/.style={shape=circle, fill=WildStrawberry, style 0},
          style C/.style={shape=circle, fill=Blue!50!RoyalBlue, style 0},
          font={\fontsize{8pt}{9.6}\selectfont}
        }
        
        \begin{pgfonlayer}{foreground}
          \node[draw, style A] (q1) at (-1, -1) {};
          \node[draw, style A] (q2) at (11, -1) {};
          \node[draw, style A] (q3) at (11,  1) {};
          \node[draw, style A] (q4) at ( 9,  1) {};
          \node[draw, style A] (q5) at ( 9,  5) {};
          \node[draw, style A] (q6) at ( 1,  5) {};
          \node[draw, style A] (q7) at ( 1,  1) {};
          \node[draw, style A] (q8) at (-1,  1) {};
          
          \node[text=PineGreen] at ([shift={(225:0.7)}]q1) {$q^1$};
          \node[text=PineGreen] at ([shift={(315:0.5)}]q2) {$q^8$};
          \node[text=PineGreen] at ([shift={( 45:0.7)}]q3) {$q^7$};
          \node[text=PineGreen] at ([shift={( 45:0.7)}]q4) {$q^6$};
          \node[text=PineGreen] at ([shift={( 45:0.7)}]q5) {$q^5$};
          \node[text=PineGreen] at ([shift={(330:0.9)}]q6) {$q^4$};
          \node[text=PineGreen] at ([shift={(135:0.7)}]q7) {$q^3$};
          \node[text=PineGreen] at ([shift={(135:0.5)}]q8) {$q^2$};
        \end{pgfonlayer}
        
        \begin{pgfonlayer}{background}
          \tikzstyle{every path}=[draw, line width=1.0pt, color=black]
          \draw[fill = gray!20] (q1.center) -- (q2.center) -- (q3.center) -- (q4.center) -- (q5.center) -- (q6.center) -- (q7.center) -- (q8.center) -- cycle;
          \node at (8, 2) {$\freeSpace$};
        \end{pgfonlayer}
        
        \begin{pgfonlayer}{foreground}
          \node[draw, style B] (o1) at (3, 1) {};
          \node[draw, style B] (o2) at (7, 1) {};
          \node[draw, style B] (o3) at (7, 3) {};
          \node[draw, style B] (o4) at (3, 3) {};
          
          \node[text=WildStrawberry] at ([shift={( 45:0.6)}]o1) {$o^1$};
          \node[text=WildStrawberry] at ([shift={(135:0.6)}]o2) {$o^2$};
          \node[text=WildStrawberry] at ([shift={(225:0.6)}]o3) {$o^3$};
          \node[text=WildStrawberry] at ([shift={(315:0.7)}]o4) {$o^4$};
        \end{pgfonlayer}
        
        \begin{pgfonlayer}{main}
          \tikzstyle{every path}=[draw, line width=1.0pt, color=black]
          \draw[fill = white] (o1.center) -- (o2.center) -- (o3.center) -- (o4.center) -- cycle;
          \node at (5, 2) {$\obstacle$};
        \end{pgfonlayer}
        
        
        \begin{pgfonlayer}{foreground}
          \node[draw, style C] (p) at (2, 2) {};
          \node[text = Blue!50!RoyalBlue] at ([shift={(135:0.5)}]p)
          {$\genericPoint$}; 
        \end{pgfonlayer}
        
        \begin{pgfonlayer}{main}
          \draw[line width = 0.5pt, color = Violet] (p.center) -- (q6.center);
          \draw[line width = 1.0pt, color = Violet] (q6.center) -- (0.5, 6.5);
          \node[text = Violet] at ([shift={(70:1.5)}]{q6}) {$\gamma(q^4, \genericPoint)$};
          \draw[|-, line width = 0.5pt, color = Violet] ($(q6.center) - (3/10, 1/10)$) -- ($(0.5, 6.5) - (3/10, 1/10)$);
          \node[text = Violet] at ([shift={(165:1.9)}]{q6}) {$r(q^4, \genericPoint)\!=\!\infty$};
        \end{pgfonlayer}
        
        \begin{pgfonlayer}{main}
          \draw[line width = 0.5pt, color = RawSienna] (p.center) -- (o4.center);
          \draw[line width = 1.0pt, color = RawSienna] (o4.center) -- (5.0, 5.0);
          \node[text = RawSienna] at ([shift={(30:2.5)}]{o4}) {$\gamma(o^4, \genericPoint)$};
          \draw[|-|, line width = 0.5pt, color = RawSienna] ($(o4.center) + (-1/5, 1/5)$) -- ($(5, 5) + (-1/5, 1/5)$);
          \node[text = RawSienna] at ([shift={(90:0.25)}]{$(o4)!1.2!(5,5)$}) {$r(o^4, \genericPoint)$};
          
        \end{pgfonlayer}
        
        \begin{pgfonlayer}{main}
          \draw[line width = 0.5pt, color = Red] (p.center) -- (o1.center);
          \draw[line width = 1.0pt, color = Red] (o1.center) -- (5, -1);
          \node[text = Red] at ([shift={(342.5:1.75)}]{o1}) {$\gamma(o^1, \genericPoint)$};
          \draw[line width = 1.0pt, color = Orange!80!Red] (o1.center) -- (4.5, -1);
          \node[text = Orange!80!Red] at ([shift={(332.5:2.75)}]{o1}) {$\gamma_\theta(o^1, \genericPoint)$};
          \draw[->, line width = 0.5pt, color = Orange!80!Red] ($(o1.center) + (1.0, -1.0)$) arc[start angle=-45, end angle=-57, radius=1.0];
          \node[text = Orange!80!Red] at ([shift={(-65:1.5)}]{o1}) {$\theta$};
        \end{pgfonlayer}
        
        \begin{pgfonlayer}{main}
          \draw[line width = 0.5pt, color = RoyalBlue] (p.center) -- (q7.center);
          \draw[line width = 1.0pt, color = RoyalBlue] (q7.center) -- (q1.center);
          \node[text = RoyalBlue] at ([shift={(270:1.25)}]{q7}) {$\gamma(q^3, \genericPoint)$};
          \draw[line width = 1.0pt, color = RawSienna] (q7.center) -- ($(q1.center)+(0, 0.5)$);
          \draw[line width = 1.0pt, color = Violet] (q7.center) -- ($(q1.center)+(0, 1.0)$);
        \end{pgfonlayer}
        
        \begin{pgfonlayer}{background}
          \draw[fill = Red!20] (q7.center) -- ($(q1.center)+(0, 0.5)$) -- ($(q1.center)+(0, 1.0)$) -- cycle;
          \node[text = Red] (l1) at ([shift={(345:1.75)}]{q1}) {$\gamma_{[\phi_1, \phi_2]}(q^3, \genericPoint)$};
          \draw[->, line width = 0.5pt, color = Red] ($(q1.center) + (0.5, 1.0)$) to[out = -60, in = 90] ($(l1.north)+(0, -0.25)$);
        \end{pgfonlayer}
        
        \begin{pgfonlayer}{main}
          \tikzstyle{every path}=[draw, line width=1.0pt, color=black]
          \draw (q1.center) -- (q2.center) -- (q3.center) -- (q4.center) -- (q5.center) -- (q6.center) -- (q7.center) -- (q8.center) -- cycle;
          \draw (o1.center) -- (o2.center) -- (o3.center) -- (o4.center) -- cycle;
        \end{pgfonlayer}
      \end{tikzpicture}
    }
  }
  \caption{(a) For an observer at $\genericPoint$, its visibility
    region $\visibilityPolygon{\genericPoint}$ is shaded in blue.  For
    the free space $\freeSpace$, we have reflex vertices
    $\reflexVertices{\freeSpace} = \{ q^3, q^6, o^1, o^2, o^3, o^4\}$.
    The reflex angles are shown in violet and the non-reflex angles
    are shown in orange. (b) Visualization of notions on projected
    rays and associated distances. The ray cast from $\genericPoint$
    are shown in thin lines, while the projected rays are shown in
    bold ones.}
\end{figure}

\subsubsection*{Projected rays and associated distances}

Given an observer at $\genericPoint$ and a vertex $\vertex \in
\vertices{\freeSpace}$ with $\genericPoint \neq \vertex$, we are
interested in the projection of $\genericPoint$ across the
vertex~$\vertex$.  Analogues of this mathematical structure have been
used for analysis in context of visibility
\cite{LG-RM-PR:92,AG-JC-FB:06-sicon}.  For this purpose, consider the
projected ray $\setRay{\vertex}{\genericPoint}$ that originates at
$\vertex$ and points away from $\genericPoint$ such that it either
ends at a point on the boundary of $\freeSpace$ or extends to
infinity. Formally,
\begin{align*}
  \setRay{\vertex}{\genericPoint}
  & = \{\genericPointQ \in \real^2 \mid \genericPointQ = \vertex +
  \lambda (\vertex - \genericPoint), \foralln 0 \leqslant \lambda
  \leqslant \lambda^*(\vertex, \genericPoint)\}, 
\end{align*}
where $\lambda^*(\vertex, \genericPoint) = \min_{\rho \, \in \,
\realpositive} \left\{\vertex + \rho (\vertex - \genericPoint) \in
\boundary{\freeSpace} \right\}$ if it exists, and $+\infty$ otherwise.
We let $\radiusRay{\vertex}{\genericPoint} := \lambda^*(\vertex,
\genericPoint) \|\vertex - \genericPoint\| \geqslant 0$ denote the
length of the ray.
Next, we define a rotated segment
$\setRotatedRay{\vertex}{\genericPoint}{\theta}$ as
$\setRay{\vertex}{\genericPoint}$ rotated clock-wise around $\vertex$
by $\theta$,
that is,
\begin{align*}
  \setRotatedRay{\vertex}{\genericPoint}{\theta} = \{\genericPointQ
  \in \real^2 \mid \genericPointQ = \vertex + 
  \lambda R(\theta) (\vertex - \genericPoint), \foralln 0 
  \leqslant \lambda \leqslant \lambda^*_\theta(\vertex,
  \genericPoint) \},
\end{align*}
where $R(\theta) \in \real^{2\times 2}$ is the standard rotation
matrix of angle $\theta$ and
\begin{align*}
  \lambda^*_\theta(\vertex, \genericPoint) = \min_{\rho \, \in \,
  \realpositive} \left\{\vertex + \rho R(\theta) (\vertex -
  \genericPoint) \in \boundary{\freeSpace} \right\},
\end{align*}
if it exists, and $+\infty$ otherwise. The corresponding length is
denoted by $\radiusRotatedRay{\vertex}{\genericPoint}{\theta}$. We
further extend this to a closed interval of angles
\begin{align*}
  \setRotatedRaySet{\vertex}{\genericPoint}{\theta_1}{\theta_2}
  & = {\textstyle \bigcup\limits_{\theta \in [\theta_1, \theta_2]}}
  \setRotatedRay{\vertex}{\genericPoint}{\theta}, 
\end{align*}
which corresponds to a bundle of rays cast for angles in $[\theta_1,
\theta_2]$. These notions are illustrated in
\Cref{fig:projected_rays}.


\section{Problem Formulation}\label{sec:problem_formulation}

Our objective is to identify the optimal locations for agents to hide
within obstacle-rich environments with adversaries, and to develop
methodologies to compute them. Motivated by scenarios where there is
no information about the adversary's location, we consider that the
agent's objective is to minimize the risk of line-of-sight detection
from any position in the environment.

Formally, consider a polygonal environment $\environment$ with a set
of polygonal obstacles $\obstacles$ and free space $\freeSpace =
\environment \setminus \interior{\obstacles}$. Suppose that the ego
agent is restricted to move in a polygonal domain $\myDomain \subseteq
\freeSpace$ and aims to minimize the chances of detection by an
adversary whose location is known to be within a polygonal region
$\advDomain \subseteq \freeSpace$. In this case, the ego agent is
visible from $\visibilityPolygon{\genericPoint} \cap \advDomain$.
Therefore, we define the \emph{visibility metric}
$\map{\visibilityMetric}{\myDomain}{\real}$ by
\begin{align}
  \label{eqn:visibility_metric}
  \visibilityMetric(\genericPoint) =
  \|\visibilityPolygon{\genericPoint} \cap \advDomain\|_\mu,
\end{align}
where $\mu$ denotes the Lebesgue measure on $\real^2$.  The visibility
metric quantifies the chances of detection of the ego agent by the
adversary.  Therefore, the ego agent's objective can be cast as the
optimization problem
\begin{equation}
  \label{eqn:optimization_min}
  \min_{\genericPoint \,\in\, \myDomain} \visibilityMetric(\genericPoint),
\end{equation}

If, instead, the ego agent's objective is to identify the most
effective surveillance positions within its domain, the ingredients
introduced above can also be brought to bear. Note that the visibility
metric also quantifies the chances of detection of the adversary by
the ego agent. Therefore, maximizing the chances of detecting the
adversary by line-of-sight corresponds to replacing $\min$ in
\cref{eqn:optimization_min} by $\max$ to yield
\begin{equation}
  \label{eqn:optimization_max}
  \max_{\genericPoint \,\in\, \myDomain} \visibilityMetric(\genericPoint). 
\end{equation}
\Cref{fig:problem_description} illustrates this discussion.

\newpage

\begin{figure}[htb]
  \centering
  \pgfdeclarelayer{background}
\pgfdeclarelayer{foreground}
\pgfsetlayers{background,main,foreground}
\begin{tikzpicture}[scale=0.55]
  \tikzset{
    style 0/.style={inner sep=0pt, minimum size=3pt},
    style A/.style={shape=circle, fill=PineGreen, style 0},
    style B/.style={shape=circle, fill=WildStrawberry, style 0},
    style C/.style={shape=circle, fill=Blue!50!RoyalBlue, style 0},
    font={\fontsize{8pt}{9.6}\selectfont}
  }

  \begin{pgfonlayer}{foreground}
    \node[draw, style A] (q1) at (-1, -1) {};
    \node[draw, style A] (q2) at (11, -1) {};
    \node[draw, style A] (q3) at (11,  1) {};
    \node[draw, style A] (q4) at ( 9,  1) {};
    \node[draw, style A] (q5) at ( 9,  5) {};
    \node[draw, style A] (q6) at ( 1,  5) {};
    \node[draw, style A] (q7) at ( 1,  1) {};
    \node[draw, style A] (q8) at (-1,  1) {};

  \end{pgfonlayer}

  \begin{pgfonlayer}{background}
    \tikzstyle{every path}=[draw, line width=1.0pt, color=black]
    \draw[fill = gray!20] (q1.center) -- (q2.center) -- (q3.center) -- (q4.center) -- (q5.center) -- (q6.center) -- (q7.center) -- (q8.center) -- cycle;
  \end{pgfonlayer}

  \begin{pgfonlayer}{foreground}
    \node[draw, style B] (o1) at (3, 1) {};
    \node[draw, style B] (o2) at (7, 1) {};
    \node[draw, style B] (o3) at (7, 3) {};
    \node[draw, style B] (o4) at (3, 3) {};

  \end{pgfonlayer}

  \begin{pgfonlayer}{background}
    \tikzstyle{every path}=[draw, line width=1.0pt, color=black]
    \draw[fill = white] (o1.center) -- (o2.center) -- (o3.center) -- (o4.center) -- cycle;
    \node at (5, 2) {$\obstacle$};
  \end{pgfonlayer}
  
  \begin{pgfonlayer}{background}
    \tikzstyle{every path}=[draw, line width=0.5pt, color=ForestGreen]
    \draw[fill = ForestGreen!20] (-1,-1) -- (11,-1) -- (11,1) -- (-1,1) -- cycle;
    \node[text = ForestGreen] at (5, 0) {$\myDomain$};
  \end{pgfonlayer}

  \begin{pgfonlayer}{background}
    \tikzstyle{every path}=[draw, line width=0.5pt, color=RedOrange]
    \draw[fill = RedOrange!20] (1,3) -- (9,3) -- (9,5) -- (1,5) -- cycle;
    \node[text = Red!50!RedOrange] at (5, 4) {$\advDomain$};
  \end{pgfonlayer}

  \begin{pgfonlayer}{main}
    \tikzstyle{every path}=[draw, line width=1.0pt, color=black]
    \draw (q1.center) -- (q2.center) -- (q3.center) -- (q4.center) -- (q5.center) -- (q6.center) -- (q7.center) -- (q8.center) -- cycle;
    \draw (o1.center) -- (o2.center) -- (o3.center) -- (o4.center) -- cycle;
  \end{pgfonlayer}

  \begin{pgfonlayer}{foreground}
    \node[draw, shape=circle, fill=Magenta, style 0] (p2) at (0,0) {};
    \node[text = Magenta] at ([shift={(135:0.85)}]{p2}) {$\genericPoint_1$};
  \end{pgfonlayer}

  \begin{pgfonlayer}{background}
    \draw[dashed, color = Magenta, fill = Magenta!20, line width=0.5pt] (p2.center)+(0.5, 0) arc[start angle=0, end angle=360, radius=0.5];
    \node[text = Magenta] at ([shift={(340:2.00)}]{p2}) {$\oBall{\epsilon}{\genericPoint_1} \intersection \myDomain$};
  \end{pgfonlayer}

  \begin{pgfonlayer}{background}
    \tikzstyle{every path}=[draw, line width=0.5pt, color=Magenta]
    \node[text = Magenta] (l3) at (-1,2.25) {$\visibilityPolygon{\genericPoint_1} \intersection \advDomain = \varnothing$};
    \draw (p2.center) -- (o4.center);
  \end{pgfonlayer}

\begin{pgfonlayer}{foreground}
  \node[draw, shape=circle, fill=Violet, style 0] (p1) at (q4.center) {};
  \node[text = Violet] at ([shift={(225:0.85)}]{q4}) {$\genericPoint_2$};
\end{pgfonlayer}

\begin{pgfonlayer}{background}
  \draw[dashed, color = Violet, fill = Violet!20, line width=0.5pt] (q4.center) -- (q4.center)+(0.5, 0) arc[start angle=0, end angle=-180, radius=0.5] -- cycle;
  \node[text = Violet] at ([shift={(340:2.00)}]{q4}) {$\oBall{\epsilon}{\genericPoint_2} \intersection \myDomain$};
\end{pgfonlayer}

\begin{pgfonlayer}{background}
  \tikzstyle{every path}=[draw, line width=0.5pt, color=Violet]
  \draw[fill = Violet!20] (9,3) -- (q5.center) -- (5,5) -- (o3.center) -- cycle;
  \node[text = Violet] (l2) at (10.5,3) {$\visibilityPolygon{\genericPoint_2} \intersection \advDomain$};
  \draw (p1.center) -- (o3.center);
  \draw[->] (7.5,4) to[out = 0, in = 90] (l2.north);
\end{pgfonlayer}

\end{tikzpicture}
\caption{In this example, with $\myDomain$ shaded in green, and
  $\advDomain$ shaded in orange, $\genericPoint_1$ is a local
  minimizer as $0 = \visibilityMetric(\genericPoint_1) \leqslant
  \visibilityMetric(\genericPointQ), \foralln \genericPointQ \in
  \oBall{\epsilon}{\genericPoint_1} \intersection \myDomain$.
  Analogously, $\genericPoint_2$ is a local maximizer as
  $\visibilityMetric(\genericPoint_2) \geqslant
  \visibilityMetric(\genericPointQ), \foralln \genericPointQ \in
  \oBall{\epsilon}{\genericPoint_2} \intersection
  \myDomain$.}\label{fig:problem_description}
\end{figure}

The objectives of this work are to address the following questions:
\begin{enumerate}[label=\textbf{Q\arabic*}]
  \item \textbf{Critical Points:} Can we characterize the set of
  critical points, the observer locations where the shape of the
  visibility polygon changes abruptly?\label{enum:critical_points}%
  \item \textbf{Regularity Properties:} What are the smoothness
  properties of the visibility polygon/metric? Are there any
  closed-form expressions for their generalized gradient?
  \label{enum:regularity_properties}
  \item \textbf{Limited Ranges:} Can the characterization of
  properties of the visibility polygon and visibility metric be
  extended to settings where the ego agent has a limited field of view
  and/or limited range?\label{enum:extension}
  \item \textbf{Local Extrema:} What are the local extrema for the
  visibility metric within the agent's domain? How can these points be
  reached? \label{enum:local_extrema}
\end{enumerate}

In the subsequent sections, we first characterize the critical points
of the visibility polygon by introducing anchors and inflection
segments, and exploring their properties to understand the regularity
of the visibility polygon. Building on these insights, we examine the
visibility metric, study its smoothness properties and extend the
analysis to cases with limited range and field of view.  Finally, we
develop methodologies to find local extrema within the agent's domain.


\section{Mathematical Structures for Visibility}\label{sec:analysis}

In this section, we characterize the critical points of the visibility
polygon to address \labelcref{enum:critical_points} through the
notions of anchors and inflection segments. These mathematical
structures are crucial for the analysis required to address the
remaining questions posed in \Cref{sec:problem_formulation}.

\subsection{Anchors Associated with an
  Observer}\label{sub:visibility_notions}

The regularity properties of a visibility polygon are closely related
to the notion of \emph{anchor}, a concept associated with an observer
introduced for simple polygons in \cite{LG-RM-PR:92}.  Here, we
formalize the notion and characterize its properties.  An
\emph{anchor} $\anchor \in \reflexVertices{\freeSpace}$ is a reflex
vertex that occludes a portion of the environment from
$\genericPoint$. Formally, a reflex vertex $\anchor \in
\reflexVertices{\freeSpace}$ is an anchor for an observer at
$\genericPoint \in \freeSpace$ if $\anchor$ is visible from
$\genericPoint$ \emph{and} the projected ray from $\anchor$ pointing
away from $\genericPoint$ is contained in $\freeSpace$, that is,
$\setRay{\anchor}{\genericPoint} \subset \freeSpace$. This new
definition for anchors requires less steps of computation compared to
existing definitions \cite{LG-RM-PR:92,AG-JC-FB:06-sicon}. We let
$\anchors{\genericPoint}$ denote the set of all anchors for an
observer at $\genericPoint$.  In \Cref{fig:projected_rays}, the
vertices $o^1$, $o^4$, $q^1$, $q^3$ and $q^4$ are visible from
$\genericPoint$. Of these, $o^1$, $o^4$, and $q^3$ are anchors for
$\genericPoint$, while $q^1$ and $q^4$ are not.
An anchor is \emph{positively (resp. negatively) oriented} wrt
$\genericPoint$ if there exists $\delta > 0$ such that
$\setRotatedRaySet{\anchor}{\genericPoint}{-\pi}{0} \intersection
\oBall{\delta}{\anchor} \subset \visibilityPolygon{\genericPoint}$
(resp. $\setRotatedRaySet{\anchor}{\genericPoint}{0}{\pi}
\intersection \oBall{\delta}{\anchor} \subset
\visibilityPolygon{\genericPoint}$). Intuitively, a positively (resp.
negatively) oriented $\anchor$ means that if $\genericPoint$ moves
clockwise (resp. counterclockwise) wrt $\anchor$, it sees some region
that was earlier occluded by $\anchor$ (see \Cref{fig:anchors_1}).
The anchors play a crucial role in characterizing visibility, as
stated next.

\begin{lemma}[Construction of the visibility polygon]
  \label{lem:visibility_construction}
  Let $\genericPoint \in \freeSpace$ be an observer. Suppose
  $\{\vertex_i\}_{i=1}^{n_{\vertex}(\genericPoint)}$ are the vertices
  of $\freeSpace$ visible to $\genericPoint$ and let
  $\anchors{\genericPoint}$ denote the set of anchors from~$x$. Let
  $\{u_i\}_{i \in \anchors{\genericPoint}}$ denote the projections of
  $\genericPoint$ across these anchors on the boundary of
  $\freeSpace$. Then, the vertices of the visibility polygon, \emph{in
  no particular order}, are given by
  \smash{$\{\vertex_i\}_{i=1}^{n_{\vertex}(\genericPoint)} \cup
    \{u_i\}_{i \in \anchors{\genericPoint}}$.}
\end{lemma}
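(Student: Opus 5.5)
We prove the lemma by a radial sweep around the observer. The visibility region $\visibilityPolygon{\genericPoint}$ is star-shaped with respect to $\genericPoint$. We may therefore describe it by its radial function: for each direction $\hat{\nu}$, let $\rho(\hat{\nu})$ be the distance from $\genericPoint$ to the first point where the ray $\genericPoint + t\hat{\nu}$, $t>0$, leaves $\freeSpace$. Its boundary is then the closed curve $\hat{\nu} \mapsto \genericPoint + \rho(\hat{\nu})\hat{\nu}$, and the vertices of $\visibilityPolygon{\genericPoint}$ are exactly the points where this curve fails to be locally a single straight segment. We show two inclusions: every such corner lies in $\{\vertex_i\} \cup \{u_i\}$, and every element of that set is a corner.

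\emph{Every corner is a visible vertex or a projection $u_i$.} Fix a direction $\hat{\nu}$ and let $\genericPointQ = \genericPoint + \rho(\hat{\nu})\hat{\nu}$ be the boundary point hit. There are two cases.
\begin{enumerate}
\item If $\rho$ is continuous at $\hat{\nu}$, the boundary of $\visibilityPolygon{\genericPoint}$ near $\genericPointQ$ coincides with the portion of $\boundary{\freeSpace}$ near $\genericPointQ$. This portion is a single straight segment unless $\genericPointQ$ is a vertex of $\freeSpace$. In that case the vertex is visible, since the segment $\lineSegment{\genericPoint}{\genericPointQ}$ lies in $\freeSpace$ by the definition of $\rho$.
\item If $\rho$ jumps at $\hat{\nu}$, a boundary edge of $\visibilityPolygon{\genericPoint}$ runs radially along the ray. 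The nearer endpoint of the jump must be a reflex vertex $\anchor$ at which the ray grazes $\boundary{\freeSpace}$. We exclude collinear-edge degeneracies using the rigidly weakly simple assumption, or handle them by treating the vertex as visible. The ray continues beyond $\anchor$ inside $\freeSpace$ exactly along $\setRay{\anchor}{\genericPoint}$ until it hits $\boundary{\freeSpace}$, and this part of the ray lies in $\freeSpace$. So $\anchor$ is visible, $\setRay{\anchor}{\genericPoint} \subset \freeSpace$, and $\anchor$ is an anchor by definition. The far endpoint of the jump is the projection $u_a$.
\end{enumerate}
Hence every corner belongs to $\{\vertex_i\} \cup \{u_i\}$.

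\emph{Every visible vertex and every projection is a corner.} For a visible vertex $\vertex_i$, the segment $\lineSegment{\genericPoint}{\vertex_i}$ is in $\freeSpace$, so $\vertex_i \in \visibilityPolygon{\genericPoint}$. Because $\vertex_i$ is a corner of $\boundary{\freeSpace}$, it lies on $\boundary{\visibilityPolygon{\genericPoint}}$ and is a vertex there, possibly with a straight angle when the adjacent edge is hidden, which is acceptable "in no particular order". For an anchor $\anchor$, the definition gives $\setRay{\anchor}{\genericPoint} \subset \freeSpace$, so the whole segment from $\genericPoint$ to $u_a$ is visible. Since $\anchor$ is reflex, rays from $\genericPoint$ rotated slightly to one side are blocked at the edges incident to $\anchor$. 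Rays rotated slightly to the other side pass on. Hence $\rho$ jumps at the direction of $\anchor$, and $u_a$ is a corner where the radial edge meets $\boundary{\freeSpace}$.

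The main obstacle is the jump case: showing rigorously that a discontinuity of $\rho$ forces the nearer endpoint to be a reflex vertex that satisfies the anchor definition, including the degenerate configurations allowed by rigidly weakly simple polygons and holes. First I would attempt a local argument in a small ball around the nearer endpoint, using the angle condition at a reflex vertex and polygonality of $\freeSpace$. I would invoke the perturbation in the definition of rigidly weakly simple polygons to reduce degenerate cases to simple ones.
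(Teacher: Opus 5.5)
The paper gives no proof of this lemma. It states it as the known characterization from \cite{LG-RM-PR:92,AG-JC-FB:06-sicon} and uses it without argument. Your radial sweep is therefore a self-contained replacement rather than a variant of the paper's proof, and the two inclusions with the continuity/jump split of $\rho$ are sound. The citation costs nothing in length but defers everything to the literature. Your route gains something concrete: the paper's anchor definition ($\setRay{\anchor}{\genericPoint}\subset\freeSpace$) is new and differs from the cited ones, so relying on the citation tacitly requires checking that the definitions agree. Your jump-case argument instead derives the paper's definition directly.

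Your ``main obstacle'' is easier than you suggest once you note that $\rho$ is upper semicontinuous because $\freeSpace$ is closed: if $\rho(\nu_k)\geq t$ with $\nu_k\to\nu$, the segments $[\genericPoint,\genericPoint+t\nu_k]\subset\freeSpace$ converge to $[\genericPoint,\genericPoint+t\nu]\subset\freeSpace$. Hence at a jump the ray itself reaches the far endpoint. It passes through the near endpoint $p\in\boundary{\freeSpace}$ while staying in $\freeSpace$, and the complement of $\freeSpace$ accumulates at $p$ from one side only. Polygonality then forces $p$ to be a vertex whose obstacle sector lies strictly on one side of the ray, i.e.\ a reflex vertex. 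The part of the ray beyond $p$ is exactly $\setRay{p}{\genericPoint}\subset\freeSpace$.

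A few points need repair.
\begin{itemize}
\item The rigidly weakly simple hypothesis concerns the polygon, not the observer. It cannot rule out rays from $\genericPoint$ that are collinear with an edge or pass through two reflex vertices, and those observer positions are exactly the inflection segments the paper studies. On Type~I segments the paper's own $\lambda^*$ degenerates: the admissible $\rho$ accumulate at $0$, so the minimum does not exist and the literal definition returns $+\infty$. You must fix a convention for the projection there rather than appeal to perturbation.
\item Being a corner of $\boundary{\freeSpace}$ does not by itself make a visible vertex $\vertex_i$ a vertex of $\visibilityPolygon{\genericPoint}$; you need a local argument. At a visible non-anchor vertex, the whole local free-space sector is visible, since segments to $\genericPoint$ stay in the relevant convex cone. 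At an anchor, the visible incident edge meets the radial edge, which is a genuine turn unless $\genericPoint$ lies on the line of that edge.
\item ``In no particular order'' concerns the listing, not straight-angle vertices.
\item The curve $\hat\nu\mapsto\genericPoint+\rho(\hat\nu)\hat\nu$ is not closed until the radial segments at the jumps are added.
\item Observers on $\boundary{\freeSpace}$ deserve a sentence, since then $\genericPoint$ itself can be a vertex of $\visibilityPolygon{\genericPoint}$ (it is trivially a visible vertex).
\end{itemize}
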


\begin{figure}[htb]
  \centering
  \pgfdeclarelayer{background}
  \pgfdeclarelayer{foreground}
  \pgfsetlayers{background,main,foreground}
  \begin{tikzpicture}[scale=0.55]
    \tikzset{
      style 0/.style={inner sep=0pt, minimum size=3pt},
      style A/.style={shape=circle, fill=PineGreen, style 0},
      style B/.style={shape=circle, fill=WildStrawberry, style 0},
      style C/.style={shape=circle, fill=Blue!50!RoyalBlue, style 0},
      font={\fontsize{8pt}{9.6}\selectfont}
    }
    
    \begin{pgfonlayer}{foreground}
      \node[draw, style A] (q1) at (-1, -1) {};
      \node[draw, style A] (q2) at (11, -1) {};
      \node[draw, style A] (q3) at (11,  1) {};
    \node[draw, style A] (q4) at ( 9,  1) {};
    \node[draw, style A] (q5) at ( 9,  5) {};
    \node[draw, style A] (q6) at ( 1,  5) {};
    \node[draw, style A] (q7) at ( 1,  1) {};
    \node[draw, style A] (q8) at (-1,  1) {};

  \end{pgfonlayer}

  \begin{pgfonlayer}{background}
    \tikzstyle{every path}=[draw, line width=1.0pt, color=black]
    \draw[fill = gray!20] (q1.center) -- (q2.center) -- (q3.center) -- (q4.center) -- (q5.center) -- (q6.center) -- (q7.center) -- (q8.center) -- cycle;
    \node at (7.9, 3.9) {$\freeSpace$};
  \end{pgfonlayer}

  \begin{pgfonlayer}{foreground}
    \node[draw, style B] (o1) at (3, 1) {};
    \node[draw, style B] (o2) at (7, 1) {};
    \node[draw, style B] (o3) at (7, 3) {};
    \node[draw, style B] (o4) at (3, 3) {};

    \node[text=WildStrawberry] at ([shift={( 45:0.6)}]o1) {$o^1$};
    \node[text=WildStrawberry] at ([shift={(135:0.6)}]o2) {$o^2$};
    \node[text=WildStrawberry] at ([shift={(225:0.6)}]o3) {$o^3$};
    \node[text=WildStrawberry] at ([shift={(315:0.6)}]o4) {$o^4$};
  \end{pgfonlayer}

  \begin{pgfonlayer}{main}
    \tikzstyle{every path}=[draw, line width=1.0pt, color=black]
    \draw[fill = white] (o1.center) -- (o2.center) -- (o3.center) -- (o4.center) -- cycle;
    \node at (5, 2) {$\obstacle$};
  \end{pgfonlayer}

  \begin{pgfonlayer}{foreground}
    \node[draw, style C] (p) at (3, 0) {};
    \node[text = Blue!50!RoyalBlue] at ([shift={(270:0.5)}]p) {$\genericPoint$};
  \end{pgfonlayer}

  \begin{pgfonlayer}{foreground}
    \node[draw, shape=circle, fill=Violet, style 0] (p1) at (3.25, 0) {};
    \node[draw, shape=circle, fill=Orange, style 0] (p2) at (2.75, 0) {};
    \node[text = Violet] at ([shift={(330:0.5)}]{p1}) {$\genericPoint_1$};
    \node[text = Orange] at ([shift={(210:0.5)}]{p2}) {$\genericPoint_2$};
  \end{pgfonlayer}

  \begin{pgfonlayer}{background}
    \tikzstyle{every path}=[draw, line width=0pt, color=RoyalBlue]
    \draw[fill = RoyalBlue!20] (q7.center) -- (q6.center) -- (3,5) -- (o1.center) -- (o2.center) -- (9, 1.5) -- (q4.center) -- (q3.center) -- (q2.center) -- (q1.center) -- (q8.center) -- cycle;
    \node[text = Blue!50!RoyalBlue] at (2.5, 4.5) {$\visibilityPolygon{\genericPoint}$};
  \end{pgfonlayer}

  \begin{pgfonlayer}{background}
    \tikzstyle{every path}=[draw, line width=0pt, color=Violet]
    \draw[fill = Violet, fill opacity = 0.1] (q7.center) -- (q6.center) -- (2,5) -- (o1.center) -- (o2.center) -- (9, {1.5+1/30}) -- (q4.center) -- (q3.center) -- (q2.center) -- (q1.center) -- (q8.center) -- cycle;
    \node[text = Violet] at (1.75, 3.25) {$\visibilityPolygon{\genericPoint_1}$};
    \tikzstyle{every path}=[draw, line width=0pt, color=Orange]
    \draw[fill = Orange, fill opacity = 0.1] (q7.center) -- (q6.center) -- ({19/6},5) -- (o4.center) -- (o1.center) -- (o2.center) -- (9, {1.5-1/30}) -- (q4.center) -- (q3.center) -- (q2.center) -- (q1.center) -- (q8.center) -- cycle;
    \node[text = Orange] at (3.75, 3.75) {$\visibilityPolygon{\genericPoint_2}$};
  \end{pgfonlayer}

  \begin{pgfonlayer}{main}
    \node (t1) at ([shift=({atan(0.25)}:0.5)]{o2}) {};
    \node (t2) at ([shift=({atan(0.25)}:-0.5)]{o2}) {};
    \draw[color = PineGreen, fill = PineGreen] (o2.center) -- (t1.center) arc[radius = 0.5, start angle = {atan(0.25)}, end angle = {-180 + atan(0.25)}] (t2.center) -- (o2.center);

    \draw[line width=0.5pt, color=Blue!50!RoyalBlue] (p.center) -- (o2.center);
    \draw[line width=0.5pt, color=Violet] (p1.center) -- (o2.center);
    \draw[line width=0.5pt, color=Orange] (p2.center) -- (o2.center);
  \end{pgfonlayer}

  \begin{pgfonlayer}{main}
    \node (t4) at ([shift=(90:0.5)]{o1}) {};
    \node (t5) at ([shift=(270:0.5)]{o1}) {};
    \draw[color = WildStrawberry, fill = WildStrawberry] (o1.center) -- (t4.center) arc[radius = 0.5, start angle = 90, end angle = 270] (t5.center) -- (o1.center);

    \draw[line width=0.5pt, color=Blue!50!RoyalBlue] (p.center) -- (o1.center);
    \draw[line width=0.5pt, color=Violet] (p1.center) -- (o1.center);
  \end{pgfonlayer}

  \begin{pgfonlayer}{main}
    \node (t7) at ([shift=(90:0.5)]{o4}) {};
    \node (t8) at ([shift=(270:0.5)]{o4}) {};
    \draw[color = WildStrawberry, fill = WildStrawberry] (o4.center) -- (t7.center) arc[radius = 0.5, start angle = 90, end angle = 270] (t8.center) -- (o4.center);

    \draw[line width=0.5pt, color=Blue!50!RoyalBlue] (p.center) -- (o4.center);
    \draw[line width=0.5pt, color=Orange] (p2.center) -- (o4.center);
  \end{pgfonlayer}

  \begin{pgfonlayer}{main}
    \tikzstyle{every path}=[draw, line width=0.5pt, color=RoyalBlue]
    \draw (q7.center) -- (q6.center) -- (3,5) -- (o1.center) -- (o2.center) -- (9, 1.5) -- (q4.center) -- (q3.center) -- (q2.center) -- (q1.center) -- (q8.center) -- cycle;
    \tikzstyle{every path}=[draw, line width=0.5pt, color=Violet]
    \draw (q7.center) -- (q6.center) -- (2,5) -- (o1.center) -- (o2.center) -- (9, {1.5+1/30}) -- (q4.center) -- (q3.center) -- (q2.center) -- (q1.center) -- (q8.center) -- cycle;
    \tikzstyle{every path}=[draw, line width=0.5pt, color=Orange]
    \draw (q7.center) -- (q6.center) -- ({19/6},5) -- (o4.center) -- (o1.center) -- (o2.center) -- (9, {1.5-1/30}) -- (q4.center) -- (q3.center) -- (q2.center) -- (q1.center) -- (q8.center) -- cycle;

    \tikzstyle{every path}=[draw, line width=1.0pt, color=black]
    \draw (q1.center) -- (q2.center) -- (q3.center) -- (q4.center) -- (q5.center) -- (q6.center) -- (q7.center) -- (q8.center) -- cycle;
    \draw (o1.center) -- (o2.center) -- (o3.center) -- (o4.center) -- cycle;

    \node[text = PineGreen] at ([shift=({300}:1.0)]{o2}) {$\setRotatedRaySet{\genericPoint}{o^2}{0}{\pi} \intersection \oBall{\epsilon}{o^2}$};
  \end{pgfonlayer}

\end{tikzpicture}
\caption{An observer at $\genericPoint$ has anchors
  $\anchors{\genericPoint} = \left\{ o^1, o^2, o^4 \right\}$. Anchors
  $o^1, o^4$ are positively oriented (showed in red) and $o^2$ is
  negatively oriented wrt $\genericPoint$ (shown in green).}
\label{fig:anchors_1}
\end{figure}

When an observer moves from one point to another, the visibility
polygon changes: some previously visible vertices of $\freeSpace$ may
become occluded by anchors; some vertices remain visible; some
previously hidden vertices may become visible as they emerge from
behind anchors; the projections of anchors onto the boundary may
shift, altering the polygon's shape.  Crucially, all changes---whether
vertices disappearing, appearing, or moving---are directly tied to the
anchors and their projections.

The characterization~\cite{LG-RM-PR:92,AG-JC-FB:06-sicon} above is
centered around the observer at $\genericPoint$, and requires the
construction of projected rays for each change in position.  We next
develop an alternative approach centered instead on the static
environment $\environment$, which has the advantage that it remains
unchanged as the observer moves.

Given a reflex vertex $\reflexVertex \in \reflexVertices{\freeSpace}$,
choose its neighboring vertices $\vertex_1$ and $\vertex_2$ such that
$\vertex_1, \reflexVertex, \vertex_2$ are vertices of the environment
$\environment$ or an obstacle $\obstacle \in \obstacles$, in the
standard order.
We now define the edge unit vectors $\edgeUVA{\reflexVertex} :=
{(\vertex_1 - \reflexVertex)}/{\|\vertex_1 - \reflexVertex\|_2}$ and
$\edgeUVB{\reflexVertex} := {(\vertex_2 - \reflexVertex)}/{\|\vertex_2
- \reflexVertex\|_2}$, and consider the four quadrants centered at
$\reflexVertex$,
\begin{align*}
  \region_1(\reflexVertex)
  & :=
    \interior{\conicalHull{\{+\edgeUVA{\reflexVertex}, 
    +\edgeUVB{\reflexVertex}\}}},
  & \region_2(\reflexVertex)
  &:= \conicalHull{\{+\edgeUVA{\reflexVertex},
    -\edgeUVB{\reflexVertex}\}},
  \\
  \region_3(\reflexVertex)
  & :=
    \interior{\conicalHull{\{-\edgeUVA{\reflexVertex},
    -\edgeUVB{\reflexVertex}\}}},
  & \region_4(\reflexVertex) &:= \conicalHull{\{-\edgeUVA{\reflexVertex},
    +\edgeUVB{\reflexVertex}\}} . 
\end{align*}
We omit the reflex vertex in the argument if it is clear from the
context (see \Cref{fig:characterization_anchor}).
Alternatively, whether or not a reflex vertex is an anchor can be
characterized by the relative position of the observer with respect to
it, as seen in the following result. 

\begin{lemma}[When is a reflex vertex an anchor?]
  \label{lem:characterization_anchor}
  A reflex vertex $\reflexVertex \in \reflexVertices{\freeSpace}$ is
  an anchor for an observer at $\genericPoint \in \freeSpace$ if and
  only if $\reflexVertex$ is visible from $\genericPoint$ \emph{and}
  $\genericPoint \in \region_2(\reflexVertex) \union
  \region_4(\reflexVertex)$. Furthermore, if this is the case, then
  $\reflexVertex$ is positively (resp. negatively) oriented wrt
  $\genericPoint$ if $\genericPoint \in \region_2(\reflexVertex)$
  (resp.  $\genericPoint \in \region_4(\reflexVertex)$).
\end{lemma}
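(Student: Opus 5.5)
Both directions rest on a local picture at $\reflexVertex$, so I will set that up first. Near a reflex vertex, the free space coincides on a small ball $\oBall{\delta}{\reflexVertex}$ with the closed angular sector swept counterclockwise from $\edgeUVA{\reflexVertex}$ to $\edgeUVB{\reflexVertex}$, which has opening greater than $\pi$. Its complement in the ball is the open obstacle wedge spanned by the two edges, of angle less than $\pi$. Since the opening exceeds $\pi$, the wedge $\region_3 = \interior{\conicalHull{\{-\edgeUVA{\reflexVertex},-\edgeUVB{\reflexVertex}\}}}$ lies inside the free-space sector. The obstacle wedge is exactly $\region_1$ intersected with the ball, up to orientation conventions that I will check against the ordering of $\environment$ and $\obstacles$. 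Hence, locally, $\freeSpace \intersection \oBall{\delta}{\reflexVertex} = (\region_2 \union \region_3 \union \region_4) \intersection \oBall{\delta}{\reflexVertex}$. The key elementary fact is that a direction $w$ lies in $\region_1$ if and only if $-w \in \region_3$. Equivalently, the point reflection through $\reflexVertex$ maps $\region_2 \union \region_4$ onto itself, swapping $\region_2$ and $\region_4$, and swaps $\region_1$ with $\region_3$.

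\emph{Necessity.} Suppose $\reflexVertex$ is an anchor. Then it is visible by definition, and $\setRay{\reflexVertex}{\genericPoint} \subset \freeSpace$. The initial piece of this ray near $\reflexVertex$ has direction $\reflexVertex - \genericPoint$, so that direction must lie in the closure of the free-space sector, and hence outside $\region_1$. Visibility of $\reflexVertex$ forces the segment $\lineSegment{\genericPoint}{\reflexVertex}$ to approach $\reflexVertex$ from direction $\genericPoint - \reflexVertex$, and this direction must also avoid $\region_1$. Since $\genericPoint - \reflexVertex \notin \region_1$ and $-(\genericPoint-\reflexVertex) \notin \region_1$, the symmetry fact gives $\genericPoint - \reflexVertex \notin \region_3$. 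Therefore $\genericPoint - \reflexVertex \in \region_2 \union \region_4$, which is a statement about cones and thus equivalent to $\genericPoint \in \region_2 \union \region_4$ once the quadrants are read as translated to $\reflexVertex$.

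\emph{Sufficiency.} Suppose $\reflexVertex$ is visible and $\genericPoint - \reflexVertex \in \region_2 \union \region_4$. Then $\reflexVertex - \genericPoint$ also lies in $\region_2 \union \region_4 \subset \freeSpace$ locally, so the ray leaving $\reflexVertex$ in that direction starts inside the free space. By definition, $\lambda^*$ stops the ray at its first hit of $\boundary{\freeSpace}$. Consequently the open ray up to that parameter avoids $\boundary{\freeSpace}$, and since it starts in $\freeSpace$ it stays in $\freeSpace$ by connectedness. The endpoint lies in $\boundary{\freeSpace} \subset \freeSpace$, so $\setRay{\reflexVertex}{\genericPoint} \subset \freeSpace$.

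\emph{Main obstacle.} The step I expect to be hardest is the boundary cases. When $\genericPoint$ lies on a ray $\pm\edgeUVA{\reflexVertex}$ or $\pm\edgeUVB{\reflexVertex}$, the projected ray runs along an edge and must be justified as lying in $\freeSpace$; this is why $\region_2$ and $\region_4$ are closed cones. Rigidly weakly simple polygons, where boundary points may be touched from both sides, need similar care. My plan is to handle these cases using the ``first hit of the boundary'' definition together with closedness of $\freeSpace$.

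\emph{Orientation.} For $\genericPoint \in \region_2$, I will check that rotating $\setRay{\reflexVertex}{\genericPoint}$ clockwise, i.e.\ $\theta \in [-\pi,0]$ in the paper's convention, sweeps locally through the free-space sector toward the edge $\edgeUVA{\reflexVertex}$, whose side faces $\genericPoint$. I will then show that a small ball piece of $\setRotatedRaySet{\reflexVertex}{\genericPoint}{-\pi}{0}$ is seen from $\genericPoint$. The argument is that every point there joins $\genericPoint$ through $\reflexVertex$'s neighborhood without crossing the obstacle wedge, using visibility of $\reflexVertex$ and a small-perturbation argument on the segment $\lineSegment{\genericPoint}{\reflexVertex}$. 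The case $\region_4$ is symmetric.
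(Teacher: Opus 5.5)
Your proposal is correct and is essentially the paper's proof written out. The paper also argues only that $\setRay{\reflexVertex}{\genericPoint}$ lies in the quadrant opposite to $\genericPoint$ (your reflection symmetry $\region_1 \leftrightarrow \region_3$, $\region_2 \leftrightarrow \region_4$), it leaves implicit that visibility rules out $\region_1$, and it calls the orientation claim immediate from the definition.

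There are two cautions on points the paper also skips. First, when $\genericPoint - \reflexVertex$ points along $-\edgeUVA{\reflexVertex}$ or $-\edgeUVB{\reflexVertex}$, the ray runs along an edge. Then the minimum defining $\lambda^*$ is not attained and the literal definition gives $\lambda^* = +\infty$, so you must first reinterpret $\lambda^*$ (e.g., as an infimum or as the exit point from $\freeSpace$) rather than appeal to a first hit. Second, your reading ``clockwise $=$ $\theta \in [-\pi,0]$'' matches the matrix $R(\theta)$ but not the verbal ``rotated clock-wise by $\theta$'' or \Cref{fig:anchors_1}, where $o^2$ is labeled negatively oriented although $\genericPoint \in \region_2(o^2)$. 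Under that verbal reading the $\region_2$/$\region_4$ assignment flips, so state explicitly which convention you use.
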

\begin{proof}
  ($\Rightarrow$) Let $\reflexVertex$ be an anchor for
  $\genericPoint$. Then, $\reflexVertex$ is visible from
  $\genericPoint$, and $\setRay{\reflexVertex}{\genericPoint} \subset
  \freeSpace$. As $\setRay{\reflexVertex}{\genericPoint}$ is in the
  opposite quadrant to $\genericPoint$, it immediately follows that
  $\genericPoint \in \region_2 \union \region_4$.

  ($\Leftarrow$) Suppose $\reflexVertex$ is visible from
  $\genericPoint$ and $\genericPoint \in \region_2 \union \region_4$.
  Again, as $\setRay{\reflexVertex}{\genericPoint}$ is in the opposite
  quadrant to $\genericPoint$, it follows that
  $\setRay{\reflexVertex}{\genericPoint} \subset \freeSpace$. So,
  $\reflexVertex$ is an anchor for $\genericPoint$.
    
  The second part follows from the definition of positively and
  negatively oriented anchors.
\end{proof}

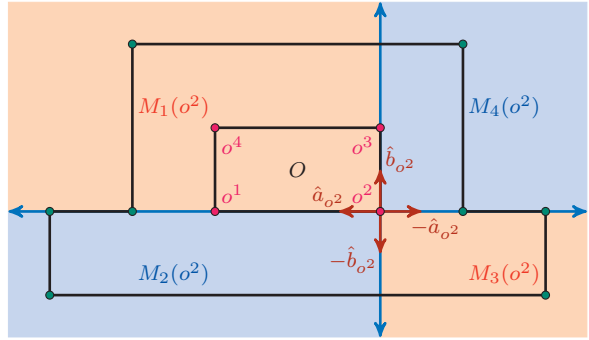
\begin{figure}[htb]
  \centering%
  \pgfdeclarelayer{background} \pgfdeclarelayer{foreground}
  \pgfsetlayers{background,main,foreground}
  \begin{tikzpicture}[scale=0.55]
    \tikzset{
      style 0/.style={inner sep=0pt, minimum size=3pt},
      style A/.style={shape=circle, fill=PineGreen, style 0},
      style B/.style={shape=circle, fill=WildStrawberry, style 0},
      style C/.style={shape=circle, fill=Blue!50!RoyalBlue, style 0},
      font={\fontsize{8pt}{9.6}\selectfont}
    }
    
    \begin{pgfonlayer}{background}
      \tikzstyle{every path}=[draw, line width=1.0pt, color=black]
      \draw[fill = gray!20] (q1.center) -- (q2.center) -- (q3.center) -- (q4.center) -- (q5.center) -- (q6.center) -- (q7.center) -- (q8.center) -- cycle;
      \node at (7.9, 3.9) {$\freeSpace$};
    \end{pgfonlayer}

    \begin{pgfonlayer}{background}
      \tikzstyle{every path}=[draw, line width=1.0pt, color=black]
      \draw[fill = white] (o1.center) -- (o2.center) -- (o3.center) -- (o4.center) -- cycle;
    \end{pgfonlayer}

  \begin{pgfonlayer}{main}
    \tikzstyle{every path}=[draw, line width=0pt, color=RoyalBlue]
    \draw[draw = Orange!30, fill = Orange!30, line width=0pt] (7, 1) -- (7, 6) -- (-2, 6) -- (-2, 1) -- cycle;
    \draw[draw = Orange!30, fill = Orange!30, line width=0pt] (7, 1) -- (12, 1) -- (12, -2) -- (7, -2) -- cycle;
    \draw[draw = RoyalBlue!20, fill = RoyalBlue!20, line width=0pt] (7, 1) -- (7, 6) -- (12, 6) -- (12, 1) -- cycle;
    \draw[draw = RoyalBlue!20, fill = RoyalBlue!20, line width=0pt] (7, 1) -- (-2, 1) -- (-2, -2) -- (7, -2) -- cycle;
  \end{pgfonlayer}

  \begin{pgfonlayer}{foreground}
    \tikzstyle{every path}=[draw, line width=1.0pt, color=black]
    \draw (q1.center) -- (q2.center) -- (q3.center) -- (q4.center) -- (q5.center) -- (q6.center) -- (q7.center) -- (q8.center) -- cycle;
    \draw (o1.center) -- (o2.center) -- (o3.center) -- (o4.center) -- cycle;
  \end{pgfonlayer}

  \begin{pgfonlayer}{foreground}
    \node[text = Orange!20!Red] at (2, 3.5) {$\region_1(o^2)$};
    \node[text = Orange!20!Red] at (10, -0.5) {$\region_3(o^2)$};
    \node[text = Blue!50!RoyalBlue] at (2, -0.5) {$\region_2(o^2)$};
    \node[text = Blue!50!RoyalBlue] at (10, 3.5) {$\region_4(o^2)$};
  \end{pgfonlayer}

  \begin{pgfonlayer}{main}
    \tikzstyle{every path}=[draw, line width=1.0pt, color=RoyalBlue]
    \draw[>=stealth', <->] (7, -2) -- (7, 6);
    \draw[>=stealth', <->] (-2, 1) -- (12, 1);
  \end{pgfonlayer}

  \begin{pgfonlayer}{foreground}
    \tikzstyle{every path}=[draw, line width=1pt, color=BrickRed]
    \draw[>=stealth', <->] (7, 0) -- (7, 2);
    \draw[>=stealth', <->] (6, 1) -- (8, 1);
    \node at (7.5, 2.25) {$\edgeUVB{o^2}$};
    \node at (6.35, -0.15) {$-\edgeUVB{o^2}$};
    \node at (8.35, 0.6) {$-\edgeUVA{o^2}$};
    \node at (5.75, 1.35) {$\edgeUVA{o^2}$};
  \end{pgfonlayer}

  \begin{pgfonlayer}{foreground}
    \node[draw, style A] (q1) at (-1, -1) {};
    \node[draw, style A] (q2) at (11, -1) {};
    \node[draw, style A] (q3) at (11,  1) {};
    \node[draw, style A] (q4) at ( 9,  1) {};
    \node[draw, style A] (q5) at ( 9,  5) {};
    \node[draw, style A] (q6) at ( 1,  5) {};
    \node[draw, style A] (q7) at ( 1,  1) {};
    \node[draw, style A] (q8) at (-1,  1) {};

  \end{pgfonlayer}
  
  \begin{pgfonlayer}{foreground}
    \node[draw, style B] (o1) at (3, 1) {};
    \node[draw, style B] (o2) at (7, 1) {};
    \node[draw, style B] (o3) at (7, 3) {};
    \node[draw, style B] (o4) at (3, 3) {};

    \node[text=WildStrawberry] at ([shift={( 45:0.6)}]o1) {$o^1$};
    \node[text=WildStrawberry] at ([shift={(135:0.6)}]o2) {$o^2$};
    \node[text=WildStrawberry] at ([shift={(225:0.6)}]o3) {$o^3$};
    \node[text=WildStrawberry] at ([shift={(315:0.6)}]o4) {$o^4$};

    \node at (5, 2) {$\obstacle$};
  \end{pgfonlayer}
\end{tikzpicture}
\caption{Here, $o^2$ is a reflex vertex with consecutive vertices
  $o^1, o^2, o^3$ in the standard order, and the corresponding regions
  are shown. If the observer $\genericPoint$ is visible to $o^2$ and
  lies in $\region_2 \union \region_4$, then $o^2$ serves as an anchor
  for $\genericPoint$; otherwise, it does not. When the reflex vertex
  is part of $\environment$ instead of $\obstacle$, the standard
  ordering for environment vertices applies.}
  \label{fig:characterization_anchor}
\end{figure}

\subsection{Critical Points and Inflection Segments}%
\label{sub:critical_point}%

Here, we address~\labelcref{enum:critical_points}. A \emph{critical
point} corresponds to an observer position whose set of anchors
changes under arbitrarily small perturbations. This notion is
illustrated in \Cref{fig:anchors_1} with the point $\genericPoint$. We
denote the set of such points by $ \setCriticalPoints$. Formally,
\begin{align*}
  \setCriticalPoints = \{ \genericPoint \in \freeSpace \mid \forall
  \epsilon>0, \; \existsn
  \genericPointQ_1, \genericPointQ_2 \in
  \oBall{\epsilon}{\genericPoint} \intersection \freeSpace
  \text{ s.t. }
  \anchors{\genericPointQ_1} \neq \anchors{\genericPointQ_2}
  \}.
\end{align*}

We next provide an explicit characterization of the set
$\setCriticalPoints$.  Based on \Cref{lem:characterization_anchor},
there are two possibilities for a reflex vertex $\reflexVertex$ to get
added to or removed from the list of anchors: 
\begin{description}
\item[Case 1:] if the observer moves from $\region_1 \union \region_3$
  to $\region_2 \union \region_4$, then $\reflexVertex$ gets added to
  the list of anchors. Vice versa, if the observer moves from
  $\region_2 \union \region_4$ to $\region_1 \union \region_3$, then
  $\reflexVertex$ gets removed from the list of anchors;
\item[Case~2:] If $\reflexVertex$ gets occluded by another anchor, say
  $\anchor$, then $\reflexVertex$ gets removed from the list of
  anchors. Vice versa, if $\reflexVertex$ is revealed from across
  another anchor, say $\anchor$, then $\reflexVertex$ gets added to
  the list of anchors.
\end{description}
We can use these observations to characterize the set of points in
free space where the set of anchors changes. Case~1 arises on points
along the 4 rays starting from $\reflexVertex$ and pointing towards
$\edgeUVA{\reflexVertex}$, $\edgeUVB{\reflexVertex}$,
$-\edgeUVA{\reflexVertex}$, $-\edgeUVB{\reflexVertex}$. We call these
\emph{Type~I points}. Case~2 arises on points along the rays lying on
the line passing through two anchors. We call these \emph{Type~II
points}.
These points can be collectively characterized via the notion of
\emph{inflection segments}.

\begin{definition}[Inflection Segments]
  \label{def:inflection_segments}
  Let $\reflexVertex \in \reflexVertices{\freeSpace}$ be a reflex
  vertex with neighboring vertices $\vertex_1$ and $\vertex_2$ such
  that $\vertex_1, \reflexVertex, \vertex_2$ are consecutive vertices
  in the standard order.  Then, the rays
  $\setRay{\reflexVertex}{\vertex_1} \intersection \freeSpace$,
  $\setRay{\reflexVertex}{\vertex_2} \intersection \freeSpace$,
  $\setRay{\vertex_1}{\reflexVertex} \intersection \freeSpace$, and
  $\setRay{\vertex_2}{\reflexVertex} \intersection \freeSpace$ are
  \emph{Type~I inflection segments} of $\freeSpace$.  The union of the
  4 rays corresponds to $\boundary{\region_2 \union \region_4}$.

  Let $\reflexVertex, \reflexVertex' \in \reflexVertices{\freeSpace}$
  be such that $\reflexVertex'$ is visible from $\reflexVertex$. Then,
  the rays $\setRay{\reflexVertex}{\reflexVertex'} \intersection
  \freeSpace$ and $\setRay{\reflexVertex'}{\reflexVertex}
  \intersection \freeSpace$ are \emph{Type~II inflection segments} of
  $\freeSpace$. \oprocend
\end{definition}

\Cref{fig:inflection_segments} illustrates this notion. We denote by
$\setInflectionSegments$ the set of all inflection segments, and by
$\setInflectionPoints$ the set of points on an inflection segment,
that is, $\setInflectionPoints := \{ \genericPoint \in \freeSpace \mid
\exists \inflectionSegment \in \setInflectionSegments \text{ s.t. }
\genericPoint \in \inflectionSegment \}$.

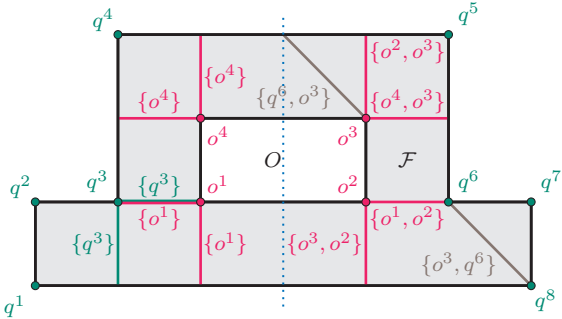
\begin{figure}[htb]
  \centering
  \pgfdeclarelayer{background}
  \pgfdeclarelayer{foreground}
  \pgfsetlayers{background,main,foreground}
  \begin{tikzpicture}[scale=0.55]
    \tikzset{
      style 0/.style={inner sep=0pt, minimum size=3pt},
      style A/.style={shape=circle, fill=PineGreen, style 0},
      style B/.style={shape=circle, fill=WildStrawberry, style 0},
      style C/.style={shape=circle, fill=Blue!50!RoyalBlue, style 0},
      font={\fontsize{8pt}{9.6}\selectfont}
    }
    
    \begin{pgfonlayer}{foreground}
      \node[draw, style A] (q1) at (-1, -1) {};
    \node[draw, style A] (q2) at (11, -1) {};
    \node[draw, style A] (q3) at (11,  1) {};
    \node[draw, style A] (q4) at ( 9,  1) {};
    \node[draw, style A] (q5) at ( 9,  5) {};
    \node[draw, style A] (q6) at ( 1,  5) {};
    \node[draw, style A] (q7) at ( 1,  1) {};
    \node[draw, style A] (q8) at (-1,  1) {};

    \node[text=PineGreen] at ([shift={(225:0.7)}]q1) {$q^1$};
    \node[text=PineGreen] at ([shift={(315:0.5)}]q2) {$q^8$};
    \node[text=PineGreen] at ([shift={( 45:0.7)}]q3) {$q^7$};
    \node[text=PineGreen] at ([shift={( 45:0.7)}]q4) {$q^6$};
    \node[text=PineGreen] at ([shift={( 45:0.7)}]q5) {$q^5$};
    \node[text=PineGreen] at ([shift={(135:0.5)}]q6) {$q^4$};
    \node[text=PineGreen] at ([shift={(135:0.7)}]q7) {$q^3$};
    \node[text=PineGreen] at ([shift={(135:0.5)}]q8) {$q^2$};
  \end{pgfonlayer}

  \begin{pgfonlayer}{background}
    \tikzstyle{every path}=[draw, line width=1.0pt, color=black]
    \draw[fill = gray!20] (q1.center) -- (q2.center) -- (q3.center) -- (q4.center) -- (q5.center) -- (q6.center) -- (q7.center) -- (q8.center) -- cycle;
    \node at (8, 2) {$\freeSpace$};
  \end{pgfonlayer}

  \begin{pgfonlayer}{foreground}
    \node[draw, style B] (o1) at (3, 1) {};
    \node[draw, style B] (o2) at (7, 1) {};
    \node[draw, style B] (o3) at (7, 3) {};
    \node[draw, style B] (o4) at (3, 3) {};

    \node[text=WildStrawberry] at ([shift={( 45:0.6)}]o1) {$o^1$};
    \node[text=WildStrawberry] at ([shift={(135:0.6)}]o2) {$o^2$};
    \node[text=WildStrawberry] at ([shift={(225:0.6)}]o3) {$o^3$};
    \node[text=WildStrawberry] at ([shift={(315:0.6)}]o4) {$o^4$};
  \end{pgfonlayer}

  \begin{pgfonlayer}{main}
    \tikzstyle{every path}=[draw, line width=1.0pt, color=black]
    \draw[fill = white] (o1.center) -- (o2.center) -- (o3.center) -- (o4.center) -- cycle;
    \node at (4.75, 2) {$\obstacle$};
  \end{pgfonlayer}

  \begin{pgfonlayer}{background}
    \tikzstyle{every path}=[draw, line width=1.0pt, color=PineGreen]
    \draw (1, 1) -- (1, -1);
    \draw (1, 1.025) -- (3, 1.025);

    \node at (0.4, 0) {$\{q^3\}$};
    \node at (2, 1.4) {$\{q^3\}$};

    \tikzstyle{every path}=[draw, line width=1.0pt, color=WildStrawberry]
    \draw (3, 0.975) -- (1, 0.975);
    \draw (3, 1) -- (3, -1);
    \draw (3, 3) -- (1, 3);
    \draw (3, 3) -- (3, 5);

    \node at (3.6, 0) {$\{o^1\}$};
    \node at (2, 0.6) {$\{o^1\}$};
    \node at (3.6, 4) {$\{o^4\}$};
    \node at (2, 3.4) {$\{o^4\}$};
  \end{pgfonlayer}

  \begin{pgfonlayer}{background}
    \tikzstyle{every path}=[draw, line width=1.0pt, color=PineGreen!50!WildStrawberry]
    \draw (7, 3) -- (5, 5);
    \draw (9, 1) -- (11, -1);

    \node at (9.25, -0.5) {$\{o^3,q^6\}$};
    \node at (5.25,  3.5) {$\{q^6,o^3\}$};

    \tikzstyle{every path}=[draw, line width=1.0pt, color=WildStrawberry]
    \draw (7, 1) -- (9, 1);
    \draw (7, 1) -- (7, -1);
    \draw (7, 3) -- (9, 3);
    \draw (7, 3) -- (7, 5);

    \node at (6.0, 0) {$\{o^3,o^2\}$};
    \node at (8, 0.6) {$\{o^1,o^2\}$};
    \node at (8, 4.6) {$\{o^2,o^3\}$};
    \node at (8, 3.4) {$\{o^4,o^3\}$};
  \end{pgfonlayer}

  \begin{pgfonlayer}{foreground}
    \tikzstyle{every path}=[draw, line width=0.75pt, color=RoyalBlue]
    \draw[dotted] (5, -1.5) -. (5, 5.5);
  \end{pgfonlayer}

  \begin{pgfonlayer}{main}
    \tikzstyle{every path}=[draw, line width=1.0pt, color=black]
    \draw (q1.center) -- (q2.center) -- (q3.center) -- (q4.center) -- (q5.center) -- (q6.center) -- (q7.center) -- (q8.center) -- cycle;
    \draw (o1.center) -- (o2.center) -- (o3.center) -- (o4.center) -- cycle;
  \end{pgfonlayer}

\end{tikzpicture}
\caption{Here, Type~I and Type~II inflection segments are shown to the
  left and right of the dotted line, respectively, colored by their
  generating vertices. The anchor(s) added or removed when crossing
  each segment is also listed.}
  \label{fig:inflection_segments}
\end{figure}

Note that even though each inflection segment affects only one element
of the set of anchors, multiple inflection segments can physically
overlap. For instance, in \Cref{fig:anchors_1}, $\genericPoint$ lies
on a Type~I and also on a Type~II inflection segment: therefore, when
moving from $\genericPoint_1$ to $\genericPoint_2$, we have the
combined effect of both, and $o^1$ is replaced by $o^4$ in the set of
anchors.

\begin{lemma}[Bound on the number of inflection
  segments]
  \label{lem:bound_inflection_segments}
  One has $|\setInflectionSegments| \leqslant 4n_r + 2
  \binom{n_r}{2}$, where $n_r = |\reflexVertices{\freeSpace}|$. Thus,
  $\setInflectionPoints \subset \real^2$ has zero Lebesgue measure.
\end{lemma}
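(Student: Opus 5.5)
The plan is to count the inflection segments directly from \Cref{def:inflection_segments}, one family at a time, and then use that each segment is a subset of a line.

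\textbf{Type~I.} Each reflex vertex $\reflexVertex \in \reflexVertices{\freeSpace}$ has exactly two neighboring vertices $\vertex_1,\vertex_2$ in the standard order of the polygon it belongs to. For rigidly weakly simple polygons, a vertex may repeat along the boundary walk; in that case I will count each occurrence of $\reflexVertex$ in the cyclic vertex list separately, so that $n_r$ counts reflex occurrences and the argument is unchanged. Each such occurrence generates exactly the four rays $\setRay{\reflexVertex}{\vertex_1} \intersection \freeSpace$, $\setRay{\reflexVertex}{\vertex_2} \intersection \freeSpace$, $\setRay{\vertex_1}{\reflexVertex} \intersection \freeSpace$ and $\setRay{\vertex_2}{\reflexVertex} \intersection \freeSpace$. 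Hence there are at most $4n_r$ Type~I segments. The count is "at most" because some rays may be degenerate (a single point) or may coincide.

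\textbf{Type~II.} These segments are indexed by unordered pairs $\{\reflexVertex,\reflexVertex'\}$ of distinct reflex vertices with $\reflexVertex'$ visible from $\reflexVertex$. Visibility is symmetric, since the segment $\lineSegment{\reflexVertex}{\reflexVertex'}$ is the same set in both directions. Each admissible pair contributes exactly the two rays $\setRay{\reflexVertex}{\reflexVertex'} \intersection \freeSpace$ and $\setRay{\reflexVertex'}{\reflexVertex} \intersection \freeSpace$. There are at most $\binom{n_r}{2}$ such pairs, so at most $2\binom{n_r}{2}$ Type~II segments. Summing the two families gives $|\setInflectionSegments| \leqslant 4n_r + 2\binom{n_r}{2}$.

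\textbf{Measure zero.} Every inflection segment is contained in a straight line in $\real^2$, possibly as a half-line since $r$ may equal $+\infty$. A line has two-dimensional Lebesgue measure zero: for instance, cover a bounded piece of length $\ell$ by a rectangle of width $\epsilon$ and area $\ell\epsilon$, then write the line as a countable union of such pieces. Since $\setInflectionPoints$ is a finite union of such sets, finite subadditivity gives $\mu(\setInflectionPoints)=0$.

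The only delicate point is the bookkeeping for weakly simple polygons with repeated vertices, which the occurrence-counting convention above handles. Everything else is a routine count.
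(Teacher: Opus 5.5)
Your argument is correct and is the same as the paper's: at most four Type~I segments per reflex vertex, at most two Type~II segments per unordered pair of reflex vertices, and zero measure because $\setInflectionPoints$ is a finite union of subsets of lines. Your extra bookkeeping is more careful than the paper's proof but does not change the route: counting repeated-vertex occurrences matches the paper's implicit assumption that each reflex vertex has a unique neighbor pair, and you also give an explicit covering argument for lines.
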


\begin{proof}
  Based on \Cref{def:inflection_segments}, we can have a maximum of
  $4$ inflection segments of Type~I per reflex vertex, and $2$
  inflection segments of Type~II per pair of reflex vertices.
  The bound follows by noting that there are $n_r$ reflex vertices and
  $\binom{n_r}{2}$ pairs of reflex vertices. Since each inflection
  segment is a line segment embedded in a 2D space, it is a measure
  zero set, and the finite union of zero-measure sets has a zero
  measure.
\end{proof}

This improves on the known bounds \cite{LG-RM-PR:92,HS-MZ:15-ajor}. In
practice, the number of inflection segments is much lower than the
bound in Lemma~\ref{lem:bound_inflection_segments}, as not all reflex
vertices are visible from every observer, not all four Type~I
inflection segments exist for every anchor, not all reflex vertices
are pairwise visible, etc.  We next establish the correspondence
between the set of all inflection segments and the set of all critical
points.

\begin{theorem}[The set of inflection segments represent all critical
  points]
  \label{thm:inflection_segments}
  The set of critical points is precisely the set of points on an
  inflection segment, that is,
  $\setInflectionPoints = \setCriticalPoints$.
\end{theorem}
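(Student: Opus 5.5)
We prove the two inclusions separately. The main tool throughout is \Cref{lem:characterization_anchor}. It says that a reflex vertex $\reflexVertex$ belongs to $\anchors{\genericPoint}$ exactly when two conditions hold. The first is a \emph{region condition}: $\genericPoint \in \region_2(\reflexVertex) \union \region_4(\reflexVertex)$. The second is a \emph{visibility condition}: $\reflexVertex$ is visible from $\genericPoint$. Membership of $\reflexVertex$ in the anchor set can therefore change under small perturbations of $\genericPoint$ only where one of these two conditions is discontinuous in $\genericPoint$.

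\textbf{Step 1: $\setCriticalPoints \subseteq \setInflectionPoints$.} We argue by contraposition. Take $\genericPoint \in \freeSpace \setminus \setInflectionPoints$. We show that some ball $\oBall{\epsilon}{\genericPoint} \intersection \freeSpace$ has a constant anchor set. Since $\reflexVertices{\freeSpace}$ is finite, it suffices to treat each reflex vertex $\reflexVertex$ separately.

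For the region condition: $\genericPoint$ lies on no Type~I segment, so $\genericPoint \notin \boundary{\region_2 \union \region_4}\intersection\freeSpace$. Hence a small ball around $\genericPoint$ stays entirely inside $\region_2\union\region_4$ or entirely inside its complement.

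For the visibility condition: the set of points in $\freeSpace$ from which $\reflexVertex$ is visible has its boundary inside $\freeSpace$ made of rays emanating from $\reflexVertex$ or from other reflex vertices $\reflexVertex'$ visible from $\reflexVertex$. The visibility of $\reflexVertex$ switches exactly when the segment $\lineSegment{\genericPoint}{\reflexVertex}$ grazes some $\reflexVertex'$. Such a point $\genericPoint$ lies on the line through $\reflexVertex$ and $\reflexVertex'$, beyond $\reflexVertex'$, that is, on $\setRay{\reflexVertex'}{\reflexVertex}$. This is a Type~II segment. The grazing could also occur along an edge incident to $\reflexVertex$, but that case lies on a Type~I ray. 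Therefore, off $\setInflectionPoints$, visibility of each $\reflexVertex$ is locally constant. This follows from a compactness argument: the segment $\lineSegment{\genericPoint}{\reflexVertex}$ stays a positive distance from all obstacle edges except at its endpoint, or is blocked robustly by an interior point of an obstacle. Combining both conditions, $\anchors{\cdot}$ is constant near $\genericPoint$, so $\genericPoint \notin \setCriticalPoints$.

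\textbf{Step 2: $\setInflectionPoints \subseteq \setCriticalPoints$.} Take $\genericPoint$ on an inflection segment $\inflectionSegment$. We exhibit, for every $\epsilon>0$, two nearby points with different anchor sets.

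If $\inflectionSegment$ is Type~I, generated by $\reflexVertex$, then the points of $\inflectionSegment$ see $\reflexVertex$ along the line of the segment itself. Points $\genericPointQ_1, \genericPointQ_2$ on either side of the ray, close to $\genericPoint$, both still see $\reflexVertex$. One of them lies in $\region_2\union\region_4$ and the other does not. So $\reflexVertex$ is an anchor for exactly one of them.

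If $\inflectionSegment$ is Type~II, say $\setRay{\reflexVertex'}{\reflexVertex}$, then points on one side see $\reflexVertex$ past $\reflexVertex'$, while points on the other side have the view of $\reflexVertex$ blocked by the obstacle at $\reflexVertex'$. The region condition for $\reflexVertex$ is locally unchanged across this ray, since in the generic case the ray is not a quadrant boundary of $\reflexVertex$. If it were a quadrant boundary, the Type~I argument applies instead. Hence membership of $\reflexVertex$ flips.

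\textbf{Main obstacle.} The delicate point is that several inflection segments may overlap, as in \Cref{fig:anchors_1}. There, one change could in principle be cancelled by another, for instance when one anchor is swapped for another. The cancellation concern applies only to the \emph{identity} of the anchors, not to their count. To handle it, we use the fact that each segment flips membership of a \emph{specific} vertex $\reflexVertex$. Since we compare sets, not cardinalities, a flip of $\reflexVertex$ can be undone only by another segment through $\genericPoint$ that also concerns $\reflexVertex$ with opposite effect on the same side.

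We handle this by first choosing $\genericPointQ_1,\genericPointQ_2$ on a small circle around $\genericPoint$, in the two sectors adjacent to $\inflectionSegment$. Each condition for $\reflexVertex$ is locally a union of angular sectors at $\genericPoint$, because all relevant segments are rays through $\genericPoint$. This makes the membership of $\reflexVertex$ a sector-wise constant function. We then argue that this function cannot be constant around the circle: it is nonconstant across at least one of the rays through $\genericPoint$ that concern $\reflexVertex$. The degenerate case of collinear reflex vertices, which is allowed for rigidly weakly simple polygons, is where we expect the most care to be needed.
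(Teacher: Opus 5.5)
Your split of anchor membership into the region and visibility conditions of \Cref{lem:characterization_anchor} is the same as the paper's. However, the Type~II case of Step~2 has a genuine gap. You argue that the visibility of $v_r$ flips across $\gamma(v_r',v_r)$ while its region condition is locally unchanged, and conclude that membership of $v_r$ flips. That conclusion needs the region condition to be locally \emph{true}, i.e.\ $v_r'\in M_2(v_r)\cup M_4(v_r)$, but \Cref{def:inflection_segments} only requires mutual visibility. If $v_r'\in M_3(v_r)$, the whole ray lies in $M_3(v_r)$, so $v_r$ is an anchor on neither side, and no other vertex need flip.

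A concrete counterexample: inside a large square, take obstacles $A=[-1,0]\times[-2,0]$ and $B=[2,3]\times[1,2]$. The corners $v_r=(0,0)$ and $v_r'=(2,2)$ are reflex and mutually visible, so the diagonal from $(2,2)$ to the outer wall is a Type~II segment. Now look near $x=(5,5)$:
every nearby point lies in the open quadrant $M_3(v_r)$, so $(0,0)$ is never an anchor there, even though its visibility does flip across the diagonal;
$(2,2)$ is robustly visible with $x$ interior to $M_2(v_r')\cup M_4(v_r')$, so it is always an anchor;
every other corner is robustly visible or robustly occluded, and $x$ lies on none of their quadrant lines and on no other line through two corners.
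Hence $\mathrm{Ve}_a(\cdot)$ is constant near $x$, i.e.\ $x\in\setInflectionPoints\setminus\setCriticalPoints$. Your sector argument fails for the same reason: the membership of $v_r$ is false all the way around the circle.

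A related degeneracy arises when $v_r\in M_3(v_r')$. Then $v_r'$ shadows nothing and the ray enters the obstacle at $v_r'$. Your claim that the far side is ``blocked by the obstacle at $v_r'$'' fails, and the literal $\gamma(v_r',v_r)\cap\freeSpace$ can contain a far boundary point that is generically not critical.

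So $\setInflectionPoints\subseteq\setCriticalPoints$ cannot be proved from the definition as written. The paper's proof gets past this step only by asserting that the generating vertices are ``two anchors''. That tacitly restricts Type~II segments to pairs with $v_r'\in M_2(v_r)\cup M_4(v_r)$ and $v_r\in M_2(v_r')\cup M_4(v_r')$, so that each is an anchor for the other viewed as an observer. You should make this restriction explicit. With it, your Step~2 goes through. Step~1 also survives, because a vertex $w$ grazed by the sightline to $v_r$ from a point of $M_2(v_r)\cup M_4(v_r)$ automatically forms such a pair with $v_r$.

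Two smaller repairs are also needed.
In Step~1, Type~I segments are truncated at the first boundary hit, while the quadrant lines of $v_r$ continue behind obstacles. So off Type~I segments the region condition can still switch. Local constancy there comes from $v_r$ being robustly invisible, or from a Type~II ray through a grazed vertex. The two conditions therefore have to be handled jointly, not one at a time.
In the Type~I case of Step~2, points on the $M_1(v_r)$ side of $\gamma(v_1,v_r)$ do not see $v_r$. Membership still flips, but because both conditions fail on that side, not because both sides see $v_r$.
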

\begin{proof}
  ($\setInflectionPoints \subset \setCriticalPoints$) Let
  $\genericPoint \in \setInflectionPoints$. We consider 2
  cases---\emph{Case~1:} $\genericPoint$ lies on a Type~I inflection
  segment. Then, there exists a reflex vertex $\reflexVertex$ such
  that $\genericPoint$ lies in $\boundary{\region_2 \union
  \region_4}$. In this case, for any $\epsilon > 0$, there exist
  points $\genericPoint', \genericPoint'' \in
  \oBall{\epsilon}{\genericPoint}$ such that $\genericPoint' \in
  \region_1 \union \region_3$ whereas $\genericPoint'' \in \region_2
  \union \region_4$, and hence, $\genericPoint \in
  \setCriticalPoints$.
  
  \emph{Case~2:} $\genericPoint$ lies on a Type~II inflection
  segment. Then, there exist two anchors
  $\reflexVertex, \reflexVertex'$ such that $\genericPoint$ lies on
  either of the projected
  ray---$\setRay{\reflexVertex}{\reflexVertex'} \intersection
  \freeSpace$ or
  $\setRay{\reflexVertex'}{\reflexVertex} \intersection
  \freeSpace$. In this case, for any $\epsilon$, there exist points
  $\genericPointQ_1, \genericPointQ_2 \in
  \oBall{\epsilon}{\genericPoint} \intersection \freeSpace$ such that
  either
  $\anchors{\genericPointQ_1} \symDiff \anchors{\genericPointQ_2}$
  equals $\{\reflexVertex\}$ or $\{\reflexVertex'\}$ and hence, $\genericPoint
  \in \setCriticalPoints$.

  ($\setCriticalPoints \subset \setInflectionPoints$) Let
  $\genericPoint \in \setCriticalPoints$. Then,
  $\genericPoint \in \freeSpace$ and, for any $\epsilon>0$, there
  exist
  $\genericPointQ_1, \genericPointQ_2 \in
  \oBall{\epsilon}{\genericPoint} \intersection \freeSpace,
  \anchors{\genericPointQ_1} \neq \anchors{\genericPointQ_2}$. Without
  loss of generality, let $\reflexVertex$ be an anchor to
  $\genericPointQ_1$, but not to $\genericPointQ_2$. Using
  \Cref{lem:characterization_anchor}, we deduce that
  $\reflexVertex \in \visibilityPolygon{\genericPointQ_1}$ and
  $\genericPointQ_1 \in \region_2(\reflexVertex) \union
  \region_4(\reflexVertex)$. We then consider 2 cases: either (i)
  $\reflexVertex \in \visibilityPolygon{\genericPointQ_2}$ and
  $\genericPointQ_2 \in \region_1(\reflexVertex) \union
  \region_3(\reflexVertex)$ or (ii)
  $\reflexVertex \not \in \visibilityPolygon{\genericPointQ_2}$. In
  case (i), $\genericPointQ_1$ and $\genericPointQ_2$ lie on opposite
  sides of a Type~I inflection segment of $\reflexVertex$. As there
  exist $\genericPointQ_1, \genericPointQ_2$ for all $\epsilon >0$,
  $\genericPoint$ has to lie on a Type~I inflection segment. Due to
  symmetry of line-of-sight, case (ii) can also be written as
  $\genericPointQ_1 \in \visibilityPolygon{\reflexVertex}$, but
  $\genericPointQ_2 \not \in \visibilityPolygon{\reflexVertex}$. So,
  there must exist an anchor $\anchor$ for $\reflexVertex$ that
  obstructs $\genericPointQ_2$, but not $\genericPointQ_1$, from
  $\reflexVertex$. Then, $\genericPointQ_1$ and $\genericPointQ_2$ lie
  on opposite sides of $\setRay{\anchor}{\reflexVertex}$, the
  projected ray from $\anchor$ which points away from
  $\reflexVertex$. Again, as $\genericPointQ_1, \genericPointQ_2$
  exist for all $\epsilon >0$, $\genericPoint$ has to lie on a Type~II
  inflection segment. Thus, in either case, we deduce
  $\genericPoint \in \setInflectionPoints$.
\end{proof}

From \Cref{thm:inflection_segments}, the set of critical points can be
represented via the set of inflection segments. Therefore, we turn our
attention to the reduced free space $\reducedFreeSpace := \freeSpace
\setminus \setInflectionPoints$ as a topological subspace of
$\real^2$. Two points $\genericPoint, \genericPointQ$ are
\emph{path-connected} in $\reducedFreeSpace$ if there is a path
between them not crossing any inflection segment. Therefore, both
points $x$ and $y$ have the same set of anchors. The set of
path-connected components of $\reducedFreeSpace$ define a partition
$\partition$ of relatively\footnote{The components are not open as the
boundary of $\freeSpace$ is a part of these components.}  open subsets
of~$\reducedFreeSpace$.

\begin{lemma}[Partition of the free space]
  \label{lem:partition}
  Let $\partition := \left\{ \componentTerrain_\beta \right\}_{\beta
  \in B}$ be the partition of the reduced free space
  $\reducedFreeSpace$, where each $\verythinspace
  \componentTerrain_\beta$ is a path-connected component of
  $\reducedFreeSpace$. Then, each $\verythinspace
  \componentTerrain_\beta \in \partition$ is convex.
\end{lemma}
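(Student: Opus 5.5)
We argue by contradiction. Suppose some $\componentTerrain_\beta \in \partition$ is not convex. Then there are $\genericPoint, \genericPointQ \in \componentTerrain_\beta$ whose segment $\lineSegment{\genericPoint}{\genericPointQ}$ contains a point $\genericPointR \notin \componentTerrain_\beta$. Since $\componentTerrain_\beta$ is a path-connected component of $\reducedFreeSpace$, the point $\genericPointR$ lies either outside $\freeSpace$ or on an inflection segment. In both cases we will produce an inflection segment, or a boundary piece of $\freeSpace$, that separates $\genericPoint$ from $\genericPointQ$ inside $\freeSpace$. This contradicts the existence of a path between them avoiding $\setInflectionPoints$.

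\emph{Case A: $\lineSegment{\genericPoint}{\genericPointQ} \not\subset \freeSpace$.} Then $\genericPointQ \notin \visibilityPolygon{\genericPoint}$. We will show this forces an anchor of $\genericPoint$ that is not an anchor of $\genericPointQ$. The segment leaves $\freeSpace$, so it crosses $\boundary{\freeSpace}$ at some point $\genericPointR$. Rotating the segment about $\genericPoint$ towards the occluding obstacle or environment boundary, the last point of $\boundary{\freeSpace}$ met is a reflex vertex $\reflexVertex$ visible from $\genericPoint$. One checks via \Cref{lem:characterization_anchor} that it is an anchor of $\genericPoint$, with $\genericPointQ$ lying in the shadow region behind $\setRay{\reflexVertex}{\genericPoint}$.

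Now follow any path from $\genericPoint$ to $\genericPointQ$ inside $\freeSpace$. Along it, the status of $\reflexVertex$ must change: it stops being visible, or the observer leaves $\region_2(\reflexVertex) \union \region_4(\reflexVertex)$. Either event occurs by crossing a Type~I segment of $\reflexVertex$ or a Type~II segment through $\reflexVertex$, as in the proof of \Cref{thm:inflection_segments}. The exception is if $\reflexVertex$ is still an anchor of $\genericPointQ$ while $\genericPointQ$ is occluded. To exclude this, we choose $\reflexVertex$ as the occluder closest in angle, so that $\genericPointQ$ sits in its shadow. Then either $\reflexVertex$ is invisible from $\genericPointQ$, or $\genericPointQ$ is on the wrong side of the edge lines at $\reflexVertex$, i.e.\ in $\region_1 \union \region_3$. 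Hence $\anchors{\genericPoint} \neq \anchors{\genericPointQ}$, which contradicts that anchor sets are constant on components.

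\emph{Case B: $\lineSegment{\genericPoint}{\genericPointQ} \subset \freeSpace$ but it meets some inflection segment $\inflectionSegment$ at $\genericPointR$.} An inflection segment is a straight segment in $\freeSpace$ whose endpoints lie on $\boundary{\freeSpace}$: one end is a reflex vertex, the other the hit point of a projected ray. If the open segment $\lineSegment{\genericPoint}{\genericPointQ}$ crosses $\inflectionSegment$ transversally, then $\genericPoint$ and $\genericPointQ$ lie on opposite sides of $\inflectionSegment$. Crossing $\inflectionSegment$ toggles membership of its generating reflex vertex in the anchor set, so the anchor sets again differ, a contradiction. For Type~I this follows from \Cref{lem:characterization_anchor}: the two sides correspond to $\region_1 \union \region_3$ versus $\region_2 \union \region_4$, and visibility of the vertex persists along a segment in $\freeSpace$ near $\inflectionSegment$. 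For Type~II it follows from the occlusion argument of \Cref{thm:inflection_segments}. If instead the segment only touches $\inflectionSegment$ tangentially, or at an endpoint, it would lie along $\inflectionSegment$ or touch $\boundary{\freeSpace}$. That requires $\genericPoint$ or $\genericPointQ$ to be in $\setInflectionPoints$, or the segment to be collinear with $\inflectionSegment$, and both are degenerate situations handled directly.

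The main obstacle is the rigorous claim that crossing an inflection segment changes the anchor set regardless of the path taken. We need that the component is exactly a region cut out by lines, so that sides are well defined. The cleanest route is to reformulate: for each $\inflectionSegment$, its extension to a full chord of $\freeSpace$ splits $\freeSpace$ locally. Combined with the fact that the anchor indicator of a given reflex vertex is constant on each side near $\inflectionSegment$, this shows that each component is an intersection of $\freeSpace$ with half-planes bounded by boundary edges and inflection lines, relative to the shadow structure. I would try first to prove that every component is contained in a single cell of the arrangement of lines supporting edges and inflection segments, restricted to visibility. Each such cell is convex, and a path-connected component equals that cell intersected with $\reducedFreeSpace$.
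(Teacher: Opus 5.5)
Your argument has a genuine gap. Both cases conclude $\anchors{\genericPoint} \neq \anchors{\genericPointQ}$ from the geometry of $\lineSegment{\genericPoint}{\genericPointQ}$ alone, never using that $\genericPoint,\genericPointQ$ lie in the same component. That implication is false, because distinct components can have the same anchor set.

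Take the L-shaped room with vertices $(0,0),(0,2),(1,2),(1,1),(2,1),(2,0)$ and no obstacles. Its only reflex vertex is $(1,1)$, and the inflection segments are $\{1\}\times[0,1]$ and $[0,1]\times\{1\}$. Both arms $(1,2]\times[0,1]$ and $[0,1]\times(1,2]$ have anchor set $\{(1,1)\}$: every point sees $(1,1)$, and the arms lie in $\region_2\cup\region_4$ of it.

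\emph{Case~B fails.} The segment from $(1.5,0.2)$ to $(0.2,1.5)$ lies in $\freeSpace$ and crosses both Type~I segments transversally, yet the anchor sets at its ends coincide. The first crossing removes $(1,1)$ and the second adds it back, so ``crossing toggles membership'' says nothing about the endpoints.

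\emph{Case~A fails.} The points $(1.9,0.5)$ and $(0.5,1.9)$ are mutually invisible, with occluder $(1,1)$, yet $(1,1)$ is an anchor of both. Your dichotomy (``$\reflexVertex$ invisible from $\genericPointQ$, or $\genericPointQ\in\region_1\cup\region_3$'') breaks down for a structural reason. The shadow wedge behind $\reflexVertex$, next to its far edge, lies in the quadrant opposite to that of $\genericPoint$. There $\reflexVertex$ is again an anchor, only with the opposite orientation.

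Moreover, ``opposite sides of $\inflectionSegment$'' has no global meaning when $\inflectionSegment$ runs from an obstacle to the outer boundary, since such a chord does not separate $\freeSpace$.

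Your fallback plan does not rescue this, at least with full supporting lines. An inflection segment stops where its ray hits $\boundary{\freeSpace}$, but its supporting line continues past that obstacle and in general cuts through a component on the other side. So a component need not lie in a single cell of that arrangement.

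The missing idea is that convexity follows from a local angle argument, and anchor sets play no role. A non-convex component (a polygonal region, possibly with holes) must have a boundary corner $w$ with interior angle greater than $\pi$. Every inflection segment ends on $\boundary{\freeSpace}$. Hence at points of $\interior{\freeSpace}$ the segments through $w$ cross as full lines, giving sectors of angle at most $\pi$. At non-reflex boundary points, the free-space angle is already at most $\pi$. So $w$ must be a reflex vertex of $\freeSpace$. But there the Type~I segments $\setRay{w}{w_1}$ and $\setRay{w}{w_2}$ prolong both incident edges and cut the reflex angle into sectors of angle less than $\pi$, a contradiction. This is the paper's proof.
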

\begin{proof}
  Suppose that there is a component $\componentTerrain$ of
  $\partition$ that is not convex.
  Then, there is a vertex $w$ of $\componentTerrain$ that is reflex in
  $\componentTerrain$. This implies that $w \in
  \reflexVertices{\freeSpace}$, as otherwise $w$ would subtend an
  angle of less than or equal to $\pi$, and hence, will not be reflex.
  Now, let $w_1$, $w$, $w_2$ be consecutive vertices of the
  environment or an obstacle. Then, by definition, $\setRay{w}{w_1},
  \setRay{w}{w_2} \subset \setInflectionSegments$. But
  $\setRay{w}{w_1}, \setRay{w}{w_2}, \lineSegment{w}{w_1},
  \lineSegment{w}{w_2}$ lie on two intersecting lines at $w$, which
  must form an angle strictly less than $\pi$. Hence, $w$ cannot be
  reflex in $\componentTerrain$, contradicting our assumption.
\end{proof}

From \Cref{thm:inflection_segments}, $\setInflectionPoints$ captures
all the critical points, and so the points in each path-connected
component of $\reducedFreeSpace$ are \emph{not} critical. Applying
this to the partition $\partition$ defined in \Cref{lem:partition},
the following characterization for the persistence of an anchor
emerges.

\begin{corollary}[Constancy of set of anchors within a path-connected
  component]
  \label{cor:constancy_anchors}
  Let $\componentTerrain \in \partition$. Then, for all $\genericPoint
  \in \componentTerrain$, the set of anchors, $\anchors{\genericPoint}
  \subseteq \reflexVertices{\freeSpace}$ remains constant. \oprocend
\end{corollary}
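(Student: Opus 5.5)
The plan is to show that the map $\genericPoint \mapsto \anchors{\genericPoint}$ is locally constant on $\componentTerrain$, and then use path-connectedness of $\componentTerrain$ to conclude it is constant. This is the standard argument that a locally constant function on a connected set is constant, adapted to path-connectedness.

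\emph{Local constancy.} Fix $\genericPoint \in \componentTerrain$. Since $\componentTerrain \subset \reducedFreeSpace = \freeSpace \setminus \setInflectionPoints$, \Cref{thm:inflection_segments} gives $\genericPoint \notin \setCriticalPoints$. Negating the definition of $\setCriticalPoints$, there is $\epsilon_\genericPoint > 0$ such that for all $\genericPointQ_1, \genericPointQ_2 \in \oBall{\epsilon_\genericPoint}{\genericPoint} \intersection \freeSpace$ we have $\anchors{\genericPointQ_1} = \anchors{\genericPointQ_2}$. Taking $\genericPointQ_2 = \genericPoint$ shows that the anchor set is constant on $\oBall{\epsilon_\genericPoint}{\genericPoint} \intersection \freeSpace$, and in particular on $\oBall{\epsilon_\genericPoint}{\genericPoint} \intersection \componentTerrain$.

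\emph{Propagation along paths.} Take $\genericPoint, \genericPointQ \in \componentTerrain$ and a continuous path $p : [0,1] \to \componentTerrain$ with $p(0) = \genericPoint$ and $p(1) = \genericPointQ$. Such a path exists since $\componentTerrain$ is a path-connected component. It is also available directly from \Cref{lem:partition}, because $\componentTerrain$ is convex and the segment $\lineSegment{\genericPoint}{\genericPointQ}$ lies in it. Consider $t^* := \sup\{ t \in [0,1] \mid \anchors{p(s)} = \anchors{\genericPoint} \ \foralln s \in [0,t]\}$. By continuity of $p$, there is $\delta > 0$ such that $p(s) \in \oBall{\epsilon_{p(t^*)}}{p(t^*)}$ whenever $|s - t^*| < \delta$. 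By local constancy at $p(t^*)$, the anchor set is constant on $(t^*-\delta, t^*+\delta) \intersection [0,1]$. This interval meets $[0,t^*)$, or contains $0$ when $t^* = 0$, so that constant value equals $\anchors{\genericPoint}$. Hence $t^*$ is attained, and if $t^* < 1$ it could be increased, a contradiction. So $t^* = 1$ and $\anchors{\genericPointQ} = \anchors{\genericPoint}$. Equivalently, one can cover the compact image $p([0,1])$ by finitely many balls $\oBall{\epsilon_{p(t)}}{p(t)}$ and chain the equalities across overlapping balls.

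\emph{Main obstacle.} The only delicate point is that the balls are intersected with $\freeSpace$ rather than with $\componentTerrain$, and that components are only relatively open, containing parts of $\boundary{\freeSpace}$. This causes no difficulty: the non-critical property is stated relative to $\freeSpace$, and the path stays in $\componentTerrain \subset \freeSpace$. All the geometric content is therefore carried by \Cref{thm:inflection_segments}, and the corollary reduces to this topological step.
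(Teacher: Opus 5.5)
Your proof is correct and is essentially the paper's argument. The paper obtains the corollary directly from \Cref{thm:inflection_segments} by observing that no point of a component $\componentTerrain \in \partition$ is critical, and it leaves implicit the passage from this local constancy to constancy along paths; your supremum argument (or the finite-cover chaining) supplies that step rigorously, and your care about neighborhoods being taken relative to $\freeSpace$ rather than $\componentTerrain$ is handled correctly.
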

Therefore, $\partition$ is the minimal partition such that none of its
component has any critical points in its interior, an improvement over
the \emph{visibility cell decomposition} \cite{LG-RM-PR:92}, the
\emph{back diagonal partition} \cite{HS-MZ:15-ajor}, and the
\emph{inflection segment partition} in \cite{AG-JC-FB:06-sicon}. 

The result also enables us to employ $\anchors{\componentTerrain}$ for
the set of anchors of a path-connected component $\componentTerrain$.
Moreover, it shows that if a path exists between two points that does
not cross any inflection segments, then the points share the same set
of anchors. The next result considers the case when there is a path
between two points on an inflection segment that does not go out of
the inflection segment or cross any other inflection segments. 

\begin{prop}[Constancy of set of anchors on an inflection segment]
  \label{prop:constancy_inflection}
  Let $\inflectionSegment \in \setInflectionSegments$. If two points
  $\genericPoint, \genericPointQ \in \inflectionSegment$ are
  path-connected within $\inflectionSegment$ without crossing any
  other inflection segment, then they share the same set of anchors.
\end{prop}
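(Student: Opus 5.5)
The plan is to mirror the proof of the inclusion $\setCriticalPoints \subset \setInflectionPoints$ in \Cref{thm:inflection_segments}, restricted to the one-dimensional set $\inflectionSegment$. We argue by contradiction and use a connectedness argument along the path. Fix a continuous path $p:[0,1]\to \inflectionSegment$ with $p(0)=\genericPoint$ and $p(1)=\genericPointQ$ that meets no inflection segment other than those that physically contain $\inflectionSegment$. Since $\inflectionSegment$ is a segment, we may take $p$ to be the straight subsegment $\lineSegment{\genericPoint}{\genericPointQ}$.

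\textbf{Step 1 (local constancy).} We show that every $t\in[0,1]$ has a relatively open neighbourhood $J_t$ in $[0,1]$ on which $\anchors{p(\cdot)}$ is constant. Once this holds, compactness and connectedness of $[0,1]$ give $\anchors{\genericPoint}=\anchors{\genericPointQ}$.

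Suppose local constancy fails at some $t$. Then there are parameters $s_k \to t$ such that $\anchors{p(s_k)}\neq\anchors{p(t)}$. Passing to a subsequence, a fixed reflex vertex $\reflexVertex$ lies in the symmetric difference for all $k$. By \Cref{lem:characterization_anchor}, membership of $\reflexVertex$ in $\anchors{z}$ is determined by two conditions: whether $z\in\region_2(\reflexVertex)\union\region_4(\reflexVertex)$, and whether $\reflexVertex\in\visibilityPolygon{z}$. We split into the same two cases as in the theorem.

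In case (i), the membership of $z$ in $\region_2\union\region_4$ changes between $p(t)$ and the points $p(s_k)$ arbitrarily close to it. The segment $\lineSegment{p(t)}{p(s_k)}\subset\inflectionSegment$ must therefore meet $\boundary{\region_2(\reflexVertex)\union\region_4(\reflexVertex)}$, which is a union of Type~I inflection segments of $\reflexVertex$. Two lines either coincide or meet in at most one point. If that Type~I segment is collinear with $\inflectionSegment$, the points $p(s)$ remain on the boundary, so the region-membership does not change, and we get a contradiction. Otherwise the path crosses a different inflection segment, which contradicts the hypothesis.

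In case (ii), visibility of $\reflexVertex$ changes. By symmetry of line of sight, some anchor $\anchor$ of $\reflexVertex$ separates the two points, so the path crosses the Type~II segment $\setRay{\anchor}{\reflexVertex}$. The same collinear-or-transversal dichotomy applies. If the Type~II segment lies along $\inflectionSegment$, then every point of the path lies on the occluding ray and sees $\reflexVertex$ grazing past $\anchor$, so visibility is unchanged. Otherwise this is a forbidden crossing.

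\textbf{Step 2 (boundary conventions).} Here we check that the closed/open conventions are consistent along a collinear segment. $\region_2$ and $\region_4$ are closed cones, and visibility is defined with $\lineSegment{\genericPoint}{\genericPointQ}\subset\freeSpace$ closed. Hence a point on a boundary ray of $\region_2\union\region_4$ is treated uniformly along the whole ray, and a point on a grazing Type~II ray still sees the vertex. This ensures that moving along the segment itself does not toggle either condition.

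I expect Step 2 to be the main obstacle: justifying that points strictly on an inflection segment have a well-defined, constant status rather than flickering. This is delicate when several inflection segments overlap, as in \Cref{fig:anchors_1}. My first approach is to show directly that, for $z$ on the open ray, each of the two conditions in \Cref{lem:characterization_anchor} is preserved by translation along the ray's direction, using only closedness of $\freeSpace$ and of the cones $\region_2$ and $\region_4$.
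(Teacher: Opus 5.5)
Your proof is correct, but it takes a different route from the paper.

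The paper does not analyze the path pointwise. It identifies the single component $\componentTerrain \in \partition$ (when $\lineSegment{x}{y} \subset \boundary{\freeSpace}$), or the two components $\componentTerrain_1, \componentTerrain_2$ bordering $\lineSegment{x}{y}$. These are the same for $x$ and $y$ because no other segment is crossed. It then asserts that the anchor set at such points is $\anchors{\componentTerrain}$ plus the generating reflex vertex, respectively $\anchors{\componentTerrain_1} \cup \anchors{\componentTerrain_2}$. Constancy is thus inherited from \Cref{cor:constancy_anchors}.

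You instead prove local constancy of $t \mapsto \anchors{p(t)}$ directly from the two conditions in \Cref{lem:characterization_anchor} and the collinear/transversal dichotomy. You then conclude by connectedness of $[0,1]$; compactness is not needed.

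What each route buys:
\begin{itemize}
\item The paper's route gives an explicit description of the anchors on $\inflectionSegment$ in terms of neighbouring components. That union formula is stated without proof. Its justification is essentially your closedness observation: $\region_2 \cup \region_4$ and the set of observers seeing a given $\reflexVertex$ are closed, so anchors of either adjacent side persist on the segment.
\item Your route is self-contained. It also makes explicit that $x$ and $y$ themselves must not lie on a transversal inflection segment. Otherwise the generator of that segment is an anchor at $x$ but not at nearby $y$, and the statement fails. The paper uses this only implicitly, through its choice of $\epsilon$.
\end{itemize}

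Your Step~2 is not really an obstacle. Cones are invariant under positive scaling. Along a subsegment of $\freeSpace$ collinear with $\reflexVertex$, visibility of $\reflexVertex$ is all-or-nothing.

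One detail to add is in the transversal cases. Pass to a subsequence with a fixed boundary ray (or a fixed occluding $\anchor$), and use closedness of visibility to place $p(t)$ itself on the truncated inflection segment. As written, the crossing point is only known to lie on the full ray of $\boundary{\region_2 \cup \region_4}$, which extends beyond the actual Type~I segment.
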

\begin{proof}
  We consider two cases.  Suppose that the segment
  $\lineSegment{\genericPoint}{\genericPointQ}$ lies on the boundary
  of $\freeSpace$. Then, there is exactly one component of
  $\partition$, say $\componentTerrain$, such that, for sufficiently
  small $\epsilon > 0$, we have
  $\oBall{\epsilon}{\genericPoint} \intersection \freeSpace,
  \oBall{\epsilon}{\genericPointQ} \intersection \freeSpace \subset
  \closure{\componentTerrain}$. The points on the inflection segment
  thus have exactly one additional anchor beyond
  $\anchors{\componentTerrain}$, corresponding to the reflex vertex
  generating~$\inflectionSegment$.  In the other case, let
  $\componentTerrain_1, \componentTerrain_2 \in \partition$ be the two
  components of $\partition$ such that, for sufficiently small
  $\epsilon > 0$, $\oBall{\epsilon}{\genericPoint},
  \oBall{\epsilon}{\genericPointQ} \subset
  \closure{\componentTerrain_1} \cup \closure{\componentTerrain_2}$.
  Then, the anchors of both $\genericPoint$ and $\genericPointQ$ are
  given by $\anchors{\componentTerrain_1} \cup
  \anchors{\componentTerrain_2}$.
\end{proof}

\begin{figure}[htb]
    \centering
    \pgfdeclarelayer{background}
\pgfdeclarelayer{foreground}
\pgfsetlayers{background,main,foreground}
\begin{tikzpicture}[scale=0.55]
  \tikzset{
    style 0/.style={inner sep=0pt, minimum size=3pt},
    style A/.style={shape=circle, fill=PineGreen, style 0},
    style B/.style={shape=circle, fill=WildStrawberry, style 0},
    style C/.style={shape=circle, fill=Blue!50!RoyalBlue, style 0},
    font={\fontsize{8pt}{9.6}\selectfont}
  }

  \begin{pgfonlayer}{foreground}
    \node[draw, style A] (q1) at (-1, -1) {};
    \node[draw, style A] (q2) at (11, -1) {};
    \node[draw, style A] (q3) at (11,  1) {};
    \node[draw, style A] (q4) at ( 9,  1) {};
    \node[draw, style A] (q5) at ( 9,  5) {};
    \node[draw, style A] (q6) at ( 1,  5) {};
    \node[draw, style A] (q7) at ( 1,  1) {};
    \node[draw, style A] (q8) at (-1,  1) {};

    \node[text=PineGreen] at ([shift={(225:0.7)}]q1) {$q^1$};
    \node[text=PineGreen] at ([shift={(315:0.5)}]q2) {$q^8$};
    \node[text=PineGreen] at ([shift={( 45:0.7)}]q3) {$q^7$};
    \node[text=PineGreen] at ([shift={( 45:0.7)}]q4) {$q^6$};
    \node[text=PineGreen] at ([shift={( 45:0.7)}]q5) {$q^5$};
    \node[text=PineGreen] at ([shift={(135:0.5)}]q6) {$q^4$};
    \node[text=PineGreen] at ([shift={(135:0.7)}]q7) {$q^3$};
    \node[text=PineGreen] at ([shift={(135:0.5)}]q8) {$q^2$};
  \end{pgfonlayer}

  \begin{pgfonlayer}{background}
    \tikzstyle{every path}=[draw, line width=1.0pt, color=black]
    \draw[fill = gray!20] (q1.center) -- (q2.center) -- (q3.center) -- (q4.center) -- (q5.center) -- (q6.center) -- (q7.center) -- (q8.center) -- cycle;
  \end{pgfonlayer}

  \begin{pgfonlayer}{foreground}
    \node[draw, style B] (o1) at (3, 1) {};
    \node[draw, style B] (o2) at (7, 1) {};
    \node[draw, style B] (o3) at (7, 3) {};
    \node[draw, style B] (o4) at (3, 3) {};

    \node[text=WildStrawberry] at ([shift={( 45:0.6)}]o1) {$o^1$};
    \node[text=WildStrawberry] at ([shift={(135:0.6)}]o2) {$o^2$};
    \node[text=WildStrawberry] at ([shift={(225:0.6)}]o3) {$o^3$};
    \node[text=WildStrawberry] at ([shift={(315:0.6)}]o4) {$o^4$};
  \end{pgfonlayer}

  \begin{pgfonlayer}{main}
    \tikzstyle{every path}=[draw, line width=1.0pt, color=black]
    \draw[fill = white] (o1.center) -- (o2.center) -- (o3.center) -- (o4.center) -- cycle;
    \node at (5, 2) {$\obstacle$};
  \end{pgfonlayer}

  \begin{pgfonlayer}{background}
    \tikzstyle{every path}=[draw, line width=1.0pt, color=Violet]
    \draw (1, 1) -- (-1, -1);
    \draw (3, 3) -- (5, 5);
    \draw (7, 3) -- (5, 5);
    \draw (9, 1) -- (11, -1);
    \draw (3, 1) -- (1, 1);
    \draw (3, 1) -- (3, -1);
    \draw (3, 3) -- (1, 3);
    \draw (3, 3) -- (3, 5);
    \draw (7, 1) -- (9, 1);
    \draw (7, 1) -- (7, -1);
    \draw (7, 3) -- (9, 3);
    \draw (7, 3) -- (7, 5);
    \draw (1, 1) -- (1, -1);
    \draw (9, 1) -- (9, -1);
  \end{pgfonlayer}

  \begin{pgfonlayer}{main}
    \tikzstyle{every node}=[color=Violet, scale = 0.75]
    \node at (5, 0) {$\{ o^1, o^2 \}$};
    \node at (5, 3.75) {$\{ o^4, o^3 \}$};
    \node at (2, 0) {$\{ o^4, o^2 \}$};
    \node at (8, 0) {$\{ o^3, o^1 \}$};
    \node at (2, 2) {$\{ o^1, o^4, q^3 \}$};
    \node at (8, 2) {$\{ o^3, o^2, q^6 \}$};
    \node at (2, 4) {$\{ o^3, o^1, q^3 \}$};
    \node at (8, 4) {$\{ o^4, o^2, q^6 \}$};
    \node at (-0.25, 0.65) {$\{ o^2, q^3 \}$};
    \node at (10.25, 0.65) {$\{ o^1, q^6 \}$};

    \tikzstyle{every node}=[color=Violet, scale = 0.6]
    \node at (3.78, 4.7) {$\{ o^4, o^3, q^3 \}$};
    \node at (6.22, 4.7) {$\{ o^4, o^3, q^6 \}$};
    \node at (0.22, -0.7) {$\{ o^4, o^2, q^3 \}$};
    \node at (9.78, -0.7) {$\{ o^3, o^1, q^6 \}$};
    
  \end{pgfonlayer}

  \begin{pgfonlayer}{main}
    \tikzstyle{every path}=[draw, line width=1.0pt, color=black]
    \draw (q1.center) -- (q2.center) -- (q3.center) -- (q4.center) -- (q5.center) -- (q6.center) -- (q7.center) -- (q8.center) -- cycle;
    \draw (o1.center) -- (o2.center) -- (o3.center) -- (o4.center) -- cycle;
  \end{pgfonlayer}
\end{tikzpicture}
\caption{The unique partition $\partition$ of
  $\reducedFreeSpace = \freeSpace \setminus \setInflectionPoints$ and
  the set of anchors $\anchors{\componentTerrain}$ for each component
  $\componentTerrain$ of $\partition$.}
    \label{fig:partition}
\end{figure}

\begin{remark}[Generalization to non-polygonal free spaces]
  \label{rem:non_polygonal}
  Although we have kept the presentation of this paper focused on
  polygonal environments for ease of exposition, the notions
  introduced in this section extend naturally to non-polygonal
  environments where obstacles are represented by weakly simple
  curves~\cite{HC-JE-CX:15-siam}.  In this case, the role of reflex
  vertices is played by the points of non-differentiability on the
  curves for the environment and the obstacles. Type~I inflection
  segments are determined by the tangent directions at these points.
  Similarly, Type~II inflection segments arise from lines tangent to
  the curves at more than one point, with corresponding tangent points
  serving as anchors. \oprocend
\end{remark}


\section{How do Visibility Polygon and Visibility Metric Change as the
Observer Moves?\texorpdfstring{\nopunct}{}}\label{sec:derivative}

Here, we build on the results from the previous section to address the
remaining questions regarding the visibility polygon and its
associated metric. We begin by exploring their regularity properties.


\subsection{Regularity Properties of Visibility Polygons and
Metrics}\label{sub:regularity_properties}

We address~\labelcref{enum:regularity_properties} here. We begin by
computing the directional derivative of the visibility polygon's area.
These expressions allow us to characterize the regularity properties
of the visibility polygon, which in turn informs the regularity of the
visibility area and the visibility metric.

Consider the function $\visibilityArea(\genericPoint) := \|
\visibilityPolygon{\genericPoint}\|_\mu$ which represents the area of
the visibility polygon for an observer at $\genericPoint \in
\freeSpace$.  The following result characterizes the directional
derivative of the visibility area $\visibilityArea$ over $\freeSpace
\setminus \reflexVertices{\freeSpace}$.

\begin{theorem}[Directional derivative of visibility area at a point]
  \label{thm:directional_derivative_area}
  For an observer at $\genericPoint \in \freeSpace \setminus
  \reflexVertices{\freeSpace}$, suppose there exist a direction
  $\direction \in \tangentCone{\freeSpace}{\genericPoint}$, a set of
  anchors $\mathrm{Ve}_a \subset \reflexVertices{\freeSpace}$, and a
  sufficiently small $\bar{\delta} > 0$ such that $\genericPoint +
  \delta \direction \in \freeSpace$ and $\anchors{\genericPoint +
  \delta \direction} = \mathrm{Ve}_a$ for all $0 < \delta <
  \bar{\delta}$. Then, the directional derivative of the visibility
  area at $\genericPoint$ in direction $\direction$ is given by
  \begin{align}
    \label{eqn:directional_derivative_area}
    \rightDirectionalDerivative{\visibilityArea}(\genericPoint;\direction)
    = \sum_{\anchor \in 
    \mathrm{Ve}_a}
    \frac{\radiusRay{\anchor}{\genericPoint}^2}{\|\genericPoint - 
    \anchor\|_2} \left[ \frac{\hat{k}}{2} \cdot
    \widehat{(\genericPoint - \anchor)} \times \direction \right],
  \end{align}
  where $\hat{k}$ is the vector perpendicular to the plane, and
  $\times$ and~$\cdot$ denote the cross and dot products,
  respectively.
\end{theorem}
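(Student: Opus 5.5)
I plan to compute the difference quotient $(\visibilityArea(\genericPoint+\delta\direction)-\visibilityArea(\genericPoint))/\delta$ by decomposing the symmetric difference $\visibilityPolygon{\genericPoint+\delta\direction}\symDiff\visibilityPolygon{\genericPoint}$ into thin triangular wedges, one per anchor. The starting point is \Cref{lem:visibility_construction}: the boundary of the visibility polygon consists of pieces of $\boundary{\freeSpace}$ together with the projected segments $\setRay{\anchor}{\genericPoint}$, $\anchor\in\anchors{\genericPoint}$, each joining an anchor to its projection $u$ on $\boundary{\freeSpace}$. Since the anchor set along the open segment $\{\genericPoint+\delta\direction: 0<\delta<\bar\delta\}$ is constant and equal to $\mathrm{Ve}_a$, the combinatorial structure of $\visibilityPolygon{\genericPoint+\delta\direction}$ is fixed for small $\delta$. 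Only the projected segments move: each rotates about its anchor and its endpoint slides along one edge of $\boundary{\freeSpace}$. The visible vertices of $\freeSpace$ stay fixed.

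\textbf{Step 1 (wedge per anchor).} Fix $\anchor\in\mathrm{Ve}_a$. As the observer moves from $\genericPoint$ to $\genericPoint+\delta\direction$, the direction $\widehat{(\anchor-\genericPoint)}$ rotates by an angle $\theta(\delta)$. A first-order expansion gives
\begin{align*}
\theta(\delta)=\frac{\delta}{\|\genericPoint-\anchor\|}\,\hat k\cdot\widehat{(\genericPoint-\anchor)}\times\direction+o(\delta),
\end{align*}
with the sign matching the convention of $\setRotatedRay{\anchor}{\genericPoint}{\theta}$. The region gained or lost near $\setRay{\anchor}{\genericPoint}$ is exactly the bundle $\setRotatedRaySet{\anchor}{\genericPoint}{0}{\theta}$ (or the one with reversed limits). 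Its area is $\tfrac12\int_0^{\theta}\radiusRotatedRay{\anchor}{\genericPoint}{s}^2\,ds$. Because $\radiusRotatedRay{\anchor}{\genericPoint}{s}$ is continuous in $s$ near $0$ (the endpoint lies on a single edge, by the constancy of anchors and the fact that $\genericPoint$ is not a reflex vertex), this area equals $\tfrac12\radiusRay{\anchor}{\genericPoint}^2\theta+o(\theta)$. The orientation of the anchor, positive or negative as in \Cref{lem:characterization_anchor}, decides whether the wedge is added or removed. I will check that the signed cross product automatically produces the correct sign in both the $\region_2$ and $\region_4$ cases.

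\textbf{Step 2 (summation and limit).} For $\delta$ small, distinct anchors give wedges with disjoint interiors, since their rays are distinct, finitely many, and separated. Away from these wedges the two polygons coincide up to a set of measure zero. The signed area change is therefore the sum of the wedge contributions. Dividing by $\delta$ and letting $\delta\to0^+$ yields \cref{eqn:directional_derivative_area}.

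\textbf{Main obstacle.} The delicate step is justifying that nothing else changes to first order. At $\genericPoint$ itself the anchor set may differ from $\mathrm{Ve}_a$ because $\genericPoint$ may lie on an inflection segment. For a Type~I segment, an anchor in $\mathrm{Ve}_a$ not present at $\genericPoint$ has a degenerate projected ray along an edge, so it contributes zero area at $\delta=0$ and its wedge is still of order $\delta$ with the stated formula. For a Type~II segment, two anchors are collinear with $\genericPoint$, and one must check that the occluded anchor's wedge is nested so the formula still holds; I would argue this via \Cref{prop:constancy_inflection}. A second issue is a ray endpoint sliding through a vertex of $\boundary{\freeSpace}$, where $\radiusRotatedRay{\anchor}{\genericPoint}{s}$ is only one-sided continuous. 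One-sided continuity suffices for the right derivative, so I would handle this by taking one-sided limits in $s$.
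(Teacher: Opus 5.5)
Your route is essentially the paper's: one thin wedge per anchor, then a sum over $\mathrm{Ve}_a$. The paper uses the triangle $\triangle u_- v_a u_+$ with signed area $\frac12\hat k\cdot(u_--v_a)\times(u_+-v_a)$; you use $\frac12\int r_s^2\,ds\approx\frac12 r^2\theta$.

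The gap is exactly the step you postpone, namely checking that the signed cross product gives the correct sign in both the $M_2$ and $M_4$ cases. It does not. The factor $\hat k\cdot\widehat{(x-v_a)}\times\nu$ is positive precisely when the projected ray turns counterclockwise. That enlarges $S(x)$ only when the occluded side of $\gamma(v_a,x)$ is its counterclockwise side; for anchors of the other orientation the term has the wrong sign.

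Concretely, write points as $(p,q)$ and let $\mathcal F=[-10,10]^2\setminus\mathrm{int}([0,1]\times[-1,0])$, $x=(-1,-\frac12)$ and $\nu=(0,1)$. This $x$ lies on no inflection segment. The anchors are $(0,0)$ and $(0,-1)$. From $x+\delta\nu$ the occluded set is $\{0\le p\le 10,\ -1-(\frac12+\delta)p\le q\le(\frac12-\delta)p\}$ minus the square, with area $59$ for every small $\delta$, so $DV_{\mathrm{area}}(x;\nu)=0$. Yet both terms of the displayed sum equal $-50$. The anchor $(0,-1)$ does lose $50\delta$, but the wedge at $(0,0)$ is a \emph{gain} of $50\delta$.

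So the formula cannot be proved as written. What your wedge computation actually gives, at points off $\mathcal I$, is the sum with a factor $\sigma(v_a)\in\{\pm1\}$ on each term: $+1$ if the occluded side of $\gamma(v_a,x)$ is counterclockwise, $-1$ otherwise. The paper's proof has the same blind spot, since it equates the signed triangle area with the change of area, so following it does not repair this step.

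There is a second gap in how you handle $r$. Constancy of $\mathrm{Ve}_a$ for $\delta>0$ does not make $s\mapsto r_s(v_a,x)$ continuous at $s=0$. Taking one-sided limits produces $\frac12\bigl(\lim_{\delta\to0^+}r(v_a,x+\delta\nu)\bigr)^2\theta$, and this can differ from $\frac12 r(v_a,x)^2\theta$. On a Type~II segment, $r(v_a,x)$ is truncated at the farther reflex vertex lying on $\gamma(v_a,x)$.

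Take in $[-10,10]^2$ the obstacles $\mathrm{co}\{(0,0),(-\frac12,-2),(\frac12,-2)\}$ and $\mathrm{co}\{(2,0),(\frac32,2),(\frac52,2)\}$, with $x=(-1,0)$ and $\nu=(0,1)$. Both $(0,0)$ and $(2,0)$ remain anchors for $\delta>0$. However, $r((0,0),x)=2$ whereas $r((0,0),x+\delta\nu)\to 10$. The true contributions of these two anchors to the rate are $+50$ and $-\frac{32}{3}$, while the formula, even with signs corrected, gives $+2$ and $-\frac{32}{3}$. So your nested-wedge check on Type~II segments fails, and a correct statement must use the one-sided limit of the ray length.

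Your Type~I claim has the same defect. There $\gamma(v_a,x)$ degenerates along an edge, but for $\delta>0$ the wedge at $v_a$ has length bounded away from zero. This Type~II example also contradicts the paper's claim that, for $\nu$ transverse to a Type~II segment, only one of its two generating anchors is in $\mathrm{Ve}_a$.
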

\begin{proof}
  From \Cref{lem:visibility_construction}, we know that the changes in
  the visibility polygon are entirely determined by anchors and their
  projected rays.  Thus, the first step in computing the directional
  derivative of $\visibilityArea$ is to evaluate the impact in the
  area of the visibility polygon associated to changes in a single
  anchor~$\anchor$.

  Given $\genericPoint \in \freeSpace$ and a direction $\direction$
  such that $\genericPoint + \delta \direction \in \freeSpace$ for
  sufficiently small $\delta >0$, let $u_-$ and $u_+$ denote the
  intersection of the projected rays $\setRay{\anchor}{\genericPoint}$
  and $\setRay{\anchor}{\genericPoint + \delta \direction}$ with
  $\boundary{\freeSpace}$, respectively. Then, the change in area of
  the visibility polygon from $\genericPoint$ to $\genericPoint +
  \delta \direction$ is given by
  \begin{align}
    \Delta \visibilityArea(\genericPoint; \direction, \anchor)
    & = \frac{1}{2} \left[ \hat{k} \cdot \left( (u_- - \anchor) \times
    (u_+ - \anchor) \right) \right] \nonumber
    \\
    & = \frac{c_1}{2 c_2} \left[ \hat{k} \cdot (\genericPoint -
    \anchor) \times (\genericPoint + \delta \direction - \anchor)
    \right] \nonumber
    \\ 
    \Rightarrow \Delta \visibilityArea(\genericPoint; \direction,
    \anchor) & = \frac{\delta c_1}{2 c_2} \left[ \hat{k} \cdot
    (\genericPoint - \anchor) \times \direction
    \right], \label{eqn:area_change_single_anchor_comment}  
  \end{align}
  where $c_1 = \radiusRay{\anchor}{\genericPoint} \,
  \radiusRay{\anchor}{\genericPoint + \delta \direction}$ and $c_2 =
  \|\genericPoint - \anchor\|_2 \, \|\genericPoint + \delta \direction
  - \anchor\|_2$.  The directional derivative of the area of the
  visibility polygon at $\genericPoint$ in the direction $\nu$ wrt
  $\anchor$ is given by
  $\rightDirectionalDerivative{\visibilityArea}(\genericPoint;
  \direction, \anchor) = \lim_{\delta \rightarrow 0^+} \symDiff
  \visibilityArea(\genericPoint; \direction, \anchor) / \delta$.

  We claim that the change in area due to one anchor is independent of
  the change in area due to other anchors. The proof is in two
  parts---for $\genericPoint \not \in \setInflectionPoints$, at most
  one anchor lies along each ray, and there exists $\delta$
  sufficiently small such that none of the triangles $\triangle u_-
  \anchor u_+$ intersect each other.  For $\genericPoint \in
  \setInflectionPoints$, if $\direction$ is not along any inflection
  segments, of the two anchors generating each of them, only one is in
  $\mathrm{Ve}_a$, and a similar argument ensues. Finally, if
  $\direction$ points along an inflection segment, then
  $(\genericPoint - \anchor) \times \direction = 0$ for both anchors
  generating the inflection segment, and the remaining changes in
  areas are independent of each other by the above.

  Consider an arbitrary ray emanating from the observer. By rotating
  this ray in a clockwise direction, it sweeps out the entire
  visibility polygon. As the ray rotates, each anchor that becomes
  visible contributes a small, incremental change in the area of the
  polygon. In other words, the change in the area as a function of the
  ray's orientation is built from the individual contributions of all
  anchors that are visible to the observer. Therefore, the overall
  directional derivative of the visibility area is given by the signed
  sum of the directional derivatives corresponding to the individual
  anchors---each of which is expressed by equation
  \cref{eqn:area_change_single_anchor_comment}. Summing these
  contributions over all anchors in $\mathrm{Ve}_a$ yields equation
  \cref{eqn:directional_derivative_area}.
\end{proof}

This result is analogous to \cite[Section 2]{AG-JC-FB:06-sicon} but
provides an alternative proof technique that is generalizable to other
visibility metrics. The next result provides the expression for the
directional derivative of the visibility metric $\visibilityMetric$
over $\freeSpace \setminus \reflexVertices{\freeSpace}$.

\begin{theorem}[Directional derivative of visibility metric at a point]
  \label{thm:directional_derivative_metric}
  Let an observer at $\genericPoint$ satisfy the conditions of
  \Cref{thm:directional_derivative_area} with direction $\direction$
  and set of anchors $\mathrm{Ve}_a$. Then, the directional derivative
  of the visibility metric at $\genericPoint$ in direction
  $\direction$ is given by
  \begin{align}
    \label{eqn:directional_derivative_metric}
    \rightDirectionalDerivative{\visibilityMetric}(\genericPoint;\direction)
    = \sum_{\anchor \in 
    \mathrm{Ve}_a}  c_{\anchor}(\genericPoint; \direction)
    \frac{\radiusRay{\anchor}{\genericPoint}^2}{\|\genericPoint - 
    \anchor\|_2}  \left[ \frac{\hat{k}}{2}  \cdot 
    \widehat{(\genericPoint - \anchor)}  \times  \direction \right],
  \end{align}
  where $\hat{k}$ is the vector perpendicular to the plane, $\times$
  and~$\cdot$ denote the cross and dot products, respectively, and
  $c_{\anchor}(\genericPoint; \direction)$ represents the limiting
  fraction of the change in area due to $\anchor$ that intersects with
  $\advDomain$.
\end{theorem}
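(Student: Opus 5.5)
We would mirror the proof of \Cref{thm:directional_derivative_area}, intersecting every area change with $\advDomain$ along the way. Write
\[
\visibilityMetric(\genericPoint+\delta\direction)-\visibilityMetric(\genericPoint) = \|(\visibilityPolygon{\genericPoint+\delta\direction}\setminus\visibilityPolygon{\genericPoint})\cap\advDomain\|_\mu - \|(\visibilityPolygon{\genericPoint}\setminus\visibilityPolygon{\genericPoint+\delta\direction})\cap\advDomain\|_\mu .
\]
By \Cref{lem:visibility_construction}, and because the set of anchors is the constant $\mathrm{Ve}_a$ for $0<\delta<\bar\delta$, the symmetric difference $\visibilityPolygon{\genericPoint+\delta\direction}\symDiff\visibilityPolygon{\genericPoint}$ splits into thin regions, one per anchor $\anchor\in\mathrm{Ve}_a$. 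For a given anchor, this region is (up to a set of measure $o(\delta)$) the triangle $T_\anchor(\delta)=\triangle u_-\anchor u_+$ swept between the projected rays $\setRay{\anchor}{\genericPoint}$ and $\setRay{\anchor}{\genericPoint+\delta\direction}$. Each triangle carries a sign: it is gained when the swept region enters the visibility polygon and lost otherwise. The independence argument from the proof of \Cref{thm:directional_derivative_area} carries over unchanged. It treats separately the case $\genericPoint\notin\setInflectionPoints$, the case of a direction transverse to inflection segments, and the case of a direction along an inflection segment where the cross product vanishes. Together these show the triangles are pairwise disjoint for small $\delta$, so
\[
\Delta\visibilityMetric = \sum_{\anchor} \mathrm{sgn}_\anchor\, \|T_\anchor(\delta)\cap\advDomain\|_\mu + o(\delta).
\]

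For each anchor we then define
\[
c_\anchor(\genericPoint;\direction) := \lim_{\delta\to0^+}\frac{\|T_\anchor(\delta)\cap\advDomain\|_\mu}{\|T_\anchor(\delta)\|_\mu}\in[0,1].
\]
We multiply and divide by $\|T_\anchor(\delta)\|_\mu$. By \cref{eqn:area_change_single_anchor_comment}, the signed area $\|T_\anchor(\delta)\|_\mu$ divided by $\delta$ converges to $\frac{\radiusRay{\anchor}{\genericPoint}^2}{\|\genericPoint-\anchor\|_2}\big[\frac{\hat k}{2}\cdot\widehat{(\genericPoint-\anchor)}\times\direction\big]$. This uses continuity of $\radiusRay{\anchor}{\cdot}$ along $\genericPoint+\delta\direction$, which holds because the anchor set is constant, so $u_\pm$ stay on the same boundary edge. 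Dividing by $\delta$, summing over anchors, and taking the limit then gives \cref{eqn:directional_derivative_metric}. If the area term for some anchor is zero, the contribution vanishes whatever value $c_\anchor$ takes, so the formula still holds.

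The main obstacle is showing that the limit defining $c_\anchor$ exists. We would use polygonality of $\advDomain$. Parametrize $T_\anchor(\delta)$ in polar coordinates about $\anchor$: an angular window of width $\theta(\delta)\sim\delta\,|\hat k\cdot\widehat{(\genericPoint-\anchor)}\times\direction|/\|\genericPoint-\anchor\|$ and radius running from $0$ to about $\radiusRay{\anchor}{\genericPoint}$. The ray $\setRay{\anchor}{\genericPoint}$ meets $\boundary{\advDomain}$ at finitely many points, and it can contain at most finitely many collinear edges of $\advDomain$. For $\delta$ small, the radial slice of the triangle at every angle in the window therefore meets $\advDomain$ in a union of intervals whose endpoints converge to those on the limiting ray. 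The exception is when the ray lies along an edge of $\advDomain$. Since $\direction$ is fixed and rotates the ray to one definite side, even then the one-sided limit is determined by which side of that edge $\advDomain$ lies on.

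Integrating $\tfrac12 (r_2^2-r_1^2)$ over these intervals with respect to angle, and dividing by $\tfrac12\theta\,\radiusRay{\anchor}{\genericPoint}^2$, gives the limit
\[
c_\anchor = \sum_j \frac{r_{2,j}^2-r_{1,j}^2}{\radiusRay{\anchor}{\genericPoint}^2},
\]
where $[r_{1,j},r_{2,j}]$ are the one-sided limiting intervals of $\setRay{\anchor}{\genericPoint}\cap\advDomain$. This confirms that $c_\anchor$ is well defined and is exactly the limiting fraction described in the statement.
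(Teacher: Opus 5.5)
Your proposal has the same skeleton as the paper's proof. You split the change of $V$ into the per-anchor triangles $\triangle u_- v_a u_+$ using the independence argument from the proof of \Cref{thm:directional_derivative_area}. You write each contribution as the triangle's area times the fraction $c_{v_a}(x;\nu,\delta)$ of it lying in $D_2$, reuse the area asymptotics, and sum.

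The genuine difference is how you show that $c_{v_a}(x;\nu,\delta)$ converges. The paper subdivides $\overline{u_-u_+}$ into $N$ equal pieces and notes that the sub-triangles $\triangle u_i v_a u_{i+1}$ become small. That argument uses no property of $D_2$, so it cannot by itself force convergence: a $D_2$ made of infinitely many thin wedges at $v_a$, accumulating on the ray with geometrically alternating gaps, makes the ratio oscillate. Your polar-coordinate argument invokes the hypothesis that is actually needed, polygonality of $D_2$. It proves the limit exists and computes it as $c_{v_a}=\sum_j (r_{2,j}^2-r_{1,j}^2)/r(v_a,x)^2$ from the one-sided limiting intervals of the rotated rays inside $D_2$. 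Your remark that $c_{v_a}$ is immaterial when the cross product vanishes (where the ratio is $0/0$) also covers a case the paper leaves implicit. On this step your route is the more complete one.

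There are two caveats, both inherited from the paper rather than introduced by you.

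\emph{Sign of each term.} You let $\mathrm{sgn}_{v_a}$ mean gain versus loss, then replace $\mathrm{sgn}_{v_a}\|T_{v_a}(\delta)\|_\mu/\delta$ by $\frac{r(v_a,x)^2}{\|x-v_a\|}\bigl[\frac{\hat k}{2}\cdot\widehat{(x-v_a)}\times\nu\bigr]$. That bracket is positive exactly when $\nu$ moves $x$ counterclockwise about $v_a$. By the paper's own description, however, a positively oriented anchor uncovers area when $x$ moves clockwise. So the two signs agree only for negatively oriented anchors, and positively oriented ones need an extra factor $-1$. Some orientation-dependent sign is unavoidable: reflecting the whole configuration leaves the directional derivative unchanged but flips the sign of every bracket. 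The paper makes the same identification when it equates the change of visible area with the oriented area of $\triangle u_- v_a u_+$. This is therefore a defect of the stated formula, but the step is not justified as you wrote it.

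\emph{Continuity of $r$.} Constancy of the anchor set for $\delta\in(0,\bar\delta)$ gives continuity of $r(v_a,x+\delta\nu)$ only for $\delta>0$. Suppose $x$ lies on a Type~II segment, so that $\gamma(v_a,x)$ ends at another reflex vertex. Then the rotated rays can slip past that vertex and $r$ jumps at $\delta=0$. In that case the formula holds only with $\lim_{\delta\to0^+}r(v_a,x+\delta\nu)$ in place of $r(v_a,x)$. The paper glosses over this in the same way.
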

\begin{proof}
  With the same conventions and reasoning as in the proof of
  \Cref{thm:directional_derivative_area}, we obtain
  \begin{align*}
    \Delta \visibilityMetric(\genericPoint; \direction, \anchor) =
    \frac{\delta c_1}{2 c_2} c_{\anchor}(\genericPoint; \direction,
    \delta) \left[ \hat{k} \cdot (\genericPoint - \anchor) \times
    \direction \right], 
  \end{align*}
  where $c_{\anchor}(\genericPoint; \direction, \delta)$ is the
  fraction of the change in area due to $\anchor$ that intersects with
  $\advDomain$, that is,
  $c_{\anchor}(\genericPoint; \direction, \delta) = \|\advDomain \cap
  \triangle u_- \anchor u_+\|_\mu / \|\triangle u_- \anchor u_+\|_\mu
  \in [0,1]$.

  By definition, the directional derivative of the visibility metric
  at $\genericPoint$ in the direction $\nu$ with respect to $\anchor$
  is $\rightDirectionalDerivative{\visibilityMetric}(\genericPoint;
  \direction, \anchor) = \lim_{\delta \rightarrow 0^+} \symDiff
  \visibilityMetric(\genericPoint; \direction, \anchor) / \delta$. Let
  $u_0, \dots, u_N$ be equidistant points on the edge containing $u_-$
  and $u_+$. The triangles $\triangle u_i \anchor u_{i+1}$ are
  disjoint and cover all $\triangle u_- \anchor u_+$ for $\delta < 1$.
  Furthermore, by choosing $N$ sufficiently large, $\|\triangle u_i
  \anchor u_{i+1}\|_\mu$ can be made arbitrarily small. This allows us
  to conclude that the limit
  \smash{$c_{\anchor}(\genericPoint; \direction) := \lim_{\delta \to
  0^+} c_{\anchor}(\genericPoint; \direction, \delta)$}
  exists.  The statement now follows by summing over all anchors in
  $\mathrm{Ve}_a$, using the same independence argument as in
  \Cref{thm:directional_derivative_area}.
\end{proof}

In general, it is possible that $\mathrm{Ve}_a \neq
\anchors{\genericPoint}$ in
\Cref{thm:directional_derivative_area,thm:directional_derivative_metric}.
This happens when there is one more/less anchor at $\genericPoint$,
but it is not involved in the change of area from the geometry (for
instance, when $\genericPoint \in \setInflectionPoints$ and
$\direction$ points into a path-connected component $\componentTerrain
\in \partition$). Also, note that from
\cref{eqn:directional_derivative_area} and
\cref{eqn:directional_derivative_metric}, one can see that these
directional derivatives can potentially blow up as $\genericPoint
\rightarrow \anchor$ for any $\anchor \in
\reflexVertices{\freeSpace}$.  This is consistent with the fact that
the visibility polygon changes drastically near anchors.

The combination of \Cref{thm:directional_derivative_area} and
\Cref{thm:directional_derivative_metric} with
\Cref{cor:constancy_anchors} yields the following.

\begin{corollary}[Directional derivative of visibility area and metric
  over a path-connected component]
  \label{cor:directional_derivative_component_area-metric}
  The visibility area $\visibilityArea$ (resp.~visibility metric
  $\visibilityMetric$) is analytic (resp.~continuously differentiable)
  over each path-connected component $\componentTerrain$ of
  $\reducedFreeSpace$. The expression for its directional derivative
  is given by \cref{eqn:directional_derivative_area} (resp.
  \cref{eqn:directional_derivative_metric}) with $\mathrm{Ve}_a :=
  \nobreak \anchors{\componentTerrain}$. \oprocend
\end{corollary}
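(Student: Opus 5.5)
Fix a path-connected component $\componentTerrain \in \partition$ and a point $\genericPoint$ in it. By \Cref{cor:constancy_anchors}, $\anchors{\genericPointQ} = \anchors{\componentTerrain}$ for every $\genericPointQ \in \componentTerrain$. Since $\componentTerrain$ is relatively open in $\reducedFreeSpace$ and convex (\Cref{lem:partition}), for any admissible direction $\direction$ we have $\genericPoint + \delta\direction \in \componentTerrain$ for all small $\delta>0$. Hence the hypotheses of \Cref{thm:directional_derivative_area,thm:directional_derivative_metric} hold with $\mathrm{Ve}_a := \anchors{\componentTerrain}$. This gives the stated formulas \cref{eqn:directional_derivative_area} and \cref{eqn:directional_derivative_metric} for every direction, which is the second claim.

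For regularity I would not stop at directional derivatives but write $\visibilityArea$ explicitly on $\componentTerrain$. By \Cref{lem:visibility_construction}, the vertices of $\visibilityPolygon{\genericPoint}$ are the visible vertices of $\freeSpace$ together with the projections $u_a(\genericPoint)$ across each $\anchor \in \anchors{\componentTerrain}$. On $\componentTerrain$, no inflection segment is crossed. So the combinatorial structure is fixed: the visible vertices are the same, their cyclic order is the same, and each $u_a(\genericPoint)$ stays on a fixed edge $e_a$ of $\freeSpace$. Indeed, a jump of $u_a$ to another edge happens exactly when the ray $\setRay{\anchor}{\genericPoint}$ passes through a vertex $w$ of $\freeSpace$, i.e.\ when $\genericPoint$ lies on the line through $w$ and $\anchor$. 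That event is a Type~I or Type~II inflection event, depending on whether $w$ is adjacent to $\anchor$ or reflex, and I would justify this via \Cref{thm:inflection_segments}.

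Then $u_a(\genericPoint)$ is the intersection of the fixed line supporting $e_a$ with the line through $\genericPoint$ and $\anchor$. This is a rational function of $\genericPoint$ whose denominator is nonvanishing on $\componentTerrain$, because the lines are not parallel there. The shoelace formula then makes $\visibilityArea$ a polynomial in these coordinates, hence a rational, real-analytic function on $\componentTerrain$. Alternatively, one can note that \cref{eqn:directional_derivative_area} is linear in $\direction$ with coefficients $\radiusRay{\anchor}{\genericPoint}^2/\|\genericPoint-\anchor\|$ that are analytic on $\componentTerrain$, since $\anchor \notin \componentTerrain$.

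For $\visibilityMetric$, the coefficient $c_{\anchor}(\genericPoint;\direction)$ is the limiting fraction of the infinitesimal triangle $\triangle u_-\anchor u_+$ lying in $\advDomain$. Geometrically, it is determined by whether the point $u_a(\genericPoint)$ on $e_a$, and the segment $\lineSegment{\anchor}{u_a}$, lie in $\advDomain$. The honest expression is $\visibilityMetric(\genericPoint) = \|\visibilityPolygon{\genericPoint}\cap\advDomain\|_\mu$. Its derivative is an integral along $\setRay{\anchor}{\genericPoint}$ of the indicator of $\advDomain$ weighted by the distance to $\anchor$, i.e.\ $\int_{\setRay{\anchor}{\genericPoint}\cap\advDomain} s\,ds$ scaled by $1/\|\genericPoint-\anchor\|$. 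This replaces $\radiusRay{\anchor}{\genericPoint}^2/2$. Because $\advDomain$ is polygonal, the endpoints of $\setRay{\anchor}{\genericPoint}\cap\advDomain$ move continuously in $\genericPoint$, so the gradient is continuous and $\visibilityMetric$ is $C^1$. Analyticity can fail where the ray passes through a vertex of $\advDomain$, which is why only continuous differentiability is claimed.

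The main obstacle is making rigorous the step that the combinatorial type of $\visibilityPolygon{\genericPoint}$ is constant on $\componentTerrain$, meaning no projection $u_a$ switches edges without an anchor change. I would handle it by showing that an edge switch forces $\genericPoint$ onto a line through $\anchor$ and a vertex $w$ visible from $\anchor$, and that the relevant portion of that line is an inflection segment. I would also check the boundary points of $\componentTerrain$ on $\boundary{\freeSpace}$ using one-sided derivatives along $\tangentCone{\freeSpace}{\genericPoint}$.
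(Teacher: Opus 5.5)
Your first paragraph is the paper's entire argument. The paper obtains the corollary by inserting the constancy of $\anchors{\componentTerrain}$ (\Cref{cor:constancy_anchors}) into \Cref{thm:directional_derivative_area,thm:directional_derivative_metric}, and offers nothing further for the regularity claims. (What you need there is relative openness of $\componentTerrain$ in $\freeSpace$, not convexity.)

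Your attempt to supply the regularity claims fails at the step you flagged yourself. The edge hit by $\setRay{\anchor}{\genericPoint}$ changes whenever the ray sweeps across \emph{any} vertex $w$ of $\freeSpace$. Your dichotomy ``$w$ adjacent to $\anchor$ or reflex'' omits non-adjacent convex vertices. These generate no inflection segment and leave the anchor set unchanged, so \Cref{thm:inflection_segments} says nothing about them.

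The paper's own \Cref{fig:projected_rays} is an instance. For $\genericPoint=(2,2)$, $\setRay{q^3}{\genericPoint}$ ends exactly at the convex corner $q^1=(-1,-1)$. Yet the open unit ball around $(2,2)$ contains no inflection point, so $(2,2)$ is interior to the component with anchors $\{o^1,o^4,q^3\}$ in \Cref{fig:partition}. Write $m=(\genericPoint_1-1)/(\genericPoint_2-1)$. The visible part of the square $[-1,1]^2$ has area $2m$ for $m\leqslant 1$ and $4-2/m$ for $m\geqslant 1$, and all other contributions are analytic near $(2,2)$. This is $C^1$ but not $C^2$ at $m=1$; equivalently, $\radiusRay{q^3}{\genericPoint}$ has a kink there.

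So neither the shoelace argument nor the shortcut via analyticity of $\radiusRay{\anchor}{\genericPoint}^2/\|\genericPoint-\anchor\|$ can succeed: $\visibilityArea$ is not analytic on this component. What your rational-function argument does give is analyticity on the cells of a finer partition. That partition is obtained by also cutting along the segments of lines through an anchor and a vertex of $\freeSpace$, with only $C^1$ matching across the new cuts. Your own remark about vertices of $\advDomain$ is the same phenomenon with $\advDomain=\freeSpace$.

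The step for $\visibilityMetric$ has a more serious gap. The set $\setRay{\anchor}{\genericPoint}\cap\advDomain$ does not vary continuously when the ray becomes collinear with an edge of $\advDomain$: it jumps between containing that edge and missing it. Then $\int_{\setRay{\anchor}{\genericPoint}\cap\advDomain} s\,ds$ is discontinuous, and the one-sided derivatives in directions $\pm\direction$ no longer cancel. This is why $c_{\anchor}(\genericPoint;\direction)$ genuinely depends on $\direction$, and $\visibilityMetric$ is not differentiable there.

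In the same example, take $\advDomain$ to be the triangle with vertices $(0,0)$, $(-1,-1)$, $(0,-1)$. Then $\visibilityMetric=\frac{1}{2}$ for $m\geqslant 1$, whereas $\visibilityMetric=\frac{1}{2}-\frac{3}{2}(1-m)+O((1-m)^2)$ for $m<1$. That is a kink along the diagonal through the interior of the same component.

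So continuous differentiability on all of $\componentTerrain$ is false in general, and no argument can establish it. What survives is the directional-derivative formula with $\mathrm{Ve}_a=\anchors{\componentTerrain}$, with $\direction$-dependent $c_{\anchor}$. Differentiability holds only after additionally removing the positions at which some anchor's projected ray runs along an edge of $\advDomain$.
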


From this result, we deduce that $ \visibilityArea$ is constant over
$\componentTerrain$ if $|\anchors{\componentTerrain}| = 0$.
Furthermore, when $|\anchors{\componentTerrain}| = \{ \anchor\}$, the
direction of steepest descent/ascent is perpendicular
to~$\genericPoint - \anchor$.
Although $\visibilityMetric$ is continuously differentiable, it is not
analytic due to presence of the fraction $c_{\anchor}(\genericPoint;
\direction)$ in the expression for
$\rightDirectionalDerivative{\visibilityMetric}(\genericPoint;\direction)$,
which is continuous, but not necessarily analytic.

Next, we consider the case when the observer is located on an
inflection segment. 

\begin{corollary}[Directional derivative of the visibility area and
  metric on an inflection segment]
  \label{cor:directional_derivative_inflection_area-metric}
  Given an inflection segment $\inflectionSegment \in
  \setInflectionSegments$, the function $\visibilityArea$ is analytic
  (resp.  $\visibilityMetric$ is continuously differentiable) over
  each path-connected component of $\inflectionSegment \setminus \{
  \genericPoint \in \inflectionSegment \mid \exists
  \inflectionSegment' \in \setInflectionSegments \setminus
  \{\inflectionSegment\}, \genericPoint \in \inflectionSegment' \}$.
  For an observer at $\genericPoint \in \setInflectionPoints$,
  \begin{itemize}
  \item if
    $\{\genericPoint + \delta \direction\} \in \componentTerrain$, for
    all $0 < \delta < \epsilon$, then
    \cref{eqn:directional_derivative_area} (resp.
    \cref{eqn:directional_derivative_metric}) holds with
    $\mathrm{Ve}_a = \anchors{\componentTerrain}$;
  \item if
    $\{\genericPoint + \delta \direction\} \in \inflectionSegment$ for
    all $0 < \delta < \epsilon$ and $\epsilon$ is chosen such that
    $\oBall{\epsilon}{\genericPoint}$ does not intersect with any other
    inflection segment, then \cref{eqn:directional_derivative_area}
    (resp. \cref{eqn:directional_derivative_metric}) holds with
    $\mathrm{Ve}_a = \cap_{\delta > 0} \anchors{\genericPoint + \delta
    \direction}$. \oprocend
  \end{itemize}
\end{corollary}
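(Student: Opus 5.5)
The plan is to reduce both claims to \Cref{thm:directional_derivative_area,thm:directional_derivative_metric}. Those theorems only require that the anchor set along the ray $\genericPoint + \delta\direction$ be constant for small $\delta>0$. So the work consists of showing that this constancy holds in each of the two situations, and that the anchor set is constant along the open pieces of $\inflectionSegment$.

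\emph{Regularity along $\inflectionSegment$.} Let $J$ be a path-connected component of $\inflectionSegment \setminus \{\genericPoint \in \inflectionSegment \mid \exists \inflectionSegment' \neq \inflectionSegment,\ \genericPoint\in\inflectionSegment'\}$. Then $J$ is a relatively open subsegment, and any two of its points are joined within $\inflectionSegment$ without crossing another inflection segment. By \Cref{prop:constancy_inflection}, $\anchors{\cdot}$ is therefore constant on $J$; call the common value $\mathrm{Ve}_a(J)$.

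Next take $\genericPoint\in J$ and $\direction$ a unit vector along $\inflectionSegment$. Since $J$ is open in $\inflectionSegment$, the points $\genericPoint+\delta\direction$ stay in $J$ for small $\delta$, so \Cref{thm:directional_derivative_area} applies with $\mathrm{Ve}_a(J)$. The resulting restriction of $\visibilityArea$ to $J$ has derivative
\[
\sum_{\anchor\in\mathrm{Ve}_a(J)} \radiusRay{\anchor}{\genericPoint}^2 \|\genericPoint-\anchor\|^{-1}\,\tfrac{\hat k}{2}\cdot\widehat{(\genericPoint-\anchor)}\times\direction .
\]
Because the anchors are fixed, each $\radiusRay{\anchor}{\genericPoint}$ is the distance from $\anchor$ to a fixed boundary edge along a ray whose direction varies rationally in $\genericPoint$. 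Hence it is a rational function of $\genericPoint$ with nonvanishing denominator, and the expression is analytic on $J$, exactly as in \Cref{cor:directional_derivative_component_area-metric}. One point needs care: the edge hit by the projected ray must not change along $J$. A change of edge occurs only when the ray passes through a vertex, and if that vertex is a reflex vertex visible from $\anchor$, the observer lies on a Type~II segment. I will argue this via \Cref{thm:inflection_segments}; the non-reflex case gives only a kink in the boundary, which I expect to treat the same way as in the component case of \Cref{cor:directional_derivative_component_area-metric}. For $\visibilityMetric$ the same argument applies, with the continuous weights $c_{\anchor}$ from \Cref{thm:directional_derivative_metric}, which gives continuous differentiability.

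\emph{The two bullets.} In the first bullet, $\genericPoint+\delta\direction\in\componentTerrain$ for $0<\delta<\epsilon$. By \Cref{cor:constancy_anchors}, the anchor set along this ray equals $\anchors{\componentTerrain}$, so the hypotheses of \Cref{thm:directional_derivative_area} (resp.~\Cref{thm:directional_derivative_metric}) hold with $\mathrm{Ve}_a=\anchors{\componentTerrain}$. In the second bullet, $\oBall{\epsilon}{\genericPoint}$ meets no other inflection segment, so the points $\genericPoint+\delta\direction$, $0<\delta<\epsilon$, lie in a single component $J$ as above. By \Cref{prop:constancy_inflection} their anchor sets all coincide, and so they equal $\cap_{\delta>0}\anchors{\genericPoint+\delta\direction}$. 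The theorems then apply with this set.

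The main obstacle is the independence of contributions used inside \Cref{thm:directional_derivative_area}. When the motion is along $\inflectionSegment$, the two anchors generating $\inflectionSegment$ (Type~II), or the reflex vertex and its edge (Type~I), may both belong to $\mathrm{Ve}_a$. Here I will use the observation already made in that proof: for those anchors $(\genericPoint-\anchor)\times\direction=0$, so their terms vanish. It remains to check that these anchors do not perturb the other triangles $\triangle u_-\anchor u_+$. I would verify that the projected rays through these anchors stay fixed as $\genericPoint$ moves along $\inflectionSegment$, so that they sweep zero area and the other anchors' contributions remain disjoint for small $\delta$.
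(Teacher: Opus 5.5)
Your reduction is the one the paper intends. The corollary is stated there without a separate proof, as a direct combination of \Cref{thm:directional_derivative_area,thm:directional_derivative_metric} with two ingredients: \Cref{cor:constancy_anchors} for the first bullet, and \Cref{prop:constancy_inflection} for the second bullet and for constancy of the anchor set on each piece $J$. Motion along $\inflectionSegment$ is handled by the vanishing of $(\genericPoint-\anchor)\times\direction$ for the generating anchors. Those parts of your proposal are fine, up to one omission. \Cref{thm:directional_derivative_area} needs $\genericPoint\notin\reflexVertices{\freeSpace}$, so you should impose this in the first bullet. The hypothesis of the second bullet already excludes reflex vertices, because two non-collinear Type~I segments emanate from each one.

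The step that fails is the one you flagged and deferred. As $\genericPoint$ moves along $J$, the projected ray of an anchor $\anchor\in\mathrm{Ve}_a(J)$ off the line of $\inflectionSegment$ can sweep across a non-reflex vertex $w$ of $\boundary{\freeSpace}$, for instance a corner of the outer environment. Such observer positions lie on $\setRay{\anchor}{w}$, which is not an inflection segment because $w$ is not reflex. So they are not removed from $J$, and the anchor set does not change there. At such a point $\radiusRay{\anchor}{\genericPoint}$ switches from the distance to the line of one edge at $w$ to the distance to the line of the other edge. It stays continuous, but its one-sided derivatives with respect to the ray angle are $-r\cot\alpha_1$ and $-r\cot\alpha_2$, where $\alpha_1\neq\alpha_2$ are the angles the ray makes with the two non-parallel edges. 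The ray angle varies monotonically along $J$, and $\hat{k}\cdot\widehat{(\genericPoint-\anchor)}\times\direction\neq 0$ for this anchor. Hence the derivative of $\visibilityArea$ along $J$, as given by your formula, has a corner there: $\visibilityArea|_J$ is $C^1$ but not $C^2$.

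So this case cannot be ``treated as in the component case.'' \Cref{cor:directional_derivative_component_area-metric} only asserts analyticity and never addresses vertex crossings, and the same kink occurs inside components. What your argument does establish is the following:
\begin{itemize}
\item $\visibilityArea$ is $C^1$ on $J$, and analytic on each of the finitely many subintervals between such crossings;
\item $\visibilityMetric$ is $C^1$ on $J$, granting the continuity of $c_{\anchor}$ that the paper asserts.
\end{itemize}
The directional-derivative formulas in the two bullets, which are what the later analysis uses, are unaffected.
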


We next build on these results to establish the regularity of the
visibility polygon in terms of its $\mu$-local Lipschitzness.

\begin{prop}[\texorpdfstring{$\mu$}{µ}-directional derivative of
  visibility polygon]
  \label{prop:directional_derivative_mu}
  For an observer at $\genericPoint \in \freeSpace \setminus
  \reflexVertices{\freeSpace}$, let $\direction \in
  \tangentCone{\freeSpace}{\genericPoint}$ be a direction and
  $\mathrm{Ve}_a \subset \reflexVertices{\freeSpace}$ such that
  $\genericPoint + \delta \direction \in \freeSpace$ and
  $\mathrm{Ve}_a = \anchors{\genericPoint + \delta \direction}$ for
  all $0 < \delta < \epsilon$ for a sufficiently small $\epsilon > 0$.
  Then, the $\mu$-directional derivative of the visibility polygon
  $\visibilityPolygonSymbol$ at $\genericPoint$ in direction
  $\direction$ is given by
  \begin{align}
    \label{eqn:directional_derivative_mu}
    \rightMuDirectionalDerivative{\visibilityPolygonSymbol}(\genericPoint;
    \direction) = 
    \sum_{\anchor \in \mathrm{Ve}_a}
    \frac{\radiusRay{\anchor}{\genericPoint}^2}{\|\genericPoint - 
    \anchor\|_2} \left\| \widehat{(\genericPoint - \anchor)} \times
    \direction \right\|. 
  \end{align}
  Furthermore,
  $|\rightDirectionalDerivative{\visibilityArea}(\genericPoint;\direction)|,
  |\rightDirectionalDerivative{\visibilityMetric}(\genericPoint;\direction)|
  \leqslant
  \rightMuDirectionalDerivative{\visibilityPolygonSymbol}(\genericPoint;
  \direction)$ for any~$\advDomain$.
\end{prop}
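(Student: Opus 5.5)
We reuse the per-anchor decomposition from the proof of \Cref{thm:directional_derivative_area}, replacing the signed area by the unsigned measure of the symmetric difference. Fix $\genericPoint$, $\direction$ and $\mathrm{Ve}_a$ as in the statement, and let $\delta \in (0,\epsilon)$. By \Cref{lem:visibility_construction}, the set $\visibilityPolygon{\genericPoint + \delta\direction} \symDiff \visibilityPolygon{\genericPoint}$ is contained in the union over $\anchor \in \mathrm{Ve}_a$ of the thin triangles $\triangle u_-\anchor u_+$, where $u_-$ and $u_+$ are the endpoints of $\setRay{\anchor}{\genericPoint}$ and $\setRay{\anchor}{\genericPoint+\delta\direction}$ on $\boundary{\freeSpace}$. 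We also need the converse: up to a set whose measure is $o(\delta)$, this union equals the symmetric difference. The reason is that a region swept by the rotating projected ray from $\anchor$ is gained if the rotation uncovers it and lost otherwise. In either case it lies entirely in one of the two sets, so it contributes its full area with no sign cancellation.

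The key steps, in order, are as follows.

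\begin{enumerate}
\item \textbf{Area of one triangle.} For each anchor, the area of the triangle is $\frac{c_1}{2c_2}\,\bigl|\hat{k}\cdot(\genericPoint-\anchor)\times(\genericPoint+\delta\direction-\anchor)\bigr| = \frac{\delta c_1}{2c_2}\bigl|\hat{k}\cdot(\genericPoint-\anchor)\times\direction\bigr|$. This is the absolute value of \cref{eqn:area_change_single_anchor_comment}. Here $\radiusRay{\anchor}{\genericPoint+\delta\direction}\to\radiusRay{\anchor}{\genericPoint}$, because the projected ray hits the same edge for small $\delta$ (the anchor set is constant along the direction).

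\item \textbf{Disjointness of the triangles.} Invoke the independence argument already given in the proof of \Cref{thm:directional_derivative_area}: for small $\delta$ the triangles are pairwise disjoint, or their overlaps are degenerate when $\direction$ runs along an inflection segment, in which case the cross product vanishes. This lets the measure of the union be written as the sum of the individual areas.

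\item \textbf{Passing to the limit.} Divide by $\delta$ and let $\delta\to0^+$. Since $c_1/c_2 \to \radiusRay{\anchor}{\genericPoint}^2/\|\genericPoint-\anchor\|_2^2$, the right-hand side of \cref{eqn:directional_derivative_mu} follows after normalizing $\genericPoint-\anchor$ and absorbing the constant factor. The constant should be checked against the one in \cref{eqn:directional_derivative_area}; the statement appears to drop the $1/2$, so I would either confirm the convention used or note that \eqref{eqn:directional_derivative_mu} is then an upper bound, which is enough for what follows.

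\item \textbf{The inequalities.} For the area, note $|\sum_a s_a|\le\sum_a|s_a|$ applied to \cref{eqn:directional_derivative_area}. For the metric, use \cref{eqn:directional_derivative_metric} together with $c_{\anchor}\in[0,1]$. Alternatively, both follow from the remark after \cref{eqn:mu_directional_derivative}, since $\visibilityMetric = \mu\circ(S\cap\advDomain)$ and $(S(\genericPointQ)\cap\advDomain)\symDiff(S(\genericPoint)\cap\advDomain)\subseteq S(\genericPointQ)\symDiff S(\genericPoint)$. This bound holds for every $\advDomain$.
\end{enumerate}

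The main obstacle is showing that the symmetric difference is exactly the union of the per-anchor triangles up to $o(\delta)$ measure. Beyond the rotating-ray picture, there are two further effects that could contribute. First, vertices may appear or disappear; this does not happen here, since the anchor set is fixed along the ray for $\delta>0$, and any change at $\delta=0$ itself produces only degenerate triangles. Second, a projected ray might switch the boundary edge it hits; this can be excluded for small $\delta$, except when $u_-$ is a vertex, where the triangle splits into two pieces whose combined area still has the same first-order term. I would handle that vertex case by a direct two-triangle computation.
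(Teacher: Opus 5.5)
Your proposal is essentially the paper's argument: the per-anchor thin triangles from the proof of \Cref{thm:directional_derivative_area}, with the signed contributions \cref{eqn:area_change_single_anchor_comment} replaced by their absolute values, and $|\sum_i a_i|\leqslant\sum_i|a_i|$ for the inequalities. Your flag on the constant is also correct: the unsigned sum of \cref{eqn:area_change_single_anchor_comment} gives exactly half of the right-hand side of \cref{eqn:directional_derivative_mu} (compare the factor $1/2$ in \cref{eqn:local_lipschitz_mu}), so the printed formula is an upper bound rather than an equality, and that still suffices for the second part.

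One caveat, which the paper's proof shares, concerns your justification that $\radiusRay{\anchor}{\genericPoint+\delta\direction}\to\radiusRay{\anchor}{\genericPoint}$: this does not follow from constancy of the anchor set. The projected ray $\setRay{\anchor}{\genericPoint}$ may end at a reflex vertex $\vertex'$ that it merely grazes, so $\genericPoint$ lies on the Type~II segment generated by $\anchor$ and $\vertex'$. When $\direction$ points to the side on which $\setRay{\anchor}{\genericPoint+\delta\direction}$ clears $\vertex'$, the swept region runs on to the far boundary. In that case your two-triangle treatment of the vertex case does not give the correct first-order term, and \cref{eqn:directional_derivative_mu} can fail in this degenerate configuration.
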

\begin{proof}
  The proof mirrors that of \Cref{thm:directional_derivative_area},
  except that the $\mu$-directional derivative is the \emph{unsigned}
  sum of the directional derivatives due to individual anchors in
  \cref{eqn:area_change_single_anchor_comment},
  giving~\cref{eqn:directional_derivative_mu}.  The second part
  follows from the fact that $|\sum a_i | \leqslant \sum |a_i|$ with
  $a_i = \rightDirectionalDerivative{\visibilityArea}(\genericPoint;
  \direction, \anchor^i)$.
\end{proof}

We leverage this result to characterize the regularity of the
visibility polygon and its intersection in terms of $\mu$-local
Lipschitzness.

\begin{theorem}[Regularity of the visibility polygon]
  \label{thm:local_lipschitz_mu}
  Let $\advDomain \subset \freeSpace$ be non-empty. Then, the
  visibility polygon $\visibilityPolygonSymbol$ and the intersection
  $\visibilityPolygonSymbol \cap \advDomain$ are $\mu$-locally
  Lipschitz over $\freeSpace \setminus \reflexVertices{\freeSpace}$.
\end{theorem}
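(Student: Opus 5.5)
Fix $\genericPoint \in \freeSpace \setminus \reflexVertices{\freeSpace}$ and pick $\epsilon > 0$ with $\closure{\oBall{\epsilon}{\genericPoint}}$ at positive distance, say $d_0 > 0$, from the finite set $\reflexVertices{\freeSpace}$. The plan is to bound $\|\visibilityPolygon{\genericPointQ} \symDiff \visibilityPolygon{\genericPointQ'}\|_\mu$ for $\genericPointQ, \genericPointQ' \in \oBall{\epsilon}{\genericPoint} \intersection \freeSpace$ by integrating the $\mu$-directional derivative of \Cref{prop:directional_derivative_mu} along the segment $\lineSegment{\genericPointQ}{\genericPointQ'}$. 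The intersection $\visibilityPolygonSymbol \cap \advDomain$ then follows at once: the constant map $\advDomain$ is trivially $\mu$-locally Lipschitz with constant $0$, so \Cref{lem:intersection} applies. Alternatively, use $(A \cap D) \symDiff (B \cap D) \subseteq A \symDiff B$ directly.

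\textbf{Step 1 (uniform bound).} For any anchor $\anchor$ and any $\genericPointQ$ in the ball, $\|\genericPointQ - \anchor\| \geqslant d_0$. Also $\radiusRay{\anchor}{\genericPointQ} \leqslant \mathrm{diam}(\environment) =: D$, since the projected ray of an anchor stays in $\freeSpace \subseteq \environment$, which is bounded. Hence \cref{eqn:directional_derivative_mu} gives
\[
  \rightMuDirectionalDerivative{\visibilityPolygonSymbol}(\genericPointQ; \direction) \leqslant n_r \frac{D^2}{d_0} \|\direction\| =: L_{\genericPoint} \|\direction\|
\]
for every admissible direction. This holds at every point of the ball and for every direction in which the anchor set is eventually constant. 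By \Cref{cor:directional_derivative_inflection_area-metric} and the finiteness of inflection segments (\Cref{lem:bound_inflection_segments}), that is every direction.

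\textbf{Step 2 (integration along segments).} First assume $\lineSegment{\genericPointQ}{\genericPointQ'} \subset \freeSpace$. The segment meets the finitely many inflection segments either in finitely many points or along a subsegment. So it splits into finitely many subsegments, each lying in a single component $\componentTerrain$ or in a single piece of an inflection segment. On each such piece the anchor set is constant by \Cref{cor:constancy_anchors} and \Cref{prop:constancy_inflection}. There the visibility polygon deforms continuously: each anchor's triangle sweeps monotonically. We then show that $\|\visibilityPolygon{a} \symDiff \visibilityPolygon{b}\|_\mu$ is at most the sum over anchors of the swept triangle areas, which by the computation in \Cref{thm:directional_derivative_area} is $\int \rightMuDirectionalDerivative{\visibilityPolygonSymbol} \leqslant L_{\genericPoint}\|a - b\|$. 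Next we chain the pieces using the triangle inequality for the symmetric difference metric. The jumps at the finitely many breakpoints contribute zero measure, because the visibility polygon is continuous in $\mu$ across a crossing: an anchor appears or disappears with a degenerate triangle. This yields the Lipschitz bound with constant $L_{\genericPoint}$.

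\textbf{Step 3 (nonconvexity and main obstacle).} If the segment leaves $\freeSpace$, shrink $\epsilon$ so that $\oBall{\epsilon}{\genericPoint} \intersection \freeSpace$ is star-shaped about $\genericPoint$. This is possible locally, since near a non-reflex point the free space is locally a convex cone. Go through $\genericPoint$ and use $\|\genericPointQ - \genericPoint\| + \|\genericPoint - \genericPointQ'\| \leqslant C\|\genericPointQ - \genericPointQ'\|$ for a constant $C$ depending on the cone's angle, which is less than $\pi$. This inflates the constant to $C L_{\genericPoint}$. The main obstacle is Step 2. One must justify that the symmetric difference is bounded by the integral of $\rightMuDirectionalDerivative{\visibilityPolygonSymbol}$; in other words, that $\delta \mapsto \visibilityPolygon{a + \delta \direction}$ is absolutely continuous in the symmetric difference metric, and that crossing an inflection segment creates no area jump. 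I would handle this with the explicit triangle decomposition $\triangle u_- \anchor u_+$ from the proof of \Cref{thm:directional_derivative_area}, applied to finite (not infinitesimal) displacements within one piece. This requires a separate treatment when $u_\pm$ lie on different edges of $\boundary{\freeSpace}$, done by subdividing at vertices.
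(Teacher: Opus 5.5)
Your proposal is correct in substance and follows the paper's route. The paper proves a per-component bound from the areas swept by the anchors' windows, which is its inequality \cref{eqn:local_lipschitz_mu}, made uniform via $\radiusRay{\anchor}{\cdot} \leqslant D$ and a lower bound on the distance to $\reflexVertices{\freeSpace}$. It chains this bound along $\lineSegment{\genericPointQ}{\genericPointQ'}$ by the triangle inequality at the finitely many points where the segment meets $\setInflectionPoints$, and uses \Cref{lem:intersection} for $\visibilityPolygonSymbol \cap \advDomain$.

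You are more careful than the paper in three places:
\begin{itemize}
\item pieces of the segment lying along an inflection segment;
\item window endpoints passing boundary vertices;
\item $\mu$-continuity at the breakpoints, which the paper uses implicitly when it applies the component bound at $\bar{\genericPoint}_j \in \setInflectionPoints$.
\end{itemize}
On that last point, your ``degenerate triangle'' description fits Type~I crossings. At a Type~II crossing, the occluding anchor's length $\radiusRay{\anchor}{\cdot}$ actually jumps: its window stops at the other reflex vertex on one side and runs past it on the other. Continuity still holds, because in the limit the long window coincides with the short window followed by the hidden anchor's window.

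One step should be removed. In Step~3, the inequality $\|\genericPointQ - \genericPoint\| + \|\genericPoint - \genericPointQ'\| \leqslant C \|\genericPointQ - \genericPointQ'\|$ is false. Take $\genericPointQ, \genericPointQ'$ arbitrarily close to each other, both at distance $\epsilon/2$ from $\genericPoint$: the left side stays near $\epsilon$ while the right side tends to $0$. The step is also unnecessary. For small $\epsilon$, the set $\oBall{\epsilon}{\genericPoint} \cap \freeSpace$ is a disk, a half-disk, or a convex wedge, hence convex, as you yourself note. So the segment never leaves $\freeSpace$, and the constant stays $L_{\genericPoint}$.
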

\begin{proof}
  Analogous to the proof of \Cref{thm:directional_derivative_area},
  consider two observers at $\genericPointR, \genericPointR' \in
  \freeSpace \setminus \reflexVertices{\freeSpace}$ that belong to the
  same path-connected component $\componentTerrain \in \partition$ of
  $\reducedFreeSpace$. Then, the distance between the visibility
  polygons at these two points satisfies
  \begin{align}\label{eqn:local_lipschitz_mu}
    \|
    \visibilityPolygonSymbol(\genericPointR) \symDiff
    \visibilityPolygonSymbol(\genericPointR') \|_\mu \leqslant \|
    \genericPointR - \genericPointR' \| \sum_{\anchor \in
    \anchors{\componentTerrain}}
    \frac{\radiusRay{\anchor}{\genericPointR} \,
    \radiusRay{\anchor}{\genericPointR'}}{2 \|\genericPointR' -
    \anchor\|} \left| (\widehat{\genericPoint - \anchor}) \times
    (\widehat{\genericPointR' - \genericPointR})
    \right|.
  \end{align}
  Note that as $\genericPointR' \not \in \reflexVertices{\freeSpace}$,
  the denominator $\|\genericPointR' - \anchor\|$ is bounded away from
  zero, and the sum is finite.
  Now, for all points in the free space that are at least $2\delta >
  0$ away from the reflex vertices, the sum can be upper bounded by
  $L_{\componentTerrain} := |\reflexVertices{\componentTerrain}|
  D^2/4\delta$, where $D$ is the diameter of the environment.

  Let $\genericPoint \in \freeSpace \setminus
  \reflexVertices{\freeSpace}$ and $\delta > 0 $ sufficiently small
  such that there are no reflex vertices in
  $\oBall{2\delta}{\genericPoint}$. Consider two points
  $\genericPointQ_1, \genericPointQ_2 \in
  \oBall{\delta}{\genericPoint}$.  Then, the line segment
  $\lineSegment{\genericPointQ_1}{\genericPointQ_2}$ strictly
  crosses\footnotemark~the set $\setInflectionPoints$ of critical
  points at most finitely many times.\footnotetext{If
  $\genericPointQ_1, \genericPointQ_2$ lie on the same inflection
  segment, then they do not strictly cross the inflection segment.}
  Let $\{\bar{\genericPoint}_i\}_{i=0}^K$ be the $K \geqslant 2$
  distinct points on
  $\lineSegment{\genericPointQ_1}{\genericPointQ_2}$ such that
  $\bar{\genericPoint}_0 = \genericPointQ_1$, $\bar{\genericPoint}_K =
  \genericPointQ_2$, and $\bar{\genericPoint}_i \in
  \setInflectionPoints$ for $0 < i < K$, in order from
  $\genericPointQ_1$ to $\genericPointQ_2$.  The line segments
  $\lineSegment{\bar{\genericPoint}_j}{\bar{\genericPoint}_{j+1}}$
  each lie in a different connected component, say
  $\componentTerrain_j \in \partition$, for $0 \leqslant j < K$. Using
  the triangle inequality for the metric and iteratively applying
  \cref{eqn:local_lipschitz_mu} to each of these segments, we have
  \begin{align*}
    & \| \visibilityPolygonSymbol(\genericPointQ_1) \symDiff
      \visibilityPolygonSymbol(\genericPointQ_2) \|_\mu
      \leqslant
      \sum_{j=0}^{K-1} \| \visibilityPolygonSymbol(\bar{\genericPoint}_j)
      \symDiff \visibilityPolygonSymbol(\bar{\genericPoint}_{j+1})
      \|_\mu \leqslant \sum_{j = 0}^{K-1} L_{\componentTerrain_j} 
      \| \bar{\genericPoint}_j - \bar{\genericPoint}_{j+1} \|_\mu \leqslant L_\genericPoint \| \genericPointQ_1 -
      \genericPointQ_2 \|_\mu,
  \end{align*}
  where $L_\genericPoint = \max_{0 \leqslant j < K}
  L_{\componentTerrain_j}$, establishing the $\mu$-local Lipschitzness
  of $\visibilityPolygonSymbol$ at~$x$.  Finally, the $\mu$-local
  Lipschitzness of $\visibilityPolygonSymbol \cap \advDomain$ follows
  from \Cref{lem:intersection}
  (note that $\visibilityPolygonSymbol \cap \advDomain$
  has the same Lipschitz constant as $\visibilityPolygonSymbol$). 
\end{proof}

Given the definition of $\visibilityArea$ and $\visibilityMetric$,
\Cref{thm:local_lipschitz_mu} combined with \Cref{lem:mu_LL} yields
the following result.

\begin{corollary}[Regularity of the visibility metric]
  \label{cor:local_lipschitz}
  The functions $\visibilityArea$ and $\visibilityMetric$ are locally
  Lipschitz over $\freeSpace \setminus \reflexVertices{\freeSpace}$.
  \oprocend
\end{corollary}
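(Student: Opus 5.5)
The argument reduces the corollary to two earlier results: \Cref{thm:local_lipschitz_mu} (the set-valued maps are $\mu$-locally Lipschitz) and \Cref{lem:mu_LL} (this transfers to the measure of the sets). First I would write both functions as a measure composed with a set-valued map: $\visibilityArea = \mu \circ \visibilityPolygonSymbol$ and $\visibilityMetric = \mu \circ (\visibilityPolygonSymbol \cap \advDomain)$. Here $\visibilityPolygonSymbol \cap \advDomain$ denotes the map $\genericPoint \mapsto \visibilityPolygon{\genericPoint} \cap \advDomain$. Both maps send points of $\freeSpace$ to measurable subsets of $\real^2$, since each value is a polygon or a finite intersection of polygons. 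So we are in the setting of a measure space $(\real^2, \mathcal{A}, \mu)$ with $\mu$ the Lebesgue measure.

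Next, fix any $\genericPoint \in \freeSpace \setminus \reflexVertices{\freeSpace}$.
\begin{itemize}
\item By \Cref{thm:local_lipschitz_mu}, $\visibilityPolygonSymbol$ is $\mu$-locally Lipschitz at $\genericPoint$ with some constants $L_\genericPoint, \epsilon > 0$.
\item Applying \Cref{lem:mu_LL} gives $|\visibilityArea(\genericPointQ) - \visibilityArea(\genericPointQ')| \leqslant L_\genericPoint \|\genericPointQ - \genericPointQ'\|$ for all $\genericPointQ, \genericPointQ' \in \oBall{\epsilon}{\genericPoint} \cap \freeSpace$. This is local Lipschitzness of $\visibilityArea$ at $\genericPoint$.
\end{itemize}
For $\visibilityMetric$ I would argue the same way with $\visibilityPolygonSymbol \cap \advDomain$. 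If $\advDomain$ is non-empty, \Cref{thm:local_lipschitz_mu} gives $\mu$-local Lipschitzness directly. Alternatively, \Cref{lem:intersection} applies with the constant map $\advDomain$, which has Lipschitz constant $0$. If $\advDomain$ is empty, then $\visibilityMetric \equiv 0$ and the claim is trivial.

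There is essentially no obstacle. The only point needing a word of care is that local Lipschitzness is meant relative to the domain $\freeSpace \setminus \reflexVertices{\freeSpace}$, i.e., on $\oBall{\epsilon}{\genericPoint} \cap \freeSpace$. This matches exactly the neighborhoods used in \Cref{thm:local_lipschitz_mu} and \Cref{lem:mu_LL}. Also, the proof of \Cref{lem:mu_LL} only uses $\mu(F(\genericPointQ)) < \infty$, which holds because $\freeSpace$ is bounded. Since $\genericPoint$ was arbitrary, both functions are locally Lipschitz over $\freeSpace \setminus \reflexVertices{\freeSpace}$.
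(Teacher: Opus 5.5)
Your proposal is correct and follows the paper's route exactly: the corollary is obtained by combining \Cref{thm:local_lipschitz_mu} ($\mu$-local Lipschitzness of $\visibilityPolygonSymbol$ and $\visibilityPolygonSymbol \cap \advDomain$) with \Cref{lem:mu_LL}. Your remarks on the empty-$\advDomain$ case and on the finiteness of the measures used in \Cref{lem:mu_LL} are small refinements that the paper leaves implicit.
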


Finally, the expressions for the generalized gradients of the
visibility area and metric follow from the definition (the functions
are continuously differentiable everywhere except on
$\setCriticalPoints$ by
\Cref{cor:directional_derivative_component_area-metric}).

\begin{corollary}[Generalized gradients of visibility area and metric]
  \label{prop:generalized_gradient}
  The generalized gradient of $f \in
  \{\visibilityArea,\visibilityMetric\}$ at $x \in \freeSpace
  \setminus \reflexVertices{\freeSpace}$ is
  \begin{align}\label{eqn:generalized_gradient_area} 
    \partial f(\genericPoint)
    & = \convexHull{\nabla
      f(\genericPoint')
      \mid \genericPoint' \to
      \genericPoint,
      \genericPoint' \in
      \freeSpace \setminus
      \setCriticalPoints}, 
  \end{align}
  where $\nabla f = [\rightDirectionalDerivative{f}(\genericPoint;
  e_1), \rightDirectionalDerivative{f}(\genericPoint; e_2)]$ and
  $\{e_1, e_2\}$ denotes the standard basis of $\real^2$. \oprocend
\end{corollary}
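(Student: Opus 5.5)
The plan is to obtain \Cref{prop:generalized_gradient} directly from the definition of the generalized gradient recalled in \Cref{sub:non_smooth}, after identifying the exceptional set $E \cup E_f$. First, by \Cref{cor:local_lipschitz}, each $f \in \{\visibilityArea, \visibilityMetric\}$ is locally Lipschitz on $\freeSpace \setminus \reflexVertices{\freeSpace}$. Hence at every $x$ in this set the Clarke generalized gradient is well defined, and it is a non-empty, convex, compact set. A standard property of this construction is that one may exclude \emph{any} additional Lebesgue-null set $E$ without changing the convex hull. I will choose $E := \setCriticalPoints$. This is legitimate because, by \Cref{thm:inflection_segments}, $\setCriticalPoints = \setInflectionPoints$, and by \Cref{lem:bound_inflection_segments} this set has zero Lebesgue measure.

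Second, I need to check that on the complement $\freeSpace \setminus \setCriticalPoints$ the function $f$ is differentiable, so that $E_f \subseteq \setCriticalPoints$. Any $x' \in \freeSpace \setminus \setCriticalPoints$ (away from reflex vertices) lies in some path-connected component $\componentTerrain \in \partition$. By \Cref{cor:constancy_anchors}, the anchor set is constant on a neighborhood of $x'$. Then \Cref{cor:directional_derivative_component_area-metric} shows that $f$ is $C^1$ near $x'$, with directional derivatives given by \cref{eqn:directional_derivative_area} or \cref{eqn:directional_derivative_metric} using $\mathrm{Ve}_a = \anchors{\componentTerrain}$. These expressions are linear in $\direction$, since $\widehat{(x'-\anchor)} \times \direction$ is linear in $\direction$ and $c_{\anchor}$ is continuous. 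Therefore $\nabla f(x')$ exists and equals $[\rightDirectionalDerivative{f}(x'; e_1), \rightDirectionalDerivative{f}(x'; e_2)]$.

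Third, I substitute these facts into the definition. Since $E \cup E_f = \setCriticalPoints$, the defining set consists of limits of $\nabla f(x_i)$ along sequences $x_i \to x$ with $x_i \in \freeSpace \setminus \setCriticalPoints$. Taking convex hulls gives \cref{eqn:generalized_gradient_area}.

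The main subtlety is in the second step, for components $\componentTerrain$ that touch $\boundary{\freeSpace}$. These components are only relatively open, so differentiability has to be understood relative to $\freeSpace$, with $\nu$ ranging over the tangent cone. I will handle this by noting that the boundary of $\freeSpace$ is itself contained in a finite union of segments and therefore has measure zero. It can thus be added to $E$ without affecting the hull. Sequences approaching $x$ through the interior still realize every limiting gradient, because each component has nonempty interior by its convexity (\Cref{lem:partition}).
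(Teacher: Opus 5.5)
Your proposal is correct and is the paper's argument written out in full. The paper likewise reads \cref{eqn:generalized_gradient_area} directly off the definition of the generalized gradient, using that $f$ is locally Lipschitz (\Cref{cor:local_lipschitz}), that it is continuously differentiable off $\setCriticalPoints$ (\Cref{cor:directional_derivative_component_area-metric}), and that $\setCriticalPoints$ is null (\Cref{thm:inflection_segments,lem:bound_inflection_segments}). Two small fixes to the justifications: the identity $\nabla f = [\rightDirectionalDerivative{f}(\cdot;e_1), \rightDirectionalDerivative{f}(\cdot;e_2)]$ should be credited to the $C^1$ property itself rather than to continuity of $c_{\anchor}$, since $c_{\anchor}$ depends on $\direction$ and its continuity alone gives no linearity in $\direction$; and components meet $\interior{\freeSpace}$ because they are nonempty relatively open subsets of $\freeSpace$, not because they are convex.
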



\subsection{Limited Range and Field of
  View}\label{sub:limited_range_limited_field_of_view}

We next turn our attention to \labelcref{enum:extension} and consider
the cases with limited range and limited field of view, cf.
\cite{CT:02-cg,CT:00-cg,BG-ST-GG:06}. For these cases, here we
characterize the regularity properties of the visibility area and
metric. We first show that the visibility with limited range is
$\mu$-locally Lipschitz.

\begin{theorem}[Regularity properties of visibility polygon with
  limited range]
  \label{thm:smoothness_limited_range}
  Let $\advDomain \subset \freeSpace$ be non-empty and let $R > 0$ be
  the sensor range. Then, the visibility polygon with limited range
  $\genericPoint \mapsto \visibilityPolygon{\genericPoint}
  \intersection \advDomain \intersection \oBall{R}{\genericPoint}$ is
  $\mu$-locally Lipschitz over
  $\freeSpace \setminus \reflexVertices{\freeSpace}$. Furthermore,
  $\genericPoint \mapsto
  \visibilityMetric_{R}(\genericPoint) :=
  \|\visibilityPolygon{\genericPoint} \intersection \advDomain
  \intersection \oBall{R}{\genericPoint}\|_\mu$ is locally Lipschitz over
  $\freeSpace \setminus \reflexVertices{\freeSpace}$.
\end{theorem}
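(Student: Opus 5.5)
The plan is to write the limited-range map as an intersection, $\genericPoint \mapsto \bigl(\visibilityPolygon{\genericPoint} \intersection \advDomain\bigr) \intersection \oBall{R}{\genericPoint}$, and invoke \Cref{lem:intersection}. By \Cref{thm:local_lipschitz_mu}, the first factor $\visibilityPolygonSymbol \intersection \advDomain$ is already $\mu$-locally Lipschitz at every $\genericPoint \in \freeSpace \setminus \reflexVertices{\freeSpace}$, with some constant $L_\genericPoint$ on a ball $\oBall{\epsilon}{\genericPoint}$. It therefore suffices to show that the ball map $\genericPointQ \mapsto \oBall{R}{\genericPointQ}$ is $\mu$-locally Lipschitz (in fact globally) as a set-valued map $\freeSpace \rightrightarrows \real^2$. \Cref{lem:intersection} then gives the first claim with constant $L_\genericPoint + L_B$ on $\oBall{\epsilon}{\genericPoint}$.

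For the ball map, I would bound the symmetric difference of two discs of equal radius $R$ centered at $\genericPointQ, \genericPointQ'$. Every point of $\oBall{R}{\genericPointQ} \setminus \oBall{R}{\genericPointQ'}$ lies in the annulus region swept when the center translates along $\lineSegment{\genericPointQ}{\genericPointQ'}$. More concretely, it lies in the set $\{ z : R \leqslant \|z - \genericPointQ'\| < R + \|\genericPointQ - \genericPointQ'\|\}$ intersected with a half-plane. A cleaner route is the sweep or Minkowski argument: the region swept by a moving disc has area at most $2R\,\|\genericPointQ - \genericPointQ'\| + \pi R^2$, so each one-sided difference has measure at most $2R\,\|\genericPointQ - \genericPointQ'\|$. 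This gives $\|\oBall{R}{\genericPointQ} \symDiff \oBall{R}{\genericPointQ'}\|_\mu \leqslant 4R\,\|\genericPointQ - \genericPointQ'\|$. Alternatively, I could compute the lens area exactly and differentiate it in the distance $d$: the derivative of the lens area is $-2\sqrt{R^2 - d^2/4} \geqslant -2R$, which yields the same bound. So $L_B = 4R$ works globally. Measurability is not an issue, since all sets involved are polygons, discs, and their intersections.

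The second claim then follows directly from \Cref{lem:mu_LL} applied to $F(\genericPoint) = \visibilityPolygon{\genericPoint} \intersection \advDomain \intersection \oBall{R}{\genericPoint}$: $\visibilityMetric_R = \mu \circ F$ is locally Lipschitz with the same constant.

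The only step needing real care is the disc symmetric-difference estimate, and even that is elementary. I would present the lens-area computation, since it gives an explicit constant and needs no appeal to sweep-area results. A minor point to check is that \Cref{lem:intersection} is stated for maps into the same measure space $\anySet$. I would take $\anySet = \real^2$ with the Lebesgue measure, noting that $\visibilityPolygonSymbol \intersection \advDomain$ viewed as a map into $\real^2$ keeps its Lipschitz property.
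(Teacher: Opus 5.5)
Your proposal is correct and matches the paper's proof: the paper also shows $\|\oBall{R}{y} \symDiff \oBall{R}{y'}\|_\mu \leqslant 4R\|y-y'\|$, using the exact symmetric-difference area $f_R(d)$ and its derivative $2\sqrt{4R^2-d^2} \leqslant 4R$, then combines this with \Cref{thm:local_lipschitz_mu} via \Cref{lem:intersection} and concludes with \Cref{lem:mu_LL}. The differences are cosmetic: the paper treats $\advDomain$ as a separate constant factor with Lipschitz constant $0$, whereas you start from $\visibilityPolygonSymbol \cap \advDomain$, and your stadium (sweep) argument also covers $d > 2R$, where the paper's closed-form $f_R$ no longer applies.
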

\begin{proof}
  Note that
  $\|\oBall{R}{\genericPointQ} \symDiff
  \oBall{R}{\genericPointQ'}\|_\mu$ only depends on the distance
  $\|\genericPointQ - \genericPointQ'\|$. Let $f_R(\distanceCenters)$
  be the area of the symmetric difference of two $R$-balls with
  centers $\distanceCenters$ apart. The expression for $f_R$ for
  $\distanceCenters \in [0, 2R]$ is given by
  \begin{align}
    \label{eqn:lipschitz_ball}
    f_R(\distanceCenters) = 4R^2 \arcsin \left(
    \frac{\distanceCenters}{2R} \right) + 
    \distanceCenters \sqrt{4R^2 - \distanceCenters^2} .
  \end{align}
  Now, since
  $ f_R'(\distanceCenters) = 2 \sqrt{4R^2 - \distanceCenters^2}
  \leqslant 4R$ and $f_R(0) = 0$, we deduce that
  $f_R(\distanceCenters) \leqslant 4R \distanceCenters$.
  It follows that $\mathbb{B}_R$ is $\mu$-locally Lipschitz with
  constant $4R$. Since $\advDomain$ is trivially $\mu$-locally
  Lipschitz with constant $0$ and $\visibilityPolygonSymbol$ is
  $\mu$-locally Lipschitz, cf. \Cref{thm:local_lipschitz_mu}, say with
  Lipschitz constant $L$, we deduce from \Cref{lem:intersection} that
  $\advDomain \intersection \mathbb{B}_R \intersection
  \visibilityPolygonSymbol$ is $\mu$-locally Lipschitz with constant
  $L + 4R$.  From here, \Cref{lem:mu_LL} allows us to establish the
  local Lipschitzness of $\visibilityMetric_{\operatorname{range}}$
  with the same constant.
\end{proof}

\begin{remark}[Inflection curves for visibility with limited range]
  \label{rem:inflection_curves_limited_range}
  The points of non-differentiability of the area of the visibility
  metric under limited range comprise parts of inflection segments and
  arcs of circles centered at the anchor with radius~$R$.  This is
  illustrated in \Cref{fig:inflection_curves_limited_range}. In fact,
  a reflex vertex is \emph{invisible} outside an $R$-ball around it.
  Therefore, as a critical point is where the set of anchors changes,
  there are possible critical points on
  $\boundary{\oBall{R}{\reflexVertex}}$.  However, as the reflex vertex
  is not an anchor for observers in $M_1(\reflexVertex)$ and
  $M_3(\reflexVertex)$, the critical points can be characterized as
  follows. For every reflex vertex, the boundary of
  $M_2(\reflexVertex) \intersection \oBall{R}{\reflexVertex}
  \intersection \advDomain$ is a set of critical points in
  $M_2(\reflexVertex)$ because within $\advDomain$, $\reflexVertex$ is
  anchor for the points inside the set, and not an anchor for the
  points outside the set. In an analogous fashion, the boundary of
  $M_4(\reflexVertex) \intersection \oBall{R}{\reflexVertex}
  \intersection \advDomain$ is also a set of critical points.
  \oprocend
\end{remark}

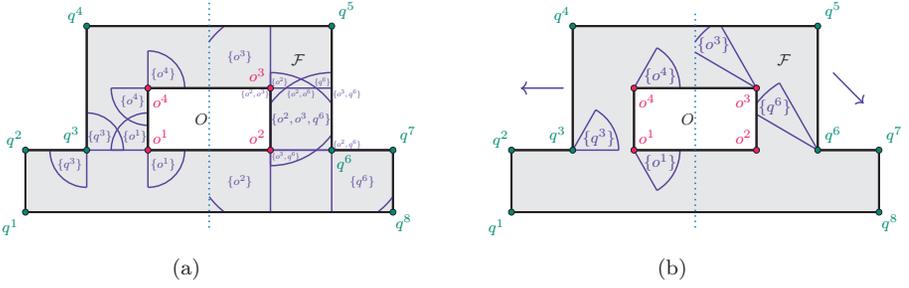
\begin{figure}[htb]
  \centering
  \subfigure[]{
    \label{fig:inflection_curves_limited_range}
    \resizebox{0.47\linewidth}{!}{
      \centering
      \pgfdeclarelayer{background}
      \pgfdeclarelayer{foreground}
      \pgfsetlayers{background,main,foreground}
      \begin{tikzpicture}[scale=0.55]
        \tikzset{
          style 0/.style={inner sep=0pt, minimum size=3pt},
          style A/.style={shape=circle, fill=PineGreen, style 0},
          style B/.style={shape=circle, fill=WildStrawberry, style 0},
          style C/.style={shape=circle, fill=Blue!50!RoyalBlue, style 0},
          font={\fontsize{8pt}{9.6}\selectfont}
        }
        
        \begin{pgfonlayer}{foreground}
          \node[draw, style A] (q1) at (-1, -1) {};
          \node[draw, style A] (q2) at (11, -1) {};
          \node[draw, style A] (q3) at (11,  1) {};
          \node[draw, style A] (q4) at ( 9,  1) {};
          \node[draw, style A] (q5) at ( 9,  5) {};
          \node[draw, style A] (q6) at ( 1,  5) {};
          \node[draw, style A] (q7) at ( 1,  1) {};
          \node[draw, style A] (q8) at (-1,  1) {};

          \node[text=PineGreen] at ([shift={(225:0.7)}]q1) {$q^1$};
          \node[text=PineGreen] at ([shift={(315:0.5)}]q2) {$q^8$};
          \node[text=PineGreen] at ([shift={( 45:0.7)}]q3) {$q^7$};
          \node[text=PineGreen] at ([shift={(-45:0.6)}]q4) {$q^6$};
          \node[text=PineGreen] at ([shift={( 45:0.7)}]q5) {$q^5$};
          \node[text=PineGreen] at ([shift={(135:0.5)}]q6) {$q^4$};
          \node[text=PineGreen] at ([shift={(135:0.7)}]q7) {$q^3$};
          \node[text=PineGreen] at ([shift={(135:0.5)}]q8) {$q^2$};
        \end{pgfonlayer}

        \begin{pgfonlayer}{background}
          \tikzstyle{every path}=[draw, line width=1.0pt, color=black]
          \draw[fill = gray!20] (q1.center) -- (q2.center) -- (q3.center) -- (q4.center) -- (q5.center) -- (q6.center) -- (q7.center) -- (q8.center) -- cycle;
          \node at (7.9, 3.9) {$\freeSpace$};
        \end{pgfonlayer}

        \begin{pgfonlayer}{foreground}
          \node[draw, style B] (o1) at (3, 1) {};
          \node[draw, style B] (o2) at (7, 1) {};
          \node[draw, style B] (o3) at (7, 3) {};
          \node[draw, style B] (o4) at (3, 3) {};

          \node[text=WildStrawberry] at ([shift={( 45:0.6)}]o1) {$o^1$};
          \node[text=WildStrawberry] at ([shift={(135:0.6)}]o2) {$o^2$};
          \node[text=WildStrawberry] at ([shift={(135:0.6)}]o3) {$o^3$};
          \node[text=WildStrawberry] at ([shift={(315:0.6)}]o4) {$o^4$};
        \end{pgfonlayer}

        \begin{pgfonlayer}{main}
          \tikzstyle{every path}=[draw, line width=1.0pt, color=black]
          \draw[fill = white] (o1.center) -- (o2.center) -- (o3.center) -- (o4.center) -- cycle;
          \node at (4.75, 2) {$\obstacle$};
        \end{pgfonlayer}
        
        \begin{pgfonlayer}{background}
          \tikzstyle{every node}=[color=Violet, scale = 0.75]


          \draw[-, Violet, line width=0.75pt] (2.2, 1) arc[start angle=0,
          end angle=90, radius=1.2];
          \draw[-, Violet, line width=0.75pt] (1, 1) -- (2.2, 1);
          \draw[-, Violet, line width=0.75pt] (1, 1) -- (1,
          2.2);
          \node at (1.425, 1.45) {$\{q^3\}$};

          \draw[-, Violet, line width=0.75pt] (-0.2, 1) arc[start angle=-180,
          end angle=-90, radius=1.2];
          \draw[-, Violet, line width=0.75pt] (1, 1) -- (-0.2, 1);
          \draw[-, Violet, line width=0.75pt] (1, 1) -- (1,
          -0.2);
          \node at (0.45, 0.55) {$\{q^3\}$};

          \draw[-, Violet, line width=0.75pt] (4.2, 1) arc[start
          angle=0, end angle = -90, radius=1.2];
          \draw[-, Violet, line width=0.75pt] (3, 1) -- (4.2, 1);
          \draw[-, Violet, line width=0.75pt] (3, 1) -- (3,
          -0.2);
          \node at (3.5, 0.55) {$\{o^1\}$};

          \draw[-, Violet, line width=0.75pt] (1.8, 1) arc[start
          angle=180, end angle=90, radius=1.2];
          \draw[-, Violet, line width=0.75pt] (3, 1) -- (1.8, 1);
          \draw[-, Violet, line width=0.75pt] (3, 1) -- (3,
          2.2);
          \node at (2.575, 1.45) {$\{o^1\}$};

          \draw[-, Violet, line width=0.75pt] (4.2, 3) arc[start
          angle=0, end angle = 90, radius=1.2];
          \draw[-, Violet, line width=0.75pt] (3, 3) -- (4.2, 3);
          \draw[-, Violet, line width=0.75pt] (3, 3) -- (3,
          4.2);
          \node at (3.5, 3.45) {$\{o^4\}$};

          \draw[-, Violet, line width=0.75pt] (1.8, 3) arc[start
          angle=-180, end angle=-90, radius=1.2];
          \draw[-, Violet, line width=0.75pt] (3, 3) -- (1.8, 3);
          \draw[-, Violet, line width=0.75pt] (3, 3) -- (3,
          1.8);
          \node at (2.5, 2.6) {$\{o^4\}$};

        \end{pgfonlayer}

        \begin{pgfonlayer}{main}
          \tikzstyle{every node}=[color=Violet, scale = 0.75]


          \draw[-, Violet, line width=0.75pt] (11, -0.5) arc[start
          angle=-asin(0.6), end angle=-asin(0.8), radius=2.5];
          \draw[-, Violet, line width=0.75pt] (9, 1) -- (9, -1);
          \draw[-, Violet, line width=0.75pt] (9, 1) -- (11, 1);
          \node at (10, 0) {$\{q^6\}$};

          \draw[-, Violet, line width=0.75pt] (7, 2.5) arc[start
          angle=180-asin(0.6), end angle=90, radius=2.5];
          \draw[-, Violet, line width=0.75pt] (9, 1) -- (9, 3.5);
          \draw[-, Violet, line width=0.75pt] (9, 1) -- (7, 1);

          \draw[-, Violet, line width=0.75pt] (7, 3.5) arc[start angle=90,
          end angle=asin(0.6), radius=2.5];
          \draw[-, Violet, line width=0.75pt] (9, 1) -- (7, 1);
          \draw[-, Violet, line width=0.75pt] (7, 1) -- (7, 3.5);

          \draw[-, Violet, line width=0.75pt] (5, -0.5) arc[start
          angle=180+asin(0.6), end angle=180+asin(0.8), radius=2.5];
          \draw[-, Violet, line width=0.75pt] (7, 1) -- (5, 1);
          \draw[-, Violet, line width=0.75pt] (7, 1) -- (7, -1);
          \node at (6, 0) {$\{o^2\}$};

          \draw[-, Violet, line width=0.75pt] (7, 0.5) arc[start angle=-90,
          end angle=-asin(0.6), radius=2.5];
          \draw[-, Violet, line width=0.75pt] (7, 3) -- (9, 3);
          \draw[-, Violet, line width=0.75pt] (7, 3) -- (7, 0.5);

          \draw[-, Violet, line width=0.75pt] (5, 4.5) arc[start angle=180-asin(0.6),
          end angle=180-asin(0.8), radius=2.5];
          \draw[-, Violet, line width=0.75pt] (7, 3) -- (5, 3);
          \draw[-, Violet, line width=0.75pt] (7, 3) -- (7, 5);
          \node at (6, 4) {$\{o^3\}$};
          
          \node at (8, 2) {$\{o^2, o^3, q^6\}$};

          \tikzstyle{every node}=[color=Violet, scale = 0.50]
          \node at (7.5, 0.8) {$\{o^3, q^6\}$};
          \node at (9.5, 1.2) {$\{o^2, q^6\}$};
          \node at (9.5, 2.8) {$\{o^3, q^6\}$};
          \node at (8.7, 3.2) {$\{q^6\}$};
          \node at (7.3, 3.2) {$\{o^2\}$};
          \node at (6.5, 2.8) {$\{o^2, o^3\}$};
          \node at (8, 2.8) {$\{o^2, o^6\}$};

        \end{pgfonlayer}

        \begin{pgfonlayer}{foreground}
          \tikzstyle{every path}=[draw, line width=0.75pt, color=RoyalBlue]
          \draw[dotted] (5, -1.5) -. (5, 6);
        \end{pgfonlayer}

        \begin{pgfonlayer}{main}
          \tikzstyle{every path}=[draw, line width=1.0pt, color=black]
          \draw (q1.center) -- (q2.center) -- (q3.center) -- (q4.center) -- (q5.center) -- (q6.center) -- (q7.center) -- (q8.center) -- cycle;
          \draw (o1.center) -- (o2.center) -- (o3.center) -- (o4.center) -- cycle;
        \end{pgfonlayer}
        
      \end{tikzpicture}
    }%
  }
  \subfigure[]{
    \label{fig:inflection_curves_limited_fov_range}
    \resizebox{0.47\linewidth}{!}{
      \centering
      \pgfdeclarelayer{background}
      \pgfdeclarelayer{foreground}
      \pgfsetlayers{background,main,foreground}
      \begin{tikzpicture}[scale=0.55]
        \tikzset{
          style 0/.style={inner sep=0pt, minimum size=3pt},
          style A/.style={shape=circle, fill=PineGreen, style 0},
          style B/.style={shape=circle, fill=WildStrawberry, style 0},
          style C/.style={shape=circle, fill=Blue!50!RoyalBlue, style 0},
          font={\fontsize{8pt}{9.6}\selectfont}
        }
        
        \begin{pgfonlayer}{foreground}
          \node[draw, style A] (q1) at (-1, -1) {};
          \node[draw, style A] (q2) at (11, -1) {};
          \node[draw, style A] (q3) at (11,  1) {};
          \node[draw, style A] (q4) at ( 9,  1) {};
          \node[draw, style A] (q5) at ( 9,  5) {};
          \node[draw, style A] (q6) at ( 1,  5) {};
          \node[draw, style A] (q7) at ( 1,  1) {};
          \node[draw, style A] (q8) at (-1,  1) {};

          \node[text=PineGreen] at ([shift={(225:0.7)}]q1) {$q^1$};
          \node[text=PineGreen] at ([shift={(315:0.5)}]q2) {$q^8$};
          \node[text=PineGreen] at ([shift={( 45:0.7)}]q3) {$q^7$};
          \node[text=PineGreen] at ([shift={( 45:0.7)}]q4) {$q^6$};
          \node[text=PineGreen] at ([shift={( 45:0.7)}]q5) {$q^5$};
          \node[text=PineGreen] at ([shift={(135:0.5)}]q6) {$q^4$};
          \node[text=PineGreen] at ([shift={(135:0.7)}]q7) {$q^3$};
          \node[text=PineGreen] at ([shift={(135:0.5)}]q8) {$q^2$};
        \end{pgfonlayer}

        \begin{pgfonlayer}{background}
          \tikzstyle{every path}=[draw, line width=1.0pt, color=black]
          \draw[fill = gray!20] (q1.center) -- (q2.center) -- (q3.center) -- (q4.center) -- (q5.center) -- (q6.center) -- (q7.center) -- (q8.center) -- cycle;
          \node at (7.9, 3.9) {$\freeSpace$};
        \end{pgfonlayer}

        \begin{pgfonlayer}{foreground}
          \node[draw, style B] (o1) at (3, 1) {};
          \node[draw, style B] (o2) at (7, 1) {};
          \node[draw, style B] (o3) at (7, 3) {};
          \node[draw, style B] (o4) at (3, 3) {};

          \node[text=WildStrawberry] at ([shift={( 45:0.6)}]o1) {$o^1$};
          \node[text=WildStrawberry] at ([shift={(135:0.6)}]o2) {$o^2$};
          \node[text=WildStrawberry] at ([shift={(225:0.6)}]o3) {$o^3$};
          \node[text=WildStrawberry] at ([shift={(315:0.6)}]o4) {$o^4$};
        \end{pgfonlayer}

        \begin{pgfonlayer}{main}
          \tikzstyle{every path}=[draw, line width=1.0pt, color=black]
          \draw[fill = white] (o1.center) -- (o2.center) -- (o3.center) -- (o4.center) -- cycle;
          \node at (4.75, 2) {$\obstacle$};
        \end{pgfonlayer}
        
        \begin{pgfonlayer}{background}
          \draw[->, Violet, line width=0.75pt] (0.7, 3) -- (-0.7, 3);

          \draw[-, Violet, line width=0.75pt] (2.5, 1) arc[start angle=0,
          end angle=60, radius=1.5];
          \draw[-, Violet, line width=0.75pt] (1, 1) -- ({1+1.5*cos(60)},
          {1+1.5*sin(60)});
          \draw[-, Violet, line width=0.75pt] (1, 1) -- (2.5, 1);
          \node[text=Violet] at (1.85, 1.4) {$\{q^3\}$};

          \draw[-, Violet, line width=0.75pt] (4.5, 1) arc[start
          angle=0, end angle = -60, radius=1.5];
          \draw[-, Violet, line width=0.75pt] (3, 1) -- ({3+1.5*cos(-60)},
          {1+1.5*sin(-60)});
          \draw[-, Violet, line width=0.75pt] (3, 1) -- (4.5, 1);
          \node[text=Violet] at (3.85, 0.6) {$\{o^1\}$};

          \draw[-, Violet, line width=0.75pt] (4.5, 3) arc[start
          angle=0, end angle=60, radius=1.5];
          \draw[-, Violet, line width=0.75pt] (3, 3) -- ({3+1.5*cos(60)},
          {3+1.5*sin(60)});
          \draw[-, Violet, line width=0.75pt] (3, 3) -- (4.5, 3);
          \node[text=Violet] at (3.85, 3.4) {$\{o^4\}$};
          
        \end{pgfonlayer}

        \begin{pgfonlayer}{background}
          \draw[->, Violet, line width=0.75pt] (9.5, 3.5) -- (10.5, 2.5);

          \draw[-, Violet, line width=0.75pt] (7, 2.5) arc[start
          angle=180-asin(0.6), end angle=120, radius=2.5];
          \draw[-, Violet, line width=0.75pt] (9, 1) --
          ({9-2.5*sin(30)}, {1+2.5*cos(30)});
          \draw[-, Violet, line width=0.75pt] (9, 1) -- (7, {1 + 2*tan(30)});
          \node[text=Violet] at (7.6, 2.4) {$\{q^6\}$};


          \draw[-, Violet, line width=0.75pt] (5, 4.5) arc[start
          angle=180-asin(0.6), end angle=180-asin(0.8), radius=2.5];
          \draw[-, Violet, line width=0.75pt] (7, 3) -- ({7 - 2*tan(30)},
          5);
          \draw[-, Violet, line width=0.75pt] (7, 3) -- (5, {3 +
          2*tan(30)});
          \node[text=Violet] at (5.6, 4.4) {$\{o^3\}$};
        \end{pgfonlayer}

        \begin{pgfonlayer}{foreground}
          \tikzstyle{every path}=[draw, line width=0.75pt, color=RoyalBlue]
          \draw[dotted] (5, -1.5) -. (5, 5.5);
        \end{pgfonlayer}

        \begin{pgfonlayer}{main}
          \tikzstyle{every path}=[draw, line width=1.0pt, color=black]
          \draw (q1.center) -- (q2.center) -- (q3.center) -- (q4.center) -- (q5.center) -- (q6.center) -- (q7.center) -- (q8.center) -- cycle;
          \draw (o1.center) -- (o2.center) -- (o3.center) -- (o4.center) -- cycle;
        \end{pgfonlayer}

      \end{tikzpicture}
    }%
  }
  \caption{(a) Inflection curves for visibility for different ranges.
  The range of the observer is $R = 1.2$ to the left of the dotted
  line and $R = 2.5$ to the right.  (b) Inflection curves for
  visibility with different ranges and fields of view.  The observer
  is facing west to the left of the dotted line, with range $R=1.5$
  and field of view angle $\varphi=120^\circ$.  The observer is facing
  south-east to the right of the dotted line, with range $R=2.5$ and
  field of view angle $\varphi=30^\circ$.}
\end{figure}

Next, we study the case of limited range and limited field of view. In
this case, the visibility depends not just on the position of the
observer, but also its heading, which therefore needs to be included
in the state. Let the state $\state$ be the position $\genericPoint
\in \real^2$ augmented with its heading angle $\heading \in \real$,
with positive angles taken counter-clockwise from the reference, set
to the positive horizontal axis. Suppose the field of view of the
observer is determined by $\fov > 0$.  Then, the $R$-visibility cone
\smash{$\fovCone{\radius}{\fov}{\state}$} is defined by $\{
\genericPointQ \in \real^2 \mid |\heading -
\arctanTwo(\genericPointQ_2-\genericPoint_2,
\genericPointQ_1-\genericPoint_1)| \leqslant {\fov}/{2},
\|\genericPointQ - \genericPoint\|_2 \leqslant \radius \}$, that is,
the set of all points in $\oBall{R}{\genericPoint}$ that make an angle
smaller than ${\fov}/{2}$ with the heading $\theta$. We denote by
$\fovConeInfinite{\fov}{\genericPointQ}$ the infinite range visibility
cone.

We first establish the $\mu$-global Lipschitzness of the visibility
polygon with limited field of view.

\begin{prop}[Regularity of visibility polygon with limited field of view]
  \label{prop:smoothness_fov}
  Given a bounded $\freeSpace$, the limited field of view cone
  $\fovConeInfinite{\varphi}{\genericPointR}$ is $\mu$-globally
  Lipschitz over $\freeSpace \times \real$.
\end{prop}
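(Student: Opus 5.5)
Since $\freeSpace$ is bounded, only the part of the infinite cone that can meet the region of interest matters, so I will measure the cone intersected with a fixed ball. Let $D$ be the diameter of $\freeSpace$ and fix any point of $\freeSpace$. Every observer $\genericPoint \in \freeSpace$ and every point of $\freeSpace$ then lie in a common ball $B$ of radius $D$, so I replace $\fovConeInfinite{\fov}{\state}$ by $\fovConeInfinite{\fov}{\state} \intersection \freeSpace \subseteq \fovCone{D}{\fov}{\state}$. The constant I am after should depend only on $D$ and $\fov$, not on the base point; that is what makes the bound global. Writing $\state = (\genericPoint,\heading)$ and $\state' = (\genericPoint',\heading')$, I split the move $\state \to \state'$ into a pure rotation $(\genericPoint,\heading)\to(\genericPoint,\heading')$ followed by a pure translation $(\genericPoint,\heading')\to(\genericPoint',\heading')$. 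The triangle inequality for the symmetric difference metric then reduces the problem to bounding each piece separately.

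For the rotation, the symmetric difference of the two sectors with common apex $\genericPoint$ and radius $D$ consists of at most two circular sectors of angle at most $|\heading-\heading'|$. Its measure is therefore at most $D^2|\heading-\heading'|$, which gives a Lipschitz constant $D^2$ in $\heading$.

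For the translation, I compare the sector with apex $\genericPoint$ to the same sector shifted by $d = \genericPoint'-\genericPoint$. Both are intersected with $\freeSpace \subset B$, so it suffices to bound the symmetric difference of two translates of a convex set $K$ of diameter at most $2D$ (the sector of radius $2D$, which covers everything reachable within $B$). For any convex body, $\mu(K \symDiff (K+d)) \leqslant \mathrm{per}(K)\,\|d\|$, where $\mathrm{per}$ denotes the perimeter: each point of $(K+d)\setminus K$ lies on a segment of length $\|d\|$ in direction $d$ that crosses $\boundary{K}$, and the projection argument (Cauchy) gives the bound. The perimeter of the sector is $4D + 2D\fov$. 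When $\fov > \pi$ the sector is not convex; I would then split it into two convex sectors of angle $\fov/2$ and apply the bound to each, or argue directly that $\boundary{K}$ is rectifiable of length $4D+2D\fov$. The same sweep argument works for any set with rectifiable boundary.

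Combining the two pieces gives $\|\fovConeInfinite{\fov}{\state}\symDiff\fovConeInfinite{\fov}{\state'}\|_\mu \leqslant D^2|\heading-\heading'| + (4D+2D\fov)\|\genericPoint-\genericPoint'\| \leqslant L\|\state-\state'\|$ for a constant $L$ independent of the base point. The main obstacle is making precise what the infinite cone means relative to the bounded region. I will state explicitly that the measure is taken after intersecting with $\freeSpace$, or with $B$, since otherwise the symmetric difference of infinite cones has infinite measure. With that convention, the translation estimate is the only step that requires care, and the perimeter-times-displacement bound settles it.
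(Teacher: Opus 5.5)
Your proposal is correct and follows essentially the same route as the paper: pass through the intermediate state $(x,\theta')$, apply the triangle inequality for the symmetric difference metric, and bound the rotation term by the area of two thin wedges and the translation term by a boundary length times $\|x-x'\|$. The paper simply states the constants $2D^2$ and $2D$ without derivation. You instead justify the translation estimate with the perimeter (Cauchy projection) bound on a truncated sector, and you make explicit that the measure is taken inside the bounded $\mathcal{F}$, which the paper leaves implicit even though the untruncated cones have a symmetric difference of infinite measure.
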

\begin{proof}
  Given points $\genericPointR', \genericPointR'' \in \freeSpace
  \times \real$, choose $\bar{\genericPointR} :=
  (\genericPointR_\genericPoint', \genericPointR_\heading'')$. Then,
  by the triangle inequality
  \begin{align*}
    \|\fovConeInfinite{\varphi}{\genericPointR'} \symDiff
    \fovConeInfinite{\varphi}{\genericPointR''}\|_\mu 
    & \leqslant \|\fovConeInfinite{\varphi}{\genericPointR'} \symDiff
      \fovConeInfinite{\varphi}{\bar{\genericPointR}}\|_\mu 
      + \|\fovConeInfinite{\varphi}{\bar{\genericPointR}} \symDiff
      \fovConeInfinite{\varphi}{\genericPointR''}\|_\mu
    \\
    & \leqslant 2D^2 \|\genericPointR_\heading' -
      \genericPointR_\heading''\| 
      + 2D
      \|\genericPointR_\genericPoint'-\genericPointR_\genericPoint''\|
      \leqslant 2D \sqrt{D^2+1} \,\,
      \|\genericPointR'-\genericPointR''\| ,
  \end{align*}
  where $D$ is the diameter of the free space.
\end{proof}

The regularity of the visibility polygon under both limited range and
limited field of view follows immediately.

\begin{theorem}[Regularity of visibility polygon with
  limited range \& limited field of
  view]
  \label{thm:smoothness_limited_fov_range}
  Let $\advDomain \subset \freeSpace$ be non-empty and let $R > 0$ be
  the sensor range.  Then, for any $0 < \fov \leqslant 2 \pi$, the
  visibility polygon with limited field of view and limited range
  \smash{$\state \mapsto \visibilityPolygon{\genericPoint}
  \intersection \advDomain \intersection
  \fovCone{\radius}{\fov}{\state}$} is $\mu$-locally Lipschitz over
  \smash{$ \left( \freeSpace \setminus \reflexVertices{\freeSpace}
  \right) \times \real$}.  Furthermore, \smash{$\state \mapsto
  \visibilityMetric_{\textrm{FOV}}(\state) :=
  \|\visibilityPolygon{\genericPoint} \intersection \advDomain
  \intersection \fovCone{\radius}{\fov}{\state}\|_\mu$} is locally
  Lipschitz over \smash{$\left( \freeSpace \setminus
  \reflexVertices{\freeSpace} \right) \times \real$}.
\end{theorem}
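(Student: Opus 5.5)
The plan is to write the set-valued map as an intersection of three maps on the augmented state space $(\freeSpace \setminus \reflexVertices{\freeSpace}) \times \real$ and then apply \Cref{lem:intersection}. We decompose
\begin{align*}
  \visibilityPolygon{\genericPoint} \intersection \advDomain \intersection \fovCone{\radius}{\fov}{\state}
  = \bigl(\visibilityPolygon{\genericPoint} \intersection \advDomain\bigr) \intersection \closure{\oBall{R}{\genericPoint}} \intersection \fovConeInfinite{\fov}{\state},
\end{align*}
where each factor is viewed as a function of $\state = (\genericPoint, \heading)$. Since the circle $\boundary{\oBall{R}{\genericPoint}}$ has measure zero, using the closed ball instead of the open one does not change any symmetric-difference measure.

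First, we treat the factor $\state \mapsto \visibilityPolygon{\genericPoint} \intersection \advDomain$, which does not depend on $\heading$. By \Cref{thm:local_lipschitz_mu}, there exist $L, \epsilon > 0$ such that its symmetric difference is bounded by $L\|\genericPoint' - \genericPoint''\| \leqslant L \|\state' - \state''\|$ for $\state', \state''$ in an $\epsilon$-ball around $\state$. Second, the ball factor is $\mu$-Lipschitz with constant $4R$ by the computation \cref{eqn:lipschitz_ball} in the proof of \Cref{thm:smoothness_limited_range}, again since $\|\genericPoint'-\genericPoint''\| \leqslant \|\state'-\state''\|$. Third, the cone factor $\fovConeInfinite{\fov}{\cdot}$ is $\mu$-globally Lipschitz on $\freeSpace \times \real$ by \Cref{prop:smoothness_fov}. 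All measures there are taken within the bounded free space, and the symmetric difference we need is contained in $\freeSpace$ because of the $\advDomain$ factor.

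Applying \Cref{lem:intersection} twice then gives $\mu$-local Lipschitzness of the full map with constant $L + 4R + 2D\sqrt{D^2+1}$. Local Lipschitzness of $\visibilityMetric_{\textrm{FOV}}$ follows from \Cref{lem:mu_LL}.

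The main obstacle is the cone factor, and specifically the case $\fov = 2\pi$ together with the measure-bookkeeping in \Cref{prop:smoothness_fov}. The unbounded cone must effectively be intersected with the bounded $\freeSpace$, which is where the diameter $D$ enters. When $\fov = 2\pi$, the cone is the whole plane minus at most a ray, so the heading dependence is measure-trivial and the bound holds trivially. I would check that the estimate $2D^2|\heading'-\heading''|$ for rotations and $2D\|\genericPoint'-\genericPoint''\|$ for translations applies uniformly for all $0<\fov\leqslant 2\pi$. The rotation bound holds because the two boundary rays sweep sectors of radius at most $D$. For the translation bound, I would compare the cone boundaries within a disc of diameter $D$.
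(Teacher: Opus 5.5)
Your proof is correct and follows the paper's argument. The paper likewise writes the map as $\visibilityPolygonSymbol \intersection \mathbb{B}_R \intersection \fovConeInfinite{\fov}{\cdot} \intersection \advDomain$, takes $\mu$-local Lipschitzness of the factors from \Cref{thm:local_lipschitz_mu}, \Cref{thm:smoothness_limited_range} and \Cref{prop:smoothness_fov}, and concludes with \Cref{lem:intersection} and \Cref{lem:mu_LL}; your extra bookkeeping (lifting the $\genericPoint$-dependent factors to $\state$ via $\|\genericPoint'-\genericPoint''\| \leqslant \|\state'-\state''\|$, the measure-zero open versus closed ball discrepancy, and measuring the cone within the bounded $\freeSpace$) fills in details the paper leaves implicit.
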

\begin{proof}
  The first statement follows by invoking
  \Cref{lem:intersection,lem:mu_LL} on $\visibilityPolygonSymbol
  \intersection \mathbb{B}_R \allowbreak \intersection
  \mathfrak{C}^\infty_\fov \intersection \advDomain$, where the first
  three are $\mu$-locally Lipschitz from
  \Cref{thm:local_lipschitz_mu,thm:smoothness_limited_range} and
  \Cref{prop:smoothness_fov} respectively. The second statement then
  follows from \Cref{lem:mu_LL}.
\end{proof}

\begin{remark}[Inflection curves for visibility with limited field of
  view and range]
  \label{rem:inflection_curves_limited_fov_range}
  The critical points of the visibility metric with limited range and
  field of view comprise parts of inflection segments and parts of the
  $R$-visibility cone at an anchor, pointing opposite to the heading
  of the observer.  This is illustrated in
  \Cref{fig:inflection_curves_limited_fov_range}.
  To determine them, we are interested in the points from where an
  anchor lies in the $R$-visibility cone of the observer. From the
  properties of a cone, this is the cone with vertex at the anchor,
  with range $R$, subtending an angle of $\fov$ at the anchor, and
  pointing in the opposite direction of the observer's heading. With a
  reasoning mirroring the one in
  \Cref{rem:inflection_curves_limited_range}, the critical points can
  then be characterized as follows. For every reflex vertex
  $\reflexVertex$, the boundary of $M_i(\reflexVertex) \intersection
  \fovCone{\radius}{\fov}{\reflexVertex; -\heading} \intersection
  \advDomain$ is a set of critical points for $i \in \{2, 4\}$
  because, within $\advDomain$, $\reflexVertex$ is anchor for the
  points inside the set, and not an anchor for the points outside it.
  \oprocend
\end{remark}


\subsection{Finding Best Hiding/Surveillance
  Spots}\label{sec:optimization}%

Here, we address \labelcref{enum:local_extrema}. We first transform
the constrained optimization
problems~\cref{eqn:optimization_min,eqn:optimization_max} into
unconstrained problems and design a normalized generalized
gradient-based algorithm to find local extrema.

We start by defining augmented visibility metrics.  For $\genericPoint
\in \real^2$, let
\begin{align}
  \label{eqn:augmented_visibility_metric}
  \augmentedVisibilityMetric{\myDomain}(\genericPoint)
  & := \visibilityMetric(\proj{\myDomain}{\genericPoint}) + \|\genericPoint -
    \proj{\myDomain}{\genericPoint}\|, 
  \\
  \label{eqn:augmented_visibility_metric_max}
  \augmentedVisibilityMetricMax{\myDomain}(\genericPoint)
  & := \visibilityMetric(\proj{\myDomain}{\genericPoint}) -
    \|\genericPoint - \proj{\myDomain}{\genericPoint}\|, 
\end{align}
where
\smash{$\proj{\myDomain}{\genericPoint} \in \argmin_{\genericPointQ
  \in \myDomain} \|\genericPointQ - \genericPoint\|$}
is a projection of $\genericPoint$ on $\myDomain$.  The local
minimizers of $\augmentedVisibilityMetric{\myDomain}$ (resp.
maximizers of $\augmentedVisibilityMetricMax{\myDomain}$) correspond
to the solutions to the constrained problems
\cref{eqn:optimization_min} (resp. \cref{eqn:optimization_max}).

\begin{lemma}[Correspondence between local extrema of visibility and
  augmented visibility metrics]
  \label{lem:common_local_minima}
  The point $\genericPoint \in \freeSpace$ is a local minimizer (resp.
  maximizer) of $\visibilityMetric$ iff $\genericPoint$ is a local
  minimizer of $\mathcal{V}_{\myDomain}$ (resp. maximizer of
  $\mathcal{W}_{\myDomain}$).
\end{lemma}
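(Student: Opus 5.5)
Since local extremality of $\visibilityMetric$ is meant relative to its domain $\myDomain$ (as in \Cref{fig:problem_description}), I will prove the equivalence for points $\genericPoint \in \myDomain$. The plan rests on two facts. First, on $\myDomain$ the projection is the identity, so $\proj{\myDomain}{\genericPointQ} = \genericPointQ$ and $\augmentedVisibilityMetric{\myDomain}(\genericPointQ) = \augmentedVisibilityMetricMax{\myDomain}(\genericPointQ) = \visibilityMetric(\genericPointQ)$ for all $\genericPointQ \in \myDomain$. Second, $\visibilityMetric$ is locally Lipschitz (\Cref{cor:local_lipschitz}), and the projection onto $\myDomain$ is $1$-Lipschitz with $\|\proj{\myDomain}{\genericPointQ} - \genericPoint\| \leqslant \|\genericPointQ - \genericPoint\|$ for $\genericPoint \in \myDomain$. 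I will treat the minimization case in detail; the maximization case is symmetric, with the sign of the penalty flipped.

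($\Leftarrow$) Suppose $\genericPoint$ is a local minimizer of $\augmentedVisibilityMetric{\myDomain}$ on $\oBall{\epsilon}{\genericPoint}$. Restricting to $\oBall{\epsilon}{\genericPoint} \cap \myDomain$, where the two functions coincide, shows immediately that $\genericPoint$ is a local minimizer of $\visibilityMetric$ over $\myDomain$. The same restriction argument handles the maximization case for $\augmentedVisibilityMetricMax{\myDomain}$.

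($\Rightarrow$) Suppose $\visibilityMetric(\genericPoint) \leqslant \visibilityMetric(\genericPointQ)$ for all $\genericPointQ \in \oBall{\epsilon}{\genericPoint} \cap \myDomain$. For an arbitrary $\genericPointQ \in \oBall{\epsilon}{\genericPoint}$, set $p = \proj{\myDomain}{\genericPointQ}$. Then $\|p - \genericPoint\| \leqslant \|\genericPointQ - \genericPoint\| < \epsilon$, so $p$ lies in the neighborhood and
$\augmentedVisibilityMetric{\myDomain}(\genericPointQ) = \visibilityMetric(p) + \|\genericPointQ - p\| \geqslant \visibilityMetric(p) \geqslant \visibilityMetric(\genericPoint) = \augmentedVisibilityMetric{\myDomain}(\genericPoint)$.
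Hence the penalty only helps in the minimization direction.

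The delicate case is maximization, where the penalty $-\|\genericPointQ - p\|$ has the right sign but we still need $\visibilityMetric(p) \leqslant \visibilityMetric(\genericPoint)$. This again follows from $p \in \oBall{\epsilon}{\genericPoint} \cap \myDomain$, giving $\augmentedVisibilityMetricMax{\myDomain}(\genericPointQ) \leqslant \visibilityMetric(p) \leqslant \visibilityMetric(\genericPoint)$. So local Lipschitzness is not strictly needed; only nonexpansiveness of the projection is.

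The main obstacle is that $\myDomain$ is a possibly nonconvex polygon. In that case the projection may be set-valued, and nonexpansiveness toward $\genericPoint$ can fail. However, for any selection $p \in \argmin_{\genericPointQ' \in \myDomain} \|\genericPointQ' - \genericPointQ\|$, we have $\|p - \genericPointQ\| \leqslant \|\genericPoint - \genericPointQ\|$ because $\genericPoint \in \myDomain$. Therefore $\|p - \genericPoint\| \leqslant 2\|\genericPointQ - \genericPoint\|$. Shrinking the neighborhood to $\oBall{\epsilon/2}{\genericPoint}$ thus restores the argument for whatever selection defines $\proj{\myDomain}{\cdot}$.

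One remaining caveat is reflex vertices, where \Cref{cor:local_lipschitz} does not apply. The argument above uses only the values of $\visibilityMetric$, not its regularity, so it goes through unchanged at those points.
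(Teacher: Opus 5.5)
Your argument for points $x \in D_1$ is correct, but the proof has a gap: you prove the equivalence only for $x \in D_1$, while the lemma is stated for every $x \in \mathcal{F}$. The part you drop is the one the penalty term exists for.

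Since $V$ is defined on $D_1$, no point of $\mathcal{F}\setminus D_1$ is a local minimizer or maximizer of $V$. So the direction ``local minimizer of $\mathcal{V}_{D_1}$ $\Rightarrow$ local minimizer of $V$'' requires showing that $\mathcal{V}_{D_1}$ has no local minimizers outside $D_1$, and that $\mathcal{W}_{D_1}$ has no local maximizers there. This is exactly what the unconstrained reformulation needs, because Norcent iterates in $\mathbb{R}^2$. Your argument never uses the penalty $\pm\|x-\pi_{D_1}(x)\|$ in an essential way. It would go through verbatim for $V\circ\pi_{D_1}$, which does have spurious minimizers off $D_1$: any exterior point whose unique nearest point in $D_1$ is a local minimizer of $V$ is one.

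The missing step is short, and it is the paper's argument for the exterior. Take $x^*\notin D_1$, $p=\pi_{D_1}(x^*)$, $d=\|x^*-p\|>0$, and $x'=x^*+t(p-x^*)$ with $t\in(0,1)$.
\begin{itemize}
\item If $q\in D_1$ satisfies $\|x'-q\|\leqslant(1-t)d$, then $d\leqslant\|x^*-q\|\leqslant\|x^*-x'\|+\|x'-q\|\leqslant td+(1-t)d=d$. Hence $x'$ lies on $[x^*,q]$ at distance $td$ from $x^*$, which forces $q=p$.
\item So $p$ is the unique nearest point of $x'$, and every selection of the projection gives $\mathcal{V}_{D_1}(x')=V(p)+(1-t)d<\mathcal{V}_{D_1}(x^*)$ and $\mathcal{W}_{D_1}(x')=V(p)-(1-t)d>\mathcal{W}_{D_1}(x^*)$.
\item Since $x'$ can be taken arbitrarily close to $x^*$, $x^*$ is neither a local minimizer of $\mathcal{V}_{D_1}$ nor a local maximizer of $\mathcal{W}_{D_1}$.
\end{itemize}

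Your treatment of points in $D_1$ is in one respect tighter than the paper's. The bound $\|\pi_{D_1}(y)-x\|\leqslant 2\|y-x\|$ justifies a step the paper's boundary case leaves implicit: that $\pi_{D_1}(y)$ stays in the neighborhood where $x$ minimizes $V$. It also removes the need for an interior/boundary split. Do delete your opening claim that the projection is $1$-Lipschitz, which is false for nonconvex $D_1$; your later factor-$2$ bound is what actually carries the argument.
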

\begin{proof}
  We only present the proof for the minimization case---the argument
  for maximization is analogous.  By construction, $\visibilityMetric$
  is the restriction of $\mathcal{V}_{\myDomain}$ to $\myDomain$, so
  the local minimizers coincide on $\interior{\myDomain}$. We next
  show that there are no local minimizers of $\mathcal{V}_{\myDomain}$
  in $\exterior{\myDomain}$. By contradiction, let $\genericPoint^*
  \in \exterior{\myDomain}$ be a local minimizer of
  $\mathcal{V}_{\myDomain}$. Then, for any point $\genericPoint'$ on
  the open segment $(\genericPoint^*,
  \proj{\myDomain}{\genericPoint^*})$, we have
  $\augmentedVisibilityMetric{\myDomain}(\genericPoint') <
  \augmentedVisibilityMetric{\myDomain}(\genericPoint^*)$ as
  $\proj{\myDomain}{\genericPoint'} =
  \proj{\myDomain}{\genericPoint^*}$.  Hence, $\genericPoint^*$ is not
  a local minimizer, leading to a contradiction.%
  
  Finally, consider a point $\genericPoint^*$ on the boundary of
  $\myDomain$. Suppose $\genericPoint^*$ is a local minimum for
  $\visibilityMetric$, then for a sufficiently small $\epsilon >0$,
  $\augmentedVisibilityMetric{\myDomain}(\genericPoint^*) \leqslant
  \augmentedVisibilityMetric{\myDomain}(\genericPoint)$ for all
  $\genericPoint \in \oBall{\epsilon}{\genericPoint^*} \intersection
  \myDomain$, and
  $\augmentedVisibilityMetric{\myDomain}(\genericPoint^*) \leqslant
  \augmentedVisibilityMetric{\myDomain}(\proj{\myDomain}{\genericPoint})
  < \augmentedVisibilityMetric{\myDomain}(\genericPoint)$ for all
  $\genericPoint \in \oBall{\epsilon}{\genericPoint^*} \setminus
  \myDomain$. So, $\genericPoint^*$ is also a local minimum for
  $\augmentedVisibilityMetric{\myDomain}$. On the other hand, if
  $\genericPoint^*$ is a local minimum of
  $\augmentedVisibilityMetric{\myDomain}$, then for a sufficiently
  small $\epsilon > 0$, we have $\visibilityMetric(\genericPoint^*)
  \leqslant \visibilityMetric(\genericPoint)$ for all $\genericPoint
  \in \oBall{\epsilon}{\genericPoint^*} \intersection \myDomain$ as
  $\visibilityMetric$ and $\augmentedVisibilityMetric{\myDomain}$
  agree on the domain, and $\genericPoint^*$ is a local minimum for
  $\visibilityMetric$ too.
\end{proof}

To minimize~$\mathcal{V}_{\myDomain}$, we strive for an algorithm with
the following properties:
\begin{enumerate}[label={(P\arabic*)}]
  \item applicable to non-convex non-smooth
    objectives; \label{enum:algo_type}
  \item allowing for non-monotonic decrease in the
    objective; \label{enum:algo_non_mono}
  \item almost surely converging to local minimizers, while avoiding
    saddle points and local maximizers. \label{enum:algo_avoid+conv}
\end{enumerate}

Requirement~\labelcref{enum:algo_type} is justified by the fact that
$\mathcal{V}_{\myDomain}$ is non-smooth, non-convex. Without
requirement~\labelcref{enum:algo_non_mono}, the monotonic decrease of
the objective would prevent us from avoiding saddle points in general,
cf. requirement \labelcref{enum:algo_avoid+conv}.

Our proposed solution is the \hyperlink{alg:Norcent}{Norcent
(\textbf{Nor}malized Des\textbf{cent}) Algorithm}. The pseudocode is
presented in the accompanying algorithmic environment\footnote{In line
5, $e_1, e_2$ are the standard orthonormal basis for $\real^2$. Note
that $\nabla_s \augmentedVisibilityMetric{\myDomain}(\genericPoint)
\in \partial \augmentedVisibilityMetric{\myDomain}(\genericPoint)$
because $\mathcal{V}_{\myDomain}'(\genericPoint;v) \in \partial
\augmentedVisibilityMetric{\myDomain}(\genericPoint)$ exists for all
$v \in \real^2 \setminus \{0\}$ from
\Cref{cor:directional_derivative_component_area-metric,cor:directional_derivative_inflection_area-metric}.
This makes sure that irrespective of the choice of coordinates, the
numerical gradient always lies in the generalized gradient of the
augmented visibility metric.}.  The strategy leverages normalized
gradient
algorithms~\cite{LB:72-kibernetika,JC:06-auto,VM-AG-VN:87,NS-KK-AR:85}
and hence satisfies~\labelcref{enum:algo_type}.  Moreover, the
\NorcentTO~displays randomization at stationary points, leading to
non-monotonic decrease in the objective, in line with
\labelcref{enum:algo_non_mono}.
Non-monotonicity also plays a role in the possible increase in the
objective when crossing non-smooth points, which we have shown to be
constrained to lines, and hence have zero measure.
The rest of this section is devoted to show that \NorcentTO{}
satisfies \labelcref{enum:algo_avoid+conv}.

\begin{algorithm}[!t]
  \caption*{\hypertarget{alg:Norcent}{\textbf{Norcent Algorithm}}}
  \label{alg:Norcent}
  \begin{algorithmic}[1]
    \Require $x_0 \in \myDomain + \oBall{\stepSize_0}{0},
    \left\{\stepSize_k\right\}_{k=0}^\infty, 
    \left\{\stepSizeB_k\right\}_{k=0}^\infty,
    \left\{\delta^{\mathrm{th}}_k\right\}_{k=0}^\infty,
    \delta_{\mathrm{tol}}> 0$
    \Ensure $\sum_{k=0}^{\infty} \stepSize_k = \infty$, 
    $\sum_{k=0}^{\infty} \stepSize_k^2 < \infty$, 
    $\stepSize_k \downarrow 0$,
    $\sum_{k=0}^{\infty} \stepSizeB_k^2 = \infty$, 
    $\stepSizeB_k \downarrow 0$,
    $\delta^{\mathrm{th}}_k \downarrow 0$,
    $\delta^{\mathrm{th}}_0 \ll 1$
    \State $x \gets x_0$
    \State $k \gets 0$
    \While{True}
      \State $x_{\mathrm{old}} \gets x$
      \State $\nu \gets \nabla_s
      \augmentedVisibilityMetric{\myDomain}(\genericPoint) = \left[
  \rightDirectionalDerivative{\augmentedVisibilityMetric{\myDomain}}(\genericPoint;
  e_1),
  \rightDirectionalDerivative{\augmentedVisibilityMetric{\myDomain}}(\genericPoint;
  e_2) \right]$
      \label{line:gen_grad}
      \If{$\|\nu\| \leqslant \delta^{\mathrm{th}}_k$ \& $\genericPoint
      \in \myDomain$}
        \State choose $\nu$ uniformly randomly in
        $\boundary{\oBall{1}{0}}$
        \State $x \gets x - \stepSizeB_k \hat{\nu}$
      \Else
        \State $x \gets x - \stepSize_k \hat{\nu}$
      \EndIf
      \If{$\|\augmentedVisibilityMetric{\myDomain}(x_\mathrm{old}) - \augmentedVisibilityMetric{\myDomain}
      (x)\| < \delta_{\mathrm{tol}}$}
        \State break
      \EndIf  
      \State $k \gets k + 1$
    \EndWhile
  \end{algorithmic}
\end{algorithm}

\begin{remark}\longthmtitle{Comparison with the literature}
  Most algorithms that employ normalized gradient descent converge to
  Clarke stationary points
  \cite{JB-AL-MO:05-siamjo,JC:06-auto,KK:07-siamjo,VM-AG-VN:87}.  The
  variant in \cite{RM-BS-SK:19-tac,BS-RM-HP-SK:22-jmlr} converges to
  local minimizers, but requires a strong assumption that the function
  is thrice differentiable at saddle points, which is violated in our
  case for saddle points lying on inflection segments or the boundary
  of the free space. \cite{LB:72-kibernetika,NS-KK-AR:85} show
  convergence to isolated local minimizers, but in our case, the
  function $\augmentedVisibilityMetric{\myDomain}$ can have a
  continuum of local minimizers; for instance, all points in a convex
  polygonal environment have a constant value of the metric, and are
  trivially local minimizers. Therefore, these algorithms cannot be
  directly applied to our problem, justifying the design of the
  \Norcent. \oprocend
\end{remark}

\begin{remark}[Stepsizes]
  \label{rem:step_size}
  In addition to the standard assumption of the stepsize sequence
  $\{\stepSize_k\}$, being square summable but not summable, we
  require that they are monotonically decreasing to $0$. Most step
  size sequences used in optimization, such as $\stepSize_k = c_2 /
  \left(1 + c_3 k^{c_1} \right)$ or $\stepSize_k = c_2 / k^{c_1}$,
  with $c_1 \in (0, 1]$, satisfy these assumptions.  The use of the
  stepsize sequence $\{ \stepSizeB_k \}$ that is not square summable
  and yet monotonically decreasing to zero is less standard. We use it
  to guarantee escape from undesired stationary points
  \oprocend
\end{remark}

\begin{remark}[Non-uniqueness of projection]
  \label{rem:non_uniqueness}
  We do not require $\myDomain$ to be convex. The function
  $\augmentedVisibilityMetric{\myDomain}$ is continuous outside
  $\myDomain$, but may be non-smooth due to the non-uniqueness of the
  projection on to non-convex sets. However, the set of points where
  this happens has zero measure. These points are escaped with
  probability $1$ (cf. Case~4 in proof of \Cref{thm:non_stopping}).
  \oprocend
\end{remark}

To show the convergence of \Norcent, we first show that the distance
to $\myDomain$ decreases at every iteration of the algorithm.

\begin{lemma}[Distance to \texorpdfstring{$\myDomain$}{D\textsubscript{1}}]
  \label{lem:distance_decrease}
  For $\genericPoint_k \in \exterior{\myDomain}$, let
  $\genericPoint_{k+1}$ be the next iterate of the \Norcent.  Then,
  $\|\genericPoint_{k+1}-\proj{\myDomain}{\genericPoint_{k+1}}\| <
  \|\genericPoint_{k}-\proj{\myDomain}{\genericPoint_{k}}\|$.
\end{lemma}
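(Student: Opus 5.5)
The plan is to show that outside $\myDomain$ the generalized-gradient step always has a strictly positive component along the outward normal $\hat n_k := (\genericPoint_k - \proj{\myDomain}{\genericPoint_k})/\|\genericPoint_k - \proj{\myDomain}{\genericPoint_k}\|$. Then I exhibit a point of $\myDomain$ that is strictly closer to $\genericPoint_{k+1}$ than $d_k := \|\genericPoint_k - \proj{\myDomain}{\genericPoint_k}\|$.

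First, since $\genericPoint_k \in \exterior{\myDomain}$, the randomized branch of the \Norcent{} is never triggered, because it requires $\genericPoint \in \myDomain$. The update is therefore $\genericPoint_{k+1} = \genericPoint_k - \stepSize_k \hat\nu$ with $\nu = \nabla_s \augmentedVisibilityMetric{\myDomain}(\genericPoint_k)$. By \cref{eqn:augmented_visibility_metric}, $\augmentedVisibilityMetric{\myDomain} = \visibilityMetric\circ\pi_{\myDomain} + d$, where $d(\cdot) = \|\cdot - \proj{\myDomain}{\cdot}\|$. Because $\myDomain$ is polygonal, $\proj{\myDomain}{\genericPoint_k}$ lies either in the relative interior of an edge $e$ or at a vertex $w$ of $\myDomain$.

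In the edge case, in a neighborhood of $\genericPoint_k$ the projection is the orthogonal projection onto the line of $e$. Hence $d$ has gradient $\hat n_k$, and $\visibilityMetric\circ\pi_{\myDomain}$ changes only through the tangential coordinate, so its one-sided directional derivatives in $e_1,e_2$ produce a vector $t$ parallel to $e$. This gives $\nu = \hat n_k + t$ with $t \perp \hat n_k$. In the vertex case the projection is locally constant, so $\nu = \hat n_k$, which is the special case $t=0$. In both cases $\|\nu\|\ge 1$, so $\hat\nu$ is well defined, and $\hat\nu\cdot\hat n_k = 1/\|\nu\| > 0$.

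Next, set $s := \stepSize_k/\|\nu\|$ and $y := \proj{\myDomain}{\genericPoint_k} - s\,t$, a point on the line of $e$ (or $y=w$ in the vertex case). Then $\genericPoint_{k+1} - y = (d_k - s)\hat n_k$. If $y\in\myDomain$ and $s\le d_k$, this gives $d(\genericPoint_{k+1}) \le \|\genericPoint_{k+1}-y\| = d_k - s < d_k$. If $s > d_k$, the iterate has crossed the supporting line of $e$. I would then argue, using the segment from $\genericPoint_{k+1}$ to $y$ and local convexity near $e$, either that $\genericPoint_{k+1}\in\myDomain$, so its distance is $0<d_k$, or that $\genericPoint_{k+1}$ is within $s-d_k<s$ of $\myDomain$. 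The second alternative needs a more careful comparison to reach $d_k$, and this is where I expect the difficulty to lie.

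The main obstacle is the case where $y$ leaves $e$ (the tangential shift $s\,t$ overshoots the edge endpoint), the case $s>d_k$ just described, and the measure-zero set where $\pi_{\myDomain}$ is non-unique (cf.\ \Cref{rem:non_uniqueness}), where the one-sided derivatives in $e_1,e_2$ may mix contributions from several edges. To handle it I would try to bound $\|t\|$ by the local Lipschitz constant of $\visibilityMetric$ from \Cref{cor:local_lipschitz}, since $\pi_{\myDomain}$ is $1$-Lipschitz on each cell of its normal-cone decomposition. I would also exploit the fact that every point of the polygonal chain of $\boundary{\myDomain}$ near $\proj{\myDomain}{\genericPoint_k}$ lies in $\myDomain$. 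The fallback is a direct planar argument: the open half-disk $\{z : \|z-\proj{\myDomain}{\genericPoint_k}\|<d_k,\ (z-\proj{\myDomain}{\genericPoint_k})\cdot\hat n_k>0\}$ meets the boundary path of $\myDomain$ through $\proj{\myDomain}{\genericPoint_k}$, and moving along $-\hat\nu$ strictly reduces the distance to this path because $\hat\nu\cdot\hat n_k>0$.
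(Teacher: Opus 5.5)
\textbf{There is a genuine gap.} Your decomposition $\nu=\hat n_k+t$ and the comparison point $y=\pi_{D_1}(x_k)-s\,t$ are correct where they apply. If $y\in D_1$, then $\mathrm{dist}(x_{k+1},D_1)\le\|x_{k+1}-y\|=|d_k-s|<d_k$ for every $0<s<2d_k$, so the sub-case $d_k<s<2d_k$ needs no crossing argument.

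The gap is the remaining cases, which you leave open: $s\ge 2d_k$, $y\notin D_1$ (the shift $s\,t$ runs past an endpoint of $e$), and non-unique projections. Your fallback does not cover them. The inequality $\hat\nu\cdot\hat n_k>0$ only says that the distance decreases along $-\hat\nu$ for sufficiently small steps, not for the prescribed step $a_k$.

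There is a further problem with the claim $t\parallel e$. It needs $V$ restricted to $e$ to be differentiable at $\pi_{D_1}(x_k)$. Where an inflection segment crosses $e$, the coordinatewise one-sided vector $\nabla_s$ is in general not tangent to $e$. For example, take $e$ along $(1,-1)/\sqrt{2}$, $\hat n_k=(1,1)/\sqrt{2}$, and let $g_\pm$ be the right and left slopes of $V$ along $e$. Then $\nu\cdot\hat n_k=1+(g_+-g_-)/2$, so even $\hat\nu\cdot\hat n_k>0$ can fail.

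These open cases cannot be closed by a finer estimate, because the statement is false there without further hypotheses.
\begin{itemize}
\item \emph{Overshoot with $t=0$.} In an obstacle-free convex environment $V$ is constant and $\mathcal{V}_{D_1}=\mu(D_2)+\mathrm{dist}(\cdot,D_1)$. Take $D_1=[0,10]\times[0,0.1]$, $x_k=(5,0.2)$ and $a_k=0.5$ (admissible, e.g.\ $k=0$ with $a_k=0.5/(k+1)$). Then $\nu=(0,1)$ and $x_{k+1}=(5,-0.3)$, so the distance grows from $0.1$ to $0.3$. This is exactly your case $t=0$, $y\in D_1$, $s\ge 2d_k$.
\item \emph{Tangential shift with $t\neq 0$.} Here even $a_k\le d_k$ is not enough. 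Suppose $D_1$ is locally the quadrant $\{z_1\le0,\ z_2\le0\}$, $x_k=(-\epsilon,d_k)$ and $t=(-10,0)$. Then $a_k=d_k$ and $\epsilon\to0^+$ give $\mathrm{dist}(x_{k+1},D_1)\approx1.34\,d_k$.
\end{itemize}

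The paper's proof avoids these situations only through implicit assumptions:
\begin{itemize}
\item it writes $\partial\mathcal{V}_{D_1}(x_k)$ as the convex hull of the unit normals, which drops the $V\circ\pi_{D_1}$ term, i.e.\ sets $t=0$;
\item it compares with $\sum_i\lambda_i\pi^{(i)}_{D_1}(x_k)$, which must lie in $D_1$;
\item it uses $\|v-a_k\hat v\|=\|v\|-a_k$, which is valid only for $a_k\le d_k$.
\end{itemize}

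So what you need is an added hypothesis, for instance $y\in D_1$ and $a_k/\|\nu\|<2d_k$. Under it, your one-line bound $|d_k-s|<d_k$ is already a complete proof for unique projections. It is also more faithful than the paper's argument, because it keeps the tangential term.
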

\begin{proof}
  Let the set of all unique projections of $\genericPoint_k$ on
  $\myDomain$ by given by
  \smash{$\{\pi_{\myDomain}^{(i)}(\genericPoint_k)\}_{i=1}^{n_{\pi}}$}.
  Then, \smash{$\|\genericPoint_k -
  \pi_{\myDomain}^{(i)}(\genericPoint_k)\|$} have equal values for all
  $i \in [n_{\pi}]$ and
  \begin{align*}
    \partial \augmentedVisibilityMetric{\myDomain} (\genericPoint_k) =
    \convexHull{\{ 
    \normalization(\genericPoint_k -
    \pi_{\myDomain}^{(i)}(\genericPoint_k)) \}_{i=1}^{n_{\pi}}},
  \end{align*} 
  by choosing $E_f = E$ as the set of points where the distance to
  $\myDomain$ fails to be differentiable, and the gradients in the
  neighborhood are due to \cite[Corollary 3.4.5]{PC-CS:04}.  This
  means that, for every $\hat{\nu} \in \partial
  \augmentedVisibilityMetric{\myDomain}(\genericPoint_k)$, there
  exists $\lambda \in \realpositive^{n_{\pi}}$ with $\lambda^\top
  \mathbbm{1} = 1$ such that
  \smash{$\hat{\nu} = \sum_{i=1}^{n_{\pi}} \lambda_i \normalization(
  \genericPoint_k - \pi_{\myDomain}^{(i)}(\genericPoint_k))$}.
  It follows that
  \begin{align*}
    \|\genericPoint_{k+1}-\proj{\myDomain}{\genericPoint_{k+1}}\| 
    & =  \|
      \genericPoint_{k} - \stepSize_k \hat{\nu} -
      \proj{\myDomain}{\genericPoint_{k+1}} \|
    &&
       \text{(\hyperlink{alg:Norcent}{Norcent} Line 12)}
    \\
    & \leqslant
      \| \genericPoint_{k} - \stepSize_k \hat{\nu} -
      \sum_{i=1}^{n_{\pi}} \lambda_i \pi_{\myDomain}^{(i)}
      (\genericPoint_k) \|
    &&
       \text{(definition of $\pi_{\myDomain}$)}
    \\
    & \leqslant \sum_{i=1}^{n_{\pi}} 
      \lambda_i \| \genericPoint_{k} 
      \! - \! 
      \pi_{\myDomain}^{(i)}(\genericPoint_k) 
      \! - \!
      \stepSize_k \normalization (\genericPoint_{k} 
      \! - \!
      \pi_{\myDomain}^{(i)}(\genericPoint_k)) \|
    &&
       \text{($\triangle$ inequality)}
    \\
    & = - \stepSize_k + \sum_{i=1}^{n_{\pi}}
      \lambda_i
      \| \genericPoint_{k} -
      \pi_{\myDomain}^{(i)}(\genericPoint_k) \| && \text{(parallel vectors)}
    \\
    & < \| \genericPoint_{k} - \proj{\myDomain}{\genericPoint_k}
      \|, && (\stepSize_k > 0)
  \end{align*}
  for any projection $\proj{\myDomain}{\genericPoint_k}$, which proves
  the result.
\end{proof}

We use this result to show that the set $\myDomain +
\oBall{\min\{\stepSize_0,\stepSizeB_0\}}{0}$ is forward invariant
under the \NorcentTO.

\begin{proposition}[Invariance under \NorcentTO]
  \label{prop:invariant_set}
  The set $\tilde{D}_1 := \myDomain +
  \oBall{\min\{\stepSize_0,\stepSizeB_0\}}{0}$ is forward invariant
  under \NorcentTO: if \smash{$\genericPoint_0 \in \tilde{D}_1$}, then
  \smash{$\genericPoint_k \in \tilde{D}_1$} for all $k \in
  \integernonnegative$.
\end{proposition}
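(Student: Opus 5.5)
The proof is by induction on $k$, splitting on whether the current iterate lies in $\myDomain$ or in $\exterior{\myDomain}$. Set $r := \min\{\stepSize_0,\stepSizeB_0\}$ and $d(\genericPoint) := \|\genericPoint - \proj{\myDomain}{\genericPoint}\|$. Then $\tilde{D}_1 = \{\genericPoint \mid d(\genericPoint) < r\}$, since $\myDomain + \oBall{r}{0}$ is exactly the set of points at distance less than $r$ from $\myDomain$. The base case is the hypothesis $\genericPoint_0 \in \tilde{D}_1$. For the inductive step, assume $d(\genericPoint_k) < r$ and show $d(\genericPoint_{k+1}) < r$.

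\emph{Case 1: $\genericPoint_k \in \exterior{\myDomain}$.} The randomized branch of the \Norcent{} requires $\genericPoint \in \myDomain$, so only the normalized-gradient branch can execute. \Cref{lem:distance_decrease} then gives directly
\begin{align*}
  d(\genericPoint_{k+1}) < d(\genericPoint_k) < r .
\end{align*}

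\emph{Case 2: $\genericPoint_k \in \myDomain$.} In both branches the update is $\genericPoint_{k+1} = \genericPoint_k - s_k \hat{\nu}$ with $\|\hat{\nu}\| = 1$ and $s_k \in \{\stepSize_k, \stepSizeB_k\}$. Because $\genericPoint_k$ is itself a candidate in the definition of the projection,
\begin{align*}
  d(\genericPoint_{k+1}) \leqslant \|\genericPoint_{k+1} - \genericPoint_k\| = s_k .
\end{align*}
It therefore suffices to show $s_k < r$. The monotonicity assumptions $\stepSize_k \downarrow 0$ and $\stepSizeB_k \downarrow 0$ give $\stepSize_k \leqslant \stepSize_0$ and $\stepSizeB_k \leqslant \stepSizeB_0$. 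To upgrade these to strict inequalities I would use strict decrease of the sequences, so that $\stepSize_k < \stepSize_0$ and $\stepSizeB_k < \stepSizeB_0$ for $k \geqslant 1$. If strict decrease is not available, I would instead treat the tangential exit onto the boundary $d = r$ as a measure-zero event, consistent with the almost-sure framing used later.

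\emph{Main obstacle.} The difficulty is the cross-comparison in Case 2, namely bounding a deterministic step $\stepSize_k$ by $\stepSizeB_0$ when $\stepSizeB_0 < \stepSize_0$, or a random step $\stepSizeB_k$ by $\stepSize_0$ when $\stepSize_0 < \stepSizeB_0$. Monotonicity of each sequence alone does not control the other. For the random branch, I would use the algorithm's threshold test $\|\nu\| \leqslant \delta^{\mathrm{th}}_k$. For the gradient branch from inside $\myDomain$, I would first try to show that a step of length $\stepSize_k$ along $-\hat{\nu}$ with $\hat{\nu} \in \partial\augmentedVisibilityMetric{\myDomain}(\genericPoint_k)$ cannot leave $\myDomain$ by more than $\stepSizeB_0$. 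The idea is to exploit the $+\|\genericPoint - \proj{\myDomain}{\genericPoint}\|$ penalty in \cref{eqn:augmented_visibility_metric}: near $\boundary{\myDomain}$ the generalized gradient acquires an outward-normal component, which should push the iterate back toward $\myDomain$.

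If that argument fails, the fallback is to index the sequences so that both are dominated by their common initial minimum for $k \geqslant 1$, that is, $\max\{\stepSize_k,\stepSizeB_k\} \leqslant \min\{\stepSize_0,\stepSizeB_0\}$. This is a mild tail condition implied by $\stepSize_k, \stepSizeB_k \downarrow 0$ after discarding finitely many terms, and it closes Case 2 and hence the induction.
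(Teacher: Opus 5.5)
Your induction and your Case~1 match the paper's proof, and your Case~2 bound $d(\genericPoint_{k+1})\leqslant s_k$ is correct. The ``main obstacle'' you flag is real, but none of your devices removes it, so Case~2 is left open.
\begin{itemize}
\item The threshold test $\|\nu\|\leqslant\delta^{\mathrm{th}}_k$ only selects the branch. It says nothing about $\stepSizeB_k$ versus $\stepSize_0$.
\item The penalty idea fails on $\interior{\myDomain}$. There $\proj{\myDomain}{\genericPoint}=\genericPoint$ locally, so the penalty vanishes identically and $\nabla\augmentedVisibilityMetric{\myDomain}=\nabla\visibilityMetric$ can point anywhere.
\item The fallback is a new hypothesis, not a consequence of the requirements. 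By monotonicity it is equivalent to $\stepSize_1\leqslant\stepSizeB_0$ and $\stepSizeB_1\leqslant\stepSize_0$. Discarding terms changes $(\stepSize_0,\stepSizeB_0)$ and hence $\tilde{D}_1$ itself. For the admissible choice $\stepSize_k=c/(k+1)$, $\stepSizeB_k=c'/\sqrt{k+1}$, the shifted condition $\stepSizeB_{K+1}\leqslant\stepSize_K$ fails for all large $K$.
\item At $k=0$ the step length is $\stepSize_0$ or $\stepSizeB_0$, both $\geqslant r$. So $d(\genericPoint_1)\leqslant s_0$ never yields $d(\genericPoint_1)<r$, and neither strict decrease for $k\geqslant 1$ nor the fallback addresses this.
\end{itemize}

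In fact Case~2 cannot be closed, because the statement is false as written. The paper's proof passes over exactly your obstacle: it asserts $\genericPoint_{k+1}\in\myDomain+\oBall{\min\{\stepSize_k,\stepSizeB_k\}}{0}$, whereas a step from $\genericPoint_k\in\myDomain$ has length $\stepSize_k$ or $\stepSizeB_k$ according to the branch.

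Here is a counterexample for $\stepSizeB_0<\stepSize_0$. Take an L-shaped room with its single reflex vertex $v$, and set $\advDomain=\freeSpace$. Let $\myDomain$ be a rectangle of width $w<\stepSize_0-\stepSizeB_0$ inside an arm where $v$ is the only anchor. Orient its long sides parallel to $\genericPoint_0-v$ for some $\genericPoint_0\in\interior{\myDomain}$. By the remark after \Cref{cor:directional_derivative_component_area-metric}, $\nabla\visibilityMetric(\genericPoint_0)$ is nonzero and orthogonal to the long sides. Provided its norm exceeds $\delta^{\mathrm{th}}_0$, the gradient branch gives $d(\genericPoint_1)\geqslant\stepSize_0-w>\stepSizeB_0$, so $\genericPoint_1\notin\tilde{D}_1$.

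Symmetrically, suppose $\stepSize_0<\stepSizeB_0$. Place a thin $\myDomain$ in the anchor-free part of the room, where $\visibilityMetric$ is constant. This triggers the random branch, and $d(\genericPoint_1)>\stepSize_0$ with positive probability.

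The repair is to use $\{\genericPoint\mid\|\genericPoint-\proj{\myDomain}{\genericPoint}\|\leqslant\max\{\stepSize_0,\stepSizeB_0\}\}$. Make it closed to absorb the equality case $d(\genericPoint_{k+1})=s_k$ from boundary points. With this set:
\begin{itemize}
\item your Case~2 closes by monotonicity alone, since $s_k\leqslant\max\{\stepSize_k,\stepSizeB_k\}\leqslant\max\{\stepSize_0,\stepSizeB_0\}$;
\item your Case~1 is unchanged;
\item the set still contains every admissible $\genericPoint_0$ and gives the boundedness used in \Cref{thm:convergence_norcent}.
\end{itemize}
Note that Case~1 is only as good as \Cref{lem:distance_decrease}, whose ``parallel vectors'' step tacitly uses $\stepSize_k\leqslant\|\genericPoint_k-\proj{\myDomain}{\genericPoint_k}\|$.
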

\begin{proof}
  Given $\genericPoint_k \in \tilde{D}_1$, we consider two cases: (1)
  if $\genericPoint_k \in \myDomain$, by the update rule in Lines 8
  and 10 of \Norcent, we have \smash{$\genericPoint_{k+1} \in
  \myDomain + \oBall{\min\{\stepSize_k, \stepSizeB_k\}}{0} \subseteq$}
  \smash{$\myDomain +\oBall{\min\{\stepSize_0, \stepSizeB_0\}}{0}
  \subset \tilde{D}_1 $} due to monotonicity of the two stepsize
  sequences; (2) if $\genericPoint_k \in \tilde{D}_1 \setminus
  \myDomain \subset \exterior{\myDomain}$, by
  \Cref{lem:distance_decrease}, $\genericPoint_{k+1} \in \tilde{D}_1$.
  Thus, by induction, the claim holds for all $k \in
  \mathbb{Z}_{\geqslant 0}$.
\end{proof}

Next, we show how the randomization in Line 7 of \Norcent{} allows us
to establish the non-stopping property
\labelcref{enum:algo_avoid+conv} for the iterates: that is, the
algorithm does not get stuck at local maximizers or saddle points, but
almost surely escapes them.

\begin{theorem}[Non-stopping property]
  \label{thm:non_stopping}
  Consider the sequence of iterates $\{\genericPoint_k\}$ generated
  by~\NorcentTO~initialized at $\genericPoint_0 \in
  \tilde{D}_1$. Then, for any convergent subsequence
  $\{\genericPoint_{k_s}\}$ whose limit point $\tilde{\genericPoint}$ is not in
  $\solutions$, there exists
  $\bar{\delta}$ such that, for all
  sufficiently large $s$ and all
  $\delta \in (0, \bar{\delta}\verythinspace]$,
  \begin{align}
    \label{eqn:non_stopping_1}
    \kappa_\delta(s) = \min\{r \mid \|x_r - x_{k_s}\| > \delta, r > k_s\} < \infty,
  \end{align}
  and 
  \begin{align}
    \label{eqn:non_stopping_2}
    \limsup_{s \to \infty}
    \augmentedVisibilityMetric{\myDomain}(\genericPoint_{\kappa_\delta(s)})  
    < \lim_{s \to \infty}
    \augmentedVisibilityMetric{\myDomain}(\genericPoint_{k_s}) =
    \augmentedVisibilityMetric{\myDomain}(\tilde{\genericPoint}),
  \end{align}
  hold with probability 1. 
\end{theorem}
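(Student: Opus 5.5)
Here $\solutions$ is read as the set of local minimizers of $\augmentedVisibilityMetric{\myDomain}$ (equivalently, by \Cref{lem:common_local_minima}, of $\visibilityMetric$ on $\myDomain$). Fix a convergent subsequence $\genericPoint_{k_s}\to\tilde{\genericPoint}\notin\solutions$. By \Cref{prop:invariant_set} all iterates stay in the bounded set $\tilde D_1$, and by \Cref{cor:local_lipschitz} (together with the $1$-Lipschitz distance term) $\augmentedVisibilityMetric{\myDomain}$ is locally Lipschitz near $\tilde{\genericPoint}$. Continuity then gives the equality in \cref{eqn:non_stopping_2}. The plan is to pick $\bar\delta$ so that $\oBall{2\bar\delta}{\tilde{\genericPoint}}$ contains no reflex vertex and meets only the inflection segments through $\tilde{\genericPoint}$. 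On this ball the directional derivatives are given piecewise by \Cref{cor:directional_derivative_component_area-metric,cor:directional_derivative_inflection_area-metric}, on finitely many convex sectors (\Cref{lem:partition}). The argument then splits into cases according to the local structure at $\tilde{\genericPoint}$.

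\emph{Case 1} ($\tilde{\genericPoint}\in\exterior{\myDomain}$). By \Cref{lem:distance_decrease} the distance decreases by exactly $\stepSize_k$ per step, while $\visibilityMetric\circ\pi_{\myDomain}$ changes only through the projection. Since $\sum\stepSize_k=\infty$, the iterates leave $\oBall{\delta}{\genericPoint_{k_s}}$, and the value drops by a uniform amount of order $\delta$.

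\emph{Case 2} ($\tilde{\genericPoint}$ non-stationary, i.e.\ $0\notin\partial\augmentedVisibilityMetric{\myDomain}(\tilde{\genericPoint})$). By upper semicontinuity of $\partial$ there are $\eta>0$ and a unit vector $w$ with $\langle g,w\rangle\ge\eta$ for all $g\in\partial\augmentedVisibilityMetric{\myDomain}(\genericPoint)$ and all $\genericPoint$ in the ball. Since $\delta^{\mathrm{th}}_k\downarrow0$, only Line 10 is active for large $k$, and the normalized steps satisfy $\langle\hat\nu_k,w\rangle\ge\eta/L$. Each step therefore moves a fixed fraction of $\stepSize_k$ along $w$, so the exit time $\kappa_\delta(s)$ is finite. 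A Lebourg mean-value estimate along the path gives a decrease of at least $c\,\delta$, with an $O(\stepSize_{k})$ error that vanishes as $s\to\infty$.

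\emph{Case 3} ($\tilde{\genericPoint}$ stationary but not a local minimizer). By \Cref{lem:local_extrema} there is a direction $\hat\nu^*$ with $\rightDirectionalDerivative{\augmentedVisibilityMetric{\myDomain}}(\tilde{\genericPoint};\hat\nu^*)<0$. Because the function is piecewise $C^1$ on finitely many sectors, the set of such directions contains an open arc $A$ of $\boundary{\oBall{1}{0}}$ of positive length. Near $\tilde{\genericPoint}$ the gradient is small, so the random branch (Line 7) is triggered repeatedly. Each such step independently lands in $-A$ with probability $|A|/2\pi>0$. A descent random step of size $\stepSizeB_k$ takes the iterate into a sector where the function is smooth with gradient bounded away from zero, and Case 2 applies from there on, producing exit and strict decrease. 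By Borel--Cantelli (the infinitely many random trials are independent, each succeeding with probability bounded below), such an escape occurs almost surely. The condition $\sum\stepSizeB_k^2=\infty$ ensures that unsuccessful random steps, which behave as a martingale with increments of size $\stepSizeB_k$, do not confine the iterate: their accumulated variance diverges, so $\|\genericPoint_r-\genericPoint_{k_s}\|>\delta$ is reached almost surely. Meanwhile $\stepSizeB_k\downarrow0$ keeps the value increases from failed trials bounded by $L\stepSizeB_k$, which is asymptotically negligible.

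\emph{Case 4} (points of non-unique projection, \Cref{rem:non_uniqueness}). These form a measure-zero set and are handled as in Case 2 or Case 3, escaped with probability one.

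The main obstacle is Case 3. One must turn the per-trial escape probability into a uniform strict gap in \cref{eqn:non_stopping_2} along the whole subsequence, and control interleavings of random and deterministic steps near inflection segments, where the deterministic step may oscillate across a segment. I would first try to build a uniform local model: a fixed cone of descent directions, and a bound showing that once the iterate enters it at distance $\gtrsim\stepSizeB_k$ from $\tilde{\genericPoint}$, the deterministic steps keep the iterate in the cone until exit.
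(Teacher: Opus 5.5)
Your proposal has genuine gaps in two places: the value-decrease step of your Case~2, and your Case~3 as a whole.

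\textbf{Your Case~2 (the paper's Case~1).} The exit-time part is fine. The separating direction $w$ from upper semicontinuity is a clean substitute for the paper's cone lemma, and your observation that Line~7 is eventually inactive is a point the paper leaves implicit. The value-decrease step, however, fails as stated.

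A step-by-step Lebourg estimate gives $\mathcal{V}_{D_1}(x_{k+1})-\mathcal{V}_{D_1}(x_k)=-a_k\langle g_k',\hat\nu_k\rangle$, where $g_k'$ is a generalized gradient at an intermediate point. The condition $\langle g,w\rangle\geqslant\eta$ for all nearby $g$ does not make $\langle g_k',\hat\nu_k\rangle$ positive. One-sided gradients across an inflection segment can form an obtuse angle: for $2|x_1|+x_2$ with $w=e_2$, they are $(2,1)$ and $(-2,1)$.

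Normalized steps then zig-zag across the segment, and each crossing can raise the value by up to $La_k$. About $\delta/a_{k_s}$ steps are taken before exit, and nearly all of them may be crossings. The accumulated error is therefore of order $\delta$, not $O(a_k)$.

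This is exactly the difficulty the paper's Case~1 is built around. It splits the trajectory into two kinds of stretches. Stretches in shrunken components are handled by the Descent Lemma. Stretches in the $a_k$-expansions of inflection or boundary segments are handled by comparing values at the successive crossing points $y_k$ on the segment. These points drift along the segment in a direction $\hat e$ of decrease, which the paper shows via the mean value theorem at $z_k=\alpha x_k+(1-\alpha)y_k$ on one fixed side, then letting $\alpha\to0^+$. As a result, only the entry/exit steps and the transitions between sets cost $O(a_k)$.

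\textbf{Your Case~3 (the paper's Case~2).} You flag this case yourself; it departs from the paper's argument and does not close, for three reasons.

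First, the arc $A$ of strict descent directions comes from the converse direction of \Cref{lem:local_extrema}. That direction fails at non-minimizing stationary points where all one-sided directional derivatives vanish, for example $x_1^2-x_2^2$ at the origin. Such points can occur inside a component where $\mathcal{V}_{D_1}$ is smooth. The paper's argument never needs a first-order descent direction.

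Second, a ``successful'' random step moves the iterate only $O(b_k)$ from $x_k$. It stays inside a neighbourhood of $\tilde x$ that still contains the stationary set. Your Case~2 requires a ball free of stationary points, so it does not apply ``from there on''; deterministic steps can bring the iterate back.

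Third, charging each failed trial $Lb_k$ is not negligible. The number of random steps in the window is unbounded: martingale escape from a $\delta$-ball typically takes on the order of $\delta^2/b_k^2$ such steps. Since $\sum_k b_k=\infty$, the accumulated charge $L\sum b_l$ does not vanish.

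The paper addresses these points with the shrinking sets $X^*_k$:
\begin{itemize}
\item The martingale argument with $\sum_k b_k^2=\infty$ gives almost-sure finite-time escape from $X^*_{k_s}\cap\mathbb{B}_\delta(x_{k_s})$.
\item Outside $X^*_k$, the paper's Case~1 analysis applies.
\item Re-entries into $X^*_k$ occur only finitely often, since otherwise $\tilde x$ would be a local minimizer.
\item Value changes are chained over entry/exit times with errors $L\sum_t a_{n_t}+\sum_t\delta^{\mathrm{th}}_{m_t}\sum_{l=m_t}^{n_t}b_l$. The random phase is thus charged by the vanishing threshold $\delta^{\mathrm{th}}$ rather than by $L$.
\end{itemize}
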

\begin{proof}
  Let $\setStationaryPoints := \{ \genericPoint \in \myDomain \mid 0
  \in \partial \augmentedVisibilityMetric{\myDomain}(\genericPoint)
  \}$ be the set of all Clarke stationary points of
  $\augmentedVisibilityMetric{\myDomain}$ over the ego domain
  $\myDomain$ and $\setNonsmoothPoints:= \{ \genericPoint \in \real^2
  \setminus \myDomain \mid |\partial
  \augmentedVisibilityMetric{\myDomain}(\genericPoint)| > 1\}$
  be the set of points outside the ego domain where the distance
  function to $\myDomain$ is non-smooth, identified by the cardinality
  of the generalized gradient.
  We consider the following cases for the limit point
  $\tilde{\genericPoint}$. For the reader's convenience, we provide
  summary statements of the steps of the proof.

  \textbf{Case 1: $\tilde{\genericPoint} \in \myDomain
    \setminus \setStationaryPoints$}. 

  \emph{(Bounding gradient vectors at $\tilde{\genericPoint}$):}
  Suppose $\tilde{\genericPoint}$ lies on $n_\inflectionSegment
  \geqslant 0$ inflection segments and has $n_\componentTerrain$
  neighboring components
  $\{\componentTerrain_j\}_{j=1}^{n_\componentTerrain} \subset
  \partition$. 
  Note that the $n_\inflectionSegment$ is bounded by
  \Cref{lem:bound_inflection_segments} and $n_\inflectionSegment = 0$
  corresponds to the case where $\tilde{\genericPoint}$ lies in the
  interior of a connected component $\componentTerrain_1$ of the
  partition $\partition$. 
  Then, either $n_\componentTerrain = 1$ when $n_\inflectionSegment =
  0$, or $n_\componentTerrain \leqslant 2 n_\inflectionSegment$, as
  each inflection segment is a line segment from
  \Cref{def:inflection_segments}, and a line segment divides the
  entire $2$-D plane into at most 2 regions. Thus, the generator
  $\gen{} \partial \augmentedVisibilityMetric{\myDomain}
  (\tilde{\genericPoint})$ of the generalized gradient is finite (cf.
  \Cref{lem:cone_lemma}), given by $\gen{} \partial
  \augmentedVisibilityMetric{\myDomain} (\tilde{\genericPoint}) =
  \left\{ \nabla^{(j)}
  \augmentedVisibilityMetric{\myDomain}(\tilde{\genericPoint})
  \right\}_{j=1}^{n_\componentTerrain}$, where each
  \begin{align*}
    \nabla^{(j)}
    \augmentedVisibilityMetric{\myDomain}(\tilde{\genericPoint}) :=
    \lim_{{\genericPoint_i \verythinspace \to \verythinspace
    \tilde{\genericPoint}}, \, {\genericPoint_i \verythinspace \in
    \verythinspace \componentTerrain_j}} \nabla
    \augmentedVisibilityMetric{\myDomain} (\genericPoint_i),
  \end{align*}
  exists and is unique due to
  \Cref{cor:directional_derivative_component_area-metric}.
  Furthermore, by choosing $E_f = \setCriticalPoints$ in the
  definition of the generalized gradient (cf. \Cref{sub:non_smooth}),
  it is clear from
  \Cref{cor:directional_derivative_component_area-metric} that
  $\partial \augmentedVisibilityMetric{\myDomain}
  (\tilde{\genericPoint}) = \convexHull{\gen{} \partial
  \augmentedVisibilityMetric{\myDomain} (\tilde{\genericPoint})}$.
  By \Cref{lem:cone_lemma}, there exists a unit vector $\hat{u} \in
  \real^2$ and an angle $\alpha \in (0, \sfrac{\pi}{2})$ such that
  $\partial \augmentedVisibilityMetric{\myDomain}
  (\tilde{\genericPoint})$ is completely contained in
  $\fovConeInfinite{2\alpha}{\hat{u}}$.

  \sloppy
  \emph{(Defining neighborhoods and bounding gradient vectors):}
  Let
  \smash{$\nabla_{\text{min}}\augmentedVisibilityMetric{\myDomain}(\tilde{\genericPoint})
  = \allowbreak \argmin_{g \in \partial
  \augmentedVisibilityMetric{\myDomain}(\tilde{\genericPoint})} \|g\|
  $}
  and choose $\tilde{\epsilon} = \|
  \nabla_{\text{min}}\augmentedVisibilityMetric{\myDomain}(\tilde{\genericPoint})\|
  /2$. Note that $\tilde{\epsilon} > 0$ as $0 \not \in \partial
  \augmentedVisibilityMetric{\myDomain}(\tilde{\genericPoint})$, which
  in turn is due to $\tilde{\genericPoint} \not \in \mathcal{S}$. For
  each $\componentTerrain_j$, there exists $\tilde{\delta}^{j} > 0$
  such that $\nabla
  \augmentedVisibilityMetric{\myDomain}(\genericPointQ) \in
  \oBall{\tilde{\epsilon}}{\nabla^{(j)}\augmentedVisibilityMetric{\myDomain}(\tilde{\genericPoint})}$
  for all $\genericPointQ \in
  \oBall{\tilde{\delta}^{j}}{\tilde{\genericPoint}} \intersection
  \componentTerrain_j$. Set $\tilde{\delta} =
  \min_{j=1}^{n_\componentTerrain} \delta^{j}$, and we get $\partial
  \augmentedVisibilityMetric{\myDomain} (\genericPointQ) \subset
  \fovConeInfinite{\sfrac{\pi}{2} + \alpha}{\hat{u}}$ for all
  $\genericPointQ \in \oBall{\tilde{\delta}}{\tilde{\genericPoint}}$.
  So, $\hat{\nu}$ in Line 10 of \NorcentTO{} lies in a cone of
  aperture $\left(\sfrac{\pi}{2} + \alpha\right) \in (\sfrac{\pi}{2},
  \pi)$, centered at $\hat{u}$.

  \emph{(Lower bound on progress made by iterates at each step):}
  Note that there exists $\tilde{\delta}_2 > 0$ such that there are no
  other inflection segments in
  \smash{$\oBall{\tilde{\delta}_2}{\tilde{\genericPoint}}$}.
  Let $\tilde{\delta} = \min\{ \tilde{\delta}_1, \tilde{\delta}_2\}$.
  We choose $\bar{\delta} = \tilde{\delta}/3$ and $\bar{s}$
  sufficiently large such that (i) $\stepSize_{k_{\bar{s}}} <
  \bar{\delta}$ and (ii) ${\genericPoint_{k_{\bar{s}}}} \in {
  \oBall{\bar{\delta}}{\tilde{\genericPoint}}}$.
  Then, given an index $s \geqslant \bar{s}$, any $\delta \in (0,
  \bar{\delta}]$, for every $k \geqslant k_s$, we have
  \begin{align}
    \label{eqn:iterate_progress}
    \left( \genericPoint_{k+1} - \genericPoint_{k_s} \right) \cdot
    (-\hat{u}) \geqslant \left( \genericPoint_{k} -
    \genericPoint_{k_s} \right) \cdot (-\hat{u}) + \stepSize_k
    \cos\big(\sfrac{\pi}{4} + \sfrac{\alpha}{2} \big), 
  \end{align}
  as long as $\genericPoint_k, \genericPoint_{k+1} \in
  \oBall{\tilde{\delta}}{\tilde{\genericPoint}}$. In other words, we
  make at least $\stepSize_k \cos(\sfrac{\pi}{4} + \sfrac{\alpha}{2})
  > 0$ progress at each $k$\textsuperscript{th} step in the
  $(-\hat{u})$ direction before exiting the $\delta$-ball. By summing
  \cref{eqn:iterate_progress} from $k_s$ to $k$, we get
  \begin{align}
    \label{eqn:iterate_cone_1}
    \left( \genericPoint_k - \genericPoint_{k_s} \right) \cdot
    (-\hat{u}) \geqslant \cos\left(\sfrac{\pi}{4} +
    \sfrac{\alpha}{2}\right) \Big(\sum_{l=k_s}^k \stepSize_l \Big).
  \end{align}
  Now, as $\cos\left(\sfrac{\pi}{4} + \sfrac{\alpha}{2}\right)$ is a
  positive constant and the stepsize sequence $\{\stepSize_l\}$ is not
  summable, there must exist an index $\bar{r} < \infty$ such that
  $\sum_{l=k_s}^{\bar{r}} \stepSize_l > \delta /
  \cos\left(\sfrac{\pi}{4} + \sfrac{\alpha}{2}\right)$. Note that the
  iterates for all $l$ from $k_s$ to $\bar{r}$ remain in the
  $\tilde{\delta}$-ball of $\tilde{\genericPoint}$, as for all such
  points, $\genericPoint_l \in \oBall{\delta +
  \stepSize_{\bar{r}}}{\genericPoint_{k_s}}$, $\genericPoint_{k_s} \in
  \oBall{\bar{\delta}}{\tilde{\genericPoint}}$, and $\delta +
  \stepSize_{\bar{r}} + \bar{\delta} < \tilde{\delta}$. By invoking
  the Cauchy-Schwarz inequality, $\|\genericPoint_{\bar{r}} -
  \genericPoint_{k_s}\| \geqslant \left( \genericPoint_{\bar{r}} -
  \genericPoint_{k_s} \right) \cdot (-\hat{u}) > \delta$. This implies
  $\genericPoint_{\bar{r}} \not \in
  \oBall{\delta}{\genericPoint_{k_s}}$. Thus, $\kappa_\delta(s)
  \leqslant \bar{r} < \infty$, and \cref{eqn:non_stopping_1} follows. 
  
  \emph{(Monotonic decrease in value of augmented visibility metric at
  each iteration---construction of sets for a splitting favorable to
  analysis):} Next, we show \cref{eqn:non_stopping_2}. The key insight
  here is that in the non-smooth case, given that the stepsize is
  sufficiently small, if we use a normalized gradient based update,
  the function value increases only when the iterates move across an
  inflection segment. To formalize this, we need to define certain
  sets within $\freeSpace$. First, note that
  $\bar{\setCriticalPoints}$ can be expressed as a union of inflection
  segments combined with the set of line segments demarcating the
  boundary of the free space, which we call \emph{boundary segments}.
  We denote by $\bar{\mathcal{I}}$ these set of line segments, with
  elements $\bar{I}_j$'s, each of which have at most an endpoint in
  common.

  We define the (open) expansion of segment $\bar{I}_j$ by a stepsize
  $\stepSize_k$ as \smash{$\bar{I}_j^{\stepSize_k} = \bar{I}_j +
  \oBall{\stepSize_k}{0}$}. These are collected into
  \smash{$\bar{\mathcal{I}}^{\stepSize_k} = \cup_j \,
  \bar{I}_j^{\stepSize_k}$}.
  Second, we collect the closed connected components of
  $\oBall{\tilde{\delta}}{\tilde{\genericPoint}} \setminus
  \bar{\mathcal{I}}^{\stepSize_k}$, which we represent by
  $\component_j$'s.  These are shrunken versions of the connected
  components $\componentTerrain_j$'s in the partition $\partition$ as
  parts of the set are taken away by the expanded inflection segments.
  We use the sets defined above to split the difference
  \smash{$\augmentedVisibilityMetric{\myDomain}(\genericPoint_{k_s})
  -\augmentedVisibilityMetric{\myDomain}(\genericPoint_{\kappa_\delta(s)})$}
  into multiple parts -- one each for the time the iterates stay in an
  expanded segment $\bar{I}_j^{\stepSize_k}$ or a shrunken connected
  component \smash{$\component_j$}, and another for each transition
  from an expanded segment or a shrunken component to another.  Let
  the transitions from one set to another happen at iterations $l_1,
  l_2, \cdots, l_T$, that is, all iterates from $k_s$ to $l_1$ lie in
  one of the sets defined above, iterates $(l_1 + 1)$ through $l_2$
  lie in another set, and so on.
  This gives us the following splitting for the change in the metric
  from $k_s$ to $\kappa_\delta(s)$,
  \begin{equation}
    \label{eqn:diff_split}
    \begin{aligned}
      \augmentedVisibilityMetric{\myDomain}(\genericPoint_{k_s}) -
      \augmentedVisibilityMetric{\myDomain}(\genericPoint_{\kappa_\delta(s)})
      = \sum_{i=0}^{T} &
      (\augmentedVisibilityMetric{\myDomain}(\genericPoint_{l_i+1})
      -
      \augmentedVisibilityMetric{\myDomain}(\genericPoint_{l_{i+1}}))
      \\
      & + \sum_{i=1}^{T-1}
      (\augmentedVisibilityMetric{\myDomain}(\genericPoint_{l_i}) -
      \augmentedVisibilityMetric{\myDomain}(\genericPoint_{l_i+1})),
    \end{aligned}
  \end{equation}
  where we set $l_0 = k_s-1$ and $l_{T+1} = \kappa_\delta(s)$. Here,
  the first term collects the changes in the metric while moving
  within one of our sets, and the second summation does the same for
  the changes when moving from one of our sets to another.

  \emph{(Note on the non-uniqueness of the splitting):} As we chose
  inflection segments which may have intersections, when we expand
  them into $\bar{I}_i^{\stepSize_k}$, they no longer constitute a
  partition, as the expanded segments $\bar{I}_i^{\stepSize_k}$'s may
  have non-empty intersections.
  We therefore choose a splitting in \cref{eqn:diff_split} that leads
  to the least number of transition $T$. This may still not be unique,
  but, as we see later, the rest of the argument works irrespective of
  the choice of splitting.

  \emph{(Lower bound for the first term of \cref{eqn:diff_split}):} We
  first bound the first term.
  For each transition index $i$, the points
  $\genericPoint_{l_i+1}, \dots, \genericPoint_{l_{i+1}}$ lie in
  either a shrunken component $\component_j$, or an expansion of a
  segment \smash{$\bar{I}^{\stepSize_k}_j$}. Either way, we index the sets
  with the transition index~$i$ for convenience, as $\component_i$ or
  $\bar{I}_i^{\stepSize_k}$.  In both cases, we show that
  $\augmentedVisibilityMetric{\myDomain}(\genericPoint_{l_i+1})-
  \augmentedVisibilityMetric{\myDomain}(\genericPoint_{l_{i+1}}) > 0$.
  For the case $\component_i$, this follows from the Descent Lemma
  \cite[Proposition A.24]{DPB:99}, which can be applied due to
  \Cref{cor:directional_derivative_component_area-metric}. We consider
  the case $\bar{I}_i^{\stepSize_k}$ next.
  
  \emph{(Establishing decrease in metric at points along an inflection
  or a boundary segment):}
  The gradients on either side of $\bar{I}_i$ are well defined due to
  \Cref{cor:directional_derivative_component_area-metric}, and their
  components perpendicular to $\bar{I}_i$ must point towards
  $\bar{I}_i$, as otherwise, the iterates would just escape
  $\bar{I}_i^{\stepSize_k}$. Moreover, the parallel components must
  result in a net direction of iterates along $\bar{I}_i$, otherwise
  there would be a critical point in
  $\oBall{\tilde{\delta}}{\tilde{\genericPoint}}$. Let this direction
  be~$\hat{e}$. Now, the gradient on at least one side must have a
  parallel component along~$-\hat{e}$, and for any transition from a
  point on this side $\genericPoint_k$ to a point on the other side
  $\genericPoint_{k+1}$, we define the point $\genericPointQ_k$ on
  $\bar{I}_i$ as the point where it intersects
  $\lineSegment{\genericPoint_k}{\genericPoint_{k+1}}$. Let the
  sequence of indices where such transitions happen be $l_{i_j}$ with
  cardinality $J$.  We break down the change in metric when moving
  within $\bar{I}_i^{\stepSize_k}$ as follows
  \begin{equation}
    \label{eqn:diff_split_2}
    \begin{aligned}
      \augmentedVisibilityMetric{\myDomain}(\genericPoint_{l_i+1})
      & ~
      - \augmentedVisibilityMetric{\myDomain}(\genericPoint_{l_{i+1}}) =
      (\augmentedVisibilityMetric{\myDomain}(\genericPoint_{l_i+1})
      -
      \augmentedVisibilityMetric{\myDomain}(\genericPointQ_{l_i+1}))
      \\
      & \quad + \sum_{j=1}^{J}
      (\augmentedVisibilityMetric{\myDomain}(\genericPointQ_{i_j}) -
      \augmentedVisibilityMetric{\myDomain}(\genericPointQ_{i_{j+1}}))
      +
      (\augmentedVisibilityMetric{\myDomain}(\genericPointQ_{l_{i_J}})
      -
      \augmentedVisibilityMetric{\myDomain}(\genericPoint_{l_{i+1}})). 
    \end{aligned}
  \end{equation}
  The change in the first and the last step is lower bounded by
  $-\stepSize_{l_i} L$, where $L$ is the Lipschitz constant over the
  $\tilde{\epsilon}$ ball around $\tilde{\genericPoint}$, which must
  be finite from \Cref{cor:local_lipschitz} because if
  $\tilde{\genericPoint} \in \reflexVertices{\freeSpace}$, then $0 \in
  \partial\augmentedVisibilityMetric{\myDomain}(\tilde{\genericPoint})$.
  The middle term needs to be analyzed carefully as we can not apply
  the mean value theorem \cite[Theorem 2.3.7]{FHC:90} and
  \Cref{cor:local_lipschitz} directly to
  $(\augmentedVisibilityMetric{\myDomain}(\genericPointQ_{i_j}) -
  \augmentedVisibilityMetric{\myDomain}(\genericPointQ_{i_{j+1}}))$.
  Instead, we define $\genericPointR_k = \alpha \genericPoint_k +
  (1-\alpha) \genericPointQ_k$, for some $\alpha \in (0,1)$, and apply
  the mean value theorem to
  $(\augmentedVisibilityMetric{\myDomain}(\genericPointR_{i_j}) -
  \augmentedVisibilityMetric{\myDomain}(\genericPointR_{i_{j+1}}))$.
  Therefore, there exists $\tilde{\genericPointR}_{i_j} \in
  \lineSegment{\genericPointR_{i_j}}{\genericPointR_{i_{j+1}}}$ such
  that
  \begin{align}
    \augmentedVisibilityMetric{\myDomain}(\genericPointR_{i_j}) -
    \augmentedVisibilityMetric{\myDomain}(\genericPointR_{i_{j+1}}) \in \langle \partial \augmentedVisibilityMetric{\myDomain} (\tilde{\genericPointR}_{i_j}), \genericPointR_{i_j} - \genericPointR_{i_{j+1}} \rangle. 
  \end{align}
  By noting that for any index $k$, $\lim_{\alpha \to 0^+}
  \genericPointR_k = \genericPointQ_k$ and that, by definition,
  $\genericPointR_k$ lies on the same side of $\bar{I}_i$ as
  $\genericPoint_k$, we deduce that $\partial
  \augmentedVisibilityMetric{\myDomain}
  (\tilde{\genericPointR}_{i_j})$ is a singleton, comprising the
  gradient due to
  \Cref{cor:directional_derivative_component_area-metric}. Moreover,
  the component of the gradient along $(\genericPointR_{i_j} -
  \genericPointR_{i_{j+1}})$ is positive as $\genericPointR_{i_j} -
  \genericPointR_{i_{j+1}} = -\hat{e} \|\genericPointR_{i_j} -
  \genericPointR_{i_{j+1}}\|$ and we chose the side where the gradient
  had a component along $-\hat{e}$ earlier. It follows that the inner
  product is strictly positive. Taking limits $\alpha \to 0^+$, and by
  continuity of $\augmentedVisibilityMetric{\myDomain}$ from
  \Cref{cor:local_lipschitz}, it follows that
  $\augmentedVisibilityMetric{\myDomain}(\genericPointQ_{l_i+1})-
  \augmentedVisibilityMetric{\myDomain}(\genericPointQ_{l_{i+1}}) >
  0$.

  \emph{(Lower bound for the second term of \cref{eqn:diff_split} and
    putting everything together):}
  It follows that the first term of \cref{eqn:diff_split} is strictly
  lower bounded by $0$, and taking $\limsup$, the term can not
  decrease to $0$ as that would require that the gradient at each
  $\tilde{\genericPointR}_{i_j}$'s and the corresponding gradients in
  the Descent Lemma be $0$, which is excluded by the assumption that
  there are no critical point in the ball. We next focus on the second
  term.
  The change across each transition across sets is lower bounded by
  $-\stepSize_{l_i} L$, analogous to the handling of the first and
  last term in \cref{eqn:diff_split_2}.
  Thus, the summation is lower bounded by $-T\stepSize_{k_s} L$ and
  $T$ is finite as $T \leqslant \kappa_\delta(s) - k_s < \infty$.
  Taking $\limsup$, the second summation in \cref{eqn:diff_split}
  vanishes as $\stepSize_{k_s} \to 0$. Combining both,
  \cref{eqn:non_stopping_2} follows.

  \textbf{Case 2: $\tilde{\genericPoint} \in \setStationaryPoints$}.
  \emph{($\mathcal{S}$ is at most $1$-dimensional around
    $\tilde{\genericPoint}$):}
  First, note that the connected component of $\mathcal{S}$ containing
  $\tilde{\genericPoint}$ can not have more than 1 degree of freedom
  at $\tilde{\genericPoint}$, since if $\tilde{\genericPoint}$ lies on
  a $2$-dimensional surface, then $\tilde{\genericPoint}$ must also be
  a local minimum, which is precluded by the fact that
  $\tilde{\genericPoint} \not \in \solutions$.
  Let $\delta_1 > 0$ be sufficiently small such that there are no
  $2$-dimensional regions in the connected component of $\mathcal{S}
  \intersection \oBall{\delta_1}{\tilde{\genericPoint}}$ containing
  $\tilde{\genericPoint}$. It is clear that the set $\mathcal{S}
  \intersection \oBall{\delta_1}{\tilde{\genericPoint}}$ has zero
  measure. Analogously to Case 1, let $\delta_2 > 0$ be sufficiently
  small such that there are no other inflection segments in
  $\oBall{\delta_2}{\tilde{\genericPoint}}$ other than the ones
  passing through $\tilde{\genericPoint}$. Define $\tilde{\delta} :=
  \min\{\delta_1, \delta_2\}$ and choose $\bar{\delta} =
  \tilde{\delta}/3$.

  \emph{(Construction of sets on which Line 8 is used):} Consider the
  sequence $\{\delta_{k}^{\mathrm{th}}\}_{k\geqslant 0}$ in \Norcent{}
  with $ \delta_k^{\mathrm{th}} \downarrow 0$ and
  $\delta_0^{\mathrm{th}} \ll 1$. For each $k \geqslant 0$, we
  construct the set $X^*_k$ as the connected component of $\{
  \genericPoint \mid \existsn g \in \partial
  \augmentedVisibilityMetric{\myDomain}(\genericPoint) \text{ s.t. }
  \| g \| \leqslant \delta_k^{\mathrm{th}} \}$ containing
  $\tilde{\genericPoint}$.  Note that $X^* := \lim_{k \to \infty}
  X^*_k = S$ is the connected component of $\mathcal{S}$ that contains
  $\tilde{\genericPoint}$. It follows that the sequence of sets
  $\{X^*_k\}_{k\geqslant0}$ monotonically shrink to $X^*$, and thus,
  $\mu(X^*_k) \downarrow 0$.

  \emph{(Iterates almost surely escape $X^*_{k_s}\intersection
  \oBall{\delta}{\genericPoint_{k_s}}$ in finite time):} 
  Next, we show that
  $\mathbb{P}(\existsn \bar{k} > k_s \text{ s.t. }
  \genericPoint_{\bar{k}} \not \in X^*_{k_s}\intersection
  \oBall{\delta}{\genericPoint_{k_s}}) = 1$. We reason by
  contradiction. Suppose this is not the case. Then, the event $E
  := \{\genericPoint_{\bar{k}} \in X^*_{k_s}\intersection
  \oBall{\delta}{\genericPoint_{k_s}}, \foralln \bar{k} > k_s\}$ has a
  non-zero probability, that is, $\mathbb{P}(E) > 0$. Next, notice
  that the iterates $\{\genericPoint_l\}_{l \verythinspace \geqslant
    \verythinspace k_s}$ form a martingale~\cite{PB:12}
  as $\mathbb{E}(\stepSizeB_l \hat{\nu}) = 0$ from the update in Line
  8 of \Norcent{} and the fact that $\hat{\nu}$ is chosen uniformly
  randomly on the unit circle.  As the martingale is bounded on the
  event $E$, it converges almost surely on $E$ due to Doob's
  martingale convergence theorem \cite{JD:53,PB:12} restricted to $E$.
  But almost sure convergence on a set of positive probability implies
  that the second moments on that set are uniformly bounded,
  contradicting the fact that
  $\mathbb{E}(\|\genericPoint_n - \genericPoint_{k_s}\|^2) =
  \sum_{l=k_s}^{n} b_k^2 \to \infty$.  Therefore, we conclude the
  finite-time escape from
  $X^*_{k_s}\intersection \oBall{\delta}{\genericPoint_{k_s}}$ with
  probability $1$.

  \emph{(Iterates almost surely escape connected components of
    $\oBall{\delta}{\genericPoint_{k_s}} \setminus X^*_{k_s}$ in finite
    time):}
  Given $\epsilon > 0$, there must exist $\tilde{r} \geqslant \bar{k}$
  such that $\mathbb{P}(\genericPoint_{\tilde{r}} \not\in
  X^*_{\tilde{r}}) = 1 - \epsilon$, as $\{X^*_k\}$ is a monotonically
  decreasing sequence of sets.  Then, either
  $\mathbb{P}(\genericPoint_{\tilde{r}} \not \in
  \oBall{\delta}{\tilde{\genericPoint}}) = 1 - \epsilon$ or
  $\mathbb{P}(\genericPoint_{\tilde{r}} \in
  \oBall{\delta}{\tilde{\genericPoint}} \setminus X^*_{\tilde{r}}) =
  1-\epsilon$.
  In the former case, $\kappa_\delta(s) \leqslant \tilde{r}$, and
  \cref{eqn:non_stopping_1} follows almost surely as the choice of
  $\epsilon$ is arbitrary. 
  In the latter case, the directional derivatives in
  $\oBall{\delta}{\tilde{\genericPoint}} \setminus X^*_{\tilde{r}}$
  are bounded away from 0. In fact $\|g\| >
  \delta_{\tilde{r}}^{\mathrm{th}}$ for all $g \in \partial
  \augmentedVisibilityMetric{\myDomain}(\genericPointQ)$ where
  $\genericPointQ \in \oBall{\delta}{\tilde{\genericPoint}} \setminus
  X^*_{\tilde{r}}$. This means that from $\tilde{r}$ onwards, we could
  follow the reasoning of Case 1, except for the possibility that the
  iterates enter $X^*_{k}$ again, for some $k \geqslant \tilde{r}$, in
  which case, they escape again to
  $\oBall{\delta}{\tilde{\genericPoint}} \setminus X^*_{\tilde{r}}$
  after a finite number of steps. In both cases, we leverage the fact
  that the choice of $\epsilon$ is arbitrary, and the statements hold
  with probability $1$.

  \emph{(Proof that the number of exits from and entering $X^*_k$
  are finite):}
  Next, we next show that the iterates can escape and enter $X^*_k$'s
  only a finite number of times. We reason by contradiction. Then,
  given $k \geqslant k_s$, for any point $\genericPoint_k \in
  \oBall{\delta}{\tilde{\genericPoint}} \setminus X^*_k$, there exists
  a smallest $\tilde{r} > k$ such that $\genericPoint_{\tilde{r}} \in
  X^*_{\tilde{r}}$.
  Following Case 1 over the connected component of
  $\oBall{\delta}{\tilde{\genericPoint}} \setminus X^*_k$ containing
  $\genericPoint_k$ which does not contain any stationary points by
  definition,
  $\augmentedVisibilityMetric{\myDomain}(\genericPoint_k) >
  \augmentedVisibilityMetric{\myDomain}(\genericPoint_{\tilde{r}})
  \geqslant 
  \augmentedVisibilityMetric{\myDomain}(\tilde{\genericPoint})  
  - \delta^{\mathrm{th}}_{\tilde{r}}
  d_{X^*_{\tilde{r}}}(\genericPoint_{\tilde{r}},
  \tilde{\genericPoint})$, where
  $d_{X^*_{\tilde{r}}}(\genericPoint_{\tilde{r}},
  \tilde{\genericPoint})$, the distance of $\tilde{\genericPoint}$
  from $\genericPoint_{\tilde{r}}$ along $X^*_{\tilde{r}}$, is
  bounded. As this must hold for all $k \geqslant k_s$ and
  $\delta^{\mathrm{th}}_{\tilde{r}} \to 0$, it follows that
  $\augmentedVisibilityMetric{\myDomain}(\genericPoint_k) >
  \augmentedVisibilityMetric{\myDomain}(\tilde{\genericPoint})$.  Note
  that the inequality remains strict even under $\limsup$ due to the
  strict decrease from $\genericPoint_k$ to $\genericPoint_{\bar{r}}$
  from Case~1.

  But then $\tilde{\genericPoint}$ is a local minimum, which
  contradicts the fact that $\tilde{\genericPoint} \not \in
  \solutions$.

  \emph{(Combining the above arguments to show that the iterates leave
  the $\delta$-ball in finite number of steps with probability $1$):}
  From our reasoning above, it follows that every time the iterates
  enter $X^*_k$, they exit after a finite number of steps with
  probability~$1$, and only a finite number of such entries and exits
  occur. Eventually, the iterates end up in connected component of
  $\oBall{\delta}{\genericPoint_{k_s}} \setminus X^*_k$ for some $k
  \geqslant k_s$ such that it never enters $X^*_k$ again with
  probability $1$.  But the elements of the generalized gradient are
  bounded away from zero within the connected component, and hence,
  the iterates must leave the set, analogously to Case 1. However, the
  only way for the iterates to leave the set is via the boundary of
  $\oBall{\delta}{\genericPoint_{k_s}}$, as the iterates never enter
  $X^*_k$ again. This means that there exists a finite index $\bar{r}$
  such that $\genericPoint_{\bar{r}} \not \in
  \oBall{\delta}{\genericPoint_{k_s}}$ with probability $1$, and
  \cref{eqn:non_stopping_1} follows, again with probability~$1$.
  
  \emph{(Combining the above arguments to show
    \cref*{eqn:non_stopping_2} holds with probability $1$):}
  Suppose we enter $\{X^*_l\}$ at iterations $l \in \{m_t\}_{t=0}^T$
  and exit at iterations $l \in \{n_t\}_{t=0}^T$, and that the local
  Lipschitz constant in $\oBall{\delta}{\genericPoint_{k_s}}$ is $L$.
  Then, following Case 1 for iterates over
  $\oBall{\delta}{\genericPoint_{k_s}} \setminus X^*_{(\cdot)}$ and
  bounding the change in the function values at transitions and the
  variation in the function values over $X^*_{(\cdot)}$,
  \begin{align*}
    \augmentedVisibilityMetric{\myDomain}(\genericPoint_{m_0})
    \! > \!
    \augmentedVisibilityMetric{\myDomain}(\genericPoint_{n_0}) \! - \! L \stepSize_{n_0} \! - \!
    \delta_{m_0}^{\mathrm{th}} \sum_{l=m_0}^{n_0} \stepSizeB_l 
    \! > \!
    \dots
    \! > \!
    \augmentedVisibilityMetric{\myDomain}(\genericPoint_{n_T}) 
    \! - \! L \sum_{t=0}^T \stepSize_{n_t} \! - \! \sum_{t=0}^T
    \delta_{m_t}^{\mathrm{th}} \sum_{l=m_t}^{n_t} \stepSizeB_l
  \end{align*}
  holds with probability $1$. 
  Combining this with
  \smash{$\augmentedVisibilityMetric{\myDomain}(\genericPoint_{k_s})
  \geqslant \augmentedVisibilityMetric{\myDomain}(\genericPoint_{m_0})
  - L \sum_{j={k_s}}^{m_0=1} \stepSize_j$} on one end and
  \smash{$\augmentedVisibilityMetric{\myDomain}(\genericPoint_{n_T})
  \geqslant
  \augmentedVisibilityMetric{\myDomain}(\genericPoint_{k_\delta(s)}) -
  L\sum_{j={n_T}}^{k_\delta(s)} \stepSize_j$} on the other end, we get
  \begin{align*}
    \augmentedVisibilityMetric{\myDomain}(\genericPoint_{k_s}) > 
    \augmentedVisibilityMetric{\myDomain}(\genericPoint_{k_\delta(s)}) 
    - L \sum_{j={k_s}}^{m_0-1} \stepSize_j - L \sum_{t=0}^T 
    \stepSize_{n_t} - \sum_{t=0}^T \delta_{m_t}^{\mathrm{th}} 
    \sum_{l=m_t}^{n_t} \stepSizeB_l - L\sum_{j={n_T}+1}^{k_\delta(s)} 
    \stepSize_j.
  \end{align*}
  Taking $\limsup$ throughout, \cref{eqn:non_stopping_2} follows with
  probability~$1$ from the fact that the last four terms go to zero
  due to stepsizes $\stepSize_l \downarrow 0$, $\delta^\mathrm{th}_l
  \downarrow 0$, $(n_t - m_t)$ being finite for every $t$, and $m_0$,
  $\kappa_\delta(s)$ and $T$ being finite as shown above. Note that
  the inequality remains strict even under $\limsup$ due to strict
  decrease in connected components of
  $\oBall{\delta}{\genericPoint_{k_s}} \setminus X^*_{(\cdot)}$ from
  Case 1. 

  \textbf{Case 3: $\tilde{\genericPoint} \in \real^2 \setminus
  (\myDomain \union \setNonsmoothPoints)$}.
  \emph{(Smooth case -- \cref*{eqn:non_stopping_1}):}
  Choose $\tilde{\delta} > 0$ sufficiently small such that
  $\oBall{\tilde{\delta}}{\tilde{\genericPoint}} \intersection
  \myDomain \intersection \setNonsmoothPoints = \varnothing$. Then,
  $\partial \augmentedVisibilityMetric{\myDomain}(\genericPointQ)$ is
  a singleton comprising its gradient, its projection
  $\proj{\myDomain}{\genericPointQ}$ is unique \cite[Corollary
  3.4.5]{PC-CS:04}, and $\langle \nabla
  \augmentedVisibilityMetric{\myDomain}(\genericPointQ),
  \genericPointQ - \proj{\myDomain}{\genericPointQ} \rangle = -1$ for
  all $\genericPointQ \in \oBall{\bar{\delta}}{\tilde{\genericPoint}}$
  as $\langle \nabla
  \visibilityMetric(\proj{\myDomain}{\genericPointQ}), \genericPointQ
  - \proj{\myDomain}{\genericPointQ} \rangle = 0$. Choose
  $\bar{\delta} = \tilde{\delta} / 2$ and $\bar{s}$ sufficiently large
  such that $\genericPoint_{k_s} \in
  \oBall{\bar{\delta}}{\tilde{\genericPoint}}$ for all $s \geqslant
  \bar{s}$.
  It follows that for every $\delta \in [0, \bar{\delta})$, the
  iterates follow $\genericPoint_{l+1} = \genericPoint_l + \stepSize_l
  \normalization(\proj{\myDomain}{\genericPoint_l} - \genericPoint_l)$
  for all $l \geqslant k_s$ as long as they remain in
  $\oBall{\delta}{\genericPoint_{k_s}}$. Using that
  $\proj{\myDomain}{\genericPoint_{l+1}} =
  \proj{\myDomain}{\genericPoint_l}$ and summing over $l$ from $k_s$
  to $k \geqslant k_s$, we get $\genericPoint_k = \genericPoint_{k_s}
  + \normalization(\proj{\myDomain}{\genericPoint_{k_s}} -
  \genericPoint_{k_s}) \sum_{l=k_s}^{k} \stepSize_l$. As the stepsize
  sequence $\stepSize_l$ is not summable, there is $\bar{r} > k_s$
  with $\genericPoint_{\bar{r}} \not \in
  \oBall{\delta}{\genericPoint_{k_s}}$, and \cref{eqn:non_stopping_1}
  holds with $\kappa_\delta(s) = \bar{r}$.

  \emph{(Smooth case -- \cref*{eqn:non_stopping_2}):}
  From the update above and $\langle \nabla
  \augmentedVisibilityMetric{\myDomain}(\genericPointQ),
  \genericPointQ - \proj{\myDomain}{\genericPointQ} \rangle = -1$, we
  have $\augmentedVisibilityMetric{\myDomain}(\genericPoint_{l+1}) =
  \augmentedVisibilityMetric{\myDomain}(\genericPoint_l) -
  \stepSize_l$. Again, summing over $l$ from $k_s$ to
  $\kappa_\delta(s)$, we get
  \smash{$\augmentedVisibilityMetric{\myDomain}(\genericPoint_{\kappa_\delta(s)})
  = \augmentedVisibilityMetric{\myDomain}(\genericPoint_{k_s}) -
  \sum_{l=k_s}^{\kappa_\delta(s)}\stepSize_l$}, and using
  \smash{$\sum_{l=k_s}^{\kappa_\delta(s)} \stepSize_l \geqslant
  \delta$}, we have
  $\augmentedVisibilityMetric{\myDomain}(\genericPoint_{\kappa_\delta(s)})
  \leqslant \augmentedVisibilityMetric{\myDomain}(\genericPoint_{k_s})
  - \delta$.  Taking $\limsup$ on both sides, $\delta >0$ stays, and
  \cref{eqn:non_stopping_2} follows.

  \textbf{Case 4: $\tilde{\genericPoint} \in \setNonsmoothPoints$}.
  \emph{(Non-smooth case):}
  Note that the iterates leave $\setNonsmoothPoints$ in a single step
  with probability~$1$, as $\mu(\setNonsmoothPoints) = 0$. This is
  because the distance function to a closed set (which $\myDomain$ is)
  is Lipschitz with constant $1$ \cite[Proposition 2.4.1]{FHC:90},
  which means that by Rademacher theorem \cite{HR:19}, the function is
  differentiable almost everywhere.
  Next, once the iterates leave $\setNonsmoothPoints$, they never
  enter the set again because for any $\genericPoint \not \in
  \myDomain$ and any of its projections
  $\proj{\myDomain}{\genericPoint}$, the distance function is
  differentiable along the open segment $(\genericPoint,
  \proj{\myDomain}{\genericPoint})$ \cite[Corollary 3.4.5]{PC-CS:04}. 
  It follows that with probability $1$, the iterates move along the
  segment $[\genericPoint, \proj{\myDomain}{\genericPoint}]$, and by
  following an analogous argument to Case~3, the result follows.
\end{proof}

Notice that the result does not rely on the monotonic decrease in the
function values along the whole sequence, in line with
\labelcref{enum:algo_non_mono}. We now prove that the optimal values
of the visibility metric are isolated.

\begin{lemma}[Optimal values are isolated]
  \label{lem:isolated_optimal_values}
  The set $\augmentedVisibilityMetric{\myDomain}^* = \{
  \augmentedVisibilityMetric{\myDomain}(\genericPoint) \mid
  \genericPoint \in \solutions \}$ has empty interior.
\end{lemma}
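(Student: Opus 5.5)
The plan is to prove the stronger statement that $\augmentedVisibilityMetric{\myDomain}^*$ is \emph{finite}. A finite subset of $\real$ has empty interior, so this suffices. Here I read $\solutions$ as the set of local minimizers of $\augmentedVisibilityMetric{\myDomain}$. By \Cref{lem:common_local_minima}, every $\genericPoint \in \solutions$ lies in $\myDomain$ and is a local minimizer of $\visibilityMetric$ over $\myDomain$. The task therefore reduces to showing that $\visibilityMetric$ takes only finitely many values on its local minimizers over $\myDomain$.

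\emph{Step 1: semialgebraic structure.} I will refine the partition $\partition$ of \Cref{lem:partition}. To the inflection segments I add finitely many further segments:
\begin{itemize}
\item the rays $\setRay{\vertex}{w}$ for reflex vertices $\vertex$ and vertices $w$ of $\freeSpace$ or of $\advDomain$, which mark where a projected ray changes the edge it hits or crosses a vertex of $\advDomain$;
\item the boundary edges of $\myDomain$ and $\advDomain$;
\item small discs around the reflex vertices, which handle $\reflexVertices{\freeSpace}$ separately.
\end{itemize}
On each open cell of the refinement, the polygon $\visibilityPolygon{\genericPoint} \cap \advDomain$ has a fixed combinatorial type. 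Its vertices are either fixed or intersections of a line through $\genericPoint$ and an anchor with a fixed edge, so they are rational functions of $\genericPoint$. The shoelace formula then makes $\visibilityMetric$ a rational function on each cell. Since there are finitely many cells, each cut out by polynomial inequalities, $\visibilityMetric$ is a continuous semialgebraic function on $\myDomain$; continuity comes from \Cref{cor:local_lipschitz}. This refines the piecewise description in \Cref{cor:directional_derivative_component_area-metric}, which only gave continuous differentiability and not an explicit rational formula.

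\emph{Step 2: stratification and Sard.} I will take a finite Whitney stratification of $\myDomain$ into connected semialgebraic $C^\infty$ manifolds $\Sigma_i$ of dimension $0$, $1$, or $2$, compatible with the cells of Step 1, so that $\visibilityMetric|_{\Sigma_i}$ is smooth (indeed rational) on every stratum. A local minimizer $\genericPoint \in \Sigma_i$ of $\visibilityMetric$ over $\myDomain$ is in particular a local minimizer of $\visibilityMetric|_{\Sigma_i}$, and hence a critical point of that restriction; for $0$-dimensional strata this holds trivially. The semialgebraic Sard theorem says the set of critical values of a smooth semialgebraic function on a semialgebraic manifold is a semialgebraic subset of $\real$ of dimension $0$, hence finite. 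Taking the union over finitely many strata gives finiteness of $\augmentedVisibilityMetric{\myDomain}^*$.

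\emph{Main obstacle.} The delicate point is Step 1, in particular justifying semialgebraicity near the reflex vertices and along overlapping inflection segments. There the formula in \Cref{thm:directional_derivative_area} blows up, and several combinatorial types meet. I would handle this in two ways. First, I would note that the visibility polygon is still determined by finitely many line intersections, so its graph remains a semialgebraic set regardless of Lipschitz behavior. Second, I would put each reflex vertex in its own $0$-dimensional stratum, which contributes at most one value. If full semialgebraicity turns out to be awkward, a fallback is to argue directly on each convex cell: on each connected component of the critical set of a rational function the value is constant, and the critical set of a rational function has finitely many connected components.
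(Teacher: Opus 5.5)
Your argument is correct, but it takes a genuinely different route from the paper.

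The paper's proof is a purely topological cardinality argument that uses nothing about $\visibilityMetric$. If $\augmentedVisibilityMetric{\myDomain}^*$ contained an interval, one picks a local minimizer $x_\beta$ for each value $\beta$ and obtains uncountably many open sets in the second-countable space $\real^2$. As written there, the claim that the neighborhoods $U_\beta$ are pairwise disjoint is not justified, since a minimizer with a larger value may lie in $U_\beta$. The clean version chooses $B_\beta$ from a countable base with $x_\beta \in B_\beta \subseteq U_\beta$. Then $B_\beta = B_\gamma$ forces both $\gamma \geqslant \beta$ and $\beta \geqslant \gamma$, so the local-minimum values of \emph{any} real function on $\real^2$ form a countable set, which has empty interior.

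You instead exploit the polygonal structure. On the cells of a finite line arrangement the combinatorial type of $\visibilityPolygon{x} \cap \advDomain$ is frozen, so $\visibilityMetric$ is piecewise rational and hence semialgebraic. A stratification plus the semialgebraic Sard theorem then give finitely many critical values per stratum. The bookkeeping of events where the combinatorial type changes is simplest if you take all lines through pairs of vertices of $\freeSpace$ and $\advDomain$, together with the edges of $\myDomain$.

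Compared with the paper, your route needs this careful bookkeeping and two nontrivial results from real algebraic geometry, whereas the paper's is a few lines and fully general. In exchange you obtain the strictly stronger conclusion that $\augmentedVisibilityMetric{\myDomain}^*$ is finite, with semialgebraicity of $\visibilityMetric$ as a reusable by-product.

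The two ideas also combine cheaply. Once $\visibilityMetric$ is semialgebraic, its set of local-minimum values over $\myDomain$ is semialgebraic by Tarski--Seidenberg. A countable semialgebraic subset of $\real$ is finite, so Sard is not even needed.
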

\begin{proof}
  Suppose, by contradiction, that
  $\augmentedVisibilityMetric{\myDomain}^*$ contains a non-trivial
  open interval $(a, b) \subseteq \real$. Then for every $\beta \in
  (a, b)$, there exists a point $\genericPoint_\beta \in \Gamma$ such
  that $\augmentedVisibilityMetric{\myDomain}(\genericPoint_\beta) =
  \beta$. Since each $\genericPoint_\beta \in \Gamma$ is assumed to be
  a local minimum, there exists an open neighborhood $U_\beta$ of
  $\genericPoint_\beta$ such that
  $\augmentedVisibilityMetric{\myDomain}(\genericPoint_\beta)
  \leqslant \augmentedVisibilityMetric{\myDomain}(\genericPoint)$ for
  all $\genericPoint \in U_\beta$. Since
  $\augmentedVisibilityMetric{\myDomain}(\genericPoint_\beta) = \beta$
  and $\beta$ varies over $(a, b)$, the points $\genericPoint_\beta$
  must all attain different values and are therefore distinct with
  disjoint open neighborhoods around them.  Thus, we obtain an
  uncountable collection $\{ U_\beta \}_{\beta \in (a, b)}$ of
  disjoint open subsets of $\real^2$. However, this contradicts the
  fact that $\real^2$ is a second-countable space and hence has at
  most countably many disjoint open subsets. Therefore,
  $\augmentedVisibilityMetric{\myDomain}^*$ cannot contain a
  non-trivial interval.
\end{proof}

Finally, we show the almost sure convergence of
\NorcentTO~\labelcref{enum:algo_avoid+conv} to the set of local
minimizers of $\visibilityMetric$.

\begin{theorem}[Convergence of \NorcentTO]
  \label{thm:convergence_norcent}
  \NorcentTO~converges to $\solutions$, the set of local minimizers of
  $\visibilityMetric$ with probability one. 
\end{theorem}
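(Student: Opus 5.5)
The plan is to follow the classical Zangwill/Polyak-type template for convergence of descent methods with a non-stopping property. The ingredients are \Cref{prop:invariant_set} for boundedness, \Cref{thm:non_stopping} for the non-stopping property, and \Cref{lem:isolated_optimal_values} for the isolation of optimal values.

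First, by \Cref{prop:invariant_set}, the iterates stay in the bounded set $\tilde{D}_1$. The closure of this set is compact, so every subsequence has a convergent sub-subsequence and the set of limit points is nonempty. Moreover, $\augmentedVisibilityMetric{\myDomain}$ is continuous on $\closure{\tilde{D}_1}$, so the values $\augmentedVisibilityMetric{\myDomain}(\genericPoint_k)$ are bounded. We also use $\|\genericPoint_{k+1}-\genericPoint_k\| \leqslant \max\{\stepSize_k,\stepSizeB_k\} \to 0$. Hence the set of limit points of $\{\genericPoint_k\}$ is connected, and the set of limit values of $\augmentedVisibilityMetric{\myDomain}(\genericPoint_k)$ is a closed interval $[w_-,w_+]$.

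The core step is to argue by contradiction that every limit point lies in $\solutions$. Suppose some limit point $\tilde{\genericPoint}\notin\solutions$, reached along a subsequence $\genericPoint_{k_s}$.

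\emph{Case (a): $w_- < w_+$.} The limit values fill the interval $(w_-,w_+)$. By \Cref{lem:isolated_optimal_values}, $\augmentedVisibilityMetric{\myDomain}^*$ has empty interior, so we can choose levels $w_- < c_1 < c_2 < w_+$ with $[c_1,c_2]\cap \augmentedVisibilityMetric{\myDomain}^* = \varnothing$. Since $\stepSize_k,\stepSizeB_k\to 0$ and $\augmentedVisibilityMetric{\myDomain}$ is uniformly continuous on the compact closure, the sequence must cross the band $[c_1,c_2]$ upward infinitely often. Each upward crossing contains a stretch of iterates whose values are in the band. We take limit points of the starting indices of these upward crossings; such limit points are not in $\solutions$, since their value lies in $[c_1,c_2]$. \Cref{thm:non_stopping} applied at such a limit point says that, almost surely, after leaving a small $\delta$-ball the value is strictly lower in the limsup. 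We then iterate this. The crossing takes the iterates a positive distance in value but only small steps, so the path repeatedly leaves $\delta$-balls around non-solution points with strict decrease, which contradicts the net increase from $c_1$ to $c_2$. To make this rigorous, I would use compactness of $\{\genericPoint : \augmentedVisibilityMetric{\myDomain}(\genericPoint)\in[c_1,c_2]\}\cap\closure{\tilde{D}_1}$ and a uniform $\delta$ obtained from a finite subcover, following Polyak's or Nurminskii's standard argument.

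\emph{Case (b): $w_-=w_+=w$.} The values converge. Applying \Cref{thm:non_stopping} at $\tilde{\genericPoint}$ gives $\limsup_s \augmentedVisibilityMetric{\myDomain}(\genericPoint_{\kappa_\delta(s)}) < w$ almost surely. This contradicts convergence of the full value sequence to $w$. Hence $\tilde{\genericPoint}\in\solutions$ almost surely.

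There are countably many subsequences to handle, and each statement holds with probability one. A countable intersection argument via a countable dense set of candidate limit points and rational $\delta$ keeps the total probability equal to one. It follows that all limit points lie in $\solutions$. Combined with boundedness, this gives $\mathrm{dist}(\genericPoint_k,\solutions)\to 0$ almost surely.

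Finally, by \Cref{lem:common_local_minima}, the local minimizers of $\augmentedVisibilityMetric{\myDomain}$ coincide with those of $\visibilityMetric$, which concludes the argument.

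The main obstacle is Case (a): showing that the values cannot oscillate across a band free of optimal values. The difficulty is that \Cref{thm:non_stopping} gives only asymptotic, subsequence-dependent decrease and not a uniform one. I would first try to extract a uniform decrease constant $\eta>0$ and radius $\delta$ over the compact band set by contradiction plus compactness. If that fails, I would instead pass to limit points of the crossing entry indices and apply the theorem directly there.
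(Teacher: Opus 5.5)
Your route is the paper's: its proof is a short appeal to the Nurminskii-type convergence theorem of \cite{VM-AG-VN:87}. The hypotheses of that theorem are exactly the ingredients you assemble: vanishing step lengths, boundedness from \Cref{prop:invariant_set}, the non-stopping property of \Cref{thm:non_stopping}, and \Cref{lem:isolated_optimal_values}. Your step bound $\max\{\stepSize_k,\stepSizeB_k\}$ is the correct one, since Line~8 uses $\stepSizeB_k$. You are in effect re-deriving that theorem, and the re-derivation fails at its core step. In Case~(a) you infer from the empty interior of $\augmentedVisibilityMetric{\myDomain}^*$ that some whole band $[c_1,c_2]\subset(w_-,w_+)$ misses $\augmentedVisibilityMetric{\myDomain}^*$. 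That does not follow, because a set with empty interior can be dense. Local-minimum values are only guaranteed to form a countable set, and a countable set can meet every subinterval of $(w_-,w_+)$. Beyond that, the ``iterate this'' step and the uniform $\eta,\delta$ from a finite subcover are never carried out, so Case~(a) never actually reaches a contradiction.

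The repair needs only one level, which is exactly what empty interior supplies. Write $W:=\augmentedVisibilityMetric{\myDomain}$, pick $c_1\in(w_-,w_+)\setminus W(\solutions)$ and any $c_2\in(c_1,w_+)$. Take upward crossings $k_s<m_s$ with $W(\genericPoint_{k_s})\leqslant c_1$, $W(\genericPoint_{m_s})\geqslant c_2$, and $c_1<W(\genericPoint_k)<c_2$ for $k_s<k<m_s$. Along a subsequence $\genericPoint_{k_s}\to\tilde{\genericPoint}$, and vanishing steps plus uniform continuity give $W(\tilde{\genericPoint})=c_1$, so $\tilde{\genericPoint}\notin\solutions$. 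Choose $\delta\leqslant\bar{\delta}$ with $W<c_2$ on $\oBall{2\delta}{\tilde{\genericPoint}}$. For large $s$, every $\genericPoint_r$ with $k_s<r<\kappa_\delta(s)$ lies in this ball. Hence $m_s\geqslant\kappa_\delta(s)$ and $W(\genericPoint_{\kappa_\delta(s)})>c_1$, so $\limsup_s W(\genericPoint_{\kappa_\delta(s)})\geqslant W(\tilde{\genericPoint})$. This contradicts \cref{eqn:non_stopping_2} in one shot; it is your fallback idea, and it makes the compactness detour unnecessary.

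Two further corrections. First, there are uncountably many subsequences, and the crossing indices are path-dependent, so a countable intersection over fixed candidate limit points does not work. What is needed is \Cref{thm:non_stopping} in the form ``almost surely, for every convergent subsequence''; the paper's appeal to a deterministic theorem tacitly uses this, and with it the argument runs pathwise. Second, both the crossing argument and the cited theorem rely on continuity of $W$ on $\closure{\tilde{D}_1}$, and the paper does not establish it. Regularity is only proved off $\reflexVertices{\freeSpace}$. Also, $\visibilityMetric\circ\pi_{\myDomain}$ can jump where the projection onto a non-convex $\myDomain$ is non-unique and the competing projections have different visibility values. State continuity as an assumption rather than a fact.
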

\begin{proof}
  The result follows by invoking the convergence result in \cite[\S
  4]{VM-AG-VN:87} (see also \cite{EN:79}) whose hypothesis we justify
  next. The normalization in Lines 8 and 10 of \NorcentTO~implies that
  $\| \genericPoint_{k+1} - \genericPoint_k \| \leqslant a_k
  \downarrow 0$. For any sequence $\{\genericPoint_k\}_{k \geqslant
  0}$ generated by initializing at $\genericPoint_0 \in \real^2$ and
  iterating~\Norcent, the remaining conditions are satisfied due to
  \Cref{prop:invariant_set}, \Cref{thm:non_stopping}, and
  \Cref{lem:isolated_optimal_values}. 
\end{proof}

For the maximization case, we can devise an analogous normalized
generalized gradient ascent algorithm by replacing `$-$' in Line 10 of
\Norcent~with~`$+$'.  The visibility metric under limited range
$\visibilityMetric_R$ can also be handled in an analogous fashion, by
replacing $\visibilityMetric$ in
\cref{eqn:augmented_visibility_metric,eqn:augmented_visibility_metric_max}
with $\visibilityMetric_R$, though care needs to be taken while
handling the stationary points $\setStationaryPoints$ in
\Cref{thm:non_stopping} which now lie on inflection curves as opposed
to inflection segments (cf.
\Cref{rem:inflection_curves_limited_fov_range}).  \Cref{fig:UL-RL}
shows the result of the execution of the algorithm for maximization in
a non-convex environment in the case of unlimited and limited
visibility range.

\begin{figure}[!h]
  \centering%
  \subfigure[Unlimited
  range]{\includegraphics[width=.46\linewidth]{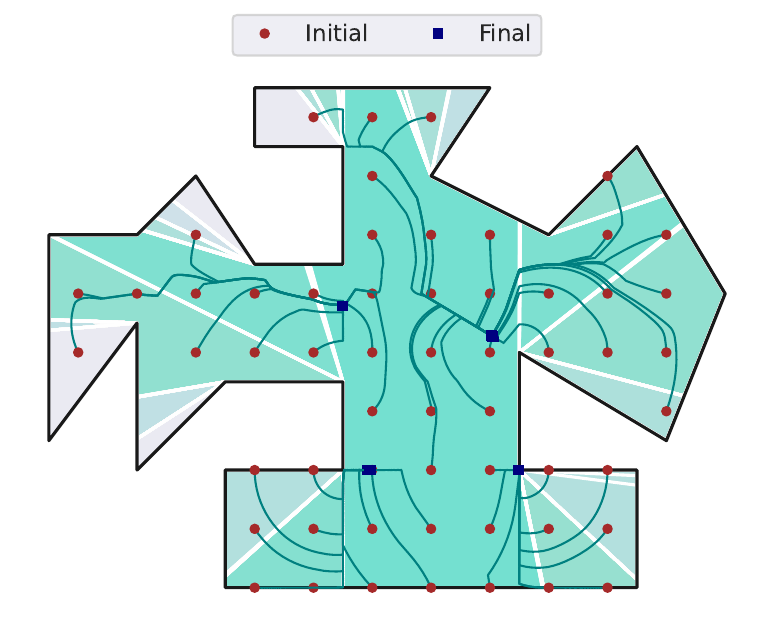}}
  \subfigure[Limited
  range]{\includegraphics[width=.46\linewidth]{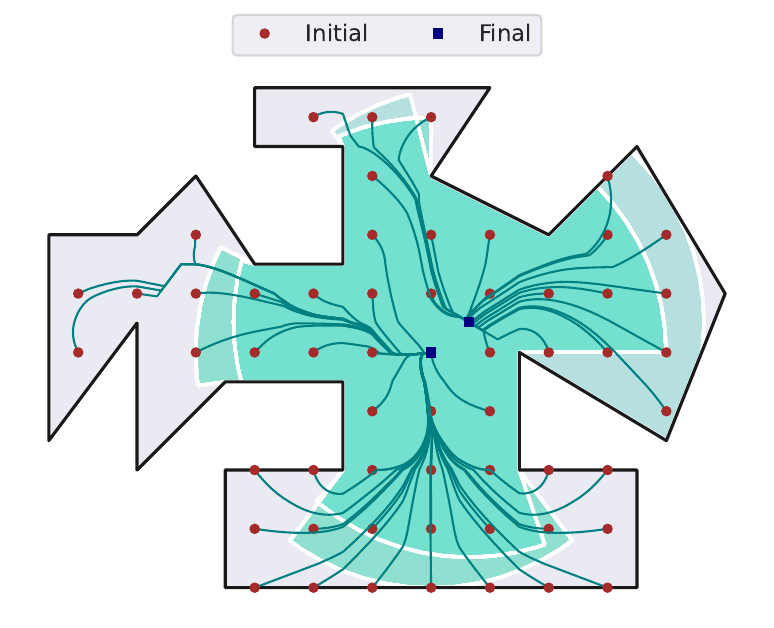}}
  \caption{Visibility maximization in a non-convex environment with
  the \NorcentTO. Red dots correspond to initial locations and blue
  squares to the attained local maximizers. The visibility regions
  from each maximizer are displayed in turquoise.}
  \label{fig:UL-RL}
\end{figure}


\section{Simulations}\label{sec:examples}

We have developed a python-based environment to simulate the proposed
\Norcent~using \emph{shapely} \cite{SG-CW-JB-MT-JA-BW:25} and
\emph{CGAL} \cite{MH-KH-FB-NX:24-cgal} for geometric computations.  We
consider two scenarios.

\subsection{Climbers hiding from a quadrotor}\label{sub:example_1}

In the first scenario, we consider expert mountain climbers trying to
evade a stealthy quadrotor. The key idea is that climbers can use the
visibility metric to perform evasive maneuvers without knowing the
position of the quadrotor. As seen in \Cref{fig:example_1}, we assume
that the mountain climber can move freely close to the ground
($\myDomain$ in green) while the quadrotor has height restrictions,
that is, its height $h \in [h_1, h_2]$ ($\advDomain$ in red).  The
climbers' implementation of \NorcentTO~to find good hiding spots is
shown in \Cref{fig:solution_1}. For one of the climbers, we show in
\Cref{fig:example_1_1} the regions in $\advDomain$ from where the
climbers can be seen at the beginning and the end of the algorithm
evolution, showing the decrease in the visibility metric until an
optimal hiding spot is found.

\begin{figure}[!h]
  \centering
  \subfigure[3D quadrotor evasion problem]{
    \includegraphics[width=0.425\linewidth]{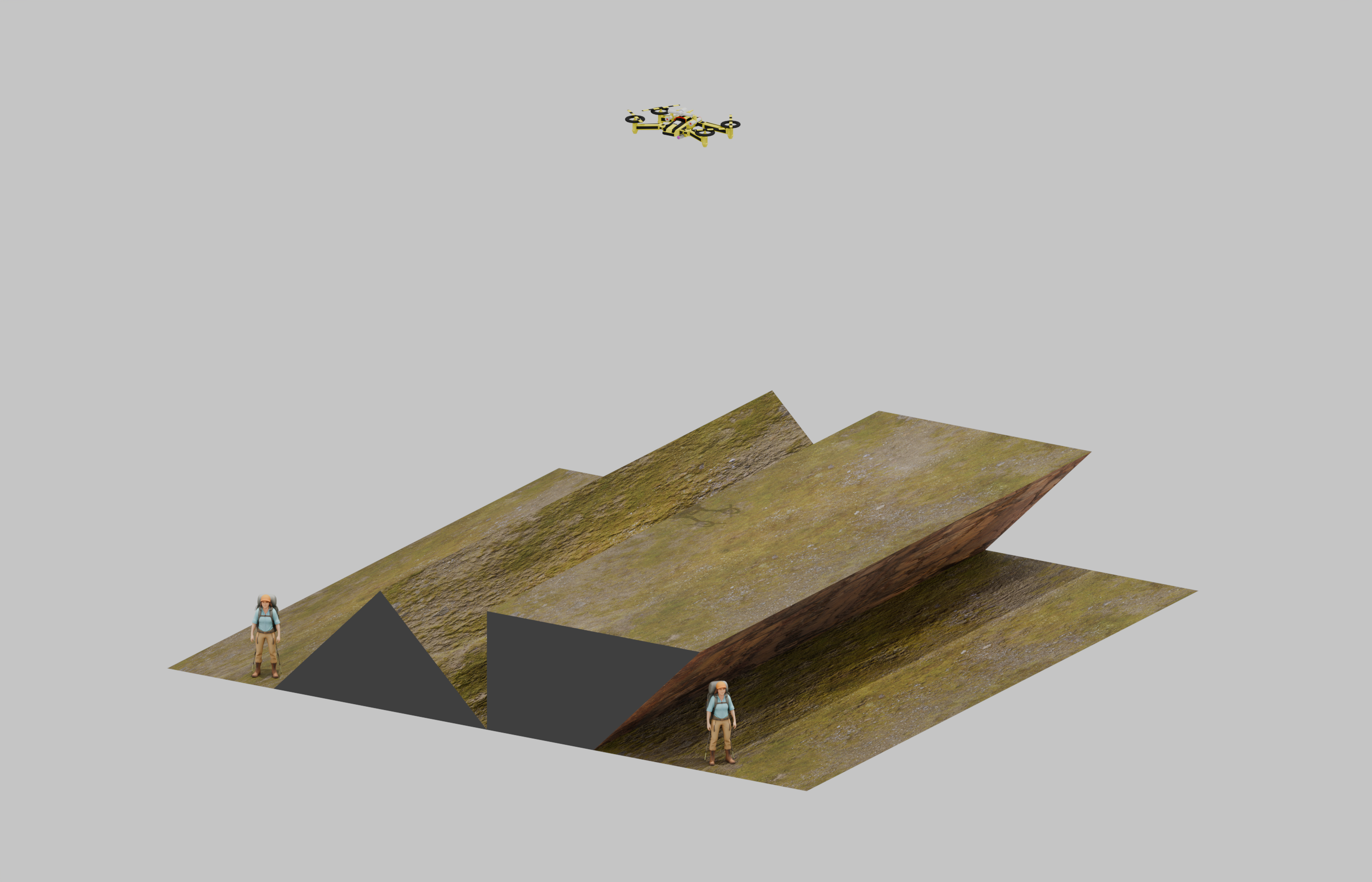}
    \label{fig:problem_1}
  }
  \subfigure[2D vertical mathematical formulation]{
    \label{fig:solution_1}
    \resizebox{0.51\linewidth}{!}{
      \centering
      \pgfdeclarelayer{background}
      \pgfdeclarelayer{foreground}
      \pgfsetlayers{background,main,foreground}
      \begin{tikzpicture}[scale=1.10]
        \tikzset{
          style 0/.style={inner sep=0pt, minimum size=3pt},
          style A/.style={shape=circle, fill=PineGreen, style 0},
          style B/.style={shape=circle, fill=WildStrawberry, style 0},
          style C/.style={shape=circle, fill=WildStrawberry!50!Red, inner sep=0pt, minimum size=2pt},
          font={\fontsize{8pt}{9.6}\selectfont}
        }

        \begin{pgfonlayer}{foreground}
          \node[draw, style A] (q1) at (0, 0) {};
          \node[draw, style A] (q2) at (1, 0) {};
          \node[draw, style A] (q3) at (2, 1) {};
          \node[draw, style A] (q4) at (3, 0) {};
          \node[draw, style A] (q5) at (3, 1) {};
          \node[draw, style A] (q6) at (5, 1) {};
          \node[draw, style A] (q7) at (4, 0) {};
          \node[draw, style A] (q8) at (6, 0) {};
          \node[draw, style A] (q9) at (6, 3) {};
          \node[draw, style A] (q0) at (0, 3) {};
        \end{pgfonlayer}

        \begin{pgfonlayer}{background}
          \tikzstyle{every path}=[draw, line width=1.0pt, color=black]
          \draw[fill = gray!20] (q1.center) -- (q2.center) -- (q3.center) --
          (q4.center) -- (q5.center) -- (q6.center) -- (q7.center) --
          (q8.center) -- (q9.center) -- (q0.center) -- cycle;
        \end{pgfonlayer}
        
        \begin{pgfonlayer}{background}
          \tikzstyle{every path}=[draw, line width=0.5pt, color=ForestGreen]
          \draw[fill = ForestGreen!20] (0.00, 0.00) -- (0.00, 0.10) -- (0.96, 0.10) -- (2.00, 1.14) -- (2.90, 0.24) -- (2.90, 1.10) -- (5.24, 1.10) -- (4.24, 0.10) -- (6.00, 0.10) -- (6.00, 0.00) -- (4.00, 0.00) -- (5.00, 1.00) -- (3.00, 1.00) -- (3.00, 0.00) -- (2.00, 1.00) -- (1.00, 0.00) -- (0.00, 0.00) -- cycle;
          \node[text = ForestGreen] at (0.5, 0.25) {$\myDomain$};
        \end{pgfonlayer}

        \begin{pgfonlayer}{background}
          \tikzstyle{every path}=[draw, line width=0.5pt, color=RedOrange]
          \draw[fill = RedOrange!20] (0.00, 2.00) -- (6.00, 2.00) -- (6.00, 3.00) -- (0.00, 3.00) -- cycle;
          \node[text = Red!50!RedOrange] at (3, 2.5) {$\advDomain$};
        \end{pgfonlayer}

        \begin{pgfonlayer}{main}
          \tikzstyle{every path}=[draw, line width=1.0pt, color=black]
          \draw (q1.center) -- (q2.center) -- (q3.center) -- (q4.center) --
          (q5.center) -- (q6.center) -- (q7.center) -- (q8.center) --
          (q9.center) -- (q0.center) -- cycle;
        \end{pgfonlayer}

        \begin{pgfonlayer}{foreground}
          \tikzstyle{every path}=[draw, line width=1.2pt, color=WildStrawberry!50!Red]
          \draw (0.7065, 0.0926) -- (0.7094, 0.0887) -- (0.7123,
          0.0847) -- (0.7182, 0.0769) -- (0.7239, 0.0690) -- (0.7297,
          0.0612) -- (0.7354, 0.0534) -- (0.7412, 0.0457) -- (0.7470,
          0.0381) -- (0.7528, 0.0305) -- (0.7586, 0.0230) -- (0.7645,
          0.0155) -- (0.7704, 0.0081) -- (0.7767, 0.0033) -- (0.7833,
          0.0009) -- (0.7900, -0.0014) -- (0.7971, -0.0011) --
          (0.8041, -0.0009) -- (0.8111, -0.0006) -- (0.8181, -0.0004)
          -- (0.8251, -0.0002) -- (0.8321, 0.0001) -- (0.8391, 0.0004)
          -- (0.8461, 0.0006) -- (0.8527, -0.0015) -- (0.8597,
          -0.0012) -- (0.8666, -0.0009) -- (0.8736, -0.0006) --
          (0.8805, -0.0003) -- (0.8874, -0.0000) -- (0.8943, 0.0003)
          -- (0.9010, -0.0017) -- (0.9079, -0.0014) -- (0.9147,
          -0.0011) -- (0.9216, -0.0007) -- (0.9285, -0.0004) --
          (0.9354, 0.0000) -- (0.9423, 0.0004) -- (0.9489, -0.0015) --
          (0.9557, -0.0012) -- (0.9626, -0.0008) -- (0.9694, -0.0004)
          -- (0.9762, 0.0000) -- (0.9831, 0.0004) -- (0.9899, 0.0008)
          -- (0.9940, 0.0015) -- (0.9982, 0.0022) -- (0.9996, 0.0032)
          -- (1.0011, 0.0041) -- (0.9998, 0.0053) -- (1.0010, 0.0041)
          -- (0.9998, 0.0053) -- (1.0010, 0.0041) -- (0.9998, 0.0053)
          -- (1.0010, 0.0040) -- (0.9997, 0.0052) -- (1.0009, 0.0040)
          -- (0.9997, 0.0052) -- (1.0009, 0.0040) -- (0.9997, 0.0052)
          -- (1.0009, 0.0039) -- (0.9996, 0.0051) -- (1.0008, 0.0039)
          -- (0.9996, 0.0051) -- (1.0008, 0.0039) -- (0.9996, 0.0051)
          -- (1.0008, 0.0039) -- (0.9996, 0.0050) -- (1.0007, 0.0038)
          -- (0.9995, 0.0050) -- (1.0007, 0.0038) -- (0.9995, 0.0050)
          -- (1.0007, 0.0038) -- (0.9995, 0.0049) -- (1.0006, 0.0037)
          -- (0.9994, 0.0049) -- (1.0006, 0.0037) -- (0.9994, 0.0049)
          -- (1.0006, 0.0037) -- (0.9994, 0.0048) -- (1.0005, 0.0037)
          -- (0.9994, 0.0048) -- (1.0005, 0.0036) -- (0.9993, 0.0048)
          -- (1.0005, 0.0036) -- (0.9993, 0.0047) -- (1.0004, 0.0036)
          -- (0.9993, 0.0047) -- (1.0004, 0.0035) -- (0.9992, 0.0047)
          -- (1.0004, 0.0035) -- (0.9992, 0.0047) -- (1.0003, 0.0035)
          -- (0.9992, 0.0046) -- (1.0003, 0.0035) -- (0.9992, 0.0046)
          -- (1.0003, 0.0034) -- (0.9991, 0.0046) -- (1.0003, 0.0034)
          -- (0.9991, 0.0045) -- (1.0002, 0.0034) -- (0.9991, 0.0045)
          -- (1.0002, 0.0034) -- (0.9991, 0.0045) -- (1.0002, 0.0033)
          -- (0.9990, 0.0044) -- (1.0001, 0.0033) -- (0.9990, 0.0044)
          -- (1.0001, 0.0033) -- (0.9990, 0.0044) -- (1.0001, 0.0033)
          -- (0.9990, 0.0044) -- (1.0001, 0.0032) -- (0.9989, 0.0043)
          -- (1.0000, 0.0032) -- (0.9989, 0.0043) -- (1.0000, 0.0032)
          -- (0.9989, 0.0043) -- (1.0000, 0.0032) -- (0.9989, 0.0042)
          -- (0.9999, 0.0031) -- (0.9988, 0.0042) -- (0.9999, 0.0031)
          -- (0.9988, 0.0042) -- (0.9999, 0.0031) -- (0.9988, 0.0042)
          -- (0.9999, 0.0030) -- (0.9988, 0.0041) -- (0.9998, 0.0030)
          -- (0.9987, 0.0041) -- (0.9998, 0.0030) -- (0.9987, 0.0041)
          -- (0.9998, 0.0030) -- (0.9987, 0.0040) -- (0.9997, 0.0029)
          -- (0.9986, 0.0040) -- (0.9997, 0.0029) -- (0.9986, 0.0040)
          -- (0.9997, 0.0029) -- (0.9986, 0.0040) -- (0.9997, 0.0029)
          -- (0.9986, 0.0039) -- (0.9996, 0.0029) -- (0.9986, 0.0039)
          -- (0.9996, 0.0028) -- (0.9985, 0.0039) -- (0.9996, 0.0028)
          -- (0.9985, 0.0039) -- (0.9995, 0.0028) -- (0.9985, 0.0038)
          -- (0.9995, 0.0028) -- (0.9985, 0.0038) -- (0.9995, 0.0027)
          -- (0.9984, 0.0038) -- (0.9995, 0.0027) -- (0.9984, 0.0037)
          -- (0.9994, 0.0027) -- (0.9984, 0.0037) -- (0.9994, 0.0027)
          -- (0.9984, 0.0037) -- (0.9994, 0.0026) -- (0.9983, 0.0037)
          -- (0.9994, 0.0026) -- (0.9983, 0.0036) -- (0.9993, 0.0026)
          -- (0.9983, 0.0036) -- (0.9993, 0.0026) -- (0.9983, 0.0036)
          -- (0.9993, 0.0025) -- (0.9982, 0.0036) -- (0.9993, 0.0025)
          -- (0.9982, 0.0035) -- (0.9992, 0.0025) -- (0.9982, 0.0035)
          -- (0.9992, 0.0025) -- (0.9982, 0.0035) -- (0.9992, 0.0024)
          -- (0.9981, 0.0035) -- (0.9992, 0.0024) -- (0.9981, 0.0034)
          -- (0.9991, 0.0024) -- (0.9981, 0.0034) -- (0.9991, 0.0024)
          -- (0.9981, 0.0034) -- (0.9991, 0.0024) -- (0.9981, 0.0034)
          -- (0.9991, 0.0023) -- (0.9980, 0.0033) -- (0.9990, 0.0023)
          -- (0.9980, 0.0033) -- (0.9990, 0.0023) -- (0.9980, 0.0033)
          -- (0.9990, 0.0023) -- (0.9980, 0.0033) -- (0.9990, 0.0022)
          -- (0.9979, 0.0032) -- (0.9989, 0.0022) -- (0.9979, 0.0032)
          -- (0.9989, 0.0022) -- (0.9979, 0.0032) -- (0.9989, 0.0022)
          -- (0.9979, 0.0032) -- (0.9989, 0.0022) -- (0.9978, 0.0031)
          -- (0.9988, 0.0021) -- (0.9978, 0.0031) -- (0.9988, 0.0021)
          -- (0.9978, 0.0031) -- (0.9988, 0.0021) -- (0.9978, 0.0031)
          -- (0.9988, 0.0021) -- (0.9978, 0.0030) -- (0.9987, 0.0020)
          -- (0.9977, 0.0030) -- (0.9987, 0.0020) -- (0.9977, 0.0030)
          -- (0.9987, 0.0020) -- (0.9977, 0.0030) -- (0.9987, 0.0020)
          -- (0.9977, 0.0030) -- (0.9986, 0.0020) -- (0.9977, 0.0029)
          -- (0.9986, 0.0019) -- (0.9976, 0.0029) -- (0.9986, 0.0019)
          -- (0.9976, 0.0029) -- (0.9986, 0.0019) -- (0.9976, 0.0029)
          -- (0.9985, 0.0019) -- (0.9976, 0.0028) -- (0.9985, 0.0018)
          -- (0.9975, 0.0028) -- (0.9985, 0.0018) -- (0.9975, 0.0028)
          -- (0.9985, 0.0018) -- (0.9975, 0.0028) -- (0.9985, 0.0018)
          -- (0.9975, 0.0027) -- (0.9984, 0.0018) -- (0.9975, 0.0027)
          -- (0.9984, 0.0017) -- (0.9974, 0.0027) -- (0.9984, 0.0017)
          -- (0.9974, 0.0027) -- (0.9984, 0.0017) -- (0.9974, 0.0027)
          -- (0.9983, 0.0017) -- (0.9974, 0.0027) -- (0.9985, 0.0018)
          -- (0.9976, 0.0028) -- (0.9986, 0.0019) -- (0.9977, 0.0029)
          -- (0.9986, 0.0019) -- (0.9977, 0.0029) -- (0.9986, 0.0019)
          -- (0.9976, 0.0029) -- (0.9986, 0.0019) -- (0.9976, 0.0028)
          -- (0.9986, 0.0019) -- (0.9976, 0.0028) -- (0.9985, 0.0019)
          -- (0.9976, 0.0028) -- (0.9985, 0.0018) -- (0.9976, 0.0028)
          -- (0.9985, 0.0018) -- (0.9975, 0.0027) -- (0.9985, 0.0018)
          -- (0.9980, 0.0023) -- (0.9988, 0.0015) -- (1, 0);
          \node[draw, shape=circle, fill=WildStrawberry!50!Red, style
          C] () at (0.7, 0.1) {};
          \node[draw, shape=circle, fill=WildStrawberry!50!Red, style
          C] () at (1, 0) {};
          \draw[-{to[round, scale=0.5]}, color=Maroon] (0.85, 0) -- (0.9, 0);

          \draw (4.9915, 0.1949) -- (4.9873, 0.1924) -- (4.9831,
          0.1899) -- (4.9747, 0.1848) -- (4.9664, 0.1798) -- (4.9580,
          0.1748) -- (4.9498, 0.1699) -- (4.9415, 0.1649) -- (4.9333,
          0.1600) -- (4.9251, 0.1550) -- (4.9169, 0.1501) -- (4.9088,
          0.1453) -- (4.9007, 0.1404) -- (4.8926, 0.1355) -- (4.8845,
          0.1307) -- (4.8765, 0.1259) -- (4.8684, 0.1211) -- (4.8605,
          0.1163) -- (4.8525, 0.1115) -- (4.8445, 0.1067) -- (4.8364,
          0.1033) -- (4.8280, 0.1011) -- (4.8197, 0.0990) -- (4.8111,
          0.0982) -- (4.8024, 0.0987) -- (4.7937, 0.0994) -- (4.7852,
          0.0987) -- (4.7766, 0.0994) -- (4.7680, 0.1002) -- (4.7596,
          0.0997) -- (4.7513, 0.0993) -- (4.7428, 0.1003) -- (4.7346,
          0.0999) -- (4.7263, 0.0996) -- (4.7181, 0.0993) -- (4.7099,
          0.0990) -- (4.7018, 0.0988) -- (4.6937, 0.0987) -- (4.6856,
          0.0986) -- (4.6774, 0.1000) -- (4.6694, 0.0999) -- (4.6614,
          0.0999) -- (4.6534, 0.1000) -- (4.6455, 0.1001) -- (4.6377,
          0.0986) -- (4.6299, 0.0988) -- (4.6220, 0.0990) -- (4.6142,
          0.0992) -- (4.6065, 0.0995) -- (4.5987, 0.0998) -- (4.5910,
          0.1001) -- (4.5834, 0.0988) -- (4.5757, 0.0992) -- (4.5682,
          0.0980) -- (4.5606, 0.0985) -- (4.5530, 0.0990) -- (4.5455,
          0.0995) -- (4.5380, 0.1000) -- (4.5306, 0.1004) -- (4.5232,
          0.1008) -- (4.5161, 0.0995) -- (4.5088, 0.0999) -- (4.5018,
          0.0984) -- (4.4947, 0.0988) -- (4.4877, 0.0991) -- (4.4807,
          0.0994) -- (4.4738, 0.0997) -- (4.4671, 0.0999) -- (4.4603,
          0.1001) -- (4.4539, 0.0983) -- (4.4473, 0.0984) -- (4.4407,
          0.0986) -- (4.4343, 0.0987) -- (4.4279, 0.0989) -- (4.4216,
          0.0989) -- (4.4154, 0.0990) -- (4.4092, 0.0991) -- (4.4031,
          0.0991) -- (4.3970, 0.0992) -- (4.3911, 0.0992) -- (4.3852,
          0.0992) -- (4.3793, 0.0992) -- (4.3736, 0.0992) -- (4.3679,
          0.0991) -- (4.3622, 0.0991) -- (4.3567, 0.0990) -- (4.3511,
          0.0990) -- (4.3457, 0.0990) -- (4.3403, 0.0989) -- (4.3350,
          0.0988) -- (4.3297, 0.0988) -- (4.3245, 0.0987) -- (4.3193,
          0.0986) -- (4.3142, 0.0986) -- (4.3092, 0.0985) -- (4.3042,
          0.0984) -- (4.2992, 0.0984) -- (4.2943, 0.0983) -- (4.2895,
          0.0982) -- (4.2847, 0.0981) -- (4.2800, 0.0981) -- (4.2753,
          0.0980) -- (4.2706, 0.0979) -- (4.2660, 0.0979) -- (4.2615,
          0.0978) -- (4.2570, 0.0977) -- (4.2525, 0.0977) -- (4.2472,
          0.1002) -- (4.2420, 0.1027) -- (4.2360, 0.1078) -- (4.2300,
          0.1129) -- (4.2240, 0.1181) -- (4.2181, 0.1233) -- (4.2123,
          0.1285) -- (4.2064, 0.1338) -- (4.2007, 0.1391) -- (4.1949,
          0.1444) -- (4.1893, 0.1497) -- (4.1836, 0.1551) -- (4.1780,
          0.1605) -- (4.1746, 0.1638) -- (4.1713, 0.1670) -- (4.1702,
          0.1681) -- (4.1691, 0.1692) -- (4.1702, 0.1681) -- (4.1691,
          0.1692) -- (4.1701, 0.1681) -- (4.1690, 0.1692) -- (4.1701,
          0.1681) -- (4.1690, 0.1691) -- (4.1701, 0.1680) -- (4.1690,
          0.1691) -- (4.1701, 0.1680) -- (4.1690, 0.1691) -- (4.1700,
          0.1680) -- (4.1690, 0.1691) -- (4.1700, 0.1680) -- (4.1689,
          0.1691) -- (4.1700, 0.1680) -- (4.1689, 0.1690) -- (4.1700,
          0.1679) -- (4.1689, 0.1690) -- (4.1700, 0.1679) -- (4.1689,
          0.1690) -- (4.1699, 0.1679) -- (4.1689, 0.1690) -- (4.1699,
          0.1679) -- (4.1688, 0.1689) -- (4.1699, 0.1679) -- (4.1688,
          0.1689) -- (4.1699, 0.1679) -- (4.1688, 0.1689) -- (4.1699,
          0.1678) -- (4.1688, 0.1689) -- (4.1698, 0.1678) -- (4.1688,
          0.1689) -- (4.1698, 0.1678) -- (4.1687, 0.1688) -- (4.1698,
          0.1678) -- (4.1687, 0.1688) -- (4.1698, 0.1678) -- (4.1687,
          0.1688) -- (4.1697, 0.1677) -- (4.1687, 0.1688) -- (4.1697,
          0.1677) -- (4.1687, 0.1688) -- (4.1697, 0.1677) -- (4.1687,
          0.1687) -- (4.1697, 0.1677) -- (4.1686, 0.1687) -- (4.1697,
          0.1677) -- (4.1686, 0.1687) -- (4.1696, 0.1677) -- (4.1686,
          0.1687) -- (4.1696, 0.1676) -- (4.1686, 0.1687) -- (4.1696,
          0.1676) -- (4.1686, 0.1686) -- (4.1696, 0.1676) -- (4.1685,
          0.1686) -- (4.1696, 0.1676) -- (4.1685, 0.1686) -- (4.1695,
          0.1676) -- (4.1685, 0.1686) -- (4.1695, 0.1675) -- (4.1685,
          0.1686) -- (4.1695, 0.1675) -- (4.1685, 0.1685) -- (4.1695,
          0.1675) -- (4.1685, 0.1685) -- (4.1695, 0.1675) -- (4.1684,
          0.1685) -- (4.1694, 0.1675) -- (4.1684, 0.1685) -- (4.1694,
          0.1675) -- (4.1684, 0.1685) -- (4.1694, 0.1674) -- (4.1684,
          0.1684) -- (4.1694, 0.1674) -- (4.1684, 0.1684) -- (4.1694,
          0.1674) -- (4.1684, 0.1684) -- (4.1694, 0.1674) -- (4.1683,
          0.1684) -- (4.1693, 0.1674) -- (4.1683, 0.1684) -- (4.1693,
          0.1674) -- (4.1683, 0.1683) -- (4.1693, 0.1673) -- (4.1683,
          0.1683) -- (4.1693, 0.1673) -- (4.1683, 0.1683) -- (4.1693,
          0.1673) -- (4.1683, 0.1683) -- (4.1692, 0.1673) -- (4.1682,
          0.1683) -- (4.1692, 0.1673) -- (4.1682, 0.1682) -- (4.1692,
          0.1673) -- (4.1682, 0.1682) -- (4.1692, 0.1672) -- (4.1682,
          0.1682) -- (4.1692, 0.1672) -- (4.1682, 0.1682) -- (4.1691,
          0.1672) -- (4.1682, 0.1682) -- (4.1691, 0.1672) -- (4.1681,
          0.1682) -- (4.1691, 0.1672) -- (4.1681, 0.1681) -- (4.1691,
          0.1672) -- (4.1681, 0.1681) -- (4.1691, 0.1671) -- (4.1681,
          0.1681) -- (4.1691, 0.1671) -- (4.1681, 0.1681) -- (4.1690,
          0.1671) -- (4.1681, 0.1681) -- (4.1690, 0.1671) -- (4.1680,
          0.1680) -- (4.1690, 0.1671) -- (4.1680, 0.1680) -- (4.1690,
          0.1671) -- (4.1680, 0.1680) -- (4.1690, 0.1670) -- (4.1680,
          0.1680) -- (4.1690, 0.1670) -- (4.1680, 0.1680) -- (4.1689,
          0.1670) -- (4.1680, 0.1680) -- (4.1689, 0.1670) -- (4.1680,
          0.1679) -- (4.1689, 0.1670) -- (4.1679, 0.1679) -- (4.1689,
          0.1670) -- (4.1679, 0.1679) -- (4.1689, 0.1669) -- (4.1679,
          0.1679) -- (4.1688, 0.1669) -- (4.1679, 0.1679) -- (4.1688,
          0.1669) -- (4.1679, 0.1679) -- (4.1688, 0.1669) -- (4.1679,
          0.1678) -- (4.1688, 0.1669) -- (4.1678, 0.1678) -- (4.1688,
          0.1669) -- (4.1678, 0.1678) -- (4.1688, 0.1669) -- (4.1678,
          0.1678) -- (4.1687, 0.1668) -- (4.1678, 0.1678) -- (4.1687,
          0.1668) -- (4.1678, 0.1678) -- (4.1687, 0.1668) -- (4.1678,
          0.1677) -- (4.1687, 0.1668) -- (4.1678, 0.1677) -- (4.1687,
          0.1668) -- (4.1677, 0.1677) -- (4.1687, 0.1668) -- (4.1677,
          0.1677) -- (4.1686, 0.1668) -- (4.1677, 0.1677) -- (4.1686,
          0.1667) -- (4.1677, 0.1677) -- (4.1686, 0.1667) -- (4.1677,
          0.1676) -- (4.1686, 0.1667) -- (4.1677, 0.1676) -- (4.1686,
          0.1667) -- (4.1681, 0.1671) -- (4.1689, 0.1664);
          \node[draw, shape=circle, fill=WildStrawberry!50!Red, style
          C] () at (5, 0.2) {};
          \node[draw, shape=circle, fill=WildStrawberry!50!Red, style
          C] () at (4.1667, 0.1667) {};
          \draw[-{to[round, scale=0.5]}, color=Maroon] (4.55, 0.1) -- (4.5, 0.1);
        \end{pgfonlayer}

      \end{tikzpicture}
    }%
  }
  \caption{Hiding from an quadrotor. The climbers can freely move
    along the terrain (green), while the quadrotor is restricted to a
    certain height range (red). (b) also shows the trajectories of the
    \NorcentTO~for two different initial positions of the climber.  }
    \label{fig:example_1}
\end{figure}

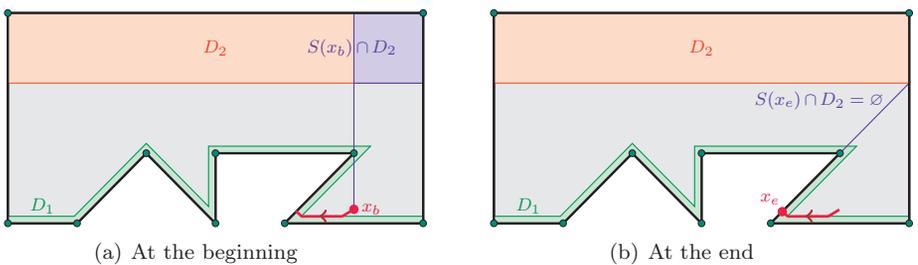
\begin{figure}[!h]
  \centering
  \subfigure[At the beginning]{
    \label{fig:example_1_before}
    \resizebox{0.47\linewidth}{!}{
      \centering
      \pgfdeclarelayer{background}
      \pgfdeclarelayer{foreground}
      \pgfsetlayers{background,main,foreground}
      \begin{tikzpicture}[scale=1.10]
        \tikzset{
          style 0/.style={inner sep=0pt, minimum size=3pt},
          style A/.style={shape=circle, fill=PineGreen, style 0},
          style B/.style={shape=circle, fill=WildStrawberry, style 0},
          style C/.style={shape=circle, fill=WildStrawberry!50!Red, inner sep=0pt, minimum size=2pt},
          font={\fontsize{8pt}{9.6}\selectfont}
        }

        \begin{pgfonlayer}{foreground}
          \node[draw, style A] (q1) at (0, 0) {};
          \node[draw, style A] (q2) at (1, 0) {};
          \node[draw, style A] (q3) at (2, 1) {};
          \node[draw, style A] (q4) at (3, 0) {};
          \node[draw, style A] (q5) at (3, 1) {};
          \node[draw, style A] (q6) at (5, 1) {};
          \node[draw, style A] (q7) at (4, 0) {};
          \node[draw, style A] (q8) at (6, 0) {};
          \node[draw, style A] (q9) at (6, 3) {};
          \node[draw, style A] (q0) at (0, 3) {};
        \end{pgfonlayer}

        \begin{pgfonlayer}{background}
          \tikzstyle{every path}=[draw, line width=1.0pt, color=black]
          \draw[fill = gray!20] (q1.center) -- (q2.center) -- (q3.center) --
          (q4.center) -- (q5.center) -- (q6.center) -- (q7.center) --
          (q8.center) -- (q9.center) -- (q0.center) -- cycle;
        \end{pgfonlayer}
        
        \begin{pgfonlayer}{background}
          \tikzstyle{every path}=[draw, line width=0.5pt, color=ForestGreen]
          \draw[fill = ForestGreen!20] (0.00, 0.00) -- (0.00, 0.10) -- (0.96, 0.10) -- (2.00, 1.14) -- (2.90, 0.24) -- (2.90, 1.10) -- (5.24, 1.10) -- (4.24, 0.10) -- (6.00, 0.10) -- (6.00, 0.00) -- (4.00, 0.00) -- (5.00, 1.00) -- (3.00, 1.00) -- (3.00, 0.00) -- (2.00, 1.00) -- (1.00, 0.00) -- (0.00, 0.00) -- cycle;
          \node[text = ForestGreen] at (0.5, 0.25) {$\myDomain$};
        \end{pgfonlayer}

        \begin{pgfonlayer}{background}
          \tikzstyle{every path}=[draw, line width=0.5pt, color=RedOrange]
          \draw[fill = RedOrange!20] (0.00, 2.00) -- (6.00, 2.00) -- (6.00, 3.00) -- (0.00, 3.00) -- cycle;
          \node[text = Red!50!RedOrange] at (3, 2.5) {$\advDomain$};
        \end{pgfonlayer}

        \begin{pgfonlayer}{main}
          \tikzstyle{every path}=[draw, line width=1.0pt, color=black]
          \draw (q1.center) -- (q2.center) -- (q3.center) -- (q4.center) --
          (q5.center) -- (q6.center) -- (q7.center) -- (q8.center) --
          (q9.center) -- (q0.center) -- cycle;
        \end{pgfonlayer}

        \begin{pgfonlayer}{foreground}
          \tikzstyle{every path}=[draw, line width=1.2pt, color=WildStrawberry!50!Red]

          \draw (4.9915, 0.1949) -- (4.9873, 0.1924) -- (4.9831,
          0.1899) -- (4.9747, 0.1848) -- (4.9664, 0.1798) -- (4.9580,
          0.1748) -- (4.9498, 0.1699) -- (4.9415, 0.1649) -- (4.9333,
          0.1600) -- (4.9251, 0.1550) -- (4.9169, 0.1501) -- (4.9088,
          0.1453) -- (4.9007, 0.1404) -- (4.8926, 0.1355) -- (4.8845,
          0.1307) -- (4.8765, 0.1259) -- (4.8684, 0.1211) -- (4.8605,
          0.1163) -- (4.8525, 0.1115) -- (4.8445, 0.1067) -- (4.8364,
          0.1033) -- (4.8280, 0.1011) -- (4.8197, 0.0990) -- (4.8111,
          0.0982) -- (4.8024, 0.0987) -- (4.7937, 0.0994) -- (4.7852,
          0.0987) -- (4.7766, 0.0994) -- (4.7680, 0.1002) -- (4.7596,
          0.0997) -- (4.7513, 0.0993) -- (4.7428, 0.1003) -- (4.7346,
          0.0999) -- (4.7263, 0.0996) -- (4.7181, 0.0993) -- (4.7099,
          0.0990) -- (4.7018, 0.0988) -- (4.6937, 0.0987) -- (4.6856,
          0.0986) -- (4.6774, 0.1000) -- (4.6694, 0.0999) -- (4.6614,
          0.0999) -- (4.6534, 0.1000) -- (4.6455, 0.1001) -- (4.6377,
          0.0986) -- (4.6299, 0.0988) -- (4.6220, 0.0990) -- (4.6142,
          0.0992) -- (4.6065, 0.0995) -- (4.5987, 0.0998) -- (4.5910,
          0.1001) -- (4.5834, 0.0988) -- (4.5757, 0.0992) -- (4.5682,
          0.0980) -- (4.5606, 0.0985) -- (4.5530, 0.0990) -- (4.5455,
          0.0995) -- (4.5380, 0.1000) -- (4.5306, 0.1004) -- (4.5232,
          0.1008) -- (4.5161, 0.0995) -- (4.5088, 0.0999) -- (4.5018,
          0.0984) -- (4.4947, 0.0988) -- (4.4877, 0.0991) -- (4.4807,
          0.0994) -- (4.4738, 0.0997) -- (4.4671, 0.0999) -- (4.4603,
          0.1001) -- (4.4539, 0.0983) -- (4.4473, 0.0984) -- (4.4407,
          0.0986) -- (4.4343, 0.0987) -- (4.4279, 0.0989) -- (4.4216,
          0.0989) -- (4.4154, 0.0990) -- (4.4092, 0.0991) -- (4.4031,
          0.0991) -- (4.3970, 0.0992) -- (4.3911, 0.0992) -- (4.3852,
          0.0992) -- (4.3793, 0.0992) -- (4.3736, 0.0992) -- (4.3679,
          0.0991) -- (4.3622, 0.0991) -- (4.3567, 0.0990) -- (4.3511,
          0.0990) -- (4.3457, 0.0990) -- (4.3403, 0.0989) -- (4.3350,
          0.0988) -- (4.3297, 0.0988) -- (4.3245, 0.0987) -- (4.3193,
          0.0986) -- (4.3142, 0.0986) -- (4.3092, 0.0985) -- (4.3042,
          0.0984) -- (4.2992, 0.0984) -- (4.2943, 0.0983) -- (4.2895,
          0.0982) -- (4.2847, 0.0981) -- (4.2800, 0.0981) -- (4.2753,
          0.0980) -- (4.2706, 0.0979) -- (4.2660, 0.0979) -- (4.2615,
          0.0978) -- (4.2570, 0.0977) -- (4.2525, 0.0977) -- (4.2472,
          0.1002) -- (4.2420, 0.1027) -- (4.2360, 0.1078) -- (4.2300,
          0.1129) -- (4.2240, 0.1181) -- (4.2181, 0.1233) -- (4.2123,
          0.1285) -- (4.2064, 0.1338) -- (4.2007, 0.1391) -- (4.1949,
          0.1444) -- (4.1893, 0.1497) -- (4.1836, 0.1551) -- (4.1780,
          0.1605) -- (4.1746, 0.1638) -- (4.1713, 0.1670) -- (4.1702,
          0.1681) -- (4.1691, 0.1692) -- (4.1702, 0.1681) -- (4.1691,
          0.1692) -- (4.1701, 0.1681) -- (4.1690, 0.1692) -- (4.1701,
          0.1681) -- (4.1690, 0.1691) -- (4.1701, 0.1680) -- (4.1690,
          0.1691) -- (4.1701, 0.1680) -- (4.1690, 0.1691) -- (4.1700,
          0.1680) -- (4.1690, 0.1691) -- (4.1700, 0.1680) -- (4.1689,
          0.1691) -- (4.1700, 0.1680) -- (4.1689, 0.1690) -- (4.1700,
          0.1679) -- (4.1689, 0.1690) -- (4.1700, 0.1679) -- (4.1689,
          0.1690) -- (4.1699, 0.1679) -- (4.1689, 0.1690) -- (4.1699,
          0.1679) -- (4.1688, 0.1689) -- (4.1699, 0.1679) -- (4.1688,
          0.1689) -- (4.1699, 0.1679) -- (4.1688, 0.1689) -- (4.1699,
          0.1678) -- (4.1688, 0.1689) -- (4.1698, 0.1678) -- (4.1688,
          0.1689) -- (4.1698, 0.1678) -- (4.1687, 0.1688) -- (4.1698,
          0.1678) -- (4.1687, 0.1688) -- (4.1698, 0.1678) -- (4.1687,
          0.1688) -- (4.1697, 0.1677) -- (4.1687, 0.1688) -- (4.1697,
          0.1677) -- (4.1687, 0.1688) -- (4.1697, 0.1677) -- (4.1687,
          0.1687) -- (4.1697, 0.1677) -- (4.1686, 0.1687) -- (4.1697,
          0.1677) -- (4.1686, 0.1687) -- (4.1696, 0.1677) -- (4.1686,
          0.1687) -- (4.1696, 0.1676) -- (4.1686, 0.1687) -- (4.1696,
          0.1676) -- (4.1686, 0.1686) -- (4.1696, 0.1676) -- (4.1685,
          0.1686) -- (4.1696, 0.1676) -- (4.1685, 0.1686) -- (4.1695,
          0.1676) -- (4.1685, 0.1686) -- (4.1695, 0.1675) -- (4.1685,
          0.1686) -- (4.1695, 0.1675) -- (4.1685, 0.1685) -- (4.1695,
          0.1675) -- (4.1685, 0.1685) -- (4.1695, 0.1675) -- (4.1684,
          0.1685) -- (4.1694, 0.1675) -- (4.1684, 0.1685) -- (4.1694,
          0.1675) -- (4.1684, 0.1685) -- (4.1694, 0.1674) -- (4.1684,
          0.1684) -- (4.1694, 0.1674) -- (4.1684, 0.1684) -- (4.1694,
          0.1674) -- (4.1684, 0.1684) -- (4.1694, 0.1674) -- (4.1683,
          0.1684) -- (4.1693, 0.1674) -- (4.1683, 0.1684) -- (4.1693,
          0.1674) -- (4.1683, 0.1683) -- (4.1693, 0.1673) -- (4.1683,
          0.1683) -- (4.1693, 0.1673) -- (4.1683, 0.1683) -- (4.1693,
          0.1673) -- (4.1683, 0.1683) -- (4.1692, 0.1673) -- (4.1682,
          0.1683) -- (4.1692, 0.1673) -- (4.1682, 0.1682) -- (4.1692,
          0.1673) -- (4.1682, 0.1682) -- (4.1692, 0.1672) -- (4.1682,
          0.1682) -- (4.1692, 0.1672) -- (4.1682, 0.1682) -- (4.1691,
          0.1672) -- (4.1682, 0.1682) -- (4.1691, 0.1672) -- (4.1681,
          0.1682) -- (4.1691, 0.1672) -- (4.1681, 0.1681) -- (4.1691,
          0.1672) -- (4.1681, 0.1681) -- (4.1691, 0.1671) -- (4.1681,
          0.1681) -- (4.1691, 0.1671) -- (4.1681, 0.1681) -- (4.1690,
          0.1671) -- (4.1681, 0.1681) -- (4.1690, 0.1671) -- (4.1680,
          0.1680) -- (4.1690, 0.1671) -- (4.1680, 0.1680) -- (4.1690,
          0.1671) -- (4.1680, 0.1680) -- (4.1690, 0.1670) -- (4.1680,
          0.1680) -- (4.1690, 0.1670) -- (4.1680, 0.1680) -- (4.1689,
          0.1670) -- (4.1680, 0.1680) -- (4.1689, 0.1670) -- (4.1680,
          0.1679) -- (4.1689, 0.1670) -- (4.1679, 0.1679) -- (4.1689,
          0.1670) -- (4.1679, 0.1679) -- (4.1689, 0.1669) -- (4.1679,
          0.1679) -- (4.1688, 0.1669) -- (4.1679, 0.1679) -- (4.1688,
          0.1669) -- (4.1679, 0.1679) -- (4.1688, 0.1669) -- (4.1679,
          0.1678) -- (4.1688, 0.1669) -- (4.1678, 0.1678) -- (4.1688,
          0.1669) -- (4.1678, 0.1678) -- (4.1688, 0.1669) -- (4.1678,
          0.1678) -- (4.1687, 0.1668) -- (4.1678, 0.1678) -- (4.1687,
          0.1668) -- (4.1678, 0.1678) -- (4.1687, 0.1668) -- (4.1678,
          0.1677) -- (4.1687, 0.1668) -- (4.1678, 0.1677) -- (4.1687,
          0.1668) -- (4.1677, 0.1677) -- (4.1687, 0.1668) -- (4.1677,
          0.1677) -- (4.1686, 0.1668) -- (4.1677, 0.1677) -- (4.1686,
          0.1667) -- (4.1677, 0.1677) -- (4.1686, 0.1667) -- (4.1677,
          0.1676) -- (4.1686, 0.1667) -- (4.1677, 0.1676) -- (4.1686,
          0.1667) -- (4.1681, 0.1671) -- (4.1689, 0.1664);
          \node[draw, shape=circle, fill=WildStrawberry!50!Red, style 0] (t1) at (5, 0.2) {};
          \node[text = WildStrawberry!50!Red] at
          ([shift={(0:0.25)}]{t1}) {$\genericPoint_b$};
          \draw[-{to[round, scale=0.5]}, color=Maroon] (4.52, 0.1) -- (4.5, 0.1);
        \end{pgfonlayer}

        \begin{pgfonlayer}{background}
          \tikzstyle{every path}=[draw, line width=0.5pt, color=Violet]
          \draw[fill = Violet!20] (6.00000, 3.00000) -- (6.00000, 2.00000) -- (5.00000, 2.00000) -- (5.00000, 3.00000) -- (6.00000, 3.00000) -- cycle;
          \node[text = Violet] (l2) at (4.97 ,2.5) {$\visibilityPolygon{\genericPoint_b} \intersection \advDomain$};
          \draw (5, 0.2) -- (5, 2);
        \end{pgfonlayer}

      \end{tikzpicture}
    }%
  }
  %
  \subfigure[At the end]{
    \label{fig:example_1_after}
    \resizebox{0.47\linewidth}{!}{
      \centering
      \pgfdeclarelayer{background}
      \pgfdeclarelayer{foreground}
      \pgfsetlayers{background,main,foreground}
      \begin{tikzpicture}[scale=1.10]
        \tikzset{
          style 0/.style={inner sep=0pt, minimum size=3pt},
          style A/.style={shape=circle, fill=PineGreen, style 0},
          style B/.style={shape=circle, fill=WildStrawberry, style 0},
          style C/.style={shape=circle, fill=WildStrawberry!50!Red, inner sep=0pt, minimum size=2pt},
          font={\fontsize{8pt}{9.6}\selectfont}
        }

        \begin{pgfonlayer}{foreground}
          \node[draw, style A] (q1) at (0, 0) {};
          \node[draw, style A] (q2) at (1, 0) {};
          \node[draw, style A] (q3) at (2, 1) {};
          \node[draw, style A] (q4) at (3, 0) {};
          \node[draw, style A] (q5) at (3, 1) {};
          \node[draw, style A] (q6) at (5, 1) {};
          \node[draw, style A] (q7) at (4, 0) {};
          \node[draw, style A] (q8) at (6, 0) {};
          \node[draw, style A] (q9) at (6, 3) {};
          \node[draw, style A] (q0) at (0, 3) {};
        \end{pgfonlayer}

        \begin{pgfonlayer}{background}
          \tikzstyle{every path}=[draw, line width=1.0pt, color=black]
          \draw[fill = gray!20] (q1.center) -- (q2.center) -- (q3.center) --
          (q4.center) -- (q5.center) -- (q6.center) -- (q7.center) --
          (q8.center) -- (q9.center) -- (q0.center) -- cycle;
        \end{pgfonlayer}
        
        \begin{pgfonlayer}{background}
          \tikzstyle{every path}=[draw, line width=0.5pt, color=ForestGreen]
          \draw[fill = ForestGreen!20] (0.00, 0.00) -- (0.00, 0.10) -- (0.96, 0.10) -- (2.00, 1.14) -- (2.90, 0.24) -- (2.90, 1.10) -- (5.24, 1.10) -- (4.24, 0.10) -- (6.00, 0.10) -- (6.00, 0.00) -- (4.00, 0.00) -- (5.00, 1.00) -- (3.00, 1.00) -- (3.00, 0.00) -- (2.00, 1.00) -- (1.00, 0.00) -- (0.00, 0.00) -- cycle;
          \node[text = ForestGreen] at (0.5, 0.25) {$\myDomain$};
        \end{pgfonlayer}

        \begin{pgfonlayer}{background}
          \tikzstyle{every path}=[draw, line width=0.5pt, color=RedOrange]
          \draw[fill = RedOrange!20] (0.00, 2.00) -- (6.00, 2.00) -- (6.00, 3.00) -- (0.00, 3.00) -- cycle;
          \node[text = Red!50!RedOrange] at (3, 2.5) {$\advDomain$};
        \end{pgfonlayer}

        \begin{pgfonlayer}{main}
          \tikzstyle{every path}=[draw, line width=1.0pt, color=black]
          \draw (q1.center) -- (q2.center) -- (q3.center) -- (q4.center) --
          (q5.center) -- (q6.center) -- (q7.center) -- (q8.center) --
          (q9.center) -- (q0.center) -- cycle;
        \end{pgfonlayer}

        \begin{pgfonlayer}{foreground}
          \tikzstyle{every path}=[draw, line width=1.2pt, color=WildStrawberry!50!Red]

          \draw (4.9915, 0.1949) -- (4.9873, 0.1924) -- (4.9831,
          0.1899) -- (4.9747, 0.1848) -- (4.9664, 0.1798) -- (4.9580,
          0.1748) -- (4.9498, 0.1699) -- (4.9415, 0.1649) -- (4.9333,
          0.1600) -- (4.9251, 0.1550) -- (4.9169, 0.1501) -- (4.9088,
          0.1453) -- (4.9007, 0.1404) -- (4.8926, 0.1355) -- (4.8845,
          0.1307) -- (4.8765, 0.1259) -- (4.8684, 0.1211) -- (4.8605,
          0.1163) -- (4.8525, 0.1115) -- (4.8445, 0.1067) -- (4.8364,
          0.1033) -- (4.8280, 0.1011) -- (4.8197, 0.0990) -- (4.8111,
          0.0982) -- (4.8024, 0.0987) -- (4.7937, 0.0994) -- (4.7852,
          0.0987) -- (4.7766, 0.0994) -- (4.7680, 0.1002) -- (4.7596,
          0.0997) -- (4.7513, 0.0993) -- (4.7428, 0.1003) -- (4.7346,
          0.0999) -- (4.7263, 0.0996) -- (4.7181, 0.0993) -- (4.7099,
          0.0990) -- (4.7018, 0.0988) -- (4.6937, 0.0987) -- (4.6856,
          0.0986) -- (4.6774, 0.1000) -- (4.6694, 0.0999) -- (4.6614,
          0.0999) -- (4.6534, 0.1000) -- (4.6455, 0.1001) -- (4.6377,
          0.0986) -- (4.6299, 0.0988) -- (4.6220, 0.0990) -- (4.6142,
          0.0992) -- (4.6065, 0.0995) -- (4.5987, 0.0998) -- (4.5910,
          0.1001) -- (4.5834, 0.0988) -- (4.5757, 0.0992) -- (4.5682,
          0.0980) -- (4.5606, 0.0985) -- (4.5530, 0.0990) -- (4.5455,
          0.0995) -- (4.5380, 0.1000) -- (4.5306, 0.1004) -- (4.5232,
          0.1008) -- (4.5161, 0.0995) -- (4.5088, 0.0999) -- (4.5018,
          0.0984) -- (4.4947, 0.0988) -- (4.4877, 0.0991) -- (4.4807,
          0.0994) -- (4.4738, 0.0997) -- (4.4671, 0.0999) -- (4.4603,
          0.1001) -- (4.4539, 0.0983) -- (4.4473, 0.0984) -- (4.4407,
          0.0986) -- (4.4343, 0.0987) -- (4.4279, 0.0989) -- (4.4216,
          0.0989) -- (4.4154, 0.0990) -- (4.4092, 0.0991) -- (4.4031,
          0.0991) -- (4.3970, 0.0992) -- (4.3911, 0.0992) -- (4.3852,
          0.0992) -- (4.3793, 0.0992) -- (4.3736, 0.0992) -- (4.3679,
          0.0991) -- (4.3622, 0.0991) -- (4.3567, 0.0990) -- (4.3511,
          0.0990) -- (4.3457, 0.0990) -- (4.3403, 0.0989) -- (4.3350,
          0.0988) -- (4.3297, 0.0988) -- (4.3245, 0.0987) -- (4.3193,
          0.0986) -- (4.3142, 0.0986) -- (4.3092, 0.0985) -- (4.3042,
          0.0984) -- (4.2992, 0.0984) -- (4.2943, 0.0983) -- (4.2895,
          0.0982) -- (4.2847, 0.0981) -- (4.2800, 0.0981) -- (4.2753,
          0.0980) -- (4.2706, 0.0979) -- (4.2660, 0.0979) -- (4.2615,
          0.0978) -- (4.2570, 0.0977) -- (4.2525, 0.0977) -- (4.2472,
          0.1002) -- (4.2420, 0.1027) -- (4.2360, 0.1078) -- (4.2300,
          0.1129) -- (4.2240, 0.1181) -- (4.2181, 0.1233) -- (4.2123,
          0.1285) -- (4.2064, 0.1338) -- (4.2007, 0.1391) -- (4.1949,
          0.1444) -- (4.1893, 0.1497) -- (4.1836, 0.1551) -- (4.1780,
          0.1605) -- (4.1746, 0.1638) -- (4.1713, 0.1670) -- (4.1702,
          0.1681) -- (4.1691, 0.1692) -- (4.1702, 0.1681) -- (4.1691,
          0.1692) -- (4.1701, 0.1681) -- (4.1690, 0.1692) -- (4.1701,
          0.1681) -- (4.1690, 0.1691) -- (4.1701, 0.1680) -- (4.1690,
          0.1691) -- (4.1701, 0.1680) -- (4.1690, 0.1691) -- (4.1700,
          0.1680) -- (4.1690, 0.1691) -- (4.1700, 0.1680) -- (4.1689,
          0.1691) -- (4.1700, 0.1680) -- (4.1689, 0.1690) -- (4.1700,
          0.1679) -- (4.1689, 0.1690) -- (4.1700, 0.1679) -- (4.1689,
          0.1690) -- (4.1699, 0.1679) -- (4.1689, 0.1690) -- (4.1699,
          0.1679) -- (4.1688, 0.1689) -- (4.1699, 0.1679) -- (4.1688,
          0.1689) -- (4.1699, 0.1679) -- (4.1688, 0.1689) -- (4.1699,
          0.1678) -- (4.1688, 0.1689) -- (4.1698, 0.1678) -- (4.1688,
          0.1689) -- (4.1698, 0.1678) -- (4.1687, 0.1688) -- (4.1698,
          0.1678) -- (4.1687, 0.1688) -- (4.1698, 0.1678) -- (4.1687,
          0.1688) -- (4.1697, 0.1677) -- (4.1687, 0.1688) -- (4.1697,
          0.1677) -- (4.1687, 0.1688) -- (4.1697, 0.1677) -- (4.1687,
          0.1687) -- (4.1697, 0.1677) -- (4.1686, 0.1687) -- (4.1697,
          0.1677) -- (4.1686, 0.1687) -- (4.1696, 0.1677) -- (4.1686,
          0.1687) -- (4.1696, 0.1676) -- (4.1686, 0.1687) -- (4.1696,
          0.1676) -- (4.1686, 0.1686) -- (4.1696, 0.1676) -- (4.1685,
          0.1686) -- (4.1696, 0.1676) -- (4.1685, 0.1686) -- (4.1695,
          0.1676) -- (4.1685, 0.1686) -- (4.1695, 0.1675) -- (4.1685,
          0.1686) -- (4.1695, 0.1675) -- (4.1685, 0.1685) -- (4.1695,
          0.1675) -- (4.1685, 0.1685) -- (4.1695, 0.1675) -- (4.1684,
          0.1685) -- (4.1694, 0.1675) -- (4.1684, 0.1685) -- (4.1694,
          0.1675) -- (4.1684, 0.1685) -- (4.1694, 0.1674) -- (4.1684,
          0.1684) -- (4.1694, 0.1674) -- (4.1684, 0.1684) -- (4.1694,
          0.1674) -- (4.1684, 0.1684) -- (4.1694, 0.1674) -- (4.1683,
          0.1684) -- (4.1693, 0.1674) -- (4.1683, 0.1684) -- (4.1693,
          0.1674) -- (4.1683, 0.1683) -- (4.1693, 0.1673) -- (4.1683,
          0.1683) -- (4.1693, 0.1673) -- (4.1683, 0.1683) -- (4.1693,
          0.1673) -- (4.1683, 0.1683) -- (4.1692, 0.1673) -- (4.1682,
          0.1683) -- (4.1692, 0.1673) -- (4.1682, 0.1682) -- (4.1692,
          0.1673) -- (4.1682, 0.1682) -- (4.1692, 0.1672) -- (4.1682,
          0.1682) -- (4.1692, 0.1672) -- (4.1682, 0.1682) -- (4.1691,
          0.1672) -- (4.1682, 0.1682) -- (4.1691, 0.1672) -- (4.1681,
          0.1682) -- (4.1691, 0.1672) -- (4.1681, 0.1681) -- (4.1691,
          0.1672) -- (4.1681, 0.1681) -- (4.1691, 0.1671) -- (4.1681,
          0.1681) -- (4.1691, 0.1671) -- (4.1681, 0.1681) -- (4.1690,
          0.1671) -- (4.1681, 0.1681) -- (4.1690, 0.1671) -- (4.1680,
          0.1680) -- (4.1690, 0.1671) -- (4.1680, 0.1680) -- (4.1690,
          0.1671) -- (4.1680, 0.1680) -- (4.1690, 0.1670) -- (4.1680,
          0.1680) -- (4.1690, 0.1670) -- (4.1680, 0.1680) -- (4.1689,
          0.1670) -- (4.1680, 0.1680) -- (4.1689, 0.1670) -- (4.1680,
          0.1679) -- (4.1689, 0.1670) -- (4.1679, 0.1679) -- (4.1689,
          0.1670) -- (4.1679, 0.1679) -- (4.1689, 0.1669) -- (4.1679,
          0.1679) -- (4.1688, 0.1669) -- (4.1679, 0.1679) -- (4.1688,
          0.1669) -- (4.1679, 0.1679) -- (4.1688, 0.1669) -- (4.1679,
          0.1678) -- (4.1688, 0.1669) -- (4.1678, 0.1678) -- (4.1688,
          0.1669) -- (4.1678, 0.1678) -- (4.1688, 0.1669) -- (4.1678,
          0.1678) -- (4.1687, 0.1668) -- (4.1678, 0.1678) -- (4.1687,
          0.1668) -- (4.1678, 0.1678) -- (4.1687, 0.1668) -- (4.1678,
          0.1677) -- (4.1687, 0.1668) -- (4.1678, 0.1677) -- (4.1687,
          0.1668) -- (4.1677, 0.1677) -- (4.1687, 0.1668) -- (4.1677,
          0.1677) -- (4.1686, 0.1668) -- (4.1677, 0.1677) -- (4.1686,
          0.1667) -- (4.1677, 0.1677) -- (4.1686, 0.1667) -- (4.1677,
          0.1676) -- (4.1686, 0.1667) -- (4.1677, 0.1676) -- (4.1686,
          0.1667) -- (4.1681, 0.1671) -- (4.1689, 0.1664);
          \node[draw, shape=circle, fill=WildStrawberry!50!Red, style 0] (t1) at (4.1667, 0.1667) {};
          \node[text = WildStrawberry!50!Red] at
          ([shift={(135:0.25)}]{t1}) {$\genericPoint_e$};
          \draw[-{to[round, scale=0.5]}, color=Maroon] (4.52, 0.1) -- (4.5, 0.1);
        \end{pgfonlayer}

        \begin{pgfonlayer}{background}
          \tikzstyle{every path}=[draw, line width=0.5pt, color=Violet]
          \node[text = Violet] (l2) at (4.7, 1.75)
          {$\visibilityPolygon{\genericPoint_e} \intersection 
          \advDomain = \varnothing$};
          \draw (4.1667, 0.1667) -- (6, 2);
        \end{pgfonlayer}

      \end{tikzpicture}
    }%
  }
  \caption{Visibility metric at the beginning and the end of
  \NorcentTO~for a climber hiding from an invisible quadrotor. Note
  that the initial condition is outside $\myDomain$, and the
  algorithm balances between getting into $\myDomain$ and reducing
  chances of detection. Once it gets into the set, it does not leave
  it. }
  \label{fig:example_1_1}
\end{figure}

\subsection{Robot seeking another robot in hide and seek}
\label{sub:example_2}

For the second scenario, we consider robots playing hide and seek,
albeit with a twist--- seekers are restricted to the green domain
$\myDomain$ and the hiders to the red domain $\advDomain$, as
illustrated in \Cref{fig:example_2}. The wheeled seeking robots are
equipped with Lidar sensors, and in order to maximize chances of
detection of the walking robot, they implement the \NorcentTO. The
trajectories of their implementation starting from four different
initializations are shown in \Cref{fig:example_2}.
\Cref{fig:example_2_1} shows how the region of the red domain
$\advDomain$ seen by the wheeled robots changes for one of the
initializations.

\begin{figure}[!h]
  \centering
  \subfigure[3D hide-and-seek problem]{
    \includegraphics[width=0.425\linewidth]{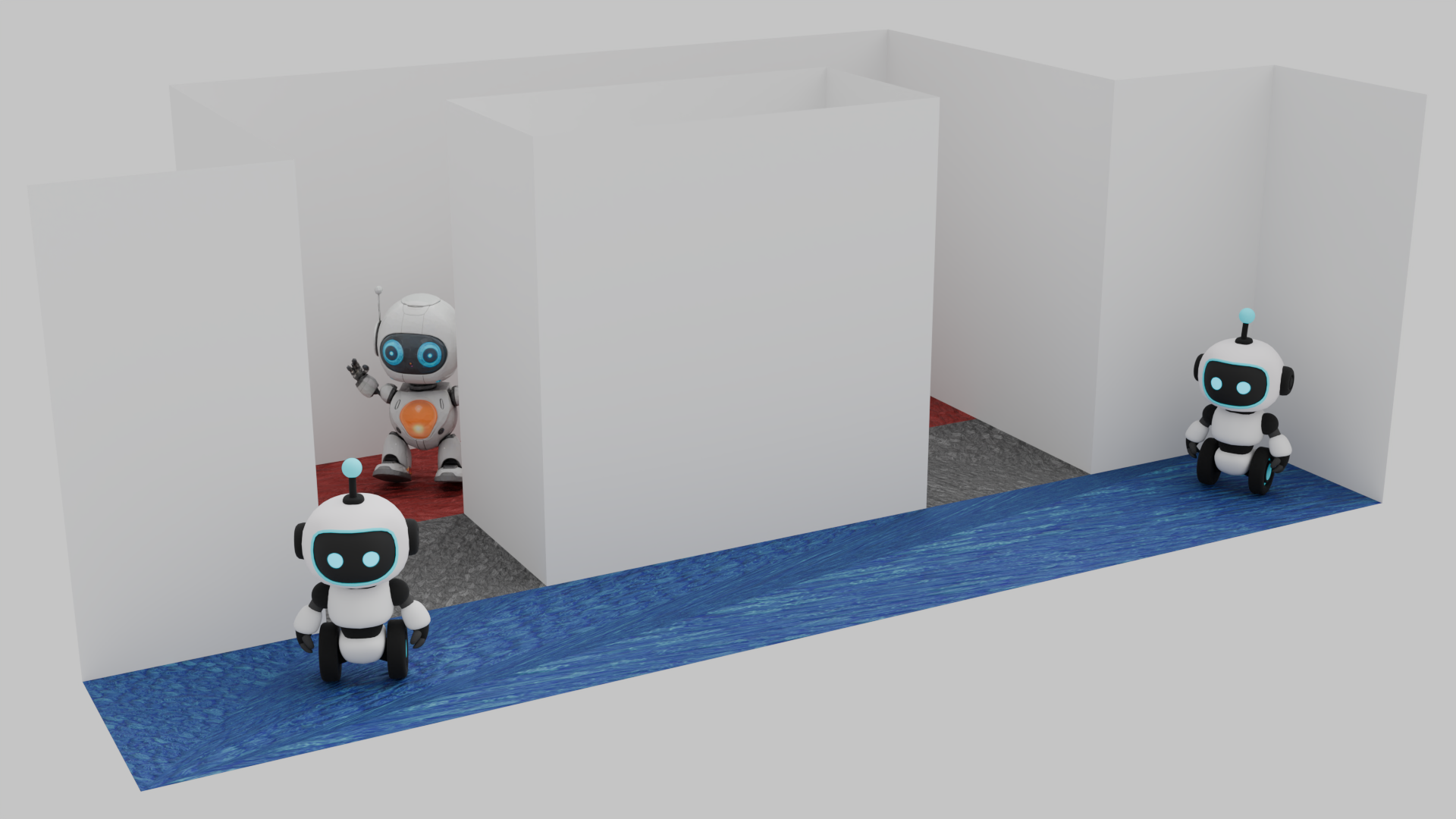}
    \label{fig:problem_2}
  }
  \subfigure[2D horizontal mathematical formulation]{
    \label{fig:solution_2}
    \resizebox{0.51\linewidth}{!}{
      \centering
      \pgfdeclarelayer{background}
      \pgfdeclarelayer{foreground}
      \pgfsetlayers{background,main,foreground}

    }
  }
  \caption{Seeking in hide and seek. The seekers are restricted to the
    green domain, while the hider is restricted to the red domain.}
    \label{fig:example_2}
\end{figure}

\begin{figure}[!h]
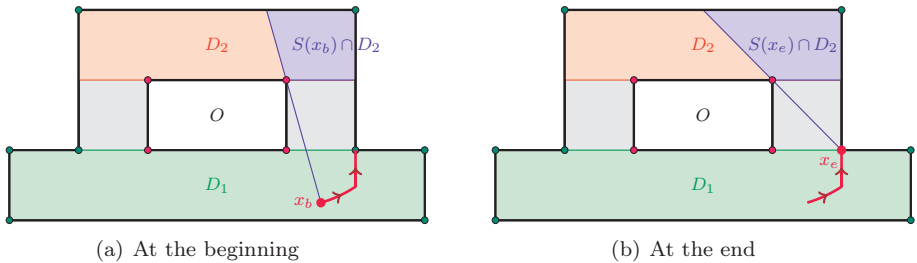

  \centering
  \subfigure[At the beginning]{
    \label{fig:example_2_before}
    \resizebox{0.47\linewidth}{!}{
      \centering
      \pgfdeclarelayer{background}
      \pgfdeclarelayer{foreground}
      \pgfsetlayers{background,main,foreground}
      %
    }%
  }
  %
  \subfigure[At the end]{
    \label{fig:example_2_after}
    \resizebox{0.47\linewidth}{!}{
      \centering
      \pgfdeclarelayer{background}
      \pgfdeclarelayer{foreground}
      \pgfsetlayers{background,main,foreground}
      %
    }%
  }
  \caption{Visibility metric at the beginning and the end of
  \NorcentTO~for a robot seeking other robots.}
  \label{fig:example_2_1}
\end{figure}


\section{Conclusions}\label{sec:conclusions}

We have studied the problem of identifying optimal hiding spots for
agents in obstacle-rich environments in the presence of adversaries
whose location is unknown, and developed methodologies to find them.
Our technical approach has first characterized the critical points of
the visibility polygon using anchors and inflection segments. Building
on this, we have relied on notions from non-smooth analysis, including
$\mu$-local Lipschitzness and $\mu$-directional derivatives, to
analyze the regularity properties of various visibility metrics,
including under limited range and limited field of view.  We have
proposed the \NorcentTO{} and established its almost sure asymptotic
convergence to local minimizers of the visibility metric despite its
non-smooth and non-convex nature.  Future work will leverage the
results for practical visibility-based metrics in robotic problems and
extend the treatment to multi-agent scenarios.

\newpage

\bibliographystyle{siamplain}
\bibliography{1091.bib}

\appendix

\section{Auxiliary result for proof of \texorpdfstring{\Cref{thm:non_stopping}}{Theorem~\ref{thm:non_stopping}}}

The following result states that, if the generalized gradient can be
expressed as a convex hull of finitely many gradients at a non-Clarke
stationary point, then there exists a cone of aperture less than $\pi$
which completely contains the generalized gradient.

\begin{lemma}[Generator of the generalized gradient]
  \label{lem:cone_lemma}
  Given a function $f \colon \genericPointSet \to \real$, let $ \gen{}
  \partial f(\genericPoint) := \big\{ \lim_{i \rightarrow \infty}
  \nabla f(\genericPoint_i) \mid \genericPoint_i \rightarrow
  \genericPoint, \genericPoint_i \not \in E \cup E_f \big\}, $ be the
  set of generators of $\partial f(\genericPoint)$, where $E$ is any
  zero measure set in the Lebesgue sense, $E_f$ is the set of points
  where $f$ fails to be differentiable, and $\nabla f$ is the gradient
  of $f$. If $|\gen{}\partial f (\genericPoint)| < \infty$ and $0
  \not\in \partial f (\genericPoint)$, then there exists a direction
  $u \in \real^2$ and an angle $\alpha \in (0, \sfrac{\pi}{2})$ such
  that $\partial f (\genericPoint) \subset
  \fovConeInfinite{2\alpha}{u}$.
  Moreover, $\partial f(\genericPoint) = \convexHull{\gen{} \partial
  f(\genericPoint)}$ holds trivially.
\end{lemma}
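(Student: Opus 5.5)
The plan has three steps: show the equality $\partial f(\genericPoint) = \convexHull{\gen{}\partial f(\genericPoint)}$, reduce the problem to a polytope that avoids the origin, and then build the cone from a separating direction.

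\textbf{Step 1: the convex-hull identity.} This is immediate from the definition of the generalized gradient in \Cref{sub:non_smooth}. There, $\partial f(\genericPoint)$ is defined as the convex hull of exactly the set of limits that constitutes $\gen{}\partial f(\genericPoint)$. Since this generator set is finite by hypothesis, its convex hull is already closed and compact, so no closure needs to be taken.

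\textbf{Step 2: reduction to a polytope.} Write $\gen{}\partial f(\genericPoint) = \{g_1,\dots,g_m\}$, which is finite. Then $P := \partial f(\genericPoint) = \convexHull{g_1,\dots,g_m}$ is a compact convex polytope in $\real^2$ with $0 \notin P$.

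\textbf{Step 3: separation and cone construction.} We separate $0$ from $P$ strictly. Let $p^* = \argmin_{p\in P}\|p\|$ be the minimum-norm element of $P$, and set $u := \hat{p}^* = p^*/\|p^*\|$. The projection characterization of $p^*$ gives $\langle p - p^*, p^*\rangle \ge 0$ for all $p\in P$. Hence $\langle p, u\rangle \ge \|p^*\| > 0$ on $P$, and in particular $\langle g_i, u\rangle > 0$ for every generator.

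Now define the angles $\theta_i := \angle(g_i,u) \in [0,\sfrac{\pi}{2})$, which are well defined since each $g_i \neq 0$. Pick $\alpha$ with $\max_i \theta_i < \alpha < \sfrac{\pi}{2}$ and $\alpha>0$. This is possible because the maximum is over a finite set, so it is strictly less than $\sfrac{\pi}{2}$.

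The cone $\fovConeInfinite{2\alpha}{u} = \{y : \angle(y,u)\le\alpha\}$ is convex, since $2\alpha<\pi$. It also contains every $g_i$. Therefore it contains their convex hull $P$, which is the claim. If one prefers to interpret $\fovConeInfinite{\cdot}{\cdot}$ with the apex at the origin and heading $u$, as in the paper's usage, this is exactly the statement.

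\textbf{Main obstacle.} The only delicate point is ensuring that the half-aperture stays strictly below $\sfrac{\pi}{2}$. Finiteness is what guarantees this: with infinitely many generators, the angles could accumulate to $\sfrac{\pi}{2}$, although compactness together with $0\notin P$ would also suffice. For this reason I would rely on finiteness together with the strict inequality $\langle g_i,u\rangle\ge\|p^*\|>0$.
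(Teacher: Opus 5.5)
The paper states this lemma in the appendix without a proof; it only remarks that the convex-hull identity holds trivially. There is therefore no argument of the authors' to compare yours against, and your proof is correct and complete.

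\emph{Convex-hull identity.} The identity $\partial f(x)=\mathrm{co}(\mathrm{gen}\,\partial f(x))$ is literally the paper's definition of the generalized gradient in Section~2.B.

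\emph{Cone containment.} Taking the minimum-norm point $p^*$ of the polytope $P$ and setting $u=p^*/\|p^*\|$, the variational inequality gives $\langle p,u\rangle\geq\|p^*\|>0$ for all $p\in P$. This is exactly the strict separation needed. Any $\alpha\in(\max_i\angle(g_i,u),\pi/2)$ with $\alpha>0$ then works, because $\{y:\langle y,u\rangle\geq\|y\|\cos\alpha\}$ is convex for $\alpha<\pi/2$ and contains every generator.

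\emph{Your remark on finiteness.} This is also right: finiteness is not essential. If $M$ bounds the norms on $P$, then $\cos\angle(p,u)\geq\|p^*\|/M>0$ for every $p\in P$, so compactness of $P$ together with $0\notin P$ already suffices.

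\emph{Degenerate case.} The one case you silently skip is an empty generator set, where $p^*$ does not exist. There the claim is vacuous. It also cannot occur for the locally Lipschitz functions to which the paper applies the lemma, by Rademacher's theorem and boundedness of the gradients.
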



\end{document}